\newcommand{\cirrad}{0.06}
\newtheorem{thm}{Theorem}[section]
\newtheorem{cor}[thm]{Corollary}
\newtheorem{prop}[thm]{Proposition}
\newtheorem{lem}[thm]{Lemma}
\newtheorem{conj}[thm]{Conjecture}
\theoremstyle{definition}
\newtheorem{defn}[thm]{Definition}
\newtheorem{exmp}[thm]{Example}
\newtheorem{question}[thm]{Question}
\newtheorem{rem}[thm]{Remark}
\newtheorem{notation}[thm]{Notation}
\let\c@equation\c@thm
\numberwithin{equation}{section}
\let\SK@label\label\fi
 \let\your@thm\@thm
 \def\@thm#1#2#3{\gdef\currthmtype{#3}\your@thm{#1}{#2}{#3}}
 \def\mylabel#1{{\let\your@currentlabel\@currentlabel\def\@currentlabel
  {\currthmtype~\your@currentlabel}
 \SK@label{#1@}}\label{#1}}
\title{The triviality of the 61-stem in the stable homotopy groups of spheres}
\author{Guozhen Wang}
\thanks{The first author was partially supported by the Danish National Research Foundation through the Centre for Symmetry and Deformation (DNRF92).}
\address{Shanghai Center for Mathematical Sciences, Fudan University, Shanghai, China, 200433}
\address{Department of Mathematics, University of Copenhagen, Universitetsparken 5, 2100 Copenhagen, Denmark}
\email{wangguozhen@fudan.edu.cn}
\author{Zhouli Xu}
\address{Department of Mathematics, The University of Chicago, Chicago, IL 60637}
\email{xu@math.uchicago.edu}
\begin{document}

\maketitle

\begin{abstract}
We prove that the 2-primary $\pi_{61}$ is zero. As a consequence, the Kervaire invariant element $\theta_5$ is contained in the strictly defined 4-fold Toda bracket $\langle 2, \theta_4, \theta_4, 2\rangle$.

Our result has a geometric corollary: the 61-sphere has a unique smooth structure and it is the last odd dimensional case - the only ones are $S^1, S^3, S^5$ and $S^{61}$.


Our proof is a computation of homotopy groups of spheres. A major part of this paper is to prove an Adams differential $d_3(D_3) = B_3$. We prove this differential by introducing a new technique based on the algebraic and geometric Kahn-Priddy theorems. The success of this technique suggests a theoretical way to prove Adams differentials in the sphere spectrum inductively by use of differentials in truncated projective spectra.
\end{abstract}

\tableofcontents

\section{Introduction}

In 1904, Poincar\'{e} proposed the following famous conjecture:
\begin{conj}
Let $M$ be a closed 3-manifold. If $M$ is simply connected, then $M$ is homeomorphic to the 3-sphere.
\end{conj}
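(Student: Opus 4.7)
The plan is to follow the Hamilton--Perelman program: equip $M$ with an arbitrary smooth Riemannian metric $g_0$, evolve it by Hamilton's Ricci flow
$$\frac{\partial g_{ij}}{\partial t} = -2 R_{ij},$$
and perform surgery whenever the curvature blows up. The goal is to show that the resulting flow with surgery becomes extinct in finite time and that every piece removed by a surgery is diffeomorphic to a spherical space form or to $S^2\times S^1$. The simple-connectivity hypothesis then forces each piece to be $S^3$ and rules out $S^2\times S^1$ summands, so $M\cong S^3$.

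First I would set up short-time existence and the parabolic maximum principle for Ricci flow on closed $3$-manifolds, giving pinching estimates of Hamilton--Ivey type that force all blow-up limits to have nonnegative sectional curvature. Next I would prove Perelman's $\mathcal{W}$-entropy monotonicity and the associated no local collapsing theorem, which yields a uniform noncollapsing scale under bounded curvature and is what makes the singularity analysis possible at all.

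The main obstacle is the classification of singularity models. I would prove the canonical neighborhood theorem: any high-curvature point is, after rescaling, close to an ancient $\kappa$-solution, and on $3$-manifolds the only such solutions are the round shrinking $S^3/\Gamma$, the round shrinking cylinder $S^2\times\mathbb{R}$ and its $\mathbb{Z}/2$ quotient, and the Bryant soliton. This uses Hamilton's compactness theorem for Ricci flows together with Perelman's reduced-volume monotonicity. With this in hand I would implement Perelman's surgery procedure: at the first singular time, cut along $2$-sphere cross sections of $\varepsilon$-horns and cap off with standard caps, choosing the cutoff, canonical-neighborhood, and surgery scales $(r,\delta,h)$ carefully so that both the noncollapsing and canonical neighborhood estimates persist across surgeries.

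Finally, to finish when $\pi_1(M)=0$, I would invoke a finite extinction time theorem, via either Perelman's $\lambda$-functional argument or the Colding--Minicozzi approach through widths of sweepouts of the loop space by harmonic $2$-spheres. This forces the flow to disappear after finitely many surgeries, so $M$ is recovered by reversing the surgeries as a connected sum of the removed pieces; each such piece is a spherical space form or $S^2\times S^1$, and simple connectivity combined with the Kneser--Milnor prime decomposition theorem for $3$-manifolds then yields $M\cong S^3$.
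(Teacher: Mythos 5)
The paper does not prove this statement at all: it records the Poincar\'e conjecture only to cite Perelman's resolution of it, so there is no internal argument to compare yours against. Your proposal is a faithful outline of exactly the proof the paper is citing --- the Hamilton--Perelman program of Ricci flow with surgery, no local collapsing, the canonical neighborhood theorem, and finite-time extinction for simply connected $M$ --- so in that sense you have taken ``the same approach'' as the cited source.

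That said, what you have written is a table of contents rather than a proof. Each named step (the $\mathcal{W}$-entropy monotonicity and noncollapsing theorem, the compactness and structure theory of ancient $\kappa$-solutions, the delicate choice of the parameters $(r,\delta,h)$ so that the canonical neighborhood and noncollapsing estimates survive infinitely many surgeries, and the finite extinction time theorem) is itself a deep theorem occupying a substantial fraction of the several-hundred-page literature verifying Perelman's work; none of these is argued here, so the proposal cannot be assessed as a self-contained argument. One concrete technical point: you invoke a complete classification of three-dimensional $\kappa$-solutions, including that the Bryant soliton is the unique noncompact, non-cylindrical one. That classification was not available in Perelman's proof (it was completed only much later by Brendle) and is not needed; the canonical neighborhood theorem rests on the weaker compactness and asymptotic-soliton structure of $\kappa$-solutions. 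As stated, your outline therefore leans on a result stronger than the argument requires and harder than the surrounding steps.
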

This is the celebrated Poincar\'{e} conjecture. It was proved by Perelman \cite{Per} in 2002, using geometric analytic methods. Note that a closed 3-manifold is simply connected if and only if it is homotopy equivalence to the 3-sphere.

This conjecture can be generalized to higher dimensions as the following question.
\begin{question}
Let $M$ be a closed $n$-manifold. Suppose $M$ is homotopy equivalent to $S^n$. Is $M$ homeomorphic to $S^n$?
\end{question}
The answer turns out to be yes for all dimensions. For $n=4$, it was proved by Freedman \cite{Fre} in 1982. For $n\geq5$, it was proved by Smale \cite{Sma} in 1962, using the theory of $h$-cobordisms, and by Newman \cite{New} in 1966 and by Connell \cite{Con} in 1967. The statement Smale proved assumes further that the $n$-manifold $M$ admits a smooth structure, while the statement Newman and Connell proved does not require such a condition.

In summary, we have the following theorem:
\begin{thm}(\cite{Sma, New, Con, Fre, Per})
Any closed $n$-manifold that is homotopy equivalent to $S^n$ is homeomorphic to $S^n$.
\end{thm}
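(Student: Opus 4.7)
The plan is to split the proof into cases by dimension, since fundamentally different techniques are required in low dimensions versus high dimensions. The cases $n = 1, 2$ are classical and follow from the elementary classification of $1$- and $2$-manifolds. The remaining cases $n = 3$, $n = 4$, and $n \geq 5$ each require substantial machinery, and I would simply appeal to the cited works, but let me sketch the logic of each in turn.

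For $n \geq 5$, I would proceed in two stages: first the smooth case via Smale, then reduce the topological case to the smooth one. In the smooth case, given a smooth homotopy $n$-sphere $M$, remove two small open disks to obtain a cobordism $W$ between two copies of $S^{n-1}$. A handle-cancellation argument using the Whitney trick (which requires $n - 1 \geq 5$, hence $n \geq 5$, or $n \geq 6$ with a small boundary adjustment for $n=5$) shows that $W$ is an $h$-cobordism, and Smale's $h$-cobordism theorem then yields $W \cong S^{n-1} \times [0,1]$, so $M$ is the union of two disks glued along their boundary, hence homeomorphic (indeed PL-homeomorphic) to $S^n$. For merely topological $M$, I would invoke the approach of Newman and Connell, which bypasses smoothability by an engulfing argument: one shows that any homotopy $n$-sphere can be written as the union of two topological $n$-cells, which again forces $M \cong S^n$.

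For $n = 4$, the obstacle is that the Whitney trick fails in dimension $4$, so Smale's method does not apply. I would appeal to Freedman's theorem, whose core technical input is that Casson handles are homeomorphic (not diffeomorphic) to standard open $2$-handles. This, combined with the disk embedding theorem, suffices to run a topological $h$-cobordism argument in dimension $4$ and conclude that a homotopy $4$-sphere is homeomorphic to $S^4$.

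For $n = 3$, I would invoke Perelman's work: starting from a simply connected closed $3$-manifold $M$, run the Ricci flow with surgery, control the geometric decomposition via Hamilton's program and Perelman's monotonicity and non-collapsing estimates, and show that the flow becomes extinct in finite time with $M$ decomposing as a connected sum of space forms; simple connectivity then forces $M \cong S^3$. The hard part of the whole theorem is unquestionably this $n = 3$ case, since it required the development of Ricci flow with surgery and the finite-extinction-time arguments; the $n = 4$ case is a close second in difficulty, resting on the delicate topology of Casson handles. The high-dimensional cases, by contrast, are by now standard consequences of the $h$-cobordism theorem and engulfing, and would be quoted rather than reproved.
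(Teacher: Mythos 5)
Your proposal matches the paper's treatment: the paper gives no independent proof but simply attributes the result by dimension to the cited works ($n\geq 5$ to Smale via $h$-cobordisms in the smooth case and to Newman and Connell via engulfing in the topological case, $n=4$ to Freedman, $n=3$ to Perelman), exactly as you do. Your added sketches of each ingredient are accurate, so the proposal is correct and takes essentially the same approach.
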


We can also generalize this question into the smooth category.
\begin{question}
Let $M$ be a closed $n$-manifold. Suppose $M$ is homeomorphic to $S^n$. Is $M$ diffeomorphic to $S^n$?
\end{question}

For $n=3$, the answer is yes. It is due to Moise \cite{Moi} that every closed 3-manifold has a unique smooth structure. In particular, the 3-sphere has a unique smooth structure. For $n=4$, this question is wildly open.

For higher dimensions, Milnor \cite{Mil} constructed an exotic smooth structure on $S^7$. Furthermore, Kervaire and Milnor \cite{KM2} showed that the answer is not true in general for $n\geq 5$.

Since the answer to Question 1.4 is not true in general, there come two natural questions:
\begin{question}
How many exotic structures are there on $S^n$?
\end{question}

\begin{question}
For which $n$'s does there exist a unique smooth structure on $S^n$?
\end{question}

Kervaire and Milnor reduced Question 1.5 to a computation of the stable homotopy groups of spheres. In fact, Kervaire and Milnor constructed a group $\Theta_n$, which is the group of h-cobordism classes of homotopy $n$-spheres. The group $\Theta_n$ classifies the differential structures on $S^n$ for $n\geq5$. This group $\Theta_n$ has a subgroup $\Theta_n^{bp}$, which consists of homotopy spheres that bound parallelizable manifolds. The relation between $\Theta_n$ and $\pi_n$ (the $n$-th stable homotopy group of the spheres) can be summarized by the following theorem.

\begin{thm}\label{km}(Kervaire-Milnor \cite{KM2})
Suppose that $n\geq 5$.
\begin{enumerate}
\item
The subgroup $\Theta_n^{bp}$ is cyclic, and has the following order:
\begin{equation*}
|\Theta_n^{bp}|=\left\{
\begin{split}
1 & , ~~\text{if~~}n\text{~~is even,} \\
1 \text{~~or~~} 2 & , ~~\text{if~~}n = 4k+1,\\
2^{2k-2}(2^{2k-1}-1)B(k) & , ~~\text{if~~}n = 4k-1.
\end{split}
\right.
\end{equation*}
Here $B(k)$ is the numerator of $4B_{2k}/k$ and $B_{2k}$ is the Bernoulli number.\\
\item
For $n \not\equiv 2 ~(mod ~4)$, there is an exact sequence
\begin{displaymath}
    \xymatrix{
   0 \ar[r] & \Theta_n^{bp} \ar[r] & \Theta_n \ar[r] & \pi_n/J \ar[r] & 0.
    }
\end{displaymath}
Here $\pi_n/J$ is the cokernel of the $J$-homomorphism.\\

\item
For $n \equiv 2 ~(mod ~4)$, there is an exact sequence
\begin{displaymath}
    \xymatrix{
   0 \ar[r] & \Theta_n^{bp} \ar[r] & \Theta_n \ar[r] & \pi_n/J \ar[r]^\Phi & \mathbb{Z}/2 \ar[r] & \Theta_{n-1}^{bp} \ar[r] & 0.
    }
\end{displaymath}
Here the map $\Phi$ is the Kervaire invariant.
\end{enumerate}
\end{thm}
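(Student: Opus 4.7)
The plan is to translate the classification of smooth structures on $S^n$ into stable bordism theory via the Pontryagin-Thom construction and then analyze the failure of this translation by surgery theory. Every homotopy sphere $\Sigma^n$ is stably parallelizable, so it admits a framing, unique modulo the image of $J$; sending $[\Sigma]$ to the framed bordism class of any framing gives a well-defined homomorphism $p\colon \Theta_n \to \pi_n/J$. I would first identify $\ker p = \Theta_n^{bp}$: if $p[\Sigma]=0$, choose a framed nullbordism $W^{n+1}$ of $\Sigma$ and, by surgery below the middle dimension on the interior of $W$, make $W$ highly connected and therefore parallelizable without altering $\partial W$; conversely, a parallelizable nullbordism yields a framing of $\Sigma$ that bounds.

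Next I would analyze the cokernel via surgery in the middle dimension applied to an arbitrary framed $n$-manifold. For $n$ odd, the middle dimension carries no obstruction and every framed class contains a homotopy sphere, giving surjectivity. For $n = 4k$, the signature obstruction to middle-dimensional surgery can be killed by connected sum with closed almost-parallelizable $4k$-manifolds of prescribed signature, again producing surjectivity. For $n \equiv 2 \pmod{4}$, the quadratic refinement of the middle intersection form of a framed nullbordism produces the Arf-Kervaire invariant $\Phi\colon \pi_n/J \to \mathbb{Z}/2$ as the surgery obstruction; the exact sequence continues because an element of $\ker\Phi$ bounds a framed manifold whose boundary is a homotopy sphere well-defined modulo the indeterminacy $\Theta_{n-1}^{bp}$.

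Finally I would compute $|\Theta_n^{bp}|$ by classifying highly connected parallelizable $(n{+}1)$-manifolds with homotopy sphere boundary. In even dimensions, such manifolds admit handle decompositions that reduce to a disk, so $\Theta_n^{bp}$ is trivial. For $n = 4k-1$, Milnor's $E_8$-plumbing of tangent disk bundles of $S^{2k}$ produces a parallelizable $4k$-manifold of signature $8$ with homotopy sphere boundary, and the cyclic subgroup this generates has order equal to the minimum positive signature of a closed almost-parallelizable $4k$-manifold divided by $8$; the Hirzebruch signature theorem combined with Adams' computation of the order of the image of $J$ then yields the Bernoulli-number formula $2^{2k-2}(2^{2k-1}-1)B(k)$. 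For $n = 4k+1$, the parallel plumbing using the Kervaire quadratic form gives order $1$ or $2$ according to whether a closed framed manifold of nonzero Kervaire invariant exists in dimension $4k+2$. The main obstacle throughout is the middle-dimensional surgery analysis, most delicately in the Kervaire dimensions, where the Arf-Kervaire invariant genuinely obstructs surgery in a way that the signature in dimensions $4k$ does not.
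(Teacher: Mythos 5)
The paper does not prove this theorem: it is imported verbatim from Kervaire--Milnor \cite{KM2} (with the caveat, recorded in Remark 1.8, that part (1) for $n\equiv 3 \pmod 4$ also requires the later resolution of the order of the image of $J$). Your outline is a faithful sketch of the argument in that source: Pontryagin--Thom plus surgery below the middle dimension identifies $\ker p$ with $\Theta_n^{bp}$; the middle-dimensional obstructions (none for $n$ odd, signature for $n\equiv 0$, Arf--Kervaire for $n\equiv 2 \pmod 4$) yield parts (2) and (3); and plumbing together with the Hirzebruch signature theorem and the image of $J$ yields part (1). One small misplacement: surjectivity of $\Theta_{4k}\to\pi_{4k}/J$ does not require connected sums with almost-parallelizable manifolds of prescribed signature --- a \emph{closed} framed $4k$-manifold already has signature zero because its rational Pontryagin classes vanish, so the middle-dimensional surgery obstruction vanishes outright. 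The connected-sum/minimal-signature argument is instead the one that computes $|\Theta_{4k-1}^{bp}|$ in part (1), where you do invoke it correctly.
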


\begin{rem}
In the first part of Theorem 1.7, the case $n\equiv 3 ~(mod ~4)$ depends on the computation of the order of the image of the $J$-homomorphism. The case $n\equiv 1 ~(mod ~4)$ depends on the Kervaire invariant in dimension $n+1$.
The computation of the image of the $J$-homomorphism at $4k-1$ stems is a special case of the Adams conjecture. The proof was
completed by Mahowald \cite{Mah}, and the full Adams conjecture was proved by Quillen \cite{Qui}, Sullivan \cite{Sul}, and by Becker-Gottlieb \cite{BG}.
\end{rem}

For Question 1.6, it is clear from Theorem 1.7 that, for $n=4k+3$ with $k\geq1$, the smooth structure on the $n$-sphere is never unique. For $n=4k+1$ with $k\geq1$, the answer depends on the existence of the Kervaire invariant elements. In 2009, Hill, Hopkins and Ravenel \cite{HHR} showed that the only dimensions in which the Kervaire invariant elements exist are 2, 6, 14, 30, 62 and possibly 126. That is, in other dimensions, the Kervaire invariant map
\begin{displaymath}
    \xymatrix{
   \pi_n/J \ar[r]^\Phi & \mathbb{Z}/2
    }
\end{displaymath}
in part $(3)$ of Theorem 1.7 is always zero and the group $\Theta_{n-1}^{bp}$ is $\mathbb{Z}/2$. Therefore, the only odd dimensional spheres that could have a unique smooth structure are $S^1, S^3, S^5, S^{13}, S^{29}, S^{61}$ and $S^{125}$. Further, the cases $S^{13}$ and $S^{29}$ can be ruled out by May's \cite{May2} 3-primary computation of the stable homotopy groups of spheres. \\

For dimension 61, we have the main theorem of this paper.

\begin{thm} \label{s61}
The 2-primary $\pi_{61} = 0$, and therefore the sphere $S^{61}$ has a unique smooth structure.
\end{thm}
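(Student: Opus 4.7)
The plan is to establish $\pi_{61}=0$ at the prime $2$ via the classical Adams spectral sequence $E_2^{s,t}=\mathrm{Ext}_{\mathcal{A}}^{s,t}(\mathbb{F}_2,\mathbb{F}_2)\Rightarrow\pi_{t-s}(S^0)_2^\wedge$, and then combine this with the already-known odd-primary homotopy of $S^0$ in dimension $61$ and Theorem~\ref{km} to deduce uniqueness of the smooth structure on $S^{61}$. The odd-primary part of $\pi_{61}/J$ is known to vanish from prior work on the stable stems, and the Hill-Hopkins-Ravenel existence of $\theta_5$ in dimension $62$ forces $\Theta_{61}^{bp}=0$ via the Kervaire exact sequence in Theorem~\ref{km}(3); thus once the 2-primary $\pi_{61}=0$ is proved, the conclusion $\Theta_{61}=0$ will follow.

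The first step I would take is to tabulate the $E_2$-page of the Adams spectral sequence through stem $61$, a range accessible via Bruner's minimal resolution machinery and the May spectral sequence, and identify the generators that could contribute to $\pi_{61}$. Most of the needed differentials will be inherited from the known Adams $E_\infty$-page through stem $60$ or forced by multiplicative structure and Toda-bracket relations. What remains, and what is the centerpiece of the argument, is the Adams $d_3$-differential $d_3(D_3)=B_3$: once this is in hand, together with standard $h_1$- and $h_2$-multiplications it should kill the remaining candidate survivors in stem $61$, leaving $E_\infty^{*,*+61}=0$.

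The hard part will be proving $d_3(D_3)=B_3$, since direct attack in the Adams spectral sequence of the sphere is not feasible at this height. My plan would be to pull the question back to a truncated real projective spectrum $P_n^m=\mathbb{R}P^m/\mathbb{R}P^{n-1}$ using the geometric Kahn-Priddy theorem, whose transfer $\Sigma^{-1}\mathbb{R}P^\infty\to S^0$ is surjective on positive-stem $2$-primary homotopy, together with its algebraic counterpart on Ext. The idea is to choose a truncation $P_n^m$ and a preimage class $\widetilde{D}_3$ whose Kahn-Priddy image detects $D_3$, compute $d_3(\widetilde{D}_3)$ in $P_n^m$ using the cofiber sequences $P_n^{m-1}\to P_n^m\to S^m$ and the homotopy of $P_n^{n+k}$ for small $k$, and then push the resulting differential forward to $S^0$. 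The delicate part will be arranging the truncation so that the lift exists, so that the target $\widetilde{B}_3$ has nontrivial Kahn-Priddy image, and so that no competing differentials in the truncated spectrum interfere with the comparison. After this step, any remaining differentials and hidden extensions in stem $61$ should reduce to a finite, routine bookkeeping check.
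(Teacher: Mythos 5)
Your overall strategy coincides with the paper's: work in the 2-primary Adams spectral sequence, isolate $d_3(D_3)=B_3$ as the crux, and prove that differential by transporting it through the geometric and algebraic Kahn--Priddy transfer to a stunted projective spectrum, computing there, and pushing forward. This is exactly what the paper does, using $P_1^{23}$ and a 9-cell quotient $X$ of $P_{14}^{23}$, with the differential ultimately originating from the $\eta$-extension from $h_4^3$ to $B_1$ in the 45-stem. One small correction on the geometric side: $\Theta_{61}^{bp}=0$ follows from the \emph{existence} of $\theta_5$, which is due to Barratt--Jones--Mahowald (reproved by Xu), not to Hill--Hopkins--Ravenel, whose theorem only rules out Kervaire classes in the remaining dimensions.

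The genuine gap is your final sentence: after $d_3(D_3)=B_3$ and $d_3(h_1D_3)=h_1B_3$, the 61-stem still contains the classes $A'$ and $gz$, and eliminating them is not ``routine bookkeeping'' nor a consequence of standard $h_1$- and $h_2$-multiplications. The paper must prove $d_5(A')=h_1B_{21}$ and that $gz$ is killed, and each requires a substantial homotopy-theoretic argument occupying a full section. For $gz$: it detects $\eta\overline{\kappa}^3$, whose vanishing is deduced from the 4-fold bracket $\overline{\kappa}\in\langle\kappa,2,\eta,\nu\rangle$, the bracket $\langle\eta\overline{\kappa}^2,\kappa,2\rangle=0$, and the relation $\nu\cdot\pi_{58}=0$ (itself resting on Bruner's and Isaksen's differentials and a chain of Toda brackets through $h_1f_1$ and $h_1Q_2$). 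For $A'$: $h_1B_{21}=d_0B_1$ detects $\eta\kappa\theta_{4.5}$, and showing this vanishes requires refining the definition of $\theta_{4.5}$ so that it is not detected by $tmf$, establishing $\rho_{15}\in\langle\{Ph_1\},\nu,\eta,2\rangle$ and $\rho_{15}\theta_{4.5}=0$, and verifying several auxiliary relations in $\pi_{54}$ and $\pi_{58}$. A complete proof along your outline would need to supply arguments of this kind; they cannot be inherited from lower stems or from multiplicative structure alone.
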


We postpone the proof of the first claim of Theorem 1.9 to Section 2, and present the proof of the second claim now.

\begin{proof}
In \cite{BJM}, Barratt, Jones and Mahowald showed that the Kervaire invariant element $\theta_5$ exists. The second author gave a new proof in \cite{Xu}. By Theorem \ref{km}, this implies that $\Theta^{bp}_{61}=0$.

At an odd prime $p$, the first nontrivial element in the cokernel of $J$ is $\beta_1$, which lies in the stem $2p^2-2p-2$. (This is proved in Section 4 of \cite{Rav}.) This value is $82$ if $p=7$. For $p=3$ and $p=5$, the table in Appendix A3 of Ravenel's green book \cite{Rav} shows that the cokernel of $J$ in dimension 61 vanishes. Therefore, the cokernel of $J$ in dimension 61 vanishes at all odd primes.

Combining the first claim of Theorem 1.9 with Theorem 1.7, this proves the second claim of Theorem 1.9.
\end{proof}

There is an important corollary of our theorem, regarding the Kervaire invariant element $\theta_5 \in \pi_{62}$.

\begin{cor}
The Kervaire invariant class $\theta_5 \in \pi_{62}$ is contained in the strictly defined 4-fold Toda bracket $\langle 2, \theta_4, \theta_4, 2\rangle$.
\end{cor}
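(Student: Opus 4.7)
The plan has two steps: verify that the bracket $\langle 2, \theta_4, \theta_4, 2\rangle$ is strictly defined, then exhibit $\theta_5$ inside it.

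For strict definedness, the requirements are that the three consecutive two-fold products vanish and that both 3-fold sub-brackets equal $\{0\}$ on the nose (not merely contain $0$). The consecutive-product conditions reduce to $2\cdot\theta_4 = 0$, which holds because $\theta_4\in\pi_{30}$ has order $2$, and $\theta_4\cdot\theta_4 = 0$ in $\pi_{60}$, which is classical. The crucial input is that both 3-fold sub-brackets
\[
\langle 2, \theta_4, \theta_4\rangle \quad\text{and}\quad \langle \theta_4, \theta_4, 2\rangle
\]
lie in $\pi_{61}$, which vanishes by the first assertion of Theorem \ref{s61}. Hence each sub-bracket equals $\{0\}$ exactly, and the 4-fold bracket is strictly defined as an element of $\pi_{62}$.

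For the membership $\theta_5\in \langle 2, \theta_4, \theta_4, 2\rangle$, I would invoke the Barratt--Jones--Mahowald construction of $\theta_5$ in \cite{BJM} (or the second author's alternative proof \cite{Xu}); both arguments realize $\theta_5$ as a representative of this Toda bracket. On the $E_2$-page of the Adams spectral sequence this mirrors the Massey product relation $\langle h_0, h_4^2, h_4^2, h_0\rangle \ni h_5^2$, and the passage from the Massey product to the Toda bracket in homotopy is handled by Moss's convergence theorem, subject to the vanishing of the relevant crossing differentials, which is part of what \cite{BJM} and \cite{Xu} establish.

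The main obstacle is the strict definedness, and it is resolved precisely by Theorem \ref{s61}. The membership of $\theta_5$ in the bracket is already present in the literature; the new content of the corollary is that $\pi_{61} = 0$ forces both 3-fold sub-brackets to be $\{0\}$ outright, removing the indeterminacy coming from the intermediate brackets and promoting the existing statement to a strictly defined one.
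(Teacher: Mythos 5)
Your strict-definedness argument is exactly the paper's: $2\theta_4=0$, $\theta_4^2=0$ (from \cite{Xu}), and both 3-fold sub-brackets land in $\pi_{61}=0$, so they are forced to be $\{0\}$ with no indeterminacy. That part is fine and is the genuinely new input of the corollary.

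The membership step is where you go astray. You lean primarily on the claim that \cite{BJM} and \cite{Xu} already ``realize $\theta_5$ as a representative of this Toda bracket.'' They do not: those references establish the \emph{existence} of a class $\theta_5$ detected by $h_5^2$ (and, in \cite{Xu}, that $\theta_4^2=0$ and $\theta_5$ has order $2$), but neither exhibits $\theta_5$ inside $\langle 2,\theta_4,\theta_4,2\rangle$ --- indeed Remark 1.11 of this paper states explicitly that Corollary 1.10 gives the \emph{first} known Toda bracket decomposition of $\theta_5$. So the citation you would ``invoke'' does not carry the weight you assign to it. The correct argument, which you mention only as a secondary ``mirror,'' is the whole proof: the Adams differential $d_2(h_5)=h_0h_4^2$ kills $h_0h_4^2$, so that in the $E_3$-page (not the $E_2$-page, where $h_0h_4^2\neq 0$ and the bracket is not even defined) one has the Massey product $\langle h_0, h_4^2, h_4^2, h_0\rangle = h_5^2$, and Moss's convergence theorem then produces an element of the Toda bracket detected by $h_5^2$; since $\theta_5$ is the class detected by $h_5^2$ (up to the usual indeterminacy in higher filtration, which one checks is harmless here), the membership follows. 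If you promote that Massey product computation to the main argument, place it on the correct page, and drop the unsupported appeal to \cite{BJM}, your proof coincides with the paper's.
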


\begin{proof}
We first check this 4-fold Toda bracket is strictly defined. In \cite{Xu}, the second author showed that $\theta_4^2=0$. Note that the 3-fold Toda bracket $\langle 2, \theta_4, \theta_4\rangle$ is contained in $\pi_{61}=0$. Therefore, this 4-fold Toda bracket is strictly defined.
In the Adams $E_3$ page, we have a Massey product
$$\langle h_0, h_4^2, h_4^2, h_0\rangle = h_5^2,$$
because of the Adams differential $d_2(h_5) = h_0h_4^2$. Then the theorem follows from Moss's Theorem \cite[Theorem 1.2]{Mos}.
\end{proof}

\begin{rem}
When computing stable stems, it is crucial to understand Toda brackets decompositions of multiplicatively indecomposable classes. A theorem of Joel Cohen \cite{Coh} says that any classes in the stable homotopy groups of spheres can be decomposed as a (matric) Toda bracket starting only from the classes that correspond to the Hopf maps. However, in practice, it is usually hard to find such a description. For the Kervaire invariant class $\theta_5$, our Corollary 1.10 gives the first known Toda bracket of it. Note that $\theta_4$ was known to have multiple Toda bracket decompositions using the Hopf maps.

By a theorem of Barratt, Jones and Mahowald \cite{BJM2}, if $\theta_5$ has order 2 and $\theta_5^2 =0$, then $\theta_6$ exists and has order 2. It is proved by the second author \cite{Xu} that $\theta_5$ has order 2. Our Toda bracket of $\theta_5$ in Corollary 1.10 therefore leads us to consider the Toda bracket $\langle \theta_5, 2, \theta_4\rangle$ in $\pi_{93}$, which is in a much lower stem than $\theta_6$ itself. Using obstruction theory as Barratt-Jones-Mahowald did in \cite{BJM}, one can show that if the Toda bracket $\langle \theta_5, 2, \theta_4\rangle$ contains zero, then $\theta_6$ exists. The Toda bracket of $\theta_5$ in Corollary 1.10 has also been very helpful in ongoing work of Isaksen and the authors of extending computations of stable stems.
\end{rem}

For dimension 125, we have the following proposition.

\begin{prop}
The sphere $S^{125}$ does not have a unique smooth structure.
\end{prop}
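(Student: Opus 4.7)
The plan is to apply the Kervaire--Milnor theorem. Since $125 = 4\cdot 31 + 1$ is not congruent to $2 \pmod 4$, part (2) of Theorem \ref{km} gives an exact sequence
$$0 \to \Theta_{125}^{bp} \to \Theta_{125} \to \pi_{125}/J \to 0,$$
so the desired conclusion $\Theta_{125} \ne 0$ follows from exhibiting a single non-zero class in $\pi_{125}/J$. The alternative route via $\Theta_{125}^{bp}$ is not available in general: by the discussion following Theorem 1.8, $\Theta_{125}^{bp} \ne 0$ would require the non-existence of the Kervaire invariant class $\theta_6 \in \pi_{126}$, a question currently open in the literature.

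To produce a non-zero element of $\pi_{125}/J$ I would work at an odd prime, in parallel with the exclusion of $S^{13}$ and $S^{29}$ earlier in the introduction via May's 3-primary calculation. At $p = 3$ the image of $J$ is supported only in stems congruent to $3 \pmod 4$, so since $125 \equiv 1 \pmod 4$ the quotient $\pi_{125}/J$ at the prime $3$ coincides with the $3$-primary stable stem $\pi_{125}$. It is then enough to point to a non-zero $3$-primary class in this stem --- most likely a Greek-letter element of $\beta$-type, possibly multiplied by $\alpha_1$ to hit the right dimension. I would locate such a class by consulting Appendix A3 of \cite{Rav} or by a direct computation on the Adams--Novikov $E_2$-page.

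If dimension $125$ turns out to be empty at $p = 3$, I would try the primes $5$ and $7$ in turn; these also miss the image of $J$ at dimension $125$ for parity reasons, and the non-trivial $v_2$-periodic and higher chromatic families at odd primes are abundant enough that some family should produce a class in this stem for $p \le 7$. Beyond $p = 7$ the first non-$J$ stem $2p^2 - 2p - 2$ already exceeds $125$, so the small primes are the only candidates.

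The principal obstacle is purely bookkeeping rather than conceptual: in sharp contrast with the proof of Theorem \ref{s61}, no new Adams differential needs to be established, and the entire argument reduces to identifying one explicit non-zero odd-primary class in dimension $125$ (or, should that fail, to refuting the existence of $\theta_6$, which is the much harder alternative).
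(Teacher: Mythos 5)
Your reduction is the same as the paper's: since $125\equiv 1\pmod 4$, part (2) of Theorem \ref{km} shows it suffices to produce one nonzero class in $\pi_{125}/J$ (and you are right that the $\Theta_{125}^{bp}$ route is blocked by the open status of $\theta_6$). But the proposal stops exactly where the actual mathematical content begins: you never exhibit such a class, and the odd-primary search you outline is unlikely to succeed. At $p=3$ the systematic computations you would consult (Ravenel's Appendix A3, or the 108-stem range cited in Section 2 of this paper) do not reach dimension $125$, so there is no table to look the answer up in; one would have to argue via an infinite family (e.g.\ some $\alpha_1\beta_s$), and proving such a product is nonzero in homotopy at $p=3$ in stem $125$ is itself a hard open-ended problem. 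At $p=5$ and $p=7$ the cokernel of $J$ is very sparse ($\beta_1$ sits in stems $38$ and $82$ respectively, and the low-lying products $\beta_1^i$, $\alpha_1\beta_1^i$, $\beta_1\beta_2$, \dots land in stems such as $114$, $121$, $124$, but not $125$), so "some family should produce a class in this stem" is optimism, not an argument. In short, the existence of an odd-primary class in $\pi_{125}/J$ is not something you can take on faith, and if it fails your proof has no fallback.

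The paper's proof goes a genuinely different way: it produces a \emph{2-primary} class, namely $\overline{\kappa}^4\{w\}$ with $\overline{\kappa}\in\pi_{20}$ and $\{w\}\in\pi_{45}$ (so $4\cdot 20+45=125$), and shows it is nonzero and lies in the cokernel of $J$ because its image under $\pi_*S^0\to\pi_* tmf$ is the nonzero element $\overline{\kappa}^4\{w\}\in\pi_{125}tmf$. The point is that dimension $125$ is far beyond the range of any direct Adams or Adams--Novikov computation of the $2$-primary stable stems, so detection by a designer spectrum like $tmf$ (whose homotopy is completely known) is what makes the argument possible. You should either adopt that detection argument or supply a concrete, verified odd-primary class; as written, the proposal has a genuine gap at its central step.
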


\begin{proof}
This proof uses the Hurewicz image of $tmf$ (the spectrum of topological modular forms). See \cite{Bau, Hen} for computations of the homotopy groups of tmf.

Let $\{w\}\in\pi_{45}$ be the unique homotopy class detected by $w$ in Adams filtration 9. It is known that both $\overline{\kappa}\in\pi_{20}$ and $\{w\}$ are detected by tmf, that is, they map nontrivially under the following map:
$$\pi_\ast S^0 \longrightarrow \pi_\ast tmf.$$
We have that $\overline{\kappa}^4\{w\} \neq 0$ in $\pi_{125} tmf$. Therefore, $\overline{\kappa}^4\{w\} \neq 0$ in $\pi_{125} S^0$ and it lies in the cokernel of $J$. This shows that $S^{125}$ does not have a unique smooth structure.
\end{proof}

Therefore, we have the following corollary.

\begin{cor} \label{ods}
The only odd dimensional spheres with a unique smooth structure are $S^1, S^3, S^5$ and $S^{61}$ .
\end{cor}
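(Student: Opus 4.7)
The plan is to assemble, case by case in the residue of $n \pmod{4}$, the ingredients already collected in this section. By the Kervaire-Milnor Theorem \ref{km}, for odd $n \geq 5$ we have $\Theta_n = 0$ if and only if both $\Theta_n^{bp} = 0$ and $\pi_n/J = 0$. The low dimensions $n = 1, 3$ are handled separately by Moise's theorem. For $n \equiv 3 \pmod{4}$, writing $n = 4k+3$ with $k \geq 1$, part (1) of Theorem \ref{km} gives $|\Theta_n^{bp}| = 2^{2k-2}(2^{2k-1}-1) B(k) > 1$, which already prevents $\Theta_n$ from being trivial; thus no sphere of dimension at least $7$ congruent to $3$ mod $4$ contributes.

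Next I would analyze $n \equiv 1 \pmod{4}$. Reading the $n+1 \equiv 2 \pmod{4}$ exact sequence in part (3) of Theorem \ref{km}, the group $\Theta_n^{bp}$ is computed as the cokernel of the Kervaire invariant map $\Phi : \pi_{n+1}/J \to \mathbb{Z}/2$, so $\Theta_n^{bp} = 0$ if and only if a Kervaire invariant one element exists in dimension $n+1$. The Hill-Hopkins-Ravenel theorem then restricts $n+1$ to $\{2, 6, 14, 30, 62, 126\}$, leaving only the six candidates $n \in \{1, 5, 13, 29, 61, 125\}$; for every other $n \equiv 1 \pmod{4}$, $\Theta_n^{bp} = \mathbb{Z}/2$ is nontrivial and so $S^n$ has no unique smooth structure.

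For each of these six candidates I would then verify whether $\pi_n/J$ vanishes. Dimension $n = 5$ is classical. Dimensions $n = 13$ and $n = 29$ are ruled out by May's 3-primary tables, which exhibit nonzero classes in the cokernel of $J$. Dimension $n = 61$ is exactly Theorem \ref{s61}, together with the odd-primary vanishing argument worked out in its proof. Finally, dimension $n = 125$ is ruled out by Proposition 1.12, which produces the nonzero class $\overline{\kappa}^4\{w\}$ in $\pi_{125}$ lying outside the image of $J$. Combining these with $S^1$ and $S^3$ yields exactly the stated list $S^1, S^3, S^5, S^{61}$. The only real obstacle in this assembly is the depth of its inputs: the Hill-Hopkins-Ravenel nonexistence theorem, which narrows the odd-stem analysis down to six candidates, and Theorem \ref{s61} itself, whose proof occupies the remainder of the paper and which eliminates the decisive dimension $n = 61$.
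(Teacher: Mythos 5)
Your proposal is correct and follows essentially the same route as the paper, which assembles this corollary from the same ingredients: the Kervaire--Milnor exact sequences, the Hill--Hopkins--Ravenel restriction narrowing the odd candidates to $n\in\{1,3,5,13,29,61,125\}$, May's 3-primary computation eliminating $13$ and $29$, Theorem \ref{s61} for $61$, and Proposition 1.12 for $125$. The only difference is presentational: the paper distributes these steps across the surrounding discussion rather than writing them out as a single case analysis.
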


For even dimensions, since the subgroup $\Theta_n^{bp}$ is always zero, we need to understand the cokernel of the $J$-homomorphism.\\

In \cite{Mil2}, Milnor states that up to dimension $64$, the only dimensions where the $n$-sphere has a unique smooth structure are $n=1,2,3,5,6,12,61$ and possibly $n=4$. This observation is based on the computation of 2-primary stable homotopy groups of spheres up to the 64 stem by Kochman and Mahowald \cite{KM} from 1995. Recently, Isaksen \cite{Isa} discovered several errors in Kochman and Mahowald's computations, and he was able to give rigorous proofs of computations through the 59 stem. One major correction is that, instead of having order 4, $\pi_{56}$ is of order $2$ and is generated by a class in the image of $J$. Consequently, we have the following theorem:

\begin{thm} \label{s56} (Isaksen)
The sphere $S^{56}$ has a unique smooth structure.
\end{thm}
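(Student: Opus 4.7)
The plan is to reduce the question of smooth structures on $S^{56}$ to a vanishing statement for the cokernel of the $J$-homomorphism, exactly as in the proof of Theorem 1.9. Since $56$ is even, part (1) of Theorem \ref{km} gives $\Theta_{56}^{bp}=0$, and since $56\not\equiv 2\pmod 4$, part (2) collapses the exact sequence to an isomorphism $\Theta_{56}\cong\pi_{56}/J$. Hence it suffices to prove that $\pi_{56}/J=0$, and since this is a finitely generated abelian group one can verify this one prime at a time.

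At odd primes the argument is identical to the one given for stem $61$ in the proof of Theorem \ref{s61}. The first nontrivial class in the cokernel of $J$ at an odd prime $p$ is $\beta_1$ in stem $2p^2-2p-2$, which equals $82$ when $p=7$ and is larger for $p\geq 11$, so nothing contributes from $p\geq 7$. For $p=3$ and $p=5$ one reads off the table in Appendix A3 of Ravenel's green book \cite{Rav} that the $p$-primary cokernel of $J$ vanishes in stem $56$.

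The genuinely new content is at the prime $2$, and this is where Isaksen's correction enters. I would appeal to the Adams spectral sequence computation in \cite{Isa}, which shows that the 2-primary $\pi_{56}$ has order $2$ and is generated by an element in the image of $J$. This corrects the claim in \cite{KM} that $\pi_{56}$ had order $4$; after the correction the quotient $(\pi_{56}/J)_{(2)}$ is zero. Combining the 2-primary and odd-primary vanishing gives $\pi_{56}/J=0$, and hence $\Theta_{56}=0$.

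The main obstacle is of course entirely the 2-primary input: everything else is a bookkeeping consequence of Theorem \ref{km} and classical facts about the image of $J$. The delicate part of Isaksen's work is a careful re-analysis of Adams differentials and hidden extensions in the range of stems near $56$, which resolves the ambiguity that had previously led to the incorrect order for $\pi_{56}$.
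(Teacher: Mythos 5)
Your proposal is correct and follows essentially the same route as the paper: reduce via Theorem \ref{km} (parts (1) and (2), using that $56$ is even and $56\not\equiv 2\pmod 4$) to the vanishing of the cokernel of $J$ in dimension $56$, then cite Ravenel for odd primes and Isaksen's corrected computation of $\pi_{56}$ at the prime $2$. The extra detail you supply about $\beta_1$ in stem $2p^2-2p-2$ for $p\geq 7$ is a harmless elaboration of the same argument.
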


\begin{proof}
It is clear from Theorem 1.7 that $\Theta^{bp}_{56}=0$. Ravenel's computation \cite{Rav} shows that the cokernel of $J$ in dimension 56 vanishes at odd primes. Recent computation of Isaksen \cite{Isa} shows that the cokernel of $J$ in dimension 56 vanishes at the prime 2. Then this theorem follows from part $(2)$ of Theorem 1.7.
\end{proof}

The technique used by Kochman and Mahowald \cite{KM} is quite different from the classical technique used by Barratt, Bruner, Mahowald, May and Tangora \cite{May2, MT, BMT, Tan1, Tan2, Tan3, Br1} through dimension 45, and the motivic technique used by Isaksen and the second author \cite{Isa, IX} through dimension 59. For more details of known techniques, see Section 2.\\

Based on Isaksen's computation, we give rigorous proofs regarding $\pi_{60}$ and $\pi_{61}$. Besides the classical technique of Toda brackets, one of our proofs relies heavily on the transfer map from the infinite real projective spectrum to the sphere spectrum. The success of this technique suggests a theoretical way to improve our understanding through a bigger range.\\

Combining our computations with the previous knowledge of $\pi_\ast$, we have another corollary of the main theorem.

\begin{cor}
For $5\leq n\leq61$, the only dimensions that $S^n$ has a unique smooth structure are $n=5,6,12,56$ and $61$.
\end{cor}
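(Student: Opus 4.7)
The plan is to reduce the corollary to a finite verification against the known stable stems through dimension $61$. Corollary \ref{ods} already pins down the odd dimensions with a unique smooth structure to $\{5,61\}$, so only the even $n$ with $6\le n\le 60$ remain to analyze. For every such $n$, part (1) of Theorem \ref{km} gives $\Theta_n^{bp}=0$, and parts (2)--(3) then specialize to $\Theta_n\cong\pi_n/J$ when $n\equiv 0\pmod 4$ and to $\Theta_n\cong\ker\bigl(\Phi\colon\pi_n/J\to\mathbb{Z}/2\bigr)$ when $n\equiv 2\pmod 4$. The question becomes: for which even $n$ in the range does $\pi_n/J$ vanish, or inject isomorphically into $\mathbb{Z}/2$ under $\Phi$?

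I would then consult the tables dimension by dimension. At the prime $2$, the groups $\pi_n$ and the image of $J$ are determined through stem $45$ by the classical work of Toda, Mahowald--Tangora, Barratt--Mahowald--Tangora and Bruner, through stem $59$ by Isaksen's motivic computations with the second author, at stem $56$ by the correction recorded in Theorem \ref{s56}, and at stems $60$ and $61$ by the present paper. At odd primes $\pi_n$ is read off from Ravenel's green-book tables. The Hill--Hopkins--Ravenel theorem \cite{HHR} forces $\Phi\equiv 0$ for $n\equiv 2\pmod 4$ outside $\{2,6,14,30,62,126\}$, so in our range $\Phi$ can only rescue $n\in\{6,14,30\}$. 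Comparing these inputs shows that $\pi_n/J$ vanishes at every prime only for $n\in\{6,12,56\}$ in the even range, and that at $n=6$ the Kervaire invariant $\Phi$ is an isomorphism $\pi_6/J\cong\mathbb{Z}/2$ detecting $\theta_1=\nu^2$, whence $\Theta_6=0$. At $n=14$ and $n=30$ the Kervaire invariant removes only a single $\mathbb{Z}/2$ summand while other survivors (such as $\kappa$ in stem $14$) persist, so $\Theta_{14}$ and $\Theta_{30}$ are nonzero. At every remaining even $n$ in the range one can point to an explicit nonzero class in $\pi_n/J$ (for instance $\epsilon$ at $n=8$, $\bar\kappa$ at $n=20$, and analogously named generators in every other dimension) that survives to $\Theta_n$. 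Together with Theorem \ref{s61} this produces the list $\{5,6,12,56,61\}$.

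The main obstacle is not conceptual---no deep input beyond Theorem \ref{s61} is required---but rather the breadth of the verification: one must cross-reference the stable stem at every prime and every even $n\le 60$ against the image of $J$ and the Kervaire invariant, and in particular use the recently corrected values at $n=56$ (Theorem \ref{s56}) and the new values at $n=60,61$ from Section 2 in place of the older, partially erroneous Kochman--Mahowald entries. Once those inputs are in hand, the corollary follows by direct inspection of the tables.
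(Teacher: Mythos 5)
Your proposal follows essentially the same route as the paper: reduce to Corollary \ref{ods} for odd dimensions, then run through the even dimensions using Theorem \ref{km}, the Hill--Hopkins--Ravenel restriction on the Kervaire invariant, and the known stable stems at all primes. One assertion needs correcting, though, and it is precisely the dimension the paper singles out: at $n=30$ you claim the Kervaire invariant ``removes only a single $\mathbb{Z}/2$ summand while other survivors persist,'' but at the prime $2$ there are no other survivors --- Barratt--Mahowald--Tangora show $\pi_{30}$ is exactly $\mathbb{Z}/2$ generated by $\theta_4$, the image of $J$ vanishes there, and $\Phi$ kills the whole $2$-primary cokernel of $J$. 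The nonvanishing of $\Theta_{30}$ instead comes from May's $3$-primary computation, which gives a $\mathbb{Z}/3$ in the cokernel of $J$ in dimension $30$. Your framework does allow for odd-primary input in principle, but as written the $n=30$ case would be decided incorrectly if one checked only the prime $2$; this is the one even dimension in the range where the odd-primary contribution is essential rather than a formality.
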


\begin{proof}
The range for $n<19$ was known to Kervaire and Milnor. For even dimensions between $20$ and $60$, it is straightforward to check that at $p=2$, the only dimension in which the cokernel of $J$ vanishes is 56. Note that the Kervaire invariant $\theta_4$ exists in dimension 30. In fact, Barratt, Mahowald and Tangora \cite{BMT} showed that $\pi_{30}$ is $\mathbb{Z}/2$, generated by $\theta_4$. Therefore, we need to consider odd primary computations in this dimension. May \cite{May2} showed that at the prime 3, the cokernel of $J$ in dimension 30 is $\mathbb{Z}/3$, which implies that $S^{30}$ does not have a unique smooth structure. Combining with Theorems 1.7 and 1.9 and Corollary 1.13, this completes the proof.
\end{proof}

\begin{rem}
Recent work of Behrens, Hill, Hopkins and Mahowald \cite{BHHM} shows that the next sphere with a unique smooth structure, if exists, is in dimension at least 126.
\end{rem}

Based on our current knowledge on $\pi_\ast$, we have the following conjecture.

\begin{conj}
For dimensions greater than 4, the only spheres with a unique smooth structure are $S^5$, $S^6$, $S^{12}$, $S^{56}$, and $S^{61}$.
\end{conj}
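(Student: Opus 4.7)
\emph{Overall strategy.} By Theorem 1.7 the conjecture reduces to showing, for every $n > 4$ outside $\{5,6,12,56,61\}$, that at least one of $\Theta_n^{bp}$ and $\pi_n/J$ is nonzero, with the proviso that at a Kervaire dimension $n \equiv 2 \pmod 4$ one must produce a class outside $\ker \Phi$. My plan is to first dispose of the odd-dimensional case using the structure theorem combined with Hill--Hopkins--Ravenel, and then attack the even case by combining Hurewicz detection into $tmf$-type ring spectra with periodicity arguments.

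\emph{Odd dimensions.} For $n \equiv 3 \pmod 4$ with $n \geq 7$, the Bernoulli-number factor in Theorem 1.7(1) forces $|\Theta_n^{bp}| > 1$ unconditionally. For $n \equiv 1 \pmod 4$ with $n > 4$ and $n \ne 5, 61$, the group $\Theta_n^{bp}$ vanishes only if a Kervaire invariant element exists in dimension $n+1$, so by \cite{HHR} the only remaining candidates are $n = 13, 29, 125$. These are ruled out respectively by May's $3$-primary calculation \cite{May2} (for $n = 13, 29$) and by the $\overline{\kappa}^4\{w\}$ Hurewicz argument of Proposition 1.12 (for $n = 125$). Together with Theorem 1.9, this completes the odd-dimensional part of the conjecture.

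\emph{Even dimensions.} For the even case I would first invoke \cite{BHHM}, which rules out unique smooth structures in the range $62 < n < 126$. For each remaining even $n \geq 126$ the goal is to exhibit an explicit nonzero class in $\pi_n/J$, and at $n = 126$ additionally a class outside $\ker \Phi$ if $\theta_6$ happens to exist. The Hurewicz-detection plan is to use the map $\pi_\ast S^0 \to \pi_\ast tmf$ together with analogous maps to $tmf_1(3)$, $tmf_0(5)$, and related chromatic ring spectra, and to write $n$ as an appropriate sum realizing a monomial in the $v_2$-periodic generators $\overline{\kappa}$, $\{w\}$ and their products with $v_1$-periodic elements $\eta, \nu, \sigma$, the image-of-$J$ generators, and odd-primary $\alpha$-, $\beta$-, $\gamma$-family elements. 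Periodicity of these ring spectra should yield such a class in all but a finite sparse set of dimensions, which one would then check individually via the Adams and Adams--Novikov spectral sequences.

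\emph{Main obstacle.} The central difficulty is twofold. First, the status of $\theta_6$ is open, and handling $n = 126$ requires either a nonexistence proof for $\theta_6$ together with $\pi_{126}/J \ne 0$, or, in the scenario that $\theta_6$ exists, the detection of an element of $\pi_{126}/J$ on which $\Phi$ vanishes. Second, and more seriously, producing a nonzero class in $\pi_n/J$ for \emph{every} even $n$ beyond 126 requires controlling the cokernel of $J$ in arbitrarily high dimensions, where the $tmf$-Hurewicz image is known to have systematic gaps, the image of $J$ must be subtracted carefully at each prime, and $v_3$-periodic phenomena eventually become essential. I expect this high-dimensional nonvanishing of cokernel-of-$J$, together with the Kervaire invariant question at $\theta_6$, to be the main obstacle, and a complete proof of Conjecture 1.17 to require genuinely new chromatic or computational input beyond the techniques of the present paper.
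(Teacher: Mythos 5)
The statement you are trying to prove is Conjecture 1.17: the paper offers no proof of it, and explicitly labels it a conjecture ``based on our current knowledge on $\pi_\ast$.'' So there is no argument in the paper to compare yours against. What the paper does prove are the partial results your proposal correctly reassembles: the odd-dimensional case is Corollary 1.13 (via Theorem 1.7, Hill--Hopkins--Ravenel, May's 3-primary computation for $n=13,29$, the $tmf$ argument for $n=125$, and the main theorem for $n=61$), the range $5\leq n\leq 61$ is Corollary 1.15, and $62\leq n\leq 125$ is the Behrens--Hill--Hopkins--Mahowald result quoted in Remark 1.16. Your treatment of these pieces is accurate, modulo the small point that for $n\equiv 3\pmod 4$ it is the factor $2^{2k-2}$ (together with the computation of the image of $J$), not the Bernoulli numerator $B(k)$, that guarantees $|\Theta_n^{bp}|>1$, since $B(k)$ can equal $1$.

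The genuine gap is the one you yourself name: for even $n\geq 126$ (and for $n=126$ in particular, where the existence of $\theta_6$ is open), no one currently knows how to show $\pi_n/J\neq 0$, let alone to exhibit a nonzero class in $\ker\Phi$, in all dimensions. Your proposed Hurewicz-detection program via $tmf$ and its variants is a plausible heuristic for why the conjecture should hold, but the $tmf$ Hurewicz image has unbounded gaps and there is no known family of cokernel-of-$J$ classes covering all sufficiently large even dimensions. Consequently your submission is a research program, not a proof, and should not be presented as a proof of the statement; it is, however, a faithful account of exactly why the paper states this as a conjecture rather than a theorem.
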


The rest of this paper is organized as follows. \\

In Section 2, we give a brief review of the stem-wise computation of $\pi_\ast$ with a focus on the prime 2. We compare the known techniques. We reduce $\pi_{61}=0$ to three Adams differentials. \\

From Section 3 to Section 10, we present the proof of the hardest differential $d_3(D_3) = B_3$. In Section 3, we summarize the strategy of our technique and explain how we organize the details of the proof in Sections 4 through 10. The intuition behind part of this proof is included in Appendix II, which is Section 14.\\

We present the proof of the other two differentials in Sections 11 and 12. The targets of these two differentials detect certain homotopy classes. We use the theory of Toda brackets to show that these homotopy classes must vanish. \\

\textbf{Acknowledgement}: The authors would like to thank Mark Behrens for introducing and suggesting this problem and for many
helpful conversations. The authors would like to thank Dan Isaksen for discussing and sharing lots of his computations.
We are also especially indebted to him for his very careful checking of our proofs. Any errors that remain are not his
fault. The authors thank Agnes Beaudry and Peter May for helping edit and reorganize this paper. Both have read more
drafts than they care to remember. We'd also like to thank Paul Goerss and Jesper Grodal for their support. Finally, we owe a great debt of gratitude to Mark Mahowald for his tenacious
exploration of the stable stems and his generosity in sharing his ideas with us.

\section{The stable homotopy groups of spheres}

The computation of the stable homotopy groups of spheres is a long standing and very challenging problem in algebraic topology. We will first give a brief review of the history from the stem-wise point of view, and then talk about some recent progress.\\

After the geometric computation of the first three stems \cite{Hop, Freu, Whi, Pon, Rok}, Serre \cite{Ser} did the computation of $\pi_n$ for $n<9$ with the aid of the Serre spectral sequence and the Eilenberg-Maclane spectra. Serre also showed that these stable groups are finite in positive stems, so we can compute them one prime at a time. Afterwards, at each prime, Adams \cite{Ada1} constructed the Adams spectral sequence whose $E_2$-term encodes the information that we could obtain via primary cohomology operations. The Adams spectral sequence gives an upper bound on $\pi_n$ and therefore determining the Adams differentials becomes a major method in computing the stable homotopy groups. Generalizing Adams's idea, Novikov constructed the Adams-Novikov spectral sequence using the complex cobordism spectrum.\\

There is another method using the EHP sequence, which computes the unstable homotopy groups inductively. Using this method, together with the Toda bracket operations, Toda \cite{Tod} succeeded to do the computation of $\pi_n$ for $n\leq19$.\\

It turns out that the Adams-Novikov spectral sequence is more successful at odd primes than at the prime 2. In the 1980's, using the Adams-Novikov spectral sequence, Ravenel \cite{Rav} computed up to the 108-stem at the prime 3, and the 999-stem at the prime 5. Previously, the computation was due independently to Nakamura \cite{Nak} and Tangora \cite{Tan4} up to the 103-stem at the prime 3, and to Aubry \cite{Aub} up to the 760-stem at the prime 5.\\

At the prime 2, the Adams spectral sequence is still the most efficient way. In \cite{May2}, May constructed the May spectral sequence, which converges to the $E_2$-page of the Adams spectral sequence. This works at all primes. In particular, May computed $\pi_n$ for $n\leq28$ at the prime 2. In the 1960's, using the Adams spectral sequence, and with the aid of the technique of Toda brackets, Barratt, Mahowald and Tangora \cite{BMT} determined the differentials in the Adams spectral sequence up to the $45$-stem. About one and a half decades later, Bruner \cite{Br1} discovered a gap in \cite{BMT}, and proved a new Adams differential in the 38-stem. Bruner's differential therefore corrected the result of $\pi_{37}$ and $\pi_{38}$, and along with that corrected some relations in the stable homotopy ring.\\

In 1990, based on the Atiyah-Hirzebruch spectral sequence of the Brown-Peterson spectrum, Kochman \cite{Koc} made an algorithm and implemented it into computer programmes. In this way, he produced a table of $\pi_n$ up to the 64-stem. However, his method is not completely reviewed by others due to its complexity, and his result is not fully accepted by the experts. In 1995, Kochman and Mahowald \cite{KM} made a few corrections to \cite{Koc}, in the range from 52 to 64. A tentative chart of the Adams spectral sequence is included in the appendix of \cite{Koc} and \cite{KM} without proofs. Note that the Adams differentials in this chart are deduced from the stable homotopy groups, not the other way around. \\

For about two decades, much of our knowledge regarding $\pi_n$, in the range from $45$ to $64$, relied on \cite{KM}. Recently, by comparing the motivic Adams spectral sequence and the classical Adams spectral sequence, Isaksen \cite{Isa} gave rigorous proofs to all but one Adams differentials up to the $59$ stem. The exception was later proved by the second author \cite{IX} based on Isaksen's motivic computation. Along with a few corrections to some relations in the stable homotopy ring, Isaksen proved a new Adams differential in the 57-stem, which was not included in \cite{KM}. This also corrects $\pi_{56}$ and $\pi_{57}$ as we used in the proof of Theorem 1.12.\\

In the range beyond the 59-stem, Isaksen \cite{Isa} also proved a few differentials. The part which Isaksen did not fully understand can be summarized in his Adams $E_\infty$ chart \cite{Isa2}, which we include in the following page.\\

Note that we do not include elements in filtration higher than 16. Those elements are detected by the $K(1)$-local sphere, and are not relevant to our proof. Here we use dashed curved lines to denote some known nontrivial 2, $\eta$ and $\nu$-extensions. Note that because of differentials unknown to Isaksen, the actual $E_\infty$-page beyond the 59-stem is a subquotient of what is shown in this chart.

\psset{linewidth=0.3mm}

\psset{unit=1.1cm}
\begin{pspicture}(55,0)(66,18)

\psgrid[unit=2,gridcolor=gridline,subgriddiv=0,gridlabelcolor=white](28,0)(33,8)

\twoextncurve[linestyle=dashed](60,12)(60.5,13)(60.5,14)(60,15)
\twoextncurve[linestyle=dashed](64,2)(64.5,2.5)(64.5,3.5)(64,4)
\etaextncurve[linestyle=dashed](59,13)(59,13.5)(59.5,14.5)(60,15)
\etaextncurve[linestyle=dashed](65,13)(65,13.5)(65.5,15)(66,15)
\nuextncurve[linestyle=dashed](57,10)(57,12)(59,15)(60,15)

\scriptsize

\rput(56,-1){56}
\rput(57,-1){57}
\rput(58,-1){58}
\rput(59,-1){59}
\rput(60,-1){60}
\rput(61,-1){61}
\rput(62,-1){62}
\rput(63,-1){63}
\rput(64,-1){64}
\rput(65,-1){65}
\rput(66,-1){66}

\rput(55,0){0}
\rput(55,2){2}
\rput(55,4){4}
\rput(55,6){6}
\rput(55,8){8}
\rput(55,10){10}
\rput(55,12){12}
\rput(55,14){14}
\rput(55,16){16}

\pscircle*(57,10){\cirrad}
\uput{\cirrad}[-90](57,10){$h_0h_2h_5i$}

\pscircle*(58,8){\cirrad}
\uput{\cirrad}[-90](58,8){$h_1Q_2$}

\psline[linecolor=tauzerocolor](59,10)(60,11)
\psline[linecolor=tauzerocolor](59,10)(62,11)

\pscircle*(59,10){\cirrad}
\uput{\cirrad}[-90](59,10){$B_{21}$}

\pscircle*(59,13){\cirrad}
\uput{\cirrad}[-90](59,13){$d_0w$}

\psline[linecolor=tauzerocolor](60,7)(61,8)

\pscircle*(60,7){\cirrad}
\uput{\cirrad}[-90](60,7){$B_3$}

\pscircle*(60,11){\cirrad}

\pscircle*(60,12){\cirrad}
\uput{\cirrad}[-90](60,12){$g^3$}

\pscircle*(60,15){\cirrad}
\uput{\cirrad}[90](60,15){$d_0^2l$}

\psline[linecolor=tauzerocolor](61,4)(62,5)
\psline[linecolor=tauzerocolor](61,4)(64,5)

\pscircle*(61,4){\cirrad}
\uput{\cirrad}[-90](61,4){$D_3$}

\psline[linecolor=tauzerocolor](61,6)(64,7)

\pscircle*(61,6){\cirrad}
\uput{\cirrad}[-90](61,6){$A'$}

\pscircle*(61,8){\cirrad}

\pscircle*(61,14){\cirrad}
\uput{\cirrad}[-90](61,14){$gz$}

\psline[linecolor=tauzerocolor](62,2)(63,3)
\psline[linecolor=tauzerocolor](62,2)(64.90,3)

\pscircle*(62,2){\cirrad}
\uput{\cirrad}[-90](62,2){$h_5^2$}

\pscircle*(62,5){\cirrad}

\psline[linecolor=tauzerocolor](62,6)(65,7)

\pscircle*(62,6){\cirrad}
\uput{\cirrad}[-90](62,6){$h_5n$}

\psline[linecolor=tauzerocolor](62,8)(63,9)

\pscircle*(62,8){\cirrad}
\uput{\cirrad}[-90](62,8){$E_1+C_0$}

\psline[linecolor=tauzerocolor](61.90,10)(63,11)

\pscircle*(61.90,10){\cirrad}
\uput{\cirrad}[-120](61.90,10){$h_1X_1$}

\pscircle*(62.10,10){\cirrad}
\uput{\cirrad}[0](62.10,10){$R$}

\pscircle*(62,11){\cirrad}

\pscircle*(62,16){\cirrad}
\uput{\cirrad}[0](62,16){$d_0^3g$}

\psline[linecolor=tauzerocolor](63,3)(64,4)

\pscircle*(63,3){\cirrad}

\pscircle*(63,6){\cirrad}
\uput{\cirrad}[-90](63,6){$h_1H_1$}

\psline[linecolor=tauzerocolor](62.90,7)(63.90,8)

\pscircle*(62.90,7){\cirrad}
\uput{\cirrad}[-180](62.90,7){$X_2$}

\psline[linecolor=tauzerocolor](63.10,7)(66,8)

\pscircle*(63.10,7){\cirrad}
\uput{\cirrad}[-90](63.10,7){$C'$}

\pscircle*(63,9){\cirrad}

\pscircle*(63,11){\cirrad}

\psline[linecolor=tauzerocolor](64,2)(65.10,3)

\pscircle*(64,2){\cirrad}
\uput{\cirrad}[-90](64,2){$h_1h_6$}

\pscircle*(64,4){\cirrad}

\pscircle*(64,5){\cirrad}

\pscircle*(64,7){\cirrad}

\pscircle*(63.90,8){\cirrad}

\psline[linecolor=tauzerocolor](64.10,8)(65,9)

\pscircle*(64.10,8){\cirrad}
\uput{\cirrad}[-90](64.10,8){$h_3Q_2$}

\psline[linecolor=tauzerocolor](64,10)(64.90,11)

\pscircle*(64,10){\cirrad}
\uput{\cirrad}[-90](64,10){$q_1$}

\pscircle*(64.90,3){\cirrad}

\psline[linecolor=tauzerocolor](65.10,3)(66,4)

\pscircle*(65.10,3){\cirrad}

\pscircle*(65,7){\cirrad}

\pscircle*(65,9){\cirrad}

\psline[linecolor=tauzerocolor](65,10)(65.10,11)
\psline[linecolor=tauzerocolor](65,10)(65.90,11)

\pscircle*(65,10){\cirrad}
\uput{\cirrad}[-90](65,10){$B_{23}$}

\psline[linecolor=tauzerocolor](64.90,11)(66,12)

\pscircle*(64.90,11){\cirrad}

\pscircle*(65.10,11){\cirrad}

\pscircle*(65,12){\cirrad}
\uput{\cirrad}[-90](65,12){$h_5Pj$}

\pscircle*(65,13){\cirrad}
\uput{\cirrad}[-90](65,13){$gw$}

\pscircle*(66,4){\cirrad}

\pscircle*(66,6){\cirrad}
\uput{\cirrad}[-90](66,6){$r_1$}

\pscircle*(66,8){\cirrad}

\psline[linecolor=tauzerocolor](66,10)(66.10,11)

\pscircle*(66,10){\cirrad}
\uput{\cirrad}[-90](66,10){$B_5+D'_2$}

\pscircle*(65.90,11){\cirrad}

\psline[linecolor=tauzerocolor](66.10,11)(66,12)

\pscircle*(66.10,11){\cirrad}

\pscircle*(66,12){\cirrad}

\pscircle*(66,15){\cirrad}
\uput{\cirrad}[90](66,15){$g^2j$}

\end{pspicture}


Now we reduce the first claim of Theorem 1.9, i.e., $\pi_{61}=0$, to three Adams differentials.

\begin{proof}
It is proven in Theorem 3.1 (and this is the crux of the paper) that
$$d_3(D_3) = B_3$$
and therefore
$$d_3(h_1D_3) = h_1B_3.$$
It is proven in Theorem \ref{A} that
$$d_5(A') = h_1B_{21}.$$
It is proven in Theorem \ref{gz} that the element $gz$ must be killed by some Adams differential.

There are no elements left in the $E_\infty$-page of the 61-stem.
\end{proof}

It is clear that these differentials also settle $\pi_{60}$.

\begin{cor}
The 2-primary $\pi_{60}$ is $\mathbb{Z}/4$, generated by $\overline{\kappa}^3$.
\end{cor}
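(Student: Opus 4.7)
My plan is to read the 60-stem off Isaksen's displayed $E_\infty$ chart, apply the three Adams differentials established in this paper for the sake of Theorem 1.9, and then resolve the remaining multiplicative extensions.

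First, I would enumerate the classes present in the 60-stem of Isaksen's $E_2$-page in the filtration range shown (at most $16$): the four classes $B_3$ at $(60,7)$, $h_1 B_{21}$ at $(60,11)$, $g^3$ at $(60,12)$, and $d_0^2 l$ at $(60,15)$. Next, I would track which classes survive to $E_\infty$. The differential $d_3(D_3)=B_3$ established as the crux of the paper kills $B_3$, and the differential $d_5(A')=h_1 B_{21}$ kills $h_1 B_{21}$. No differential hits or originates from $g^3$ or $d_0^2 l$ in the chart, and the new differentials introduced here do not touch these classes, so both survive. Thus $E_\infty$ in stem $60$ consists of two copies of $\mathbb{Z}/2$, in filtrations $12$ and $15$.

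The remaining work is to determine the group structure of $\pi_{60}$ from this associated graded. Because $\overline{\kappa}\in\pi_{20}$ is detected by $g$ in Adams filtration $4$, the class $\overline{\kappa}^3\in\pi_{60}$ is detected by $g^3$. Isaksen's chart records a hidden 2-extension from $g^3$ to $d_0^2 l$, so $2\overline{\kappa}^3$ is detected by $d_0^2 l$ and in particular is nonzero; hence $\overline{\kappa}^3$ has order at least $4$. To see that the order is exactly $4$, I would observe that $4\overline{\kappa}^3$ must lie in Adams filtration at least $16$. No class appears in the $60$-stem above filtration $15$ in the displayed chart, and stem $60$ contributes nothing from the $K(1)$-local sphere: it is neither $\equiv 0,1$ modulo $8$ nor of the form $4k-1$, so the image of the $J$-homomorphism vanishes there. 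Hence $4\overline{\kappa}^3=0$, and $\pi_{60}\cong\mathbb{Z}/4$ generated by $\overline{\kappa}^3$.

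The only subtlety beyond the three differentials proved elsewhere in the paper is the hidden 2-extension from $g^3$ to $d_0^2 l$; this is not new and is already recorded in Isaksen's computations. Every other ingredient is either one of the three new differentials or is visible on the chart, so the corollary reduces to bookkeeping once Theorem 1.9 is in hand.
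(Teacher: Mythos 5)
Your proposal is correct and follows essentially the same route as the paper: after the differentials of Theorems 3.1 and 12.1 remove $B_3$ and $h_1B_{21}$, only $g^3$ and $d_0^2l$ remain in the 60-stem, and the hidden 2-extension between them (recorded in Isaksen's chart) gives $\mathbb{Z}/4$ generated by $\overline{\kappa}^3$. The paper's proof is just a terser version of this; your extra checks (that nothing survives in filtration above 15 and that the image of $J$ vanishes in stem 60) are correct fillings-in of details the paper leaves implicit.
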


\begin{proof}
The elements $g^3$ and $d_0^2l$ are the only elements left, and there is a hidden 2-extension between them. The element $g$ detects $\overline{\kappa} \in \pi_{20}$. Therefore, the 2-primary group $\pi_{60}$ is $\mathbb{Z}/4$, generated by $\overline{\kappa}^3$.
\end{proof}

\section{Intuition and the proof of the differential $d_3(D_3)=B_3$}

We have developed a general method to prove a differential in the Adams spectral sequence of the sphere spectrum. The strategy can be summarized in three parts:

\begin{enumerate}
\item Using the algebraic Kahn-Priddy theorem, we pullback a differential in the Adams spectral sequence
of the sphere spectrum to one in the Adams spectral sequence of the suspension spectrum of $RP^\infty$.

\item Using our knowledge of the cell structure of $RP^\infty$ and the algebraic Atiyah-Hirzebruch spectral sequence, we deduce the Adams differential in $RP^\infty$ from one in a certain $H\mathbb{F}_2$-subquotient of $RP^\infty$.

\item Using our knowledge of the Adams spectral sequence of the sphere spectrum, and the cell structure of this $H\mathbb{F}_2$-subquotient, we reduce the computation of the Adams differential in this $H\mathbb{F}_2$-subquotient to that of a product (or more generally a Toda bracket) in a \emph{lower} stem of the stable homotopy groups of spheres.

\end{enumerate}

Intuitively, an $H\mathbb{F}_2$-subquotient of a CW complex is a subquotient to the eyes of mod 2 homology, in a sense that will be made precise in Definition 4.1.

The technical heart of the paper, explained in Sections 3 - 10, is to apply this method to prove the following theorem.

\begin{thm}
We have the Adams differential: $d_3(D_3)=B_3.$
\end{thm}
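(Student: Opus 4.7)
The plan is to follow the three-step strategy already outlined, exploiting the transfer map $t \colon \Sigma^\infty RP^\infty \to S^0$ to transport the differential problem from the sphere spectrum into the Adams spectral sequence of $\Sigma^\infty RP^\infty$, where the cell structure of $RP^\infty$ provides extra leverage. The algebraic Kahn--Priddy theorem guarantees that the induced map on Adams $E_2$-pages is surjective in positive stems, so $D_3$ must admit at least one preimage $\widetilde{D_3}$ on the Adams $E_2$-page of $\Sigma^\infty RP^\infty$. The first task is therefore to pin down such a lift using the algebraic Atiyah--Hirzebruch spectral sequence for $RP^\infty$: I would identify on which cell $\widetilde{D_3}$ originates, catalogue the classes in the same bidegree as the candidate target $\widetilde{B_3}$, and note which of these map nontrivially through the transfer.

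Next, I would choose an $H\mathbb{F}_2$-subquotient $X$ of $RP^\infty$ --- a finite cellular subquotient isolating the cells on which $\widetilde{D_3}$ and $\widetilde{B_3}$ live --- small enough that the calculation is tractable, yet large enough that $D_3$ is still detected after transfer and that the relevant Adams differential does not collapse for cell-counting reasons. Inside $X$, via the attaching-map structure of the relevant cells, the desired Adams $d_3$ becomes equivalent to a statement about a specific product (or, more generally, a Toda bracket) in a strictly lower stem of $\pi_\ast(S^0)$. Because this reduced calculation takes place well below the 61-stem, it is accessible through Isaksen's computations \cite{Isa} combined with classical Toda bracket juggling; once established there, I would push the differential back up to $RP^\infty$ and then across $t$ to conclude $d_3(D_3) = B_3$ in the sphere spectrum, checking along the way that indeterminacies do not swallow the comparison.

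The main obstacle I expect is step 2: choosing the correct subquotient $X$ and controlling the algebraic Atiyah--Hirzebruch spectral sequence precisely enough to make the identifications stick. In a stem as high as 61 there are many a priori candidate lifts for $\widetilde{D_3}$, many classes sharing bidegree with $\widetilde{B_3}$, and a nontrivial amount of Adams filtration to track; moreover one must rule out that the differential produced in $X$ is cancelled upon including back into $RP^\infty$ and then transferring to $S^0$, which requires understanding the hidden $2$-, $\eta$-, and $\nu$-extensions of $RP^\infty$ through this range. Verifying that the chosen $X$ captures $D_3$ without spurious contributions, and that the resulting lower-stem product or Toda bracket is both well-defined and nonzero in the Adams filtration where $B_3$ lives, is where essentially all the technical effort will be concentrated --- presumably the reason Sections 3 through 10 are devoted to it.
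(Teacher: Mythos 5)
Your proposal is a restatement of the strategy the paper itself announces in Section~3; it is not yet a proof, and the parts you defer are exactly where the content lies. Concretely, nothing in your plan identifies the lift of $D_3$ (it is $h_1h_3h_5[22]$, living on the $22$-cell), the lift of $B_3$ (it is $G[6]$, on the $6$-cell), or the lower-stem input that generates the whole differential, namely the hidden $\eta$-extension from $h_4^3$ to $B_1$ in $\pi_{45}$. These identifications are not formalities: pinning down the preimages requires the Lambda-algebra/Curtis-table computation of $Ext(P_1^\infty)$ through the $61$-stem together with the computed transfer, and the $\eta$-extension is what produces the seed differential $d_4(h_4^3[16]) = B_1[14]$ in $\Sigma^{14}C\eta$. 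Without naming these objects, the claim that ``the desired $d_3$ becomes equivalent to a specific product in a lower stem'' has no referent.

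There is also a structural problem with the plan as you state it. You propose to choose a subquotient ``isolating the cells on which $\widetilde{D_3}$ and $\widetilde{B_3}$ live,'' compute the differential there, and push it back. But the source cell ($22$) and the target cell ($6$) are far apart, and no tractable subquotient contains both; more importantly, the differential one can actually compute in a finite complex is a $d_4$ with target $B_1[14]$ (on the $14$-cell), whereas the differential needed in $P_1^{23}$ is a $d_3$ with target $G[6]$ --- both the target cell and the length of the differential change under the pullback $P_1^{23}\twoheadrightarrow X$. Handling this requires arguing that the preimage of the source must support \emph{some} $d_2$, $d_3$, or $d_4$, and then eliminating every possibility except $d_3(h_1h_3h_5[22]) = G[6]$; dually, pushing the seed differential forward from $\Sigma^{14}C\eta$ into $\widetilde{X}$ and $X$ requires showing that $B_1[14]$ is not killed earlier, which forces one to prove that nine of the ten classes in the relevant bidegrees are permanent cycles (via Toda-bracket computations in $\pi_{39}$ and $\pi_{41}$ over carefully chosen $H\mathbb{F}_2$-subcomplexes). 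Your proposal acknowledges that indeterminacies must be controlled but does not supply, or even locate, any of these arguments, so as it stands it establishes nothing beyond the paper's own outline.
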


With notations to be explained, here is a ``road map" of the proof.

\begin{displaymath}
    \xymatrix{
  Ext(S^0) & & & & & & Ext(\Sigma^{14} C\eta) \ar[ddd] \\
  & & & & & & \\
  & & & & & & \\
  Ext(P_1^\infty) \ar[uuu] & & & & & & Ext(\widetilde{X}) \ar[ddd] \\
  & & & & & & \\
  & & & & & & \\
  Ext(P_1^{23}) \ar[uuu] \ar[rrr] & & & Ext(P_{14}^{23}) \ar[rrr] & & & Ext(X)
  }
\end{displaymath}

\begin{displaymath}
  \xymatrix{
  B_3 &      &    &    &     B_1[14] \ar@{|->}[dd] & \\
      & D_3 \ar@{-->}[ul]^{d_3} & & & &  h_4^3[16] \ar@{-->}[ul]^{d_4} \ar@{|->}[dd]\\
  *\txt{\\$G[6]$} \ar@{|->}[uu]  &      &    &    &     B_1[14] \ar@{|->}[dd] & \\
      & h_1h_3h_5[22] \ar@{-->}[ul]^{d_3} \ar@{|->}[uu] & & & &  h_4^3[16] \ar@{|->}[dd] \ar@{-->}[ul]^{d_4}\\
*\txt{\\$G[6]$} \ar@{|->}[uu] &      &  B_1[14] \ar@{|->}[rr]  &    &     B_1[14] & \\
      & *\txt{\\$h_1h_3h_5[22]$} \ar@{-->}[ul]^{d_3} \ar@{|->}[uu] \ar@{|->}[rr] & & h_1h_3h_5[22] \ar@{-->}[ul]^{d_4}  & & *\txt{ $h_4^3[16]$\\ \\$\underline{h_1h_3h_5[22]}$ is a cycle} \ar@{-->}[ul]^{d_4}
    }
\end{displaymath}

The first part of this ``road map" describes seven Adams spectral sequences and maps among them; the second part describes certain Adams $d_3$ or $d_4$ differentials in the 61-stem of each of the spectral sequences and maps in the Adams $E_2$-page among the sources and targets of these differentials.

\begin{notation}
All spectra are localized at the prime 2. Suppose $Z$ is a spectrum. Let $Ext(Z)$ denote its Adams $E_2$-page.

For spectra, let $S^0$ be the sphere spectrum, and $P_1^\infty$ be the suspension spectrum of $RP^\infty$. In general, we use $P_n^{n+k}$ to denote the suspension spectrum of $RP^{n+k}/RP^{n-1}$. Recall that we have the James periodicity for the stunted projective spectra:
$$\Sigma^{\phi(k)} P_n^{n+k} \simeq P_{n+\phi(k)}^{n+k+\phi(k)},$$
where $\phi(k) = 2^{\psi(k)}$, and
\begin{equation*}
 \psi(k)= \lfloor\frac{k}{2}\rfloor + \left\{
 \begin{aligned}
    -1 &, ~~~~~~~~k\equiv 0 ~mod~ 8\\
    0 &, ~~~~~k\equiv 1 \\
    0 &, ~~~~~k\equiv 2 \\
    1&, ~~~~~k\equiv 3 \\
    0&, ~~~~~k\equiv 4 \\
    1&, ~~~~~k\equiv 5 \\
    0&, ~~~~~k\equiv 6 \\
    0&, ~~~~~k\equiv 7.
      \end{aligned}
 \right.
\end{equation*}
For example, $\phi(7) = 2^{\psi(7)} = 8$, hence we have $P_{16}^{23} \simeq \Sigma^8 P_8^{15} \simeq \Sigma^{16} P_0^7$.

The spectrum $X$ is a quotient spectrum of $P_{14}^{23}$ and $\widetilde{X}$ is a subspectrum of $X$. The spectrum $C\eta$ is the cofiber of $\eta \in \pi_1$, and $\Sigma^{14} C \eta$ turns out to be a subspectrum of $\widetilde{X}$. The precise definitions of the spectra $X$ and $\widetilde{X}$ can be found in Definition 5.1.
\end{notation}

For sources and targets of these differentials, we use the following way to denote the elements in the Adams $E_2$-page of $P_1^\infty$ and its $H\mathbb{F}_2$-subquotients. One way to compute $Ext(P_1^\infty)$ is to use the algebraic Atiyah-Hirzebruch spectral sequence.

\begin{displaymath}
    \xymatrix{
  E_1 = \bigoplus_{n=1}^\infty Ext(S^n) \ar@{=>}[r] & Ext(P_1^\infty)
    }
\end{displaymath}


\begin{notation}
We denote any element in $Ext(S^n)$ to be $a[n]$, where $a\in Ext(S^0)$, and $n$ suggests that it comes from $Ext(S^n)$. We will abuse notation and write the same symbol $a[n]$ for an element of $Ext(P_1^\infty)$ detected by the element $a[n]$ of the Atiyah-Hirzebruch $E_\infty$ page. Thus, there is indeterminacy in the notation $a[n]$ that is detected by Atiyah-Hirzebruch $E_\infty$ elements in lower filtration. When $a[n]$ is the element of lowest Atiyah-Hirzebruch filtration in the Atiyah-Hirzebruch $E_\infty$ page in a given bidegree $(s,t)$, then $a[n]$ also is a well-defined element of $Ext(P_1^\infty)$. Sometimes we will need to be precise about a particular element of $Ext(P_1^\infty)$ detected by $a[n]$. We will use the notation $\underline{a[n]}$ to denote a particular choice, and we must provide a definition that specifies $\underline{a[n]}$ in this case. We use this same notation for all $H\mathbb{F}_2$-subquotients of $P_1^\infty$. There won't be any confusion on the index $n$ since any $H\mathbb{F}_2$-subquotient contains at most one cell in each dimension.
\end{notation}

\begin{rem}
In \cite{WX}, we computed the Adams $E_2$-page of $P_1^\infty$ in the range of $t<72$ by the Lambda algebra. This Lambda algebra computation gives us a lot of information on the algebraic Atiyah-Hirzebruch spectral sequence. In particular, there is a one-to-one correspondence between the differentials in the Lambda algebra computation and differentials in the algebraic Atiyah-Hirzebruch spectral sequence.
\end{rem}

\begin{rem}
Despite the indeterminacy in Notation 3.3, there is a huge advantage of it. Suppose $f: Q\rightarrow Q'$ is a map between two $H\mathbb{F}_2$-subquotients of $P_1^\infty$, which is a composite of inclusion and quotient maps. Suppose further that there exists an element $a[n]$ which is a generater of both $Ext^{s,t}(Q)$ and $Ext^{s,t}(Q')$ for some bidegree $(s,t)$ (this implies both $Q$ and $Q'$ have a cell in dimension $n$). We therefore must have that, with the right choices, $a[n]$ in $Ext^{s,t}(Q)$ maps to $a[n]$ in $Ext^{s,t}(Q')$. This property follows from the naturality of the algebraic Atiyah-Hirzebruch spectral sequence.

\begin{displaymath}
    \xymatrix{
  \underset{i \in I}\bigoplus Ext(S^i) \ar@{=>}[dd] \ar[rr] & & \underset{i \in I'}\bigoplus Ext(S^i) \ar@{=>}[dd] \\
  & & \\
  Ext(Q) \ar[rr] & & Ext(Q') \\
 a[n] \ar@{|->}[rr] & & a[n]
    }
\end{displaymath}
\end{rem}

\begin{exmp}
As an example, the group $Ext^{3, 64}(X) = \mathbb{Z}/2\oplus\mathbb{Z}/2\oplus\mathbb{Z}/2$, is generated by $h_4^3[16], \ h_1h_3h_5[22]$ and $h_0h_3h_5[23]$, as explained in Table 6 in Section 9. The element $h_4^3[16]$ is uniquely determined by our notation, since it has the lowest Atiyah-Hirzebruch filtration. In fact, the 16-skeleton of $X$ is $\Sigma^{14}C \eta$. The inclusion map specifies the element $h_4^3[16]$ in $Ext^{3, 64}(X)$ as the image of the element $h_4^3[16]$ in $Ext^{3, 64}(\Sigma^{14}C \eta)$.

\begin{displaymath}
    \xymatrix{
  Ext(\Sigma^{14}C \eta) \ar[rr] & & Ext(X) \\
 h_4^3[16] \ar@{|->}[rr] & & h_4^3[16]
    }
\end{displaymath}

As a comparison, the element $h_1h_3h_5[22]$ in our notation does not specify a unique element in $Ext^{3, 64}(X)$. In fact, suppose $A$ and $B$ are elements in $Ext^{3, 64}(X)$, which are detected by $h_4^3[16]$ and $h_1h_3h_5[22]$ in the algebraic Atiyah-Hirzebruch spectral sequence of $X$. The element $A+B$ is therefore also detected by $h_1h_3h_5[22]$. Our notation $h_1h_3h_5[22]$ in $Ext^{3, 64}(X)$ does \emph{not} distinguish the elements $B$ and $A+B$.

It turns out making a choice for $h_1h_3h_5[22]$ is essential to our proof. In fact, we use a 4 cell complex $X^{22}$ (see Definition 5.6) to specify such a choice. The complex $X^{22}$ is an $H\mathbb{F}_2$-subcomplex of $X$, and contains a cell in dimension 22, but not in dimension 16. The group $Ext^{3, 64}(X^{22}) = \mathbb{Z}/2$, generated by $h_1h_3h_5[22]$, as explained in Table 4 in Section 8. We denote the image of $h_1h_3h_5[22]$ in $Ext^{3, 64}(X^{22})$ to be $\underline{h_1h_3h_5[22]}$ in $Ext^{3, 64}(X)$.

\begin{displaymath}
    \xymatrix{
  Ext(X^{22}) \ar[rr] & & Ext(X) \\
 h_1h_3h_5[22] \ar@{|->}[rr] & & \underline{h_1h_3h_5[22]}
    }
\end{displaymath}
\end{exmp}

Now, we explain the main steps of the proof for the Adams differential $d_3(D_3) = B_3$.

\begin{enumerate}

\item \textbf{\underline{Step 1}}: We establish a $d_4$ differential in the Adams spectral sequence of $\Sigma^{14} C \eta$:
$$d_4(h_4^3[16]) = B_1[14].$$
This is stated as Theorem 7.1 and proved in Section 7.

\item \textbf{\underline{Step 2}}: Using the inclusion map $\Sigma^{14} C \eta \rightarrow \widetilde{X}$, we push forward the Adams $d_4$ differential in Step 1 to an Adams $d_4$ differential in $\widetilde{X}$:
$$d_4(h_4^3[16]) = B_1[14].$$
This is stated as Theorem 8.1 and proved in Section 8.

\item \textbf{\underline{Step 3}}: Using the inclusion map $\widetilde{X} \rightarrow X$, we push forward the Adams $d_4$ differential in Step 2 to an Adams $d_4$ differential in $X$:
$$d_4(h_4^3[16]) = B_1[14].$$
This is stated as Theorem 9.1 and proved in Section 9.

\item \textbf{\underline{Step 4}}: We show that the chosen element $\underline{h_1h_3h_5[22]}$ (as explained in Example 3.6) is a permanent cycle in the Adams spectral sequence of $X$. This is stated as Theorem 9.2 and proved in Section 9.

    Combining with Step 3, we have an immediate Adams $d_4$ differential in $X$:
$$d_4(\underline{h_1h_3h_5[22]} + h_4^3[16]) = B_1[14].$$
This is stated as Corollary 9.3.

\item \textbf{\underline{Step 5}}: Using the quotient map $P_1^{23} \rightarrow X$, we pull back the Adams $d_4$ differential in Step 4 to an Adams $d_3$ differential in $P_1^{23}$:
$$d_3(h_1h_3h_5[22]) = G[6].$$
This is stated as Theorem 10.1 and proved in Section 10.

\item \textbf{\underline{Step 6}}: Using the inclusion map $P_1^{23} \rightarrow P_1^\infty$ and the transfer map $P_1^\infty \rightarrow S^0$, we push forward the Adams $d_3$ differential in Step 4 to an Adams $d_3$ differential in $S^0$:
$$d_3(D_3) = B_3.$$
This is our main theorem and is proved in this section.
\end{enumerate}

We have several comments before we dive into the details of the proofs.

\begin{rem}
Step 1 is the origin of all our differentials. It follows essentially from a relation in the stable homotopy groups of spheres:
there is a nontrivial $\eta$-extension from $h_4^3$ to $B_1$.
\end{rem}

\begin{rem}
Intuitively, the most mysterious step is Step 5. The intuition behind such an argument is explained in detail in Section 14, which is Appendix II. But note that the intuition is irrelevant to our proofs. For the proof, when we pull back a $d_4$ differential, the preimage of the source must support a $d_2$, $d_3$ or $d_4$ differential. To get the $d_3$ differential as claimed in Step 5, we rule out all other possibilities.
\end{rem}

\begin{rem}
Logically, the most complicated step is Step 2. The intuition seems straightforward: we push forward a $d_4$ differential to get a $d_4$ differential. But note that we need to show that the image of the target survives to the $E_4$ page, i.e., it is not killed by a $d_2$ or $d_3$ differential. It turns out in the corresponding bidegrees, there are 10 elements which have the potential to support a $d_2$ or $d_3$ differential. To rule out these possibilities, we will show in Section 8 that 9 elements out of the 10 are permanent cycles, and the other one supports a $d_2$ differential which is irrelevant. Our way to show these elements are permanent cycles is by showing they are permanent cycles in some $H\mathbb{F}_2$-subcomplexes of $X$. For this purpose, in Section 5, we study the cell structure of $X$, as well as its several $H\mathbb{F}_2$-subcomplexes.
\end{rem}

\begin{rem}
The intuitive reason why this method works is due to the geometric and algebraic Kahn-Priddy theorems. It is because of Step 6 that we can reduce the computation of an Adams differential in $S^0$ to one in $P_1^\infty$, and further to one in a \emph{lower} stem of $S^0$.
\end{rem}

In the rest of this section, we prove Step 6.

Recall that we have the Kahn-Priddy Theorem \cite{KP}, stated as follows.

\begin{thm}
The transfer map $P_1^\infty \rightarrow S^0$ induces a surjection on homotopy groups in positive stems.
\end{thm}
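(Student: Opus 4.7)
The plan is to follow the classical Kahn--Priddy argument, which exploits the $E_\infty$ structure of the sphere spectrum together with the wreath-product decomposition of the symmetric groups. Since positive-degree stable stems are torsion by Serre's finiteness theorem, and the odd-primary statement follows from a parallel argument with $B\Sigma_p$ replacing $B\Sigma_2$, it suffices to prove $2$-local surjectivity. The Barratt--Priddy--Quillen theorem identifies $\Omega^\infty_0 S^0 \simeq B\Sigma_\infty^+$, under which $\pi_n^s \cong \pi_n(B\Sigma_\infty^+)$ for $n \geq 1$, and a direct description of the $\Sigma_2$-equivariant transfer (as in Becker--Gottlieb) shows that $\tau\colon \Sigma^\infty_+ B\Sigma_2 \to S^0$ adjoints to the plus-construction of the natural inclusion $B\Sigma_2 \hookrightarrow B\Sigma_\infty$, up to a unit in $\pi_0^s$.

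The technical heart of the proof is then the assertion that $H_*(B\Sigma_\infty;\mathbb{F}_2)$ is generated, as an algebra over the Dyer--Lashof algebra acting through the $E_\infty$-structure on $QS^0$, by the image of $H_*(B\Sigma_2;\mathbb{F}_2) = H_*(RP^\infty;\mathbb{F}_2)$. I would reduce this to the iterated wreath-product isomorphisms $\Sigma_{2^k} \cong \Sigma_2 \wr \Sigma_{2^{k-1}}$, using that the $2$-Sylow subgroup of $\Sigma_n$ is built from $\Sigma_2$ by repeated wreath construction, combined with the standard compatibility of the operations $Q^i$ with the wreath-product comparison maps on the infinite loop space level.

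To conclude, I would promote the homological generation statement to a homotopy-level surjection via the Adams spectral sequence. Dualizing the above gives the \emph{algebraic} Kahn--Priddy theorem: the induced map of $\mathrm{Ext}_{\mathcal{A}}$-groups from $H^*(P_1^\infty)$ to $\mathbb{F}_2$ is surjective in positive stems. Given $\alpha \in \pi_n^s$ with $n > 0$, lift its Adams-filtration representative to $\mathrm{Ext}_{\mathcal{A}}(H^*(P_1^\infty), \mathbb{F}_2)$ and convert it to a genuine element of $\pi_n^s(P_1^\infty)$ using the convergence of the Adams spectral sequence for $P_1^\infty$; its transfer to $\pi_n^s$ agrees with $\alpha$ modulo a class of strictly higher Adams filtration, so an induction on filtration (which terminates since $\pi_n^s$ is finite) produces the desired preimage. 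The main obstacle is the Dyer--Lashof generation statement of the middle paragraph, which in Kahn and Priddy's original treatment is proved through a careful induction using the Nishida relations between Steenrod and Dyer--Lashof operations.
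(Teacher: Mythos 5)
The paper does not prove this statement; it is quoted from Kahn--Priddy \cite{KP} and used as a black box, so there is no internal proof to compare against. Judged on its own terms, your first two paragraphs do point at the right ingredients for the classical argument (Barratt--Priddy--Quillen, and the fact that $H_*(Q_0S^0;\mathbb{F}_2)$ is generated over the Dyer--Lashof algebra by the image of $H_*(B\Sigma_2;\mathbb{F}_2)$), but the third paragraph, where the theorem is actually supposed to be deduced, contains two genuine gaps. First, the algebraic Kahn--Priddy theorem (Lin's theorem, stated separately in this paper precisely because it does not follow from the topological one or vice versa) is \emph{not} obtained by ``dualizing'' the Dyer--Lashof generation statement: the latter is an unstable statement about the homology of the infinite loop space $Q_0S^0$, while Lin's theorem is a statement about $\mathrm{Ext}_{\mathcal{A}}$ of a map of spectra, and its proof is an independent lambda-algebra computation. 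There is no formal passage from one to the other.

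Second, and more fatally, even if you grant the algebraic Kahn--Priddy theorem, surjectivity on Adams $E_2$-pages does not imply surjectivity on homotopy groups: the lift to $\mathrm{Ext}(P_1^\infty)$ of a permanent cycle in $\mathrm{Ext}(S^0)$ need not itself be a permanent cycle, and if it supports a differential your induction on Adams filtration cannot even start for that class. (Indeed, a large part of the present paper consists of verifying, by hand, that specific lifted classes in $\mathrm{Ext}$ of subquotients of $P_1^\infty$ do or do not survive; this is exactly the issue your argument waves away.) The actual Kahn--Priddy proof never passes through the Adams spectral sequence: one constructs the infinite-loop extension $\lambda\colon Q_0S^0\to QB\Sigma_2$ of the inclusion, computes the effect of the transfer on $H_*(B\Sigma_2;\mathbb{F}_2)$ explicitly (the generator $e_i$ hits $Q^i[1]\ast[-2]$ modulo decomposables), uses the commutation of the infinite-loop map $t$ with Dyer--Lashof operations together with your generation statement to conclude that $(t\circ\lambda)_*$ is surjective on indecomposables and hence an isomorphism on mod $2$ homology, and then applies the mod $p$ Whitehead theorem to conclude that $t_*$ is surjective on $2$-local homotopy in positive degrees. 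That homological computation of the transfer, and the space-level composite $t\circ\lambda$, are the missing ideas here.
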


We also have the algebraic Kahn-Priddy Theorem due to Lin \cite{Lin}.

\begin{thm}
The transfer map also induces a surjection:
$$Ext^{s,t}(P_1^\infty) \rightarrow Ext^{s+1, t+1} (S^0)$$
for $t-s>0$.
\end{thm}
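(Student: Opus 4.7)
The plan is to follow Lin's approach, working at the level of chain-level models for both $Ext$ groups. Both sides can be computed by variants of the Lambda algebra: $Ext^{*,*}_A(\mathbb{F}_2, \mathbb{F}_2)$ is the cohomology of the Lambda algebra $\Lambda$ (generated by $\lambda_i$, $i \geq 0$, with Adem-type relations and an explicit differential, and with an additive basis consisting of admissible monomials $\lambda_{i_1}\cdots\lambda_{i_s}$), while $Ext^{*,*}_A(\widetilde{H}^*(P_1^\infty), \mathbb{F}_2)$ is the cohomology of an analogous complex $\Lambda(P_1^\infty)$ obtained by adjoining one extra generator $v_n$ for each cell $n \geq 1$ of $P_1^\infty$. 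This is essentially the model recalled in Remark 3.4 and \cite{WX}.

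On the chain level, the transfer $P_1^\infty \to S^0$ is then modeled by a map $tr \colon \Lambda(P_1^\infty) \to \Lambda$ defined by $v_n \cdot \mu \mapsto \lambda_{n-1} \cdot \mu$, which visibly raises both Adams filtration and internal degree by one, matching the shift $(s,t) \mapsto (s+1,t+1)$ in the statement. To prove surjectivity on cohomology, I would start from a class $\alpha \in Ext^{s+1, t+1}(\mathbb{F}_2, \mathbb{F}_2)$ with $t - s > 0$, pick a cycle representative that is a sum of admissible monomials, and observe that for each admissible $\lambda_{j_1}\cdots \lambda_{j_{s+1}}$ appearing in the representative, the admissibility condition together with the positivity of total degree force $j_1 \geq 1$. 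Hence the candidate preimage $\sum v_{j_1+1}\lambda_{j_2}\cdots \lambda_{j_{s+1}}$ makes sense in $\Lambda(P_1^\infty)^{s,t}$ and is sent to $\alpha$ under $tr$.

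The main obstacle is verifying that this candidate preimage is actually a cycle, that is, that the differential on $\Lambda(P_1^\infty)$ annihilates the constructed sum. This differential has an ``external'' part (applying $d$ to the Lambda-algebra tail), which vanishes on the nose because $\alpha$ is a cycle, and an ``internal'' part coming from the Steenrod module structure on $\widetilde{H}^*(P_1^\infty)$, which produces new terms of the form $v_m \cdot (\cdots)$ with $m < n$. These internal terms need not vanish individually; the real work is to correct the candidate preimage by adding further terms of lower Atiyah-Hirzebruch filtration so that the total sum becomes a cycle. This correction can be carried out inductively, filtration by filtration, using control over $\Lambda(P_1^\infty)$ in lower bidegrees. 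Alternatively, one can invoke Lin's identification of $Ext_A(\widetilde{H}^*(P_1^\infty), \mathbb{F}_2)$ with (a shift of) the $Ext$ of the Singer construction, which repackages the required vanishing into a single clean homological statement and is arguably the most conceptual way to handle the internal correction.
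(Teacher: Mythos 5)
First, a point of comparison: the paper does not prove this statement. It is the algebraic Kahn-Priddy theorem, quoted from Lin \cite{Lin}, so there is no internal proof to measure your argument against; what you have written is an attempted reconstruction of Lin's argument. Your framing is the right one, and it matches how the theorem is actually deployed in Section 3: both $Ext$ groups are computed by Lambda-algebra complexes, the chain-level transfer sends the class carried by the $n$-cell with tail $\mu$ to $\lambda_n\mu$ (consistently with the computation where $(22)\,21\,11\,7$ is sent to $D_3$), and your observation that admissibility $2i_j\geq i_{j+1}$ together with $t-s>0$ forces the leading index of every admissible monomial to be at least $1$ is exactly why the required cell exists. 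Do note the off-by-one in your formula: the transfer must be $v_n\otimes\mu\mapsto\lambda_n\mu$, not $\lambda_{n-1}\mu$ (otherwise the internal degree is not raised by one). This is not cosmetic, because the failure of surjectivity in stem $0$ is precisely the unreachability of the towers $h_0^k=[\lambda_0^k]$, the only admissible monomials with leading index $0$; under your indexing the hypothesis $t-s>0$ would never enter the argument.

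The genuine gap is the step you flag yourself: showing that the candidate lift $\sum v_{j_1}\otimes\lambda_{j_2}\cdots\lambda_{j_{s+1}}$ is, or can be corrected to, a cycle in $H_*(P_1^\infty)\otimes\Lambda$. That assertion is the entire content of the theorem, and ``correct inductively, filtration by filtration, using control over $\Lambda(P_1^\infty)$ in lower bidegrees'' restates the goal rather than proving it: you would need to show at each stage that the internal boundary terms $v_m\otimes(\cdots)$ with $m<j_1$ are themselves boundaries from lower Atiyah-Hirzebruch filtration, and nothing in the proposal supplies this. The most tempting shortcut --- deducing $d\tilde z=0$ from $tr(d\tilde z)=d(tr\,\tilde z)=dz=0$ --- fails because the chain-level transfer is not injective: $\lambda_n\mu$ must be rewritten into admissible form when it is inadmissible, and the Adem relations collapse distinct basis elements $v_n\otimes\mu$ of $H_*(P_1^\infty)\otimes\Lambda$ onto overlapping sets of admissibles (already $v_1\otimes\lambda_3$, $v_2\otimes\lambda_2$, $v_3\otimes\lambda_1$, $v_4\otimes\lambda_0$ land in the three-dimensional span of $\lambda_2\lambda_2$, $\lambda_3\lambda_1$, $\lambda_4\lambda_0$). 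Your fallback of ``invoking Lin's identification with the Singer construction'' is simply citing \cite{Lin}, which is what the paper does. So the proposal is a correct setup for the standard proof, but the key homological input is still missing and must come from \cite{Lin} or from the explicit chain-level analysis of \cite{WX}.
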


Now we prove Step 6.

\begin{proof}

For the purpose of the differential $d_3(D_3)=B_3$, we check the two tables in the appendix of \cite{WX}. See \cite{WX} for more details of the Lambda algebra notation we used here. We rewrite $Ext^{(s,t)}$ as $Ext^{(s,s+(t-s))}$ to indicate that it is in stem $t-s$. \\

The element $D_3$ is in $Ext^{4,61+4}(S^0) = \mathbb{Z}/2$. Checking the table for $P_1^\infty$, we have that
\begin{equation*}
\begin{split}
Ext^{3,61+3}(P_1^\infty) = \mathbb{Z}/2, \text{~~generated by~~} & (22)~ 21~ 11~ 7, \\
Ext^{3,61+3}(P_1^{23}) = (\mathbb{Z}/2)^2, \text{~~generated by~~} & (22)~ 21~ 11~ 7,\\ & (23)~ 22~ 13~ 3.
\end{split}
\end{equation*}
The element $21~ 11~ 7$ lies in
$$Ext^{3,39+3}(S^0) = \mathbb{Z}/2, \text{~~ generated by~~} h_1h_3h_5.$$
Therefore, the element $h_1h_3h_5[22]$ maps to $D_3$.\\

The element $B_3$ is in $Ext^{7,60+7}(S^0) = \mathbb{Z}/2$. Checking the table for $P_1^\infty$, we have that
\begin{equation*}
\begin{split}
Ext^{6,60+6}(P_1^\infty) = (\mathbb{Z}/2)^2, \text{~~ generated by~~} & (6)~ 2~ 4~ 7~ 11~ 15~ 15, \ (20)~ 5~ 5~ 9~ 7~ 7~ 7,\\
Ext^{6,60+6}(P_1^{23}) = (\mathbb{Z}/2)^4, \text{~~ generated by~~} & (6)~ 2~ 4~ 7~ 11~ 15~ 15, \ (20)~ 5~ 5~ 9~ 7~ 7~ 7, \\
& (22)~ 3~ 5~ 9~ 7~ 7~ 7, \ (23)~ 13~ 2~ 3~ 5~ 7~ 7.
\end{split}
\end{equation*}
In the table for the transfer, we have that the element $(20)~ 5~ 5~ 9~ 7~ 7~ 7$ (with certain choice) maps to $0$. Due to the algebraic Kahn-Priddy Theorem, we must have the element $(6)~ 2~ 4~ 7~ 11~ 15~ 15$ maps to $B_3$. The element $2~ 4~ 7~ 11~ 15~ 15$ lies in
$$Ext^{6,54+6}(S^0) = \mathbb{Z}/2, \text{~~ generated by~~} G.$$
Therefore, the element $G[6]$ maps to $B_3$.

\begin{displaymath}
    \xymatrix{
  Ext^{3,61+3}(P_1^{23}) \ar[rr] & & Ext^{3,61+3}(P_1^{\infty}) \ar[rr] & & Ext^{4,61+4}(S^0) \\
 h_1h_3h_5[22] \ar@{|->}[rr] & & h_1h_3h_5[22] \ar@{|->}[rr] & & D_3\\
 Ext^{6,60+6}(P_1^{23}) \ar[rr] & & Ext^{6,60+6}(P_1^{\infty}) \ar[rr] & & Ext^{7,60+7}(S^0) \\
 G[6] \ar@{|->}[rr] & & G[6] \ar@{|->}[rr] & & B_3
    }
\end{displaymath}

Note that in both $Ext(P_1^\infty)$ and $Ext{(P_1^{23})}$, the elements $h_1h_3h_5[22]$ and $G[6]$ are uniquely determined by our notation, since they have the lowest Atiyah-Hirzebruch filtrations in their bidegrees.

In the Adams spectral sequence for $S^0$, the element $B_3$ survives to the $E_3$-page: there is no element that could kill $B_3$ by a $d_2$ differential. Therefore, the Adams $d_3$ differential in $P_1^{23}$:
$$d_3(h_1h_3h_5[22]) = G[6]$$
in Step 5 (Theorem 10.1) implies the Adams $d_3$ differential in $S^0$:
$$d_3(D_3) = B_3.$$
\end{proof}

\section{$H\mathbb{F}_2$-subquotients for CW spectra}

In this section, we introduce the definitions of $H\mathbb{F}_2$-subcomplexes and $H\mathbb{F}_2$-quotient complexes for CW spectra. We also discuss an important $H\mathbb{F}_2$-subcomplex of $P_1^6$ in Theorem 4.7.

\begin{defn}
Let $A$, $B$, $C$ and $D$ be CW spectra, $i$ and $q$ be maps
\begin{displaymath}
    \xymatrix{
A \ar@{^{(}->}[r]^-i & B, & B \ar@{->>}[r]^-q & C
    }
\end{displaymath}
We say that $(A, i)$ is an $H\mathbb{F}_2$-subcomplex of $B$, if the map $i$ induces an injection on mod 2 homology. We denote an $H\mathbb{F}_2$-subcomplex by an hooked arrow as above.

We say that $(C, q)$ is an $H\mathbb{F}_2$-quotient complex of $B$, if the map $q$ induces a surjection on mod 2 homology. We denote an $H\mathbb{F}_2$-quotient complex by a double headed arrow above.

When the maps involved are clear in the context, we also say $A$ is an $H\mathbb{F}_2$-subcomplex of $B$, and $C$ is an $H\mathbb{F}_2$-quotient complex of $B$.

Furthermore, we say $D$ is an $H\mathbb{F}_2$-subquotient of $B$, if $D$ is an $H\mathbb{F}_2$-subcomplex of an $H\mathbb{F}_2$-quotient complex of $B$, or an $H\mathbb{F}_2$-quotient complex of an $H\mathbb{F}_2$-subcomplex of $B$.
\end{defn}

\begin{rem}
Note that our definitions of $H\mathbb{F}_2$-subcomplexes and $H\mathbb{F}_2$-quotient complexes are \emph{not} necessarily subcomplexes and quotient complexes on the point set level. Our definitions should be thought as in the homological or homotopical sense. Here is a motivating example of why we use these definitions. The top cell of the spectrum $P_1^3$ splits off, therefore there is a map from $S^3$ to $P_1^3$ that induces an injection on mod 2 homology. This is an $H\mathbb{F}_2$-subcomplex in our sense. However, on the point set level, the image of the attaching map is not a point, therefore $S^3$ is not a subcomplex of $P_1^3$ in the classical sense.
\end{rem}

\begin{rem}
It follows directly from Definition 4.1 that if $(A, i)$ is an $H\mathbb{F}_2$-subcomplex of $B$, then the cofiber of $i$ is an $H\mathbb{F}_2$-quotient complex of $B$, which we sometimes denote as $B/A$. Dually, if $(C, q)$ is an $H\mathbb{F}_2$-quotient complex of $B$, then the fiber of $q$ is an $H\mathbb{F}_2$-subcomplex of $B$.
\end{rem}

The following lemma is useful in constructing $H\mathbb{F}_2$-subquotients.

\begin{lem} \label{gll}
Suppose $(A, i)$ is an $H\mathbb{F}_2$-subcomplex of $B$. Let $C$ be the cofiber of $i$. Let $(D, j)$ be an $H\mathbb{F}_2$-subcomplex of $C$. Define $E$ to be the homotopy pullback of $D$ along $B\rightarrow C$. We have that $E$ is an $H\mathbb{F}_2$-subcomplex of $B$. Moreover, $A$ is an $H\mathbb{F}_2$-subcomplex of $E$ with quotient $D$.

Dually, suppose $(C, q)$ is an $H\mathbb{F}_2$-quotient complex of $B$. Let $A$ be the fiber of $q$. let $(F, p)$ be an $H\mathbb{F}_2$-quotient complex of $A$. Define $G$ to be the homotopy pushout of $F$ along $A \rightarrow B$. We have that $G$ is an $H\mathbb{F}_2$-quotient complex of $B$. Moreover, $C$ is an $H\mathbb{F}_2$-quotient complex of $G$ with fiber $F$.
\end{lem}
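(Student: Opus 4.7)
The plan is to reduce both parts of the lemma to elementary diagram chases in the long exact sequences of mod 2 homology, using the fact that in the stable category of spectra a homotopy pullback (resp.\ pushout) square packages naturally as a map of cofiber sequences on the vertical fibers (resp.\ cofibers).

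For the first half, I would begin by identifying the homotopy fiber of the top map $E \to D$. Since the defining square is a homotopy pullback, this fiber agrees with the homotopy fiber of $B \to C$, which by hypothesis is $A$. This yields a commutative diagram of cofiber sequences
$$\xymatrix{A \ar[r] \ar@{=}[d] & E \ar[r] \ar[d]^{f} & D \ar[d]^{j} \\ A \ar[r]^{i} & B \ar[r]^{q} & C}$$
in which the leftmost vertical arrow is the identity. Applying $H\mathbb{F}_2$-homology produces two parallel long exact sequences connected by vertical maps; the first and third vertical maps are injective by hypothesis ($i$ and $j$ are $H\mathbb{F}_2$-inclusions), and a short three-step diagram chase then shows the middle map $f_{\ast}$ is injective: a class in $H_{\ast}E$ killed by $f_{\ast}$ pushes to zero in $H_{\ast}D$ via the injectivity of $j_{\ast}$, hence comes from $H_{\ast}A$, where it must vanish by injectivity of $i_{\ast}$. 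This proves $E$ is an $H\mathbb{F}_2$-subcomplex of $B$. Injectivity of $(A \to E)_{\ast}$ is then automatic because post-composing with $f_{\ast}$ recovers $i_{\ast}$, and the quotient $E/A = D$ is visible from the top cofiber sequence.

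The second half follows by formally dual reasoning: the defining homotopy pushout square for $G$ extends to a map of cofiber sequences
$$\xymatrix{A \ar[r]^{\alpha} \ar[d]^{p} & B \ar[r] \ar[d] & C \ar@{=}[d] \\ F \ar[r] & G \ar[r] & C}$$
whose right vertical arrow is the identity (since pushouts preserve cofibers). The same diagram chase on $H\mathbb{F}_2$-homology---now using surjectivity of $p_{\ast}$ and $q_{\ast}$ in place of injectivity---shows $(G \to C)_{\ast}$ is surjective, so $G$ is an $H\mathbb{F}_2$-quotient complex of $B$, and exhibits $F$ as its fiber. Alternatively one could appeal directly to Spanier--Whitehead duality to deduce the second part from the first.

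The only step requiring genuine care is the production of the map of cofiber sequences from the pullback/pushout square. This is a standard consequence of the triangulated structure on the stable homotopy category (essentially Verdier's octahedral axiom applied to the two-out-of-three homotopy fibers of the pullback square), and I would invoke it rather than reprove it; everything else is bookkeeping in the long exact sequences.
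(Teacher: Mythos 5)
Your proposal is correct and takes essentially the same approach as the paper, which likewise forms the commutative diagrams of cofiber sequences (with the identity on the common fiber, resp.\ cofiber) and chases the induced exact sequences in mod $2$ homology. One small mislabeling in your dual half: the nontrivial chase using surjectivity of $p_*$ and $q_*$ should target surjectivity of $(B\to G)_*$, which is what makes $G$ an $H\mathbb{F}_2$-quotient complex of $B$; surjectivity of $(G\to C)_*$ is the automatic part, dual to your remark that $(A\to E)_*$ is automatically injective.
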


\begin{proof}
This follows from the short exact sequences of homology induced by the following commutative diagrams of cofiber sequences and diagram chasing.
\begin{displaymath}
    \xymatrix{
A \ar@{^{(}->}[r] \ar@{=}[d] & E \ar@{->>}[r] \ar@{^{(}->}[d] & D \ar@{^{(}->}[d]^-j\\
A \ar@{^{(}->}[r]^-i & B \ar@{->>}[r] & C \\
A \ar@{^{(}->}[r] \ar@{->>}[d]^-p & B \ar@{->>}[r]^-q \ar@{->>}[d] & C \ar@{=}[d] \\
F \ar@{^{(}->}[r] & G \ar@{->>}[r] & C
    }
\end{displaymath}
\end{proof}

We first study the spectrum $P_1^6$. For attaching maps, we abuse notation and refer to a homotopy class by its detecting element in the $E_1$-page of the Atiyah-Hirzebruch spectral sequence. We use similar notation as in the algebraic case in Notation 3.3. The readers who are familiar with the notation of cell diagrams from \cite{BJM} should compare with the cell diagrams in Remark 4.8 for the intuition of the following Lemmas 4.5, 4.6 and Theorem 4.7.

\begin{lem}
There is an $H\mathbb{F}_2$-subcomplex of $P_1^5$ with a $3$-cell and a $5$-cell that forms $\Sigma^3 C\eta$.
\end{lem}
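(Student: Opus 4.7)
The plan is to construct the required subcomplex in two stages using Lemma~\ref{gll}. First I would identify $\Sigma^3 C\eta$ as an $H\mathbb{F}_2$-subcomplex of the quotient $P_3^5 = P_1^5/P_1^2$. The cofiber sequence $S^3 \to P_3^4 \to S^4$ shows that $\pi_4(P_3^4) \cong \pi_1(M(2)) \cong \mathbb{Z}/2$, generated by $\eta$ included along the bottom $3$-cell. The nonvanishing of $\operatorname{Sq}^2 \colon H^3(P_3^5) \to H^5(P_3^5)$ (from $\operatorname{Sq}^2(x^3) = x^5$) then forces the attaching map of the $5$-cell in $P_3^5$ to be this generator, so it factors as $S^4 \xrightarrow{\eta} S^3 \hookrightarrow P_3^4$. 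Thus $P_3^5 \simeq \Sigma^3 C\eta \cup_{2\iota_3} e^4$, with the $4$-cell attached only to the bottom $3$-cell of $\Sigma^3 C\eta$ by degree $2$; this exhibits $\Sigma^3 C\eta$ as an $H\mathbb{F}_2$-subcomplex of $P_3^5$ with cofiber $S^4$.

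Second, I would lift this subcomplex to $P_1^5$. Applying Lemma~\ref{gll} with $A = P_1^2 \hookrightarrow B = P_1^5$ (cofiber $C = P_3^5$) and $D = \Sigma^3 C\eta \hookrightarrow P_3^5$ yields an $H\mathbb{F}_2$-subcomplex $E \hookrightarrow P_1^5$ sitting in a cofiber sequence $P_1^2 \to E \to \Sigma^3 C\eta$, so $E$ has cells in dimensions $1, 2, 3, 5$. To produce the desired two-cell subcomplex I then establish the splitting $E \simeq P_1^2 \vee \Sigma^3 C\eta$, so that $\Sigma^3 C\eta$ becomes a wedge summand of $E$ and hence an $H\mathbb{F}_2$-subcomplex of $P_1^5$. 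This splitting is measured by an extension class $k\colon \Sigma^3 C\eta \to \Sigma P_1^2$ pulled back from the connecting map of $P_1^2 \to P_1^5 \to P_3^5$. Its restriction to the $3$-cell $S^3 \hookrightarrow \Sigma^3 C\eta$ is automatically null, since the $3$-cell splits off the $3$-skeleton as $P_1^3 \simeq S^3 \vee P_1^2$ (using $\operatorname{Sq}^2(x) = \operatorname{Sq}^1(x^2) = 0$), so $k$ factors through the top cell and the residual obstruction lies in $\pi_5(\Sigma P_1^2) = \pi_4(P_1^2)$.

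The main obstacle is showing this residual obstruction vanishes. I would identify it with the secondary component $b \in \pi_4(P_1^2)$ appearing in the decomposition $\widetilde{\alpha} = \iota_3 \circ \eta + b$ of a lift through $\pi_4(P_1^3) = \pi_4(S^3) \oplus \pi_4(P_1^2)$ of the attaching map $\alpha \in \pi_4(P_1^4)$ of the $5$-cell. Computing $\pi_4(P_1^2) = \pi_3(M(2))$ via the cofiber $S^0 \xrightarrow{2} S^0 \to M(2)$, and tracing the indeterminacy from $\eta$-multiplication $\pi_3(P_1^2) \xrightarrow{\,\cdot\,\eta\,} \pi_4(P_1^2)$, I would show that $b$ can be absorbed by adjusting the choice of lift of the $3$-cell inclusion, making the extension class null. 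Once this is done, $\Sigma^3 C\eta$ is realized as the desired $H\mathbb{F}_2$-subcomplex of $P_1^5$.
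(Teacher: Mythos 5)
Your overall strategy is viable and, despite the different bookkeeping (you pass to the quotient $P_3^5$ first and then try to split the resulting four-cell complex $E$, whereas the paper splits off $S^3$ first and then splits the top cell off $P_1^5/S^3$), both routes funnel into the same computation: after the $\eta$-component of the attaching map of the $5$-cell onto the $3$-cell is accounted for, there remains a residue $b\in\pi_4(P_1^2)\cong\pi_3(M(2))$, a group generated by $\nu[1]$ and $\eta^2[2]$. Your first two paragraphs are essentially correct, although the splitting $P_1^3\simeq S^3\vee P_1^2$ needs slightly more than the vanishing of the primary operations you cite in passing: one should either invoke the Hopf invariant one argument as the paper does, or note that $\pi_2(P_1^2)=\mathbb{Z}/2\{\eta[1]\}$ and that an $\eta[1]$-attachment would force $Sq^2\neq 0$ on a one-dimensional class of the suspension spectrum of a space, contradicting instability.

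The genuine gap is at the final step. You assert that $b$ ``can be absorbed by adjusting the choice of lift,'' but the indeterminacy available from changing the lift (equivalently, from changing the chosen splitting $S^3\to P_1^3$) is exactly $\eta\cdot\pi_3(P_1^2)$. Since $\pi_3(P_1^2)$ is generated by $\eta^2[1]$ and $\eta[2]$, and $\eta^3[1]=4\nu[1]=0$ in $\pi_4(P_1^2)$, this indeterminacy is only $\{0,\eta^2[2]\}$; it cannot absorb a $\nu[1]$-component of $b$, and $\nu[1]$ is a nonzero element of $\pi_4(P_1^2)$ distinct from $\eta^2[2]$. You therefore still owe an argument ruling out the $\nu[1]$-component. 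The paper supplies it: a $\nu[1]$-attachment would produce a nontrivial $Sq^4\colon H^1(P_1^5/S^3)\to H^5(P_1^5/S^3)$, which does not exist (by instability, or by direct computation in $H^*(\mathbb{R}P^5)$). Without this step the splitting $E\simeq P_1^2\vee\Sigma^3 C\eta$, and hence the lemma, is not established.
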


\begin{proof}
Firstly, by the solution of the Hopf invariant one problem, the top cell of $P_1^3$ splits off. It follows that $S^3$ is an $H\mathbb{F}_2$-subcomplex of $P_1^3$, and therefore an $H\mathbb{F}_2$-subcomplex of $P_1^5$.

Secondly, we consider the $H\mathbb{F}_2$-quotient complex $P_1^5/S^3$. We claim the top cell of $P_1^5/S^3$ splits off. We prove this claim by showing the attaching map is homotopic to zero. In fact, the following composition is trivial:
$$S^4 \rightarrow P_1^4/S^3 \rightarrow S^4,$$
where the second map is the quotient map. Otherwise, we would have a nontrivial
$$Sq^1:H^4(P_1^5/S^3)\rightarrow H^5(P_1^5/S^3),$$
which we don't. This shows that the attaching map factors through $P_1^2$.
\begin{displaymath}
	\xymatrix{
		S^4 \ar[rr] \ar@{-->}[dr] & & P_1^4/S^3 \\
		& P_1^2 \ar[ur] &
		}
\end{displaymath}
The group $\pi_4(P_1^2)$ is generated by $\eta^2[2]$ and $\nu[1]$. However, the element $\eta^2[2]$ is killed by $\eta[4]$ in the Atiyah-Hirzebruch spectral sequence of $P_1^4/S^3$. The element $\nu[1]$ does not detect the attaching map either, since otherwise we would have a nontrivial
$$Sq^4:H^1(P_1^5/S^3)\rightarrow H^5(P_1^5/S^3),$$
which we don't. Therefore, the attaching map $S^4 \rightarrow P_1^4/S^3$ is trivial, and $S^5$ is an $H\mathbb{F}_2$-subcomplex of $P_1^5/S^3$.

Now we pull back $S^5$ along the quotient map $P_1^5 \rightarrow P_1^5/S^3$. We claim that we have $\Sigma^3 C\eta$ as an $H\mathbb{F}_2$-subcomplex of $P_1^5$.
\begin{displaymath}
    \xymatrix{
S^3 \ar@{^{(}->}[r] \ar@{=}[d] & \Sigma^3 C\eta \ar@{->>}[r] \ar@{^{(}->}[d] & S^5 \ar@{^{(}->}[d]\\
S^3 \ar@{^{(}->}[r] & P_1^5 \ar@{->>}[r] & P_1^5/S^3
    }
\end{displaymath}
In fact, by Lemma 4.4, we have an $H\mathbb{F}_2$-subcomplex of $P_1^5$ with nontrivial $H^3$ and $H^5$. Since there is a nontrivial
$$Sq^2:H^3(P_1^5)\rightarrow H^5(P_1^5),$$
we must have $\Sigma^3 C\eta$ as the $H\mathbb{F}_2$-subcomplex.
\end{proof}

\begin{lem}
If we quotient out the $H\mathbb{F}_2$-subcomplex $\Sigma^3 C\eta$ in $P_1^6$, then the $6$-cell splits off. Therefore, $S^6$ is an $H\mathbb{F}_2$-subcomplex of $P_1^6/\Sigma^3 C\eta$.
\end{lem}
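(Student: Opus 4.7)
The approach is to show that the attaching map of the $6$-cell lives in a stable homotopy group that we can compute to be zero. Since $P_1^6/\Sigma^3 C\eta$ has cells only in dimensions $1,2,4,6$, its $5$-skeleton equals its $4$-skeleton, which is $P_1^5/\Sigma^3 C\eta$. I would identify this quotient in two steps using the cofibration $S^3\hookrightarrow\Sigma^3 C\eta\twoheadrightarrow S^5$, so that $P_1^5/\Sigma^3 C\eta=(P_1^5/S^3)/S^5$. First, $S^3$ is an $H\mathbb{F}_2$-subcomplex of $P_1^5$ via the splitting of $P_1^3$ used at the start of Lemma 4.5. Second, the $5$-cell of $P_1^5/S^3$ splits off by the conclusion of Lemma 4.5, so $P_1^5/S^3\simeq (P_1^4/S^3)\vee S^5$. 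Quotienting by this $S^5$ gives $P_1^5/\Sigma^3 C\eta\simeq P_1^4/S^3\simeq P_1^2\cup_{\eta[2]} e^4$, where the $\eta$-attachment of the $4$-cell to the $2$-cell is forced by the nontrivial $Sq^2\colon H^2\to H^4$. The lemma then follows once $\pi_5(P_1^4/S^3)=0$.

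I would compute this group via the Atiyah-Hirzebruch spectral sequence of $P_1^4/S^3$. In total stem $5$ the $E_2$-page contains only
\[
\eta[4]\in E_2^{4,1}=\pi_1=\mathbb{Z}/2 \qquad\text{and}\qquad \eta^3[2]=12\nu[2]\in E_2^{2,3}=\ker(\cdot 2\colon\pi_3\to\pi_3)\cong\mathbb{Z}/2,
\]
together with $E_2^{1,4}=\pi_4=0$. The $d_2$ differential, determined by the $\eta$-attachment of the $4$-cell to the $2$-cell, acts as composition with $\eta$. Hence $d_2(\eta[4])=\eta^2[2]$ is the generator of $E_2^{2,2}=\pi_2=\mathbb{Z}/2$, killing $E_2^{4,1}$; and $d_2(\eta^2[4])=\eta^3[2]$ is the generator of $E_2^{2,3}$, using $\eta^3=12\nu$ in $\pi_3=\mathbb{Z}/24$, killing $E_2^{2,3}$. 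Thus $E_\infty^{p,q}=0$ for $p+q=5$, and $\pi_5(P_1^4/S^3)=0$.

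It follows that the attaching map of the $6$-cell in $P_1^6/\Sigma^3 C\eta$ is null, the $6$-cell splits off as a wedge summand $S^6$, and $S^6$ is an $H\mathbb{F}_2$-subcomplex of $P_1^6/\Sigma^3 C\eta$. The main verifications are the nontriviality of both $d_2$ differentials, which rest on the familiar facts $\eta^2\neq 0\in\pi_2$ and $\eta^3=12\nu\neq 0\in\pi_3$, together with the $Sq^2$-detection identifying the $4$-cell's attachment to the $2$-cell as $\eta$; the rest is formal.
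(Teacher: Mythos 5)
Your proof is correct and follows the same overall strategy as the paper's: identify the $4$-skeleton of $P_1^6/\Sigma^3 C\eta$ with $P_1^4/S^3$ (via the splitting from Lemma 4.5) and show that the attaching map of the $6$-cell, an element of $\pi_5(P_1^4/S^3)$, vanishes. The difference lies in the computation of that group. The paper asserts $\pi_5(P_1^4/S^3)\cong\mathbb{Z}/2$ generated by $\eta[4]$ and therefore needs a second step, the vanishing of $Sq^2\colon H^4(P_1^6)\to H^6(P_1^6)$, to rule out the attaching map being detected by $\eta[4]$. You instead observe that $\eta[4]$ itself supports the Atiyah--Hirzebruch differential $d_2(\eta[4])=\eta^2[2]$, forced by the $\eta$-attachment of the $4$-cell to the $2$-cell together with $\eta^2\neq 0$, so that $\pi_5(P_1^4/S^3)=0$ and the conclusion is immediate. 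Your version of the computation is the correct one: the differential you use is exactly the one the paper invokes in the proof of Lemma 4.5 (``the element $\eta^2[2]$ is killed by $\eta[4]$''), and one can confirm $\pi_5(P_1^4/S^3)=0$ independently from the long exact sequence of $P_1^2\to P_1^4/S^3\to S^4$, since the boundary map sends $\eta$ to a class projecting to $\eta^2\neq 0$ on the top cell of $P_1^2$, so nothing in $\pi_5(P_1^4/S^3)$ is detected by $\eta[4]$. Thus your argument is sharper, dispenses with the paper's auxiliary $Sq^2$ computation, and corrects a (harmless) overstatement in the paper's proof. One cosmetic point: working $2$-locally, as the paper does throughout, one would write $\eta^3=4\nu$ in $\pi_3\cong\mathbb{Z}/8$ rather than $12\nu$ in $\mathbb{Z}/24$; either way this is the unique nonzero element of $\ker(2)$, so nothing in your argument is affected.
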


\begin{proof}
We claim that the attaching map $S^5 \rightarrow P_1^4/S^3$ is trivial.

In fact, the group $\pi_5(P_1^4/S^3)\cong \mathbb{Z}/2$, generated by $\eta[4]$. To compute it, note that the $E_1$-page of the Atiyah-Hirzebruch spectral sequence of $P_1^4/S^3$ is $\pi_5(S^1)\oplus\pi_5(S^2)\oplus\pi_5(S^4)=\mathbb{Z}/8\oplus\mathbb{Z}/2$, generated by $\nu[2]$ and $\eta[4]$. We have the following Atiyah-Hirzebruch differentials:
\begin{equation*}
\begin{split}
\nu[2] \rightarrow & 2\nu[1]\\
2\nu[2] \rightarrow & 4\nu[1]\\
\eta^2[4] \rightarrow & 4\nu[2] = \eta^3[2]
\end{split}
\end{equation*}
Therefore, the element $\eta[4]$ is the only one left in the $E_\infty$-page.

Since we have
$$Sq^2 = 0 : H^4(P_1^6)\rightarrow H^6(P_1^6),$$
we must have
$$Sq^2 = 0 : H^4(P_1^6/\Sigma^3 C\eta)\rightarrow H^6(P_1^6/\Sigma^3 C\eta).$$
Therefore, the attaching map is not detected by $\eta[4]$, and is trivial. This proves that $S^6$ is an $H\mathbb{F}_2$-subcomplex of $P_1^6/\Sigma^3 C\eta$.
\end{proof}

\begin{thm} \label{p356}
There is an $H\mathbb{F}_2$-subcomplex $Y$ of $P_1^6$ consisting of the $3$-cell, $5$-cell and the $6$-cell, which is the pullback of $S^6$ along the quotient map $P_1^6 \rightarrow P_1^6/\Sigma^3 C\eta$.
\begin{displaymath}
    \xymatrix{
\Sigma^3 C\eta \ar@{^{(}->}[r] \ar@{=}[d] & Y \ar@{->>}[r] \ar@{^{(}->}[d] & S^6 \ar@{^{(}->}[d]\\
\Sigma^3 C\eta \ar@{^{(}->}[r] & P_1^5 \ar@{->>}[r] & P_1^5/\Sigma^3 C\eta
    }
\end{displaymath}
\end{thm}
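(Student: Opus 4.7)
The plan is to obtain Theorem 4.7 as an essentially formal consequence of the preceding two lemmas together with the general pullback principle of Lemma 4.4. Lemma 4.5 gives $\Sigma^3 C\eta$ as an $H\mathbb{F}_2$-subcomplex of $P_1^5$, and since $P_1^5$ is itself an $H\mathbb{F}_2$-subcomplex of $P_1^6$ (it is the honest $5$-skeleton), $\Sigma^3 C\eta$ is an $H\mathbb{F}_2$-subcomplex of $P_1^6$ as well. Lemma 4.6 then identifies an $H\mathbb{F}_2$-subcomplex $S^6 \hookrightarrow P_1^6/\Sigma^3 C\eta$. These two inputs are exactly what the hypotheses of Lemma 4.4 require.

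Concretely, I would set $A = \Sigma^3 C\eta$, $B = P_1^6$, $C = P_1^6/\Sigma^3 C\eta$, and $D = S^6 \hookrightarrow C$, and then define $Y$ to be the homotopy pullback $E$ produced by Lemma 4.4. By that lemma, $Y$ is an $H\mathbb{F}_2$-subcomplex of $P_1^6$, and it sits in the commutative diagram of cofiber sequences
\begin{displaymath}
\xymatrix{
\Sigma^3 C\eta \ar@{^{(}->}[r] \ar@{=}[d] & Y \ar@{->>}[r] \ar@{^{(}->}[d] & S^6 \ar@{^{(}->}[d]\\
\Sigma^3 C\eta \ar@{^{(}->}[r] & P_1^6 \ar@{->>}[r] & P_1^6/\Sigma^3 C\eta
}
\end{displaymath}
which is precisely the diagram in the statement of the theorem.

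It remains only to read off the cell structure of $Y$. The top row is a cofiber sequence, so the long exact sequence in $\mathbb{F}_2$-homology shows that $H_\ast(Y;\mathbb{F}_2)$ is $\mathbb{F}_2$ in degrees $3$, $5$, $6$ and zero otherwise; thus $Y$ has cells exactly in dimensions $3$, $5$, $6$, and the inclusion $Y \hookrightarrow P_1^6$ realizes these as the $3$-, $5$-, and $6$-cells of $P_1^6$ in the $H\mathbb{F}_2$-sense of Definition 4.1.

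The main obstacle is not in this packaging step at all; it was absorbed into Lemmas 4.5 and 4.6, where the nontriviality of the relevant $Sq^i$ operations and the Atiyah--Hirzebruch differentials in $\pi_5(P_1^4/S^3)$ were used to rule out the candidate attaching maps. Given those lemmas, Theorem 4.7 follows with no further input beyond Lemma 4.4, so I expect the proof to be a short diagrammatic verification.
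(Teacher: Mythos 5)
Your proposal is correct and is essentially identical to the paper's own proof, which likewise obtains $Y$ by applying the pullback principle of Lemma 4.4 to the inputs furnished by Lemmas 4.5 and 4.6. The only cosmetic difference is that you write $P_1^6$ in the bottom row of the diagram where the paper's displayed diagram has $P_1^5$ (an apparent typo there, since the ambient complex must contain the $6$-cell).
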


\begin{proof}
This follows directly from Lemmas 4.3 and 4.6.
\end{proof}

\begin{rem}
The cell diagrams of the cofiber sequences in Theorem 4.7 are the following:
\begin{displaymath}
    \xymatrix{
   *+[o][F-]{6} \ar@{-}[d]^{2} & & *+[o][F-]{6} \ar@{-}[d]^{2} & & \\
   *+[o][F-]{5} \ar@{-}@/_1pc/[dd]_{\eta} & & *+[o][F-]{5} \ar@{-}@/_1pc/[dd]_{\eta} & & \\
   & & *+[o][F-]{4} \ar@{-}[d]^{2} \ar@{-}@/^1pc/[dd]^{\eta} & & *+[o][F-]{4} \ar@{-}@/^1pc/[dd]^{\eta}\\
   *+[o][F-]{3} & & *+[o][F-]{3} & & \\
   & & *+[o][F-]{2} \ar@{-}[d]^{2} & & *+[o][F-]{2} \ar@{-}[d]^{2} \\
   & & *+[o][F-]{1} & & *+[o][F-]{1}
    }
\end{displaymath}

\end{rem}

\section{Some $H\mathbb{F}_2$-subquotients of $P_1^\infty$}

In this section, we discuss the cell structures of certain $H\mathbb{F}_2$-subquotients of $P_1^\infty$. All of them turn out to be $H\mathbb{F}_2$-subcomplexes of a 9 cell complex $X$. The existence of these $H\mathbb{F}_2$-subquotients is used extensively in the proofs in Sections 8, 9 and 10. For illustration purpose, we include the cell diagrams of these $H\mathbb{F}_2$-subquotients. The definition of cell diagrams is reviewed in Section 13, which is Appendix I.

We define the 9 cell complex $X$.

\begin{defn}
Recall that the 15-skeleton of $P_{14}^{23}$ is $P_{14}^{15} = S^{14} \vee S^{15}$. The complex $X$ is defined to be the cofiber of the inclusion map $S^{15}\hookrightarrow P_{14}^{23}$, i.e., $X$ fits into the cofiber sequence
\begin{displaymath}
    \xymatrix{
S^{15} \ar@{^{(}->}[r] & P_{14}^{23} \ar@{->>}[r] & X.
}
\end{displaymath}
We also define the 22-skeleton of $X$ to be $\widetilde{X}$. In other words, $\widetilde{X}$ fits into the cofiber sequence
\begin{displaymath}
    \xymatrix{
S^{15} \ar@{^{(}->}[r] & P_{14}^{22} \ar@{->>}[r] & \widetilde{X}.
}
\end{displaymath}
\end{defn}

Now we establish the following lemmas on the cell structure of $X$.

\begin{lem} \label{p16}
There is a quotient map $X \twoheadrightarrow S^{16}$.
\end{lem}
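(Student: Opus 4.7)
The plan is to build the quotient map in two stages: first kill the bottom cell of $X$, then project off an $S^{16}$ wedge summand that appears in the quotient via James periodicity.

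First I would verify that $S^{14}$ is an $H\mathbb{F}_2$-subcomplex of $X$. This is immediate from Definition 4.1 because the inclusion of the bottom cell of any CW spectrum induces an isomorphism on the lowest nonzero mod $2$ homology group. Next I would identify $X/S^{14}$. Since $X = P_{14}^{23}/S^{15}$ by Definition 5.1 and $P_{14}^{15} = S^{14} \vee S^{15}$ (the attaching map between the 14- and 15-cells of $RP^\infty$ has degree $1 + (-1)^{15} = 0$ and is therefore stably null), we get
\begin{displaymath}
X/S^{14} \;\simeq\; P_{14}^{23}/P_{14}^{15} \;=\; P_{16}^{23}.
\end{displaymath}

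Then I would invoke James periodicity. From Notation 3.2, $\phi(7) = 2^{\psi(7)} = 8$, so two applications of James periodicity give $\Sigma^{16} P_0^{7} \simeq P_{16}^{23}$. Because $P_0^{7} = \Sigma^\infty (RP^7)_+$ splits stably as $S^0 \vee P_1^{7}$ (the disjoint basepoint is a wedge summand), we obtain $P_{16}^{23} \simeq S^{16} \vee \Sigma^{16} P_1^{7}$. The composition
\begin{displaymath}
X \;\twoheadrightarrow\; X/S^{14} \;\simeq\; S^{16} \vee \Sigma^{16} P_1^{7} \;\twoheadrightarrow\; S^{16},
\end{displaymath}
with the last arrow the projection onto the first summand, is surjective on mod $2$ homology in dimension $16$ and is the desired quotient map.

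There is no serious obstacle in this argument, but it is worth noting why one cannot shortcut it: in $X$ itself the operation $Sq^2 \colon H^{14}(X) \to H^{16}(X)$ is nonzero (inherited from $RP^\infty$ via $\binom{14}{2} \equiv 1 \pmod 2$), so the $16$-cell of $X$ is attached to the $14$-cell by $\eta$, and no retraction $X \to S^{16}$ restricting to the identity on the $16$-skeleton can exist. Killing the $14$-cell first severs this $\eta$-attachment, after which James periodicity makes the splitting of the bottom cell of $P_{16}^{23}$ transparent.
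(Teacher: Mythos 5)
Your proof is correct and follows essentially the same route as the paper: quotient out the bottom cell $S^{14}$ of $X$ to obtain $P_{16}^{23}$, identify $P_{16}^{23}\simeq\Sigma^{16}P_0^7$ by James periodicity, and split off the bottom cell of $P_0^7$ to project onto $S^{16}$. The extra verifications you include (that $P_{14}^{15}=S^{14}\vee S^{15}$, and the remark about the nontrivial $Sq^2$ forcing one to kill the $14$-cell first) are consistent with the paper and do not change the argument.
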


\begin{proof}
There is a quotient map $P_0^7\twoheadrightarrow S^0$, since the bottom cell splits off. By James periodicity, this gives a quotient map $P_{16}^{23}\twoheadrightarrow S^{16}$. Since the 14-skeleton of $X$ is $S^{14}$, we have a quotient map to its cofiber $P_{16}^{23}$.
\begin{displaymath}
    \xymatrix{
S^{14} \ar@{^{(}->}[r] & X \ar@{->>}[r] & P_{16}^{23}.
}
\end{displaymath}
Pre-composing the quotient map $P_{16}^{23}\twoheadrightarrow S^{16}$ with the quotient map $X\twoheadrightarrow P_{16}^{23}$, we get the desired quotient map $X\twoheadrightarrow S^{16}$.
\end{proof}

\begin{lem}
We have $S^{17}$ as an $H\mathbb{F}_2$-subcomplex of $\widetilde{X}$ and of $X$.
\end{lem}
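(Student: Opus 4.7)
The plan is to show that the attaching map of the 17-cell in $\widetilde X$ to the 16-skeleton is trivial, so that the 17-skeleton splits as $\Sigma^{14}C\eta \vee S^{17}$. The wedge summand inclusion then provides the desired $H\mathbb{F}_2$-subcomplex of $\widetilde X$, and composing with the $H\mathbb{F}_2$-subcomplex inclusion $\widetilde X \hookrightarrow X$ yields the statement for $X$.

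First I would identify the 16-skeleton of $\widetilde X$. Because $15$ is odd, the 15-cell splits off stably, so $P_{14}^{15} = S^{14} \vee S^{15}$; because $\binom{14}{2}$ is odd, the Steenrod operation $Sq^2$ is nontrivial from dimension $14$ to dimension $16$ in $RP^\infty$, so the 16-cell of $P_{14}^{23}$ attaches to the 14-cell by $\eta$. After quotienting by $S^{15}$ as in Definition 5.1, the 16-skeleton of $\widetilde X$ is therefore $\Sigma^{14}C\eta$. Next, the cofiber sequence $S^{14} \to \Sigma^{14}C\eta \to S^{16} \xrightarrow{\eta} S^{15}$ gives the long exact sequence
\[
\pi_{17}(S^{16}) \xrightarrow{\eta_*} \pi_{16}(S^{14}) \to \pi_{16}(\Sigma^{14}C\eta) \to \pi_{16}(S^{16}) \xrightarrow{\eta_*} \pi_{15}(S^{14}).
\]
The left map is surjective since $\eta \cdot \eta = \eta^2$ generates $\pi_2^s$, and the right map has kernel $2\mathbb{Z}$. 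Consequently $\pi_{16}(\Sigma^{14}C\eta) \cong 2\mathbb{Z}$, and crucially the top-cell projection $\pi_{16}(\Sigma^{14}C\eta) \hookrightarrow \pi_{16}(S^{16}) = \mathbb{Z}$ is injective.

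Finally, the attaching map $\bar\alpha$ of the 17-cell in $\widetilde X$ lies in $\pi_{16}(\Sigma^{14}C\eta)$ and is obtained from the attaching map of the 17-cell in $P_{14}^{23}$ by post-composition with the quotient $P_{14}^{16} \twoheadrightarrow \Sigma^{14}C\eta$. Its image under the top-cell projection equals the integral attaching degree of the 17-cell to the 16-cell in $P_{14}^{23}$, which vanishes because $17$ is odd (the cellular differential $d_{17}$ in the integral chain complex of $RP^\infty$ is zero). By the injectivity just established, $\bar\alpha = 0$; the 17-skeleton of $\widetilde X$ splits as $\Sigma^{14}C\eta \vee S^{17}$, and the wedge summand inclusion is the required $H\mathbb{F}_2$-subcomplex. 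The main technical input is the computation of $\pi_{16}(\Sigma^{14}C\eta)$ together with the injectivity of its top-cell projection; once those are in hand, the rest is naturality of attaching maps.
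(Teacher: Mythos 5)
Your proof is correct and follows essentially the same route as the paper: identify the $16$-skeleton of $\widetilde{X}$ as $\Sigma^{14}C\eta$, show $\pi_{16}(\Sigma^{14}C\eta)\cong\mathbb{Z}$ injects into $\pi_{16}(S^{16})$ under the top-cell projection (the paper does this via the Atiyah--Hirzebruch spectral sequence, you via the equivalent long exact sequence), and observe that the $17$-cell attaches to the $16$-cell with degree zero. The only cosmetic difference is that the paper cites James periodicity (the splitting of $P_{16}^{17}$) for this last degree-zero fact, where you read it off the integral cellular chain complex of $RP^\infty$; both are fine.
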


\begin{proof}
We claim that the top cell of the 17-skeleton of $\widetilde{X}$ splits off, and therefore $S^{17}$ is an $H\mathbb{F}_2$-subcomplex of $\widetilde{X}$ and $X$.

The 16-skeleton of $\widetilde{X}$ is $\Sigma^{14} C\eta$ because of the nontrivial $Sq^2$. The group $\pi_{16}(\Sigma^{14} C\eta)$ is generated by $2[16]$. Note that in the Atiyah-Hirzebruch spectral sequence, the element $\eta^2[14]$ is killed by $\eta[16]$. Therefore, it follows from James periodicity that the attaching map is trivial.
\end{proof}

Now we define some $H\mathbb{F}_2$-subcomplexes of $X$. The relationships among the $H\mathbb{F}_2$-subcomplexes are summarized in Remark 5.12. The reader should compare with the cell diagrams in Remark 5.13 for the intuition of the following definitions.

\begin{defn}
We define $\widehat{X^{20}}$ to be the 20-skeleton of $X$, and $X^{20}$ to be the fiber of the following composition:
\begin{displaymath}
    \xymatrix{
\widehat{X^{20}} \ar@{^{(}->}[r]  & \widetilde{X} \ar@{->>}[r] & S^{16}.
    }
\end{displaymath}
Note that the composition is a quotient map, and therefore $X^{20}$ is an $H\mathbb{F}_2$-subcomplex of $\widehat{X^{20}}$.
\end{defn}

\begin{defn}
Quotienting out the 16-skeleton of $\widetilde{X}$, we have the $H\mathbb{F}_2$-quotient complex $P_{17}^{22}$. We define $\widehat{X^{22}}$ to be the pullback of $\Sigma^{16} Y$ along the quotient map $\widetilde{X} \rightarrow P_{17}^{22}$. Note that by Theorem 4.7 and James periodicity, $\Sigma^{16} Y$ is an $H\mathbb{F}_2$-subcomplex of $P_{17}^{22}$.

\begin{displaymath}
    \xymatrix{
\Sigma^{14} C\eta \ar@{^{(}->}[r] \ar@{=}[d] & \widehat{X^{22}} \ar@{->>}[r] \ar@{^{(}->}[d] & \Sigma^{16} Y \ar@{^{(}->}[d]\\
\Sigma^{14} C\eta \ar@{^{(}->}[r] & \widetilde{X} \ar@{->>}[r] & P_{17}^{22} = \Sigma^{16} P_1^6
    }
\end{displaymath}
\end{defn}

\begin{defn}
We define $X^{22}$ to be the fiber of the following composition:
\begin{displaymath}
    \xymatrix{
\widehat{X^{22}} \ar@{^{(}->}[r]  & \widetilde{X} \ar@{->>}[r] & S^{16}.
    }
\end{displaymath}
Note that the composition is a quotient map, and therefore $X^{22}$ is an $H\mathbb{F}_2$-subcomplex of $\widehat{X^{22}}$.
\end{defn}

\begin{defn}
We define $\widehat{X^{21}}$ to be the 21-skeleton of $\widehat{X^{22}}$, and $X^{21}$ to be the 21-skeleton of $X^{22}$.
\end{defn}

\begin{rem}
Note that $S^{19}$ is an $H\mathbb{F}_2$-subcomplex of $X^{21}$. In fact, the 19-skeleton of $X^{21}$ is $S^{19}\vee S^{14}$. The attaching map $S^{18} \rightarrow S^{14}$ is trivial since $\pi_4=0$.
\end{rem}

\begin{defn}
The top cell of $P_1^7$ splits off due to the solution of the Hopf invariant one problem. By James periodicity, this implies that the top cell of $P_{17}^{23}$ splits off. Therefore, $S^{23}$ is an $H\mathbb{F}_2$-subcomplex of $P_{17}^{23}$.

We define $\widehat{X^{23}}$ to be the pullback of $S^{23}$ along the quotient map $X \rightarrow P_{16}^{23}$.
\begin{displaymath}
    \xymatrix{
\Sigma^{14} C\eta \ar@{^{(}->}[r] \ar@{=}[d] & \widehat{X^{23}} \ar@{->>}[r] \ar@{^{(}->}[d] & S^{23} \ar@{^{(}->}[d]\\
\Sigma^{14} C\eta \ar@{^{(}->}[r] & X \ar@{->>}[r] & P_{17}^{23} = \Sigma^{16} P_1^7
    }
\end{displaymath}
\end{defn}

\begin{defn}
We define $X^{23}$ to be the fiber of the following composition:
\begin{displaymath}
    \xymatrix{
\widehat{X^{23}} \ar@{^{(}->}[r]  & X \ar@{->>}[r] & S^{16}.
    }
\end{displaymath}
Note that the composition is a quotient map, and therefore $X^{23}$ is an $H\mathbb{F}_2$-subcomplex of $\widehat{X^{23}}$.
\end{defn}

\begin{rem}
We do not know if the top cell of $X^{23}$ splits off. If not, then the attaching map is detected by a nontrivial homotopy class in $\pi_8$. Since homotopy classes in $\pi_8$ have Adams filtration at least 2, $Ext(X^{23})$ splits as a direct sum of $Ext(S^{14})$ and $Ext(S^{23})$ in either case.
\end{rem}

\begin{rem}
We summarize in the following diagram the relationships among the $H\mathbb{F}_2$-subcomplexes defined in Definitions 5.4, 5.5, 5.6, 5.7, 5.9 and 5.10. For the name convention, we have been using the notation $X^n$, not to be confused with the $n-$skeleton of $X$, to indicate a kind of ``$n-$skeleton" to the eyes of mod 2 homology, and the notation $\widehat{X^n}$ to indicate ``adding" the 16-cell to $X^n$. The cases for $n = 23$ do not necessarily follow this convention, since we do not know if the top cell of $X^{23}$ splits off.
\begin{displaymath}
    \xymatrix{
    & & & & P_{14}^{23} \ar@{->>}[d] \\
    & X^{23} \ar@{^{(}->}[r] & \widehat{X^{23}} \ar@{^{(}->}[r] & \widetilde{X} \ar@{=}[d] \ar@{^{(}->}[r] & X \\
    & X^{22} \ar@{^{(}->}[r] & \widehat{X^{22}} \ar@{^{(}->}[r] & \widetilde{X} \ar@{=}[d] & \\
    S^{19} \ar@{^{(}->}[r] & X^{21} \ar@{^{(}->}[r] \ar@{^{(}->}[u] & \widehat{X^{21}} \ar@{^{(}->}[r] \ar@{^{(}->}[u] & \widetilde{X} \ar@{=}[d] & \\
    & X^{20} \ar@{^{(}->}[r] & \widehat{X^{20}} \ar@{^{(}->}[r] & \widetilde{X} & \\
    }
\end{displaymath}
In Section 8, we need to show certain elements in $Ext(X)$ are permanent cycles. We will show these elements are permanent cycles in the corresponding $H\mathbb{F}_2$-subcomplexes, and use the naturality of Adams spectral sequences and the algebraic Atiyah-Hirzebruch spectral sequences to show they are permanent cycles in $X$. The intuition of finding these $H\mathbb{F}_2$-subcomplexes is due to the rearrangement of the cell diagram of $\widetilde{X}$. Following the cell diagram, one could reconstruct $\widetilde{X}$ layer by layer. Firstly, consider the cells in the bottom layer: $S^{14}\vee S^{17} \vee S^{19}$. Secondly, attach the cells in the next layer: the ones in dimension 16, 18 and 21. Lastly, attach the cells in dimension 20 and 22. Any $H\mathbb{F}_2$-subcomplex consists of a collection of cells, such that for each cell contained in this collection, any cells in lower layers that this cell is attached to are also contained in this collection. The reader should compare this with the cell diagrams in Remark 5.13.
\begin{displaymath}
    \xymatrix{
    & *+[o][F-]{20} \ar@{-}@/_1pc/[ddl]_{\eta} \ar@{-}@/^1pc/ [dddrr]^{2} & &\\
    & *+[o][F-]{22} \ar@{-}[dr]^{2} & & \\
   *+[o][F-]{18} \ar@{-}[d]_{2} \ar@{-}[dr]_{\nu} & *+[o][F-]{16} \ar@{-}[d]_{\eta} & *+[o][F-]{21} \ar@{-}[dr]_{\eta} \ar@{-}[dl]^{\nu^2} & \\
  *+[o][F-]{17}  & *+[o][F-]{14} & & *+[o][F-]{19}
    }
\end{displaymath}
\end{rem}

\begin{rem}
For readers who are familiar with the notation of cell diagrams from \cite{BJM}, we include the cell diagrams as illustrations of the $H\mathbb{F}_2$-subcomplexes we defined. The definition and some examples of cell diagrams are explained in Appendix I.

\begin{displaymath}
    \xymatrix{
    *+[o][F-]{22} \ar@{-}[d]_{2} & & & & & &  \\
    *+[o][F-]{21} \ar@{-} `l/20pt[ddddddd] `[ddddddd]_{\nu^2} [ddddddd] \ar@{-}@/_1pc/[dd]_{\eta} & & & & & *+[o][F-]{21} \ar@{-}@/_1pc/[dd]^{\eta} \ar@{-}`r[ddddddd] `[ddddddd]_{\nu^2} [ddddddd] & *+[o][F-]{21} \ar@{-}@/_1pc/[dd]^{\eta} \ar@{-}`r[ddddddd] `[ddddddd]_{\nu^2} [ddddddd]  \\
    *+[o][F-]{20} \ar@{-}[d]^{2} \ar@{-}@/^1pc/[dd]^{\eta} & & *+[o][F-]{20} \ar@{-}[d]_{2} \ar@{-}@/^1pc/[dd]^{\eta} & *+[o][F-]{20} \ar@{-}[d]_{2} \ar@{-}@/^1pc/[dd]^{\eta}  & & &  \\
    *+[o][F-]{19} & & *+[o][F-]{19} & *+[o][F-]{19} & & *+[o][F-]{19} & *+[o][F-]{19} \\
    *+[o][F-]{18} \ar@{-}[d]^{2} \ar@{-} `r[dddd] `[dddd]^{\nu} [dddd] & & *+[o][F-]{18} \ar@{-}[d]^{2} \ar@{-} `r[dddd] `[dddd]_{\nu} [dddd] & *+[o][F-]{18} \ar@{-}[d]^{2} \ar@{-} `r[dddd] `[dddd]_{\nu} [dddd] & & &  \\
    *+[o][F-]{17} & & *+[o][F-]{17} & *+[o][F-]{17} & & & \\
    *+[o][F-]{16} \ar@{-}@/_1pc/[dd]^{\eta} & &  & *+[o][F-]{16} \ar@{-}@/_1pc/[dd]^{\eta} & & & *+[o][F-]{16} \ar@{-}@/_1pc/[dd]^{\eta} \\
   & & & & & &  \\
    *+[o][F-]{14} & & *+[o][F-]{14} & *+[o][F-]{14} & & *+[o][F-]{14} & *+[o][F-]{14} \\
   & & & & & &  \\
    \widetilde{X} & & X^{20} & \widehat{X^{20}} & & X^{21} & \widehat{X^{21}} 
    }
\end{displaymath}

\begin{displaymath}
    \xymatrix{
    & & & *+[o][F-]{23} \ar@{--}`r[ddddddddd] `[ddddddddd] [ddddddddd] & *+[o][F-]{23} \ar@{--}`r[ddddddddd] `[ddddddddd] [ddddddddd]\\
     *+[o][F-]{22} \ar@{-}[d]_{2} & *+[o][F-]{22} \ar@{-}[d]_{2} & & & \\
     *+[o][F-]{21} \ar@{-}@/_1pc/[dd]^{\eta} \ar@{-}`r[ddddddd] `[ddddddd]_{\nu^2} [ddddddd] & *+[o][F-]{21} \ar@{-}@/_1pc/[dd]^{\eta} \ar@{-}`r[ddddddd] `[ddddddd]_{\nu^2} [ddddddd] & & & \\
     & & & & \\
     *+[o][F-]{19} & *+[o][F-]{19} & & &\\
    & & & & \\
    & & & &\\
   & *+[o][F-]{16} \ar@{-}@/_1pc/[dd]^{\eta} & & & *+[o][F-]{16} \ar@{-}@/_1pc/[dd]^{\eta}\\
    & & & & \\
     *+[o][F-]{14} & *+[o][F-]{14} & & *+[o][F-]{14} & *+[o][F-]{14} \\
 & & & & \\
     X^{22} & \widehat{X^{22}} & & X^{23} & \widehat{X^{23}}
    }
\end{displaymath}
Here the dashed lines in $X^{23}$ and $\widehat{X^{23}}$ mean some possible attaching maps as explained in Remark 5.11.

For the cell diagram of $\widetilde{X}$, note that we have a nonzero $Sq^8$ on $H^{14}(\widetilde{X})$. However, $\Sigma^{14} C \sigma$ is not an $H\mathbb{F}_2$-subquotient of $\widetilde{X}$, we therefore do not draw the attaching map $\sigma$. The non-existence of the $H\mathbb{F}_2$-subquotient is due to the existence of the attaching map $\nu^2$, which is proved in the following Theorem 5.14.
\end{rem}

By Remark 5.8, we have $S^{19}$ as an $H\mathbb{F}_2$-subcomplex of $X^{21}$. The cofiber $X^{21}/S^{19}$ is therefore a 2 cell complex with cells in dimension 14 and 21. We have the following theorem.

\begin{thm}
The complex $X^{21}/S^{19}$ is $\Sigma^{14} C \nu^2$, where $C \nu^2$ is the cofiber of $\nu^2$.
\end{thm}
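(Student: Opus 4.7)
The cofiber $X^{21}/S^{19}$ has exactly two cells, in dimensions $14$ and $21$, so its homotopy type is determined by an attaching map $\phi \in \pi_{20}(S^{14}) \cong \pi_6^s$. After $2$-localization, $\pi_6^s \cong \mathbb{Z}/2$ generated by $\nu^2$, so $\phi \in \{0, \nu^2\}$, and the theorem reduces to showing $\phi$ is nonzero.

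My plan is to trace the attaching data of the $21$-cell through the construction. By Remark 5.12, $X^{21}$ embeds as an $H\mathbb{F}_2$-subcomplex of $\widetilde{X} = P_{14}^{22}/S^{15}$ via the chain $X^{21} \hookrightarrow X^{22} \hookrightarrow \widehat{X^{22}} \hookrightarrow \widetilde{X}$, so the $21$-to-$14$ component of the attaching map of the $21$-cell in $X^{21}$ is inherited from that in $\widetilde{X}$. Since the $15$-cell of $P_{14}^{22}$ does not enter the $21$-to-$14$ attachment, this component equals the $21$-to-$14$ component already present in $P_{14}^{22}$.

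The $21$-to-$14$ component of the attaching map in $P_{14}^{22}$ is recorded in the Atiyah-Hirzebruch spectral sequence of $P_1^\infty$ and can be extracted from the Lambda algebra computation of $Ext(P_1^\infty)$ in \cite{WX}; alternatively, James periodicity (with $\phi(7)=8$) identifies $P_{14}^{21}$ with $\Sigma^{8} P_{6}^{13}$, reducing the question to an attaching map in a simpler stunted projective spectrum. The relevant algebraic Atiyah-Hirzebruch differential lands on $h_2^2[14]$ starting from $1[21]$, which detects a $\nu^2$-attachment and thereby confirms $\phi \neq 0$.

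Passing to $X^{21}/S^{19}$, the $\eta$-attachment from cell $21$ to cell $19$ visible in the cell diagram of Remark 5.13 becomes trivial since its target is collapsed, while the $\nu^2$-attachment to cell $14$ persists by the naturality of the attaching-map analysis under the quotient map. This yields $X^{21}/S^{19} \simeq \Sigma^{14} C\nu^2$. The main obstacle I anticipate is bookkeeping: the $21$-cell has multiple attaching components in $\widetilde{X}$ (to cells $16, 17, 18, 19, 20$), and isolating the $14$-component requires iterated application of Lemma \ref{gll} to construct $H\mathbb{F}_2$-subquotients in which only cells $14$ and $21$ remain — this is precisely what the chain of constructions leading to $X^{21}/S^{19}$ achieves.
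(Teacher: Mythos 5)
Your setup is right: $X^{21}/S^{19}$ is a two-cell complex with cells in dimensions $14$ and $21$, its attaching map lives in $\pi_6 \cong \mathbb{Z}/2\{\nu^2\}$, and the whole content of the theorem is that this map is nonzero. But the step where you actually decide nontriviality does not work. You propose to read the ``$21$-to-$14$ component'' off the algebraic Atiyah--Hirzebruch spectral sequence via a differential $1[21] \mapsto h_2^2[14]$ extracted from the Lambda algebra computation of $Ext(P_1^\infty)$. No such differential exists, for two independent reasons. First, in the algebraic Atiyah--Hirzebruch spectral sequence of $P_1^\infty$, $P_{14}^{22}$, $\widetilde{X}$, or $X^{21}$, the class $1[21]$ already supports $d_2(1[21]) = h_1[19]$ (the $21$-cell is attached to the $19$-cell by $\eta$, since $\binom{19}{2}$ is odd), so it cannot survive to support a longer differential; the source only becomes a cycle after you collapse $S^{19}$, i.e., in $X^{21}/S^{19}$ itself. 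Second, and more fundamentally, each differential in the algebraic Atiyah--Hirzebruch spectral sequence (equivalently, each entry of the Curtis table) raises the cohomological degree $s$ by exactly $1$, whereas $\nu^2$ is detected by $h_2^2$ in Adams filtration $2$. Because $\nu^2$ has Adams filtration $2$, the short exact sequence in mod $2$ cohomology of $X^{21}/S^{19}$ splits as a module over the Steenrod algebra regardless of whether the attaching map is $0$ or $\nu^2$: the complexes $\Sigma^{14}C\nu^2$ and $S^{14}\vee S^{21}$ have isomorphic $Ext$ groups. So no computation at the level of $Ext$ or the Lambda algebra can distinguish the two cases; the attaching map is invisible to primary operations. (The James periodicity reduction to $P_6^{13}$ is correct but does not help, since the same obstruction recurs there.)

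This is exactly why the paper's proof is forced to be geometric. It uses the quotient map $\mathbb{R}P_{14}^{22} \to \mathbb{C}P_7^{11}$ together with James periodicity to compare $\widetilde{X}$ with $\mathbb{C}P_{-1}^3$, and then invokes Adams' secondary-operation result that the top cell of $\mathbb{C}P^3$ is attached to the bottom cell by $2\nu$ (Theorem 5.16 isolates from this a map $S^5 \to \mathbb{C}P_1^3$ detected by $\nu[2]$). The $\nu^2$ then appears as a genuine composite: one $\nu$ coming from $\mathbb{C}P^3$ and a second $\nu$ coming from the nontrivial $Sq^4$ on $H^{-2}(\mathbb{C}P_{-1}^3)$. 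If you want to complete your argument, you need an input of this kind -- some secondary (filtration-$2$) information, such as Adams' theorem on $\mathbb{C}P^3$ or an equivalent $Sq^8$-plus-secondary-operation analysis of $P_{14}^{22}$ -- rather than anything recoverable from the $E_2$-page data in \cite{WX}.
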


This theorem implies the following corollary.

\begin{cor}
The complex $\Sigma^{14} C \nu^2$ is an $H\mathbb{F}_2$-subquotient of $X^{21}$, $\widehat{X^{21}}$, $X^{22}$ and $\widehat{X^{22}}$.
\end{cor}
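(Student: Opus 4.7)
First, I would observe that $X^{21}$ has cells only in dimensions $14$, $19$, and $21$ (from Definitions 5.6 and 5.9), and by Remark 5.8 the subcomplex $S^{19}$ splits off, with the $19$-skeleton of $X^{21}$ being $S^{14}\vee S^{19}$. Consequently the quotient $X^{21}/S^{19}$ is a 2-cell CW spectrum with cells in dimensions $14$ and $21$, and its attaching map lies in $\pi^s_{20}(S^{14}) \cong \pi^s_6 = \mathbb{Z}/2\langle\nu^2\rangle$. The theorem therefore reduces to ruling out the case where this attaching map is $0$.

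To do so, I would trace the chain of $H\mathbb{F}_2$-subcomplex inclusions $X^{21} \hookrightarrow X^{22} \hookrightarrow \widehat{X^{22}}$ supplied by Definitions 5.6 and 5.9, together with the pullback square of Definition 5.5 embedding $\widehat{X^{22}}$ into $\widetilde{X}$. At each of these stages the cells added or removed interact with the $14$-cell only through Toda-bracket corrections whose indeterminacy lies in $\pi^s_4 = 0$ or $\pi^s_5 = 0$; hence no modification of the $21$-to-$14$ attaching map occurs, and this attaching map in $X^{21}$ agrees with its counterpart in $\widetilde{X}$. Since $\widetilde{X}$ is obtained from $P_{14}^{22}$ by coning off the $15$-cell, and since $Sq^1 x^{14}=0$ in $H^*(RP^\infty)$ together with $\pi^s_5 = 0$ forbid any contribution from this cofibering, the $21$-to-$14$ attaching map in $\widetilde{X}$ equals the corresponding attaching map in $P_{14}^{22}$.

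The hard part will be the last step: verifying that in $P_{14}^{22}$, the $21$-cell attaches to the $14$-cell by $\nu^2$ rather than $0$. Because $\nu^2$ has Adams filtration $2$, this attachment is invisible to primary Steenrod operations (indeed $Sq^7 x^{14}=0$ in $H^*(RP^\infty)$, and no $Sq^{2^k}$ lands in the relevant bidegree), so one cannot read it off from $H\mathbb{F}_2$-cohomology. The cleanest route is to appeal to the Lambda algebra computation of $\mathrm{Ext}(P_1^\infty)$ in \cite{WX}: by Remark 3.4, Lambda algebra differentials correspond bijectively to differentials in the algebraic Atiyah-Hirzebruch spectral sequence for $P_1^\infty$, and the $\nu^2$-attachment of the $21$-cell onto the $14$-cell appears as the specific differential $d_7(1[21]) = h_2^2[14]$ at this bidegree. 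Combining this input with the reductions above produces the identification $X^{21}/S^{19} \simeq \Sigma^{14}C\nu^2$.
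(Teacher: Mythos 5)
Your reduction of the corollary to Theorem 5.14, and then to the dichotomy ``$X^{21}/S^{19}$ is either $\Sigma^{14}C\nu^2$ or $S^{14}\vee S^{21}$,'' matches the paper's setup, but the method you propose for deciding the dichotomy cannot work. The Lambda complex $H_*(Y)\otimes\Lambda^{*,*}$ and the algebraic Atiyah--Hirzebruch spectral sequence are functors of the $\mathcal{A}$-module $H^*(Y;\mathbb{F}_2)$ alone, i.e., of primary Steenrod operations. Since $\nu^2$ has Adams filtration $2$ and no primary operation connects a $14$-cell to a $21$-cell (as you yourself note, $Sq^7x^{14}=0$), the two candidates $\Sigma^{14}C\nu^2$ and $S^{14}\vee S^{21}$ have isomorphic $\mathcal{A}$-module cohomology, hence identical Lambda complexes and identical $Ext$ groups; no computation in $Ext(P_1^\infty)$, however refined, can tell them apart. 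Concretely, the differential you invoke, $d_7(1[21])=h_2^2[14]$, does not occur in the algebraic Atiyah--Hirzebruch spectral sequence of $P_{14}^{22}$: since $Sq^2_*(e_{21})=e_{19}$ there, the class $1[21]$ already supports $d_2(1[21])=h_1[19]$ and is gone long before page $7$. Even if some length-$7$ algebraic differential did appear in a related complex, it would record an ``algebraic attaching map'' in the sense of Remark 10.5 --- a Massey product assembled from the primary attaching maps of the intermediate cells --- and not the homotopy class of the attaching map in the two-cell quotient, which is the thing Theorem 5.14 actually asserts.

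The paper supplies the missing homotopical input from outside $RP^\infty$: it uses the quotient map $\mathbb{R}P_{14}^{22}\to\mathbb{C}P_7^{11}$ (after James periodicity, $P_{-2}^6\to\mathbb{C}P_{-1}^3$) together with the fact that the top cell of $\mathbb{C}P^3$ is attached to the bottom cell by $2\nu$, a secondary-operation phenomenon recorded in Example 13.5. Theorem 5.16 extracts from this a map $S^5\to\mathbb{C}P_1^3$ detected by $\nu[2]$, and composing with the $\nu$ coming from the nontrivial $Sq^4$ on $H^{-2}(\mathbb{C}P_{-1}^3)$ produces the $\nu^2$ attaching map in $\Sigma^{-15}X^{22}$. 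Any repair of your argument needs an input of at least this secondary level; the Curtis and Lambda tables for $P_1^\infty$ cited in Remark 3.4 will not provide it. A lesser issue is your middle step: transporting ``the'' $21$-to-$14$ attaching map between $X^{21}$, $\widehat{X^{22}}$, $\widetilde{X}$ and $P_{14}^{22}$ presumes that such a component is well defined before the two-cell subquotient has been constructed, which is essentially what is being proved; but this could likely be patched if the final detection step were sound.
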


In the rest of this section, we prove Theorem 5.14. Note that since $\pi_6 = \mathbb{Z}/2$ is generated by $\nu^2$, the complex $X^{21}/S^{19}$ is either $\Sigma^{14} C \nu^2$ or $S^{14}\vee S^{21}$. Theorem 5.14 and Corollary 5.15 will be used in several proofs in Section 6. However, the proofs in Section 6 do not depend on these results. In fact, if the complex $X^{21}/S^{19}$ were $S^{14}\vee S^{21}$, the proofs in Section 6 would be strictly much easier. The reader should feel free to skip the proof of Theorem 5.14: knowing either case could be true is good enough for the proofs in Section 8. Since this theorem may be of other interest, we include the proof of Theorem 5.14 for completeness.

To prove Theorem 5.14, we first consider the spectrum $\mathbb{C}P_1^3$, which is the suspension spectrum of $\mathbb{C}P^3$. As we will explain in Example 13.5, the top cell does not split off and is attached to $\mathbb{C}P_1^2$ via $2\nu[2]$. We have a standard quotient map $P_1^7 \rightarrow \mathbb{C}P_1^3$, which is induced by the quotient map on the space level. Then pre-composing it with the inclusion map, we have a map
$$q : P_1^6 \twoheadrightarrow \mathbb{C}P_1^3.$$
Recall that in Theorem 4.7, we showed that there exists a 3 cell complex $Y$, which is an $H\mathbb{F}_2$-subcomplex of $P_1^6$.

\begin{thm}
The composition
\begin{displaymath}
	\xymatrix{
S^3 \ar@{^{(}->}[r] & Y \ar@{^{(}->}[r] & P_1^6 \ar@{->>}[r]^q & \mathbb{C}P_1^3
}
\end{displaymath}
is trivial, therefore the composition
\begin{displaymath}
	\xymatrix{
 Y \ar@{^{(}->}[r] & P_1^6 \ar@{->>}[r]^q & \mathbb{C}P_1^3
}
\end{displaymath}
maps through $P_5^6$. Furthermore, the composition
\begin{displaymath}
	\xymatrix{
S^5 \ar@{^{(}->}[r] & P_5^6 \ar[r] & \mathbb{C}P_1^3
}
\end{displaymath}
is nontrivial, and detected by $\nu[2]$ in the Atiyah-Hirzebruch spectral sequence of $\mathbb{C}P_1^3$.
\end{thm}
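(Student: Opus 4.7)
My plan is to address the three claims of the theorem in sequence. The first two---the vanishing of $S^3\to\mathbb{C}P_1^3$ and the factorization through $P_5^6$---follow from standard long exact sequence computations combined with the cell diagram of $Y$, while the third, the nontriviality of the composite $S^5\hookrightarrow P_5^6\to\mathbb{C}P_1^3$, is the main obstacle and will be proved by a contradiction argument on the top-cell degree.

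For the first claim, I would compute $\pi_3(\mathbb{C}P_1^3)=0$ by applying the long exact sequences of the cofiber sequences $S^2\hookrightarrow\mathbb{C}P_1^2\twoheadrightarrow S^4$ and $\mathbb{C}P_1^2\hookrightarrow\mathbb{C}P_1^3\twoheadrightarrow S^6$. The key input is that $\eta_*:\pi_3(S^3)\to\pi_3(S^2)=\mathbb{Z}/2$ is surjective and $\pi_3(S^4)=\pi_3(S^6)=0$. Since $\pi_3(\mathbb{C}P_1^3)=0$, the composition $S^3\hookrightarrow Y\to\mathbb{C}P_1^3$ is null, so the map $Y\to\mathbb{C}P_1^3$ extends through the cofiber $Y/S^3$. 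Using the cell structure of $Y$ from Theorem 4.7---cells in dimensions $3,5,6$ with the $5$-cell attached to the $3$-cell by $\eta$ and the $6$-cell attached to the $5$-cell by $2$---we identify $Y/S^3$ with the two-cell complex $P_5^6$.

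For the Atiyah-Hirzebruch detection, I would first show that every nonzero class in $\pi_5(\mathbb{C}P_1^3)$ is detected by a multiple of $\nu[2]$. Computing $\pi_5(\mathbb{C}P_1^2)$ from the long exact sequence of $S^2\hookrightarrow\mathbb{C}P_1^2\twoheadrightarrow S^4$, the relation $\eta^3=4\nu$ in $\pi_3^s$ gives $\pi_5(\mathbb{C}P_1^2)=\mathbb{Z}/4$ generated by $\nu[2]$; the subsequent boundary $\eta_*:\pi_5(S^4)\to\pi_4(S^2)$ is an isomorphism, so the map $\pi_5(\mathbb{C}P_1^2)\to\pi_5(S^4)$ vanishes, forcing every class in $\pi_5(\mathbb{C}P_1^3)$ to come from the $2$-cell.

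The hard part is the nontriviality of the composite. Suppose it were null; then $P_5^6\to\mathbb{C}P_1^3$ would factor through $P_5^6/S^5=S^6$, that is, through an element of $\pi_6(\mathbb{C}P_1^3)$. From the long exact sequence of $\mathbb{C}P_1^2\hookrightarrow\mathbb{C}P_1^3\twoheadrightarrow S^6$, one has $\pi_6(\mathbb{C}P_1^2)=0$ (again by $\eta^3=4\nu$), and the boundary $\pi_6(S^6)=\mathbb{Z}\to\pi_5(\mathbb{C}P_1^2)=\mathbb{Z}/4$ sends $1$ to the attaching map $\alpha$ of the $6$-cell of $\mathbb{C}P^3$, which by Example 13.5 equals $2\nu[2]$. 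Hence $\pi_6(\mathbb{C}P_1^3)\hookrightarrow\pi_6(S^6)=\mathbb{Z}$ has image $2\mathbb{Z}$, so any factorization through $S^6$ would force the top-cell degree of $P_5^6\to\mathbb{C}P_1^3\twoheadrightarrow S^6$ to be even. However, $q^*(y^3)=x^6$ in $H^*(P_1^6;\mathbb{F}_2)$, and the $6$-cells of $P_1^6$, $Y$, and $P_5^6$ are naturally identified under the inclusion and quotient maps, so this top-cell degree is in fact odd. The contradiction shows the composite is nonzero; combined with the filtration bound, this identifies it with $\nu[2]$ in $\pi_5(\mathbb{C}P_1^3)=\mathbb{Z}/2$.
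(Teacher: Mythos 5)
Your proof is correct. The first two claims are established in the same way as in the paper: your long exact sequence computations of $\pi_3(\mathbb{C}P_1^3)=0$ and of $\pi_5(\mathbb{C}P_1^2)\cong\mathbb{Z}/4$ generated by $\nu[2]$ (using $\eta^3=4\nu$) are the paper's Atiyah--Hirzebruch differentials written additively, and your identification of the factorization with $Y/S^3\simeq P_5^6$ matches the intended quotient map. Where you genuinely diverge is the nontriviality of $S^5\to\mathbb{C}P_1^3$: the paper argues directly by mapping the cofiber sequence $S^5\to S^5\to P_5^6\to S^6$ to $S^5\to\mathbb{C}P_1^2\to\mathbb{C}P_1^3\to S^6$, normalizing the induced map on top cells to the identity via the $H^6(-;\mathbb{F}_2)$-isomorphism, and reading off that the induced map $v$ on bottom cells satisfies $2v=2\nu[2]\neq 0$ in $\pi_5(\mathbb{C}P_1^2)\cong\mathbb{Z}/4$, so $v$ is an odd multiple of $\nu[2]$; you instead assume the restriction to $S^5$ is null, factor $P_5^6\to\mathbb{C}P_1^3$ through $P_5^6/S^5=S^6$, and contradict the mod $2$ surjectivity onto the top cell using $\pi_6(\mathbb{C}P_1^3)\cong 2\mathbb{Z}\subset\pi_6(S^6)$. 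The two arguments are logically interchangeable and rest on the same two inputs --- the attaching map $2\nu[2]$ from Example 13.5 having order exactly $2$ in $\pi_5(\mathbb{C}P_1^2)$, and the quotient map hitting the top mod $2$ cohomology class --- but the paper's direct version also pins down the lift $v\in\pi_5(\mathbb{C}P_1^2)$ up to the indeterminacy $2\nu[2]$, which your reductio does not; this extra precision is not needed for the statement, since $\pi_5(\mathbb{C}P_1^3)\cong\mathbb{Z}/2$ makes ``nonzero'' equivalent to ``detected by $\nu[2]$.''
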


\begin{rem}
We have the following commutative diagram:
\begin{displaymath}
	\xymatrix{
		Y \ar@{^{(}->}[r] \ar@{->>}[d] & P_1^6 \ar@{->>}[r]^q & \mathbb{C}P_1^3 \\
		P_5^6 \ar[rru] \\
		S^5 \ar@{^{(}->}[u] \ar[rr]^{\nu} & & S^2 \ar@{^{(}->}[uu]
		}
\end{displaymath}
In other words, the cell diagrams of the composition $Y \rightarrow \mathbb{C}P_1^3$ can be described as follows:
\begin{displaymath}
	\xymatrix{
  *+[o][F-]{6} \ar@{-}[d]^2 \ar@{-->}[rr]^{1} & &  *+[o][F-]{6} \ar@{-}@/_1pc/[dddd]_{2\nu} \\
   *+[o][F-]{5} \ar@{-}@/_1pc/[dd]_{\eta} \ar@{-->}[rrddd]_{\nu} & & \\
   & & *+[o][F-]{4} \ar@{-}@/^1pc/[dd]^{\eta} \\
   *+[o][F-]{3} & & \\
  & &  *+[o][F-]{2}
    }
\end{displaymath}
\end{rem}

\begin{proof}
The first claim of Theorem 5.16 follows from the fact that $\pi_3(\mathbb{C}P_1^3) = 0$. In fact, in the $E_1$-page of the Atiyah-Hirzebruch spectral sequence of $\mathbb{C}P_1^3$, there is only one candidate that lies in the degree that converges to $\pi_3$: $\eta[2]$. However, because of the attaching map in $\mathbb{C}P_1^2$, we have an Atiyah-Hirzebruch differential
$$1[4] \rightarrow \eta[2].$$
Therefore, $\pi_3(\mathbb{C}P_1^3) = 0$.

For the second claim, we first show that the composition
\begin{displaymath}
	\xymatrix{
S^5 \ar@{^{(}->}[r] & P_5^6 \ar[r] & \mathbb{C}P_1^3
}
\end{displaymath}
maps through $S^2$. This follows from the fact that $\pi_5(\mathbb{C}P_1^3) = \mathbb{Z}/2$, generated by $\nu[2]$. In fact, because of the attaching maps in $\mathbb{C}P_1^3$, we have the Atiyah-Hirzebruch differentials
\begin{equation*}
\begin{split}
1[6] \rightarrow & 2\nu[2]\\
2[6] \rightarrow & 4\nu[2]\\
\eta[4] \rightarrow & \eta^2[2],
\end{split}
\end{equation*}
which leave $\nu[2]$ as the only nontrivial element in the Atiyah-Hirzebruch $E_\infty$-page that converges to $\pi_5(\mathbb{C}P_1^3)$.

Next, we consider the following commutative diagram of cofiber sequences
\begin{displaymath}
	\xymatrix{
S^5 \ar[r]^{2\nu[2]} & \mathbb{C}P_1^2 \ar@{^{(}->}[r] & \mathbb{C}P_1^3 \ar@{->>}[r] & S^6 \\
S^5 \ar[r]^2 \ar@{-->}[u] & S^5 \ar@{^{(}->}[r] \ar[u] & P_5^6 \ar[u]  \ar@{->>}[r] & S^6 \ar@{-->}[u]
}
\end{displaymath}
Since the composition
\begin{displaymath}
	\xymatrix{
S^5 \ar@{^{(}->}[r] & P_5^6 \ar[r] & \mathbb{C}P_1^3 \ar@{->>}[r] & S^6
}
\end{displaymath}
is trivial, it maps through the quotient $P_5^6/S^5 = S^6$. Since the map $P_5^6 \rightarrow \mathbb{C}P_1^3$ induces an isomorphism on $H^6$, so does $S^6\dashrightarrow S^6$. Therefore, we can choose it to be the identity map. To make the left square commute, we must identify the map $S^5 \rightarrow \mathbb{C}P_1^2$ as $\nu[2]$ modulo the indeterminacy $2\nu[2]$. Therefore, the composition
\begin{displaymath}
	\xymatrix{
S^5 \ar@{^{(}->}[r] & \mathbb{C}P_1^2 \ar[r] & \mathbb{C}P_1^3
}
\end{displaymath}
is nontrivial, and detected by $\nu[2]$ in the Atiyah-Hirzebruch spectral sequence of $\mathbb{C}P_1^3$.
\end{proof}

\begin{proof}[Proof of Theorem 5.14]
We show that there is an attaching map $\nu^2$ in $X^{21}$.

Firstly, we have a quotient map
$$P_{-2}^{6}\rightarrow \mathbb{C}P_{-1}^{3},$$
which is induced by the quotient map $\mathbb{R}P_{14}^{22} \rightarrow \mathbb{C}P_7^{11}$ on the space level and James periodicity. It maps through $\Sigma^{-16}\widetilde{X}$, since $\pi_{-1}(\mathbb{C}P_{-1}^{3}) = 0$. In fact, in the Atiyah-Hirzebruch spectral sequence of $\mathbb{C}P_{-1}^{3}$, we have a differential
$$1[0] \rightarrow \eta[-2],$$
which kills the only nontrivial element $\eta[-2]$ in the $E_1$-page.
\begin{displaymath}
	\xymatrix{
S^{-1} \ar@{^{(}->}[r] & P_{-2}^{6} \ar@{->>}[r] \ar@{->>}[d] & \Sigma^{-16}\widetilde{X} \ar@{->>}[dl] \\
& \mathbb{C}P_{-1}^{3} &
}
\end{displaymath}

Secondly, by Theorem 5.16, we have the following commutative diagram
\begin{displaymath}
	\xymatrix{
P_5^6 \ar[rrd] && S^5 \ar[ll] \ar[d] \ar[rd]^\nu & \\
Y \ar@{->>}[u] \ar@{^{(}->}[r]	& P_1^6 \ar@{->>}[r] \ar[d] & \mathbb{C}P_1^3 \ar[d] & S^2 \ar@{^{(}->}[l] \ar[ld]^\nu \\
& S^{-1} \ar[r]^{id} & S^{-1} & 	
		}
\end{displaymath}
where the map $\nu: S^2 \rightarrow S^{-1}$ is due to the nontrivial $Sq^4$ on $H^{-2}(\mathbb{C}P_{-1}^{3})$.

Therefore, in the cofiber of the composition
\begin{displaymath}
	\xymatrix{
Y \ar@{^{(}->}[r] & P_1^6 \ar[r] & S^{-1},
}
\end{displaymath}
we have $\nu^2$ as an attaching map. Since this cofiber is $\Sigma^{-15}X^{22}$, this proves the attaching map $\nu^2$ in $X^{21}$.
\end{proof}

\section{Two lemmas on Atiyah-Hirzebruch differentials}

In this section, we establish two general lemmas regarding the relationship of 3-fold Toda brackets and differentials in the Atiyah-Hirzebruch spectral sequences of certain 3 and 4 cell complexes. As examples, we use these lemmas to prove Proposition 6.3 and 6.4, whose statements will be used in Section 8.

We recall some facts from the construction of the Atiyah-Hirzebruch spectral sequence. Let $X$ be a complex with at most one cell in each dimension. Let $X^n$ denote its $n$-skeleton. Not to be confused with the notation we use in the rest of this paper, the $n$-skeleton notation only applies in the next four pages.

We have the following facts about the Atiyah-Hirzebruch spectral sequence of $X$:
\begin{enumerate}
\item The $E_1$-page is
$$E_1^{s,t}=\pi_{t}(X^{s}/X^{s-1}).$$
As used in the previous two sections, we denote any element in the $E_1$-page to be $\alpha[s]$, where $\alpha$ is an element in the stable homotopy groups of spheres, and $s$ suggests its Atiyah-Hirzebruch filtration. We will abuse the notation and write the same symbol $\alpha[s]$ for an element in $\pi_\ast(X)$.
\item The $E_r$-page is
$$E_r^{s,t}={{Im(\pi_t(X^s/X^{s-r}) \rightarrow\pi_t(X^s/X^{s-1}))}\over{Im(\pi_{t+1}(X^{s+r-1}/X^s)\rightarrow\pi_t(X^s/X^{s-1}))}},$$
where the top map is induced by the quotient map
$$X^s/X^{s-r} \twoheadrightarrow X^s/X^{s-1},$$
and the bottom map is induced by the attaching map in the cofiber sequence
\begin{displaymath}
    \xymatrix{
    X^s/X^{s-1} \ar@{^{(}->}[r] & X^{s+r-1}/X^{s-1} \ar@{->>}[r] & X^{s+r-1}/X^s \ar[r] & \Sigma X^s/X^{s-1}.
    }
\end{displaymath}
\item The differential
$$d_r:E_r^{s,t}\rightarrow E_r^{s-r,t-1}$$
is defined as the following. Let $\widetilde{\alpha}$\ be a class in $\pi_t(X^s/X^{s-r})$, such that it maps to $\alpha[s]\in E_r^{s,t}$ under the projection to the top cell: $X^s/X^{s-r} \twoheadrightarrow X^s/X^{s-1}$. We define $d_r(\alpha[s])$ to be the composition of $\widetilde{\alpha}$ with the attaching map $X^s/X^{s-r}\rightarrow \Sigma X^{s-r}/X^{s-r-1}$.
\begin{displaymath}
    \xymatrix{
    S^t \ar[r]^-{\widetilde{\alpha}} & X^s/X^{s-r} \ar[r] & \Sigma X^{s-r}/X^{s-r-1}.
    }
\end{displaymath}
One can check that this is well-defined.
\item Suppose we have a nontrivial differential in the Atiyah-Hirzebruch spectral sequence of $X$:
$$d_{s_1-s_2}(\alpha[s_1]) = \beta[s_2],$$
where $\alpha \in \pi_\ast(X^{s_1}/X^{s_1-1})$ and $\beta \in \pi_\ast(X^{s_2}/X^{s_2-1})$. This implies that, in the Atiyah-Hirzebruch spectral sequence of $X^{s_1-1}$, the element $\beta[s_2]$ is a permanent cycle. Furthermore, under the attaching map $S^{s_1-1} \rightarrow X^{s_1-1}$, the image of $\alpha[s_1]$ is detected by $\beta[s_2]$.
\end{enumerate}

We have the following lemma to compute differentials in the Atiyah-Hirzebruch spectral sequence of 3 cell complexes:
\begin{lem}
Let $T$ be a three cell complex with cells in dimensions $t_1, t_2, t_3$, where $t_3 < t_2 < t_1$. Suppose we have cofiber sequences
\begin{displaymath}
    \xymatrix{
    \Sigma^{t_3} C \gamma \ar@{^{(}->}[r]^-{i_1} & T \ar@{->>}[r]^{q_1} & S^{t_1} \ar[r]^-{a_1} & \Sigma^{t_3 + 1} C \gamma \\
    S^{t_3} \ar@{^{(}->}[r]^{i_2} & T \ar@{->>}[r]^-{q_2} & \Sigma^{t_2} C \beta \ar[r]^{a_2} & \Sigma S^{t_3},
    }
\end{displaymath}
where $C \beta$ is the cofiber of $\beta \in \pi_{t_1 - t_2 -1}$, $C \gamma$ is the cofiber of $\gamma \in \pi_{t_2 - t_3 -1}$ and $\beta$, $\gamma$ are nontrivial classes such that $\beta \cdot \gamma = 0$. In other words, the cell diagram of $T$ is the following:
\begin{displaymath}
    \xymatrix{
    *+[o][F-]{t_1} \ar@{-}[d]_{\beta}  \\
    *+[o][F-]{t_2} \ar@{-}[d]_{\gamma} \\
    *+[o][F-]{t_3}
    }
\end{displaymath}
Suppose the class $\alpha\in\pi_{t_0}$ satisfies the condition: $\alpha \cdot \beta = 0$ in $\pi_{t_0 + t_1 - t_2 - 1}$.
Then we have an Atiyah-Hirzebruch differential:
$$d_{t_1 - t_3}(\alpha[t_1]) \subseteq \langle \alpha, \beta ,\gamma \rangle[t_3].$$
If moreover
$\alpha \cdot \pi_{t_1 - t_3 - 1}\subseteq \gamma \cdot \pi_{t_0 + t_1 - t_2}$ in $\pi_{t_0 + t_1 - t_3 - 1}$, then we have an Atiyah-Hirzebruch differential:
$$d_{t_1 - t_3}(\alpha[t_1]) = \langle \alpha, \beta ,\gamma \rangle[t_3].$$
Here the indeterminacy of $\langle \alpha, \beta ,\gamma \rangle[t_3]$ is zero in the $E_{t_1-t_3}$-page.

Furthermore, in the latter case, if $0 \in \langle \alpha, \beta ,\gamma \rangle$, then $\alpha[t_1]$ is a permanent cycle in the Atiyah-Hirzebruch spectral sequence of $T$.
\end{lem}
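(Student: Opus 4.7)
The plan is to unpack the Atiyah-Hirzebruch differential $d_{t_1-t_3}$ on $\alpha[t_1]$ using the second cofiber sequence $S^{t_3}\hookrightarrow T\twoheadrightarrow\Sigma^{t_2}C\beta\xrightarrow{a_2}S^{t_3+1}$, and then identify the resulting composite with a representative of the 3-fold Toda bracket $\langle\alpha,\beta,\gamma\rangle$.

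First I would produce the lift. Since $\alpha\beta=0$, the composite $\Sigma^{t_2+1}\beta\circ\alpha[t_1]:S^{t_0+t_1}\to S^{t_2+1}$ is null, so $\alpha[t_1]:S^{t_0+t_1}\to S^{t_1}$ lifts along the quotient $\Sigma^{t_2}C\beta\twoheadrightarrow S^{t_1}$ to a map $\widetilde\alpha:S^{t_0+t_1}\to\Sigma^{t_2}C\beta$. By the explicit formula for $d_r$ recalled at the start of the section, $d_{t_1-t_3}(\alpha[t_1])$ is represented by the composite $a_2\circ\widetilde\alpha\in\pi_{t_0+t_1-t_3-1}$.

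Next I would identify $a_2$ with the data defining the Toda bracket. Since $\beta\gamma=0$, the attaching map $a_1:S^{t_1-1}\to\Sigma^{t_3}C\gamma$ of the top cell of $T$ is an extension $\widetilde\beta$ of $\Sigma^{t_2}\beta$ over $\Sigma^{t_3}C\gamma$. Comparing the two cofiber sequences for $T$ shows that $a_2$ is obtained by collapsing $\Sigma^{t_2}C\beta$ onto its top sphere $S^{t_1}$, applying $\Sigma\widetilde\beta$, and then projecting $\Sigma(\Sigma^{t_3}C\gamma)$ onto its bottom cell $S^{t_3+1}$; meanwhile the restriction of $a_2$ to the bottom cell $S^{t_2}\hookrightarrow\Sigma^{t_2}C\beta$ is $\Sigma\gamma$. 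These are exactly the two pieces of data used to build $\langle\alpha,\beta,\gamma\rangle$, so $a_2\circ\widetilde\alpha$ is by construction a representative of $\langle\alpha,\beta,\gamma\rangle$, giving the containment $d_{t_1-t_3}(\alpha[t_1])\subseteq\langle\alpha,\beta,\gamma\rangle[t_3]$.

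Third I would analyze indeterminacy. Two lifts of $\alpha[t_1]$ to $\Sigma^{t_2}C\beta$ differ by the image of $\pi_{t_0+t_1}(S^{t_2})\to\pi_{t_0+t_1}(\Sigma^{t_2}C\beta)$, and post-composing this difference with $a_2$ lands in $\gamma\cdot\pi_{t_0+t_1-t_2}$, matching one summand of the Toda indeterminacy. Under the hypothesis $\alpha\cdot\pi_{t_1-t_3-1}\subseteq\gamma\cdot\pi_{t_0+t_1-t_2}$, the other summand $\alpha\cdot\pi_{t_1-t_3-1}$ is already absorbed, so the two indeterminacies agree and the containment upgrades to the equality $d_{t_1-t_3}(\alpha[t_1])=\langle\alpha,\beta,\gamma\rangle[t_3]$. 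For the final claim, observe that on a 3-cell complex the only potentially nonzero differentials on $\alpha[t_1]$ are $d_{t_1-t_2}$ and $d_{t_1-t_3}$, the former being $\alpha\beta[t_2]=0$ by assumption; if $0\in\langle\alpha,\beta,\gamma\rangle$ then, under the equality regime, we can choose $\widetilde\alpha$ so that $a_2\circ\widetilde\alpha=0$, making $\alpha[t_1]$ a permanent cycle.

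The main obstacle will be Step 2, the careful identification of $a_2$ in terms of $\widetilde\beta$ and $\gamma$. The two cofiber sequences for $T$ must be compared with consistent orientations and suspensions so that the element $a_2\circ\widetilde\alpha$ genuinely lies in $\langle\alpha,\beta,\gamma\rangle$ rather than in some merely related subset of $\pi_{t_0+t_1-t_3-1}$; the rest of the argument is essentially bookkeeping of indeterminacies.
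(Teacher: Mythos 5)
Your proposal is correct and follows essentially the same route as the paper's proof: lift $\alpha[t_1]$ to $\widetilde\alpha\in\pi_{t_0+t_1}(\Sigma^{t_2}C\beta)$ using $\alpha\cdot\beta=0$, identify $a_2\circ\widetilde\alpha$ as a representative of $\langle\alpha,\beta,\gamma\rangle$ because $a_2$ restricts to $\Sigma\gamma$ on the bottom cell, and match the lift-indeterminacy $\gamma\cdot\pi_{t_0+t_1-t_2}$ against the Toda indeterminacy under the extra hypothesis. (One non-load-bearing slip in your Step 2: $a_2$ does \emph{not} factor as a composite through the collapse $\Sigma^{t_2}C\beta\twoheadrightarrow S^{t_1}$ — that would force $a_2|_{S^{t_2}}=0$, contradicting $a_2|_{S^{t_2}}=\Sigma\gamma$ — but the facts you actually use are exactly the ones the paper uses.)
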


\begin{proof}
Following the condition $\alpha \cdot \beta = 0$, $\alpha[t_1]$ survives in the Atiyah-Hirzebruch spectral sequence of $\Sigma^{t_2} C \beta$. In fact, this follows from the long exact sequence of homotopy groups associated to the cofiber sequence
\begin{displaymath}
    \xymatrix{
    S^{t_2} \ar@{^{(}->}[r] & \Sigma^{t_2} C \beta \ar@{->>}[r] & S^{t_1}.
    }
\end{displaymath}
By naturality of the Atiyah-Hirzebruch spectral sequence induced by the quotient map $T \twoheadrightarrow \Sigma^{t_2} C \beta$, we have the differential in the Atiyah-Hirzebruch spectral sequence of $T$:
$$d_{t_1 - t_2} (\alpha[t_1]) = 0.$$

Now consider any class in $\pi_{t_0 + t_1}(\Sigma^{t_2} C \beta)$ which is detected by $\alpha[t_1]$. We abuse the notation to denote such a class by $\alpha[t_1]$. By the definition of the Toda bracket $\langle \alpha, \beta ,\gamma \rangle$, the class $a_{2\ast}(\alpha[t_1])$ is an element in $\langle \alpha, \beta ,\gamma \rangle[t_3]$.
\begin{displaymath}
    \xymatrix{
 *+[o][F-]{} \ar[r]^{\alpha} & *+[o][F-]{} \ar@{-}[d]^{\beta}  &   \\
   & *+[o][F-]{} \ar[r]^{\gamma} & *+[o][F-]{} \\
   S^{t_0 + t_1} \ar[r] & \Sigma^{t_2} C \beta \ar[r]^{a_2} & \Sigma S^{t_3}
   }
\end{displaymath}
The indeterminacy of this Toda bracket is $\alpha \cdot \pi_{t_1 - t_3 - 1} + \gamma \cdot \pi_{t_0 + t_1 - t_2}$.
From the construction of the Atiyah-Hirzebruch spectral sequence, $a_{2\ast}(\alpha[t_1])$ is also a representative for $d_{t_1-t_3}(\alpha[t_1])$. The indeterminacy of the target of this differential is the image of
$$d_{t_2-t_3}:\pi_{t_0 + t_1 -t_2 + t_3 + 1}(S^{t_2})\rightarrow \pi_{t_0 + t_1}(\Sigma S^{t_3}),$$
which is $\gamma \cdot \pi_{t_0 + t_1 - t_2}$, since it is induced by multiplication by $\gamma$ map. Hence the first claim.

If $\alpha \cdot \pi_{t_1 - t_3 - 1}\subseteq \gamma \cdot \pi_{t_0 + t_1 - t_2}$ in $\pi_{t_0 + t_1 - t_3 - 1}$,
then $d_{t_1 - t_3}(\alpha[t_1])$ and $\langle \alpha, \beta ,\gamma \rangle[t_3]$ have a common element with the same indeterminacy. Hence the second statement.

The third statement follows directly from the second one, since the $E_{t_1 - t_3 + 1}$-page is the $E_\infty$-page for the Atiyah-Hirzebruch spectral sequence of $T$.
\end{proof}

\begin{lem}
Let $U$ be a four cell complex with cells in dimensions $t_1$, $t_2$, $t_3$, $t_4$, where $t_4 < t_3 < t_2 < t_1$. Suppose we have cofiber sequences
\begin{displaymath}
    \xymatrix{
    S^{t_3} \vee S^{t_4} \ar@{^{(}->}[r]^-{i_3} & U \ar@{->>}[r]^{q_3} & \Sigma^{t_2} C \beta \ar[r]^-{a_3} & \Sigma S^{t_3} \vee \Sigma S^{t_4} \\
    V \ar@{^{(}->}[r]^{i_4} & U \ar@{->>}[r]^-{q_4} & S^{t_1} \ar[r]^{a_4} & \Sigma V \\
    S^{t_3} \vee S^{t_4} \ar@{^{(}->}[r]^-{i_5} & V \ar@{->>}[r]^{q_5} & S^{t_2} \ar[r]^-{a_5} & \Sigma S^{t_3} \vee \Sigma S^{t_4}
    }
\end{displaymath}
where $C \beta$ is the cofiber of $\beta \in \pi_{t_1 - t_2 -1}$, the map $a_5: S^{t_2} \rightarrow \Sigma S^{t_3} \vee \Sigma S^{t_4}$ is defined component-wise by multiplication by $\gamma \in \pi_{t_2 - t_3 -1}$ and $\delta \in \pi_{t_2 - t_4 -1}$ map, and $\beta$, $\gamma$, $\delta$ are nontrivial classes such that $\beta \cdot \gamma = 0$, $\beta \cdot \delta = 0$. In other words, the cell diagram of $U$ is the following:
\begin{displaymath}
    \xymatrix{
    *+[o][F-]{t_1} \ar@{-}[d]_{\beta}  \\
    *+[o][F-]{t_2} \ar@{-}@/_1pc/[d]_{\gamma} \ar@{-}`r[dd] `[dd]^{\delta} [dd] \\
    *+[o][F-]{t_3} \\
    *+[o][F-]{t_4}
    }
\end{displaymath}
Suppose the class $\alpha\in\pi_{t_0}$ satisfies the following conditions:
\begin{enumerate}
\item $\alpha \cdot \beta = 0$ in $\pi_{t_0 + t_1 - t_2 - 1}$,
\item $\alpha \cdot \pi_{t_1 - t_3 - 1}\subseteq \gamma \cdot \pi_{t_0 + t_1 - t_2}$ in $\pi_{t_0 + t_1 - t_3 - 1}$,
\item $0 \in \langle \alpha, \beta, \gamma\rangle$ in $\pi_{t_0 + t_1 - t_3 - 1}$.
\end{enumerate}
We then have an Atiyah-Hirzebruch differential
$$d_{t_1 - t_4} (\alpha[t_1]) \subseteq \langle \alpha, \beta, \delta\rangle [t_4].$$
If furthermore the following two conditions are satisfied:
\begin{enumerate}
\setcounter{enumi}{3}
\item $\alpha \cdot \pi_{t_1 - t_4 - 1} = 0$ in $\pi_{t_0 + t_1 - t_4 - 1}$,
\item $\delta \cdot \pi_{t_0 + t_1 - t_2} = 0$ in $\pi_{t_0 + t_1 - t_4 - 1}$,
\end{enumerate}
then we have an Atiyah-Hirzebruch differential
$$d_{t_1 - t_4} (\alpha[t_1]) = \langle \alpha, \beta, \delta\rangle [t_4].$$
Moreover, in the latter case, if $0 \in \langle \alpha, \beta ,\delta \rangle$, then $\alpha[t_1]$ is a permanent cycle in the Atiyah-Hirzebruch spectral sequence of $U$.
\end{lem}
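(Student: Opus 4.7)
The plan is to reduce the four-cell situation to the three-cell Lemma 6.1 by quotienting out the bottom cell $S^{t_4}$, and then realize the $d_{t_1-t_4}$ differential as the composition of a lift of $\alpha[t_1]$ with a connecting map that factors through $\Sigma^{t_2}C\beta$ and thereby represents the Toda bracket $\langle\alpha,\beta,\delta\rangle$. Observe first that the quotient $U/S^{t_4}$ is a three-cell complex with cells in dimensions $t_1, t_2, t_3$ and attaching maps $\beta$ and $\gamma$, i.e., precisely a complex of the shape $T$ in Lemma 6.1. Conditions (1) and (2) supply the hypotheses of the equality statement of Lemma 6.1 applied to $U/S^{t_4}$, yielding $d_{t_1-t_3}(\alpha[t_1])=\langle\alpha,\beta,\gamma\rangle[t_3]$ with zero indeterminacy; condition (3) then makes this differential vanish, so by the third statement of Lemma 6.1 the class $\alpha[t_1]$ is a permanent cycle in the AHSS of $U/S^{t_4}$. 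Fix a lift $\tilde\alpha\in\pi_{t_0+t_1}(U/S^{t_4})$ of $\alpha[t_1]$ under the top-cell projection.

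By the construction of the AHSS differential, $d_{t_1-t_4}(\alpha[t_1])$ in the AHSS of $U$ is represented by the composite of $\tilde\alpha$ with the connecting map $U/S^{t_4}\to\Sigma S^{t_4}$ of the cofiber sequence $S^{t_4}\hookrightarrow U\twoheadrightarrow U/S^{t_4}$. Since $S^{t_3}$ and $S^{t_4}$ sit as a wedge at the bottom of $U$, this connecting map is null on the $t_3$-cell of $U/S^{t_4}$ and hence factors through the further quotient $U/S^{t_4}\twoheadrightarrow\Sigma^{t_2}C\beta$; the resulting map $\Sigma^{t_2}C\beta\to\Sigma S^{t_4}$ is the $\delta$-component of $a_3$. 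Thus the composite $S^{t_0+t_1}\xrightarrow{\tilde\alpha}U/S^{t_4}\twoheadrightarrow\Sigma^{t_2}C\beta\to\Sigma S^{t_4}$ is, by the standard cofiber-sequence definition of a three-fold Toda bracket, a representative of $\langle\alpha,\beta,\delta\rangle[t_4]$, yielding the containment $d_{t_1-t_4}(\alpha[t_1])\subseteq\langle\alpha,\beta,\delta\rangle[t_4]$. For the equality statement, note that the indeterminacy of the Toda bracket is $\alpha\cdot\pi_{t_1-t_4-1}+\delta\cdot\pi_{t_0+t_1-t_2}$, while the indeterminacy of the AHSS element is $\delta\cdot\pi_{t_0+t_1-t_2}$, coming both from varying the lift $\tilde\alpha$ through the $t_2$-cell and from the image of $d_{t_2-t_4}$ landing in filtration $t_4$; conditions (4) and (5) kill both of these pieces, so the two sides agree on the nose. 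The permanent cycle claim is immediate: if $0\in\langle\alpha,\beta,\delta\rangle$ then $d_{t_1-t_4}(\alpha[t_1])=0$, and since $U$ has no cells below $t_4$ no further differentials out of filtration $t_1$ are possible.

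The main obstacle I anticipate is the clean identification in the second paragraph: one must verify that the connecting map genuinely factors through $\Sigma^{t_2}C\beta$, that the resulting composite matches the canonical cofiber-sequence definition of $\langle\alpha,\beta,\delta\rangle$ with the correct summand of $a_3$, and that sign and composition conventions are consistent with the three-cell argument of Lemma 6.1. The indeterminacy bookkeeping is mechanical but requires careful attention to which contributions come from varying $\tilde\alpha$ and which from prior AHSS differentials; all of the genuinely new conceptual content of the four-cell case sits in this factorization step.
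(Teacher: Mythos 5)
Your proposal is correct and follows essentially the same route as the paper: both reduce to the three-cell Lemma 6.1 via quotients of $U$, use conditions (1)--(3) to push $\alpha[t_1]$ past the $E_{t_1-t_3}$-page, and identify $d_{t_1-t_4}(\alpha[t_1])$ with $\langle \alpha,\beta,\delta\rangle$ through the cofiber-sequence description of the Toda bracket, with conditions (4) and (5) killing the indeterminacy on both sides. The only difference is packaging: the paper forms the second three-cell quotient $T'=U/S^{t_3}$ (cells $t_1,t_2,t_4$ with attaching maps $\beta,\delta$), applies Lemma 6.1 there, and pulls back along $U\twoheadrightarrow T'$, whereas you unwind the connecting map $U/S^{t_4}\to \Sigma S^{t_4}$ directly; these amount to the same computation.
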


\begin{proof}
We consider the following two cofiber sequences:
\begin{displaymath}
    \xymatrix{
    S^{t_3}  \ar@{^{(}->}[r] & U \ar@{->>}[r]^{p_3} & T' \\
    S^{t_4}  \ar@{^{(}->}[r] & U \ar@{->>}[r]^{p_4} & T''.
    }
\end{displaymath}
Both 3 cell complexes $T'$ and $T''$ (with the following cell diagrams) satisfy the assumptions in Lemma 6.1.
\begin{displaymath}
    \xymatrix{
    *+[o][F-]{t_1} \ar@{-}[d]_{\beta} & &  *+[o][F-]{t_1} \ar@{-}[d]_{\beta} \\
    *+[o][F-]{t_2} \ar@{-}`r[dd] `[dd]^{\delta} [dd] & & *+[o][F-]{t_2} \ar@{-}@/_1pc/[d]_{\gamma} \\
     & & *+[o][F-]{t_3}\\
    *+[o][F-]{t_4} & & \\
    T' & & T''
    }
\end{displaymath}
By Lemma 6.1, in the Atiyah-Hirzebruch spectral sequence of $T''$, we have a differential
$$d_{t_1 - t_3} (\alpha[t_1]) = \langle \alpha, \beta, \gamma\rangle[t_3] = 0.$$
The last equality follows from condition $(3)$. Using the naturality for the quotient map $p'': U \twoheadrightarrow T''$, we pull back a differential in the Atiyah-Hirzebruch spectral sequence of $U$:
$$d_{t_1 - t_3} (\alpha[t_1]) = 0.$$
By Lemma 6.1, in the Atiyah-Hirzebruch spectral sequence of $T'$, we have a differential
$$d_{t_1 - t_4} (\alpha[t_1]) \subseteq \langle \alpha, \beta, \delta\rangle[t_4].$$
Using the naturality of the quotient map $p_3: U \twoheadrightarrow T'$, we pull it back to get a differential in the Atiyah-Hirzebruch spectral sequence of $U$:
$$d_{t_1 - t_4} (\alpha[t_1]) \subseteq \langle \alpha, \beta, \delta\rangle[t_4].$$
The second and third statements follow directly from the first one, since the Toda bracket $\langle \alpha, \beta, \delta\rangle$ has zero indeterminacy under conditions $(4)$ and $(5)$, and the $E_{t_1 - t_4 + 1}$-page is the $E_\infty$-page for the Atiyah-Hirzebruch spectral sequence of $U$.
\end{proof}

Now we apply Lemma 6.2 to the complex $X^{22}$.

In $\pi_{39}$, consider the three homotopy classes $\alpha = \sigma \eta_5$, $\alpha' \in \{h_5c_0\}$ such that $2 \cdot \alpha' = 0, \ \sigma \cdot \alpha' =0$, and $\alpha'' = \sigma \{d_1\}$. Here we use the notation $\{a\}$ to denote the set of homotopy classes that are detected by $a$, where $a$ is a surviving element in the $E_\infty$-page of the Adams spectral sequence.
One can choose $\alpha' = \langle \theta_4, 2, \epsilon\rangle$. Moss's theorem tells us $\alpha' \in \{h_5c_0\}$. We have
$$2 \cdot \alpha' = 2 \langle \theta_4, 2, \epsilon\rangle = \langle 2, \theta_4, 2 \rangle \epsilon = \eta \theta_4 \epsilon = 0.$$
The last equation follows from filtration reasons. From the proof of Lemma 6.5, we also have $\sigma \cdot \alpha' =0$.
Note also that there are indeterminacies in the notation $\{d_1\}$ and $\eta_5$, but for our purpose, any choices work. The reader should compare with Isaksen's computations in \cite{Isa, Isa2}.

\begin{prop}
In the Atiyah-Hirzebruch spectral sequence of $X^{22}$, we have the following $d_8$ differentials:
\begin{equation*}
\begin{split}
d_8(\alpha[22]) & = 0, \\
d_8(\alpha'[22]) & = \eta\phi[14],\\
d_8(\alpha''[22]) & \subseteq \eta^2\pi_{44}[14],
\end{split}
\end{equation*}
where $\phi\in\pi_{45}$ is detected by $h_5d_0$, such that $\eta \cdot \phi \in \langle\alpha', 2, \nu^2\rangle$.
\end{prop}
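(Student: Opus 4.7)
The plan is to apply Lemma 6.2 to the four-cell complex $X^{22}$. From Remark 5.13, $X^{22}$ has cells in dimensions $14, 19, 21, 22$ with attaching maps $\beta=2$ (from the top cell in dimension $22$), $\gamma=\eta$ (from $21$ down to $19$), and $\delta=\nu^2$ (from $21$ down to $14$, via Theorem 5.14 / Corollary 5.15). So the hypotheses of Lemma 6.2 set $(t_1,t_2,t_3,t_4)=(22,21,19,14)$, with $t_0=39$ for each of $\alpha,\alpha',\alpha''$. The resulting $d_{t_1-t_4}=d_8$ differential is what the proposition records, and in each case it lies in $\langle\alpha,2,\nu^2\rangle[14]$ (or equals it, when conditions (4) and (5) of Lemma 6.2 hold).

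First I would verify conditions (1)--(3) of Lemma 6.2 uniformly for the three classes. Condition (1) asks for $2\alpha=0$: for $\sigma\eta_5$ this follows since $\eta_5$ is detected by $h_1h_5$ and $h_0\cdot h_1h_5=0$ with no higher-filtration survivor; for $\alpha'=\langle\theta_4,2,\epsilon\rangle$ the vanishing $2\alpha'=0$ was noted in the paragraph preceding the proposition; for $\sigma\{d_1\}$ it follows from $2\{d_1\}=0$ (read off from Isaksen's $E_\infty$ chart in stem $32$). Condition (2) requires $\alpha\cdot\pi_2\subseteq\eta\cdot\pi_{41}$; since $\pi_2=\{\eta^2\}$ this reduces to noting $\alpha\eta^2=\eta\cdot(\alpha\eta)\in\eta\pi_{41}$, which is automatic. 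Condition (3), $0\in\langle\alpha,2,\eta\rangle\subseteq\pi_{41}$, I would verify in each case by filtration: the target bidegree in the Adams spectral sequence for the sphere at stem $41$ is sparse, and the Toda bracket, being a coset of $\eta\pi_{40}+\alpha\pi_3$, can be checked to contain $0$ using the relations $\eta\cdot\alpha=\eta\cdot\sigma\eta_5, \eta\alpha', \eta\sigma\{d_1\}$, all of which are tracked in Isaksen's calculations.

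Next I would handle conditions (4) and (5) to upgrade to equalities where needed. Condition (5) is the strong constraint $\nu^2\cdot\pi_{41}=0$; this is a direct check against the known generators of $\pi_{41}$, all of which are annihilated by $\nu^2$ for filtration reasons. Condition (4) is $\alpha\cdot\pi_7=0$; since $\pi_7$ is generated by $\sigma$ and $\sigma$ acts trivially on $\alpha'$ (given) and on $\sigma\eta_5$ (as $\sigma^2\eta_5$ lies in $\pi_{46}$ where the relevant product vanishes by a May/Tangora-style check) and on $\sigma\{d_1\}$ (again by standard product tables), condition (4) holds for $\alpha$ and $\alpha'$. For $\alpha''$, where condition (4) might fail, I only assert the containment version of the conclusion, matching the $\subseteq$ in the statement for $d_8(\alpha''[22])$.

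The main obstacle, and the one where most of the work goes, is identifying the three Toda brackets $\langle\alpha,2,\nu^2\rangle,\ \langle\alpha',2,\nu^2\rangle,\ \langle\alpha'',2,\nu^2\rangle$ in $\pi_{47}$. For $\alpha=\sigma\eta_5$ I would use the juggling relation $\langle\sigma\eta_5,2,\nu^2\rangle\supseteq\sigma\cdot\langle\eta_5,2,\nu^2\rangle$ together with Isaksen's computation that $\sigma\langle\eta_5,2,\nu^2\rangle=0$ modulo the indeterminacy $\sigma\eta_5\pi_7+\nu^2\pi_{40}$, giving $0\in\langle\sigma\eta_5,2,\nu^2\rangle$ and hence the first stated differential. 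For $\alpha'$ the identification $\eta\phi\in\langle\alpha',2,\nu^2\rangle$ is the defining condition imposed in the statement, so this case is immediate once Lemma 6.2 applies with equality. For $\alpha''=\sigma\{d_1\}$, juggling again gives $\langle\sigma\{d_1\},2,\nu^2\rangle\supseteq\sigma\langle\{d_1\},2,\nu^2\rangle$, and checking stem-wise that every class in the target factors through $\eta^2$ (using that $\nu^3=\eta^3\cdot(\text{something})$ type identities hold in the relevant stems) yields $\langle\sigma\{d_1\},2,\nu^2\rangle\subseteq\eta^2\pi_{44}$. The most delicate point is the simultaneous control of indeterminacies in $\pi_{47}$, where a careful inspection of Isaksen's $E_\infty$ page is required; I expect this to be the hardest bookkeeping in the proof.
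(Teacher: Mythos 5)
Your overall framework is the paper's: apply Lemma 6.2 to $X^{22}$ with $(t_1,t_2,t_3,t_4)=(22,21,19,14)$, $\beta=2$, $\gamma=\eta$, $\delta=\nu^2$, verify conditions (1)--(5) (only (1)--(3) for $\alpha''$), and then compute the brackets $\langle -,2,\nu^2\rangle$. The verification of the hypotheses matches the paper in substance, though you have several indexing slips: the brackets live in $\pi_{46}$, not $\pi_{47}$; condition (2) is $\alpha\cdot\pi_2\subseteq\eta\cdot\pi_{40}$; the indeterminacy of $\langle\alpha,2,\eta\rangle$ involves $\alpha\cdot\pi_2$, not $\alpha\cdot\pi_3$; and condition (5) is $\nu^2\cdot\pi_{40}=0$ (which the paper gets from $\nu\cdot\pi_{43}=0$).

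The genuine gap is in the $\alpha'$ case, which is the heart of the proposition. You write that the identification $\eta\phi\in\langle\alpha',2,\nu^2\rangle$ is ``the defining condition imposed in the statement, so this case is immediate.'' That is circular: the statement asserts the \emph{existence} of a class $\phi$ detected by $h_5d_0$ with $\eta\phi\in\langle\alpha',2,\nu^2\rangle$, and proving that existence is precisely the content of the second differential. Lemma 6.2 only tells you $d_8(\alpha'[22])=\langle\alpha',2,\nu^2\rangle[14]$; a priori this bracket could be $0$ or detected by something other than $h_1h_5d_0$. The paper produces $\phi$ by writing $\nu^2=\langle\eta,\nu,\eta\rangle$, shuffling to get $\langle\alpha',2,\nu^2\rangle\supseteq\langle\alpha',2,\eta,\nu\rangle\cdot\eta$, and then using the Massey product $\langle h_5c_0,h_0,h_1,h_2\rangle=h_5d_0$ together with Moss's theorem. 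None of this appears in your proposal, and it cannot be skipped: the downstream argument (Lemma 8.8) depends on the target being an $\eta$-multiple of a class $\phi$ that can be lifted to the $16$-cell of $\widehat{X^{22}}$ and killed by $d_2(\phi[16])=\eta\phi[14]$. Relatedly, for $\alpha''$ the mechanism you propose (``$\nu^3=\eta^3\cdot(\text{something})$ type identities'') is not the one that works; the paper shuffles to $\sigma\langle\{d_1\},2,\nu^2\rangle\subseteq\sigma\cdot\pi_{39}$ and then invokes the nontrivial fact $\sigma\cdot\pi_{39}\subseteq\eta^2\pi_{44}$ (Lemma 6.5, proved generator by generator, with a delicate analysis of $h_3d_1$), plus a separate check that the indeterminacy $\sigma\{d_1\}\cdot\pi_7$ also lands in $\eta^2\pi_{44}$. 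Your sketch does not engage with either of these inputs.
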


\begin{proof}
The complex $X^{22}$ satisfies the conditions in Lemma 6.2, with $\beta = 2 \in \pi_0$, $\gamma = \eta \in \pi_1$ and $\delta = \nu^2 \in \pi_6$.
\begin{displaymath}
    \xymatrix{
    *+[o][F-]{22} \ar@{-}[d]_{2}  \\
    *+[o][F-]{21} \ar@{-}@/_1pc/[d]_{\eta} \ar@{-}`r[dd] `[dd]^{\nu^2} [dd] \\
    *+[o][F-]{19} \\
    *+[o][F-]{14}
    }
\end{displaymath}
We verify that the classes $\alpha$ and $\alpha'$ satisfy conditions $(1)$ through $(5)$, and $\alpha''$ satisfy conditions $(1)$ through $(3)$ in Lemma 6.2:
\begin{enumerate}
\item $\alpha \cdot 2 = 0$ in $\pi_{39}$. This follows from $2 \cdot \eta_5 = 0$. \\
      $\alpha' \cdot 2 = 0$ in $\pi_{39}$. This follows from our definition of $\alpha'$. \\
      $\alpha'' \cdot 2 = 0$ in $\pi_{39}$. This follows from $2 \cdot \{d_1\} = 0$.
\item $\alpha \cdot \pi_2 \subseteq \eta \cdot \pi_{40}$ in $\pi_{41}$. \\
      $\alpha' \cdot \pi_2 \subseteq \eta \cdot \pi_{40}$ in $\pi_{41}$. \\
      $\alpha'' \cdot \pi_2 \subseteq \eta \cdot \pi_{40}$ in $\pi_{41}$. \\
      These follow from the fact that $\pi_2$ is generated by $\eta^2$.
\item $0 \in \langle \alpha, 2, \eta \rangle$ in $\pi_{41}$. \\
      $0 \in \langle \alpha', 2, \eta \rangle$ in $\pi_{41}$. \\
      $0 \in \langle \alpha'', 2, \eta \rangle$ in $\pi_{41}$. \\
      These follow from the fact that the Cokernel of $J$ in $\pi_{41}$ is contained in the image of $\eta : \pi_{40} \rightarrow \pi_{41}$. In fact, suppose for example $\langle \alpha, 2, \eta \rangle$ does not contain 0. It therefore must contain an element in the image of $J$. Therefore, mapping this Toda bracket to the $K(1)$-local sphere gives a contradiction, since the class $\alpha$ maps to 0. The cases $\alpha'$ and $\alpha''$ work the same way.
\item $\alpha \cdot \pi_7 = 0$ in $\pi_{46}$. \\
      $\alpha' \cdot \pi_7 = 0$ in $\pi_{46}$. \\
      These follow from the fact that $\pi_7$ is generated by $\sigma$ and the proof of Lemma 6.5.
\item $\nu^2 \cdot \pi_{40} = 0$ in $\pi_{46}$. This follows from $\nu \cdot \pi_{43} = 0$ for filtration reasons.
\end{enumerate}
For the targets of these differentials, we apply Lemma 6.2 by computing the following Toda brackets
$$\langle \alpha, 2, \nu^2\rangle, \ \langle \alpha', 2, \nu^2\rangle, \ \langle \alpha'', 2, \nu^2\rangle.$$

For the element $\alpha = \sigma \eta_5$, we have
$$\langle \sigma \cdot \eta_5, 2, \nu^2 \rangle \supseteq \sigma \langle \eta_5, 2, \nu^2 \rangle = \eta_5 \langle 2, \nu^2, \sigma \rangle = \eta_5 \{0, \sigma^2\} = 0.$$
Note that the last equation holds because in the proof of Lemma 6.5 we have $\sigma^2 \eta_5 =0$. Therefore, by Lemma 6.2, we have the Atiyah-Hirzebruch differential 
$$d_8(\alpha[22]) = 0.$$

For the element $\alpha'\in\{h_5c_0\}$, we have
\begin{equation*}
\begin{split}
\langle \alpha', 2, \nu^2 \rangle & = \langle \alpha', 2, \langle \eta, \nu, \eta \rangle \rangle \\
& \supseteq \langle \alpha', 2, \eta, \nu \rangle \cdot \eta \\
& \subseteq \{h_5d_0\} \cdot \eta,
\end{split}
\end{equation*}
where the last inequality follows from the following Massey product in $Ext$, and Moss's theorem \cite[Theorem 1.2]{Mos}.
$$\langle h_5c_0, h_0, h_1, h_2\rangle = h_5 \langle c_0, h_0, h_1, h_2\rangle = h_5d_0.$$
That is, there exists a class $\phi$ in $\{h_5d_0\}$ in $\pi_{45}$ such that $\eta \cdot \phi \in \langle \alpha', 2, \nu^2 \rangle$. Therefore, by Lemma 6.2, we have the Atiyah-Hirzebruch differential 
$$d_8(\alpha'[22]) = \eta\phi[14].$$ 

For the element $\alpha'' = \sigma \{d_1\}$, we have
$$\langle \sigma\cdot\{d_1\}, 2, \nu^2 \rangle \supseteq \sigma \langle \{d_1\}, 2, \nu^2 \rangle \subseteq \sigma\cdot \pi_{39} \subseteq \eta^2 \pi_{44}.$$
The indeterminacy of the Toda bracket $\langle \sigma\cdot\{d_1\}, 2, \nu^2 \rangle$ is
$$\sigma\{d_1\}\cdot \pi_7 + \nu^2 \cdot \pi_{40} = \sigma\{d_1\}\cdot \pi_7 \subseteq \sigma\cdot \pi_{39} \subseteq \eta^2 \pi_{44}.$$
Therefore, we have
$$\langle \sigma\cdot\{d_1\}, 2, \nu^2 \rangle \subseteq \eta^2 \pi_{44}.$$
By Lemma 6.2, we have the Atiyah-Hirzebruch differential
$$d_8(\alpha''[22]) \subseteq \eta^2\pi_{44}[14].$$ 
\end{proof}

We also apply Lemma 6.2 to the complex $X^{20}/S^{19}$. Note that by Lemma 4.4 and Remark 5.8, we have $S^{19}$ as an $H\mathbb{F}_2$-subcomplex of $X^{20}$.

In $\pi_{41}$, we consider the homotopy class $\alpha''' = \sigma \{h_0h_2h_5\}$. Note that the notation $\{h_0h_2h_5\}$ has indeterminacy. Since $h_0h_2h_5$ does not support any hidden $\eta$-extension in the $E_\infty$-page of the Adams spectral sequence of $S^0$, we choose a class in $\{h_0h_2h_5\}$ such that its $\eta$-multiple is zero. The class $\alpha''' = \sigma \{h_0h_2h_5\}$ is therefore unique.

\begin{prop}
In the Atiyah-Hirzebruch spectral sequence of $X^{20}/S^{19}$, the element $\alpha'''[20]$ is a permanent cycle.
\end{prop}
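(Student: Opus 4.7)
The strategy is to apply Lemma 6.2 to $U = X^{20}/S^{19}$, mirroring the approach used in Proposition 6.3. First I identify the cell structure of $X^{20}/S^{19}$. From the cell diagrams of Remark 5.13, $X^{20}$ has cells in dimensions $14, 17, 18, 19, 20$ with the $18$-cell attached to the $17$-cell via $2$ and to the $14$-cell via $\nu$, and the $20$-cell attached to the $19$-cell via $2$ and to the $18$-cell via $\eta$. By the argument of Remark 5.8 applied to the $19$-skeleton of $X^{20}$, the summand $S^{19}$ is an $H\mathbb{F}_2$-subcomplex. Quotienting it out removes the $19$-cell and kills the $2\iota_{19}$-component of the $20$-cell's attaching map, leaving $\eta\iota_{18}$ as the leading term. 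Thus $X^{20}/S^{19}$ is a four-cell complex whose cell diagram matches the setup of Lemma 6.2 with $(t_1,t_2,t_3,t_4) = (20,18,17,14)$ and $(\beta,\gamma,\delta) = (\eta,2,\nu)$; the defining relations $\beta\gamma = 2\eta = 0$ and $\beta\delta = \eta\nu = 0$ are automatic.

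Next I verify conditions $(1)$--$(5)$ of Lemma 6.2 for $\alpha = \alpha''' = \sigma\{h_0h_2h_5\}$. Condition $(1)$, $\alpha''' \cdot \eta = 0$, is built into the choice of $\{h_0h_2h_5\}$: a representative with vanishing $\eta$-multiple was fixed in the paragraph defining $\alpha'''$. Condition $(2)$ reduces to $\alpha'''\eta^2 = 0$ since $\pi_2 = \mathbb{Z}/2\{\eta^2\}$, and so follows from $(1)$. Condition $(3)$, $0 \in \langle\alpha''',\eta,2\rangle \subseteq \pi_{43}$, is established by Toda shuffling $\langle\sigma\{h_0h_2h_5\},\eta,2\rangle \supseteq \sigma\langle\{h_0h_2h_5\},\eta,2\rangle$ followed by a cokernel-of-$J$ argument via the $K(1)$-local sphere, of the same kind used for condition $(3)$ in the proof of Proposition 6.3. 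Condition $(4)$ is trivial since $\pi_5 = 0$. Condition $(5)$, $\nu\cdot\pi_{43} = 0$ in $\pi_{46}$, is checked directly from the known generators of $\pi_{43}$ via Isaksen's tables.

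Having verified $(1)$--$(5)$, Lemma 6.2 asserts that the lower-order differentials $d_2$ and $d_3$ on $\alpha'''[20]$ vanish and that $d_6(\alpha'''[20]) = \langle\alpha''',\eta,\nu\rangle[14]$ with zero indeterminacy. It remains to show $0 \in \langle\sigma\{h_0h_2h_5\},\eta,\nu\rangle \subseteq \pi_{46}$. By Toda shuffling this bracket contains $\sigma\cdot\langle\{h_0h_2h_5\},\eta,\nu\rangle$, so it suffices to compute the inner bracket in $\pi_{39}$ (for example via Moss's theorem starting from the Massey product $\langle h_0h_2h_5,h_1,h_2\rangle$ in $\mathrm{Ext}$) and then check that its $\sigma$-multiple lies in a subgroup of $\pi_{46}$ containing $0$. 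Since $E_7 = E_\infty$ for a four-cell complex of height $6$, this then gives the desired permanent cycle.

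The principal obstacle is the explicit evaluation of the inner bracket $\langle\{h_0h_2h_5\},\eta,\nu\rangle$ together with the $\nu$-multiplication check for condition $(5)$; both demand careful inspection of the stable stems in the range $39$--$46$ using Isaksen's tables. Once these two technical points are settled, the remainder of the proof is a direct application of Lemma 6.2.
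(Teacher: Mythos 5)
Your setup is the right one and matches the paper's: you identify $X^{20}/S^{19}$ as the four-cell complex of Lemma 6.2 with $(t_1,t_2,t_3,t_4)=(20,18,17,14)$ and $(\beta,\gamma,\delta)=(\eta,2,\nu)$, verify conditions $(1)$--$(5)$ for $\alpha'''=\sigma\{h_0h_2h_5\}$, and reduce everything to showing $0\in\langle\sigma\{h_0h_2h_5\},\eta,\nu\rangle$ in $\pi_{46}$. But that last containment is precisely the content of the proposition, and you do not prove it --- you defer it as ``the principal obstacle.'' Worse, the route you sketch for it (compute $\langle\{h_0h_2h_5\},\eta,\nu\rangle\subseteq\pi_{39}$ via Moss's theorem, then multiply by $\sigma$) runs into a real difficulty: by Lemma 6.5 of the paper, $\sigma\cdot\pi_{39}\subseteq\{0,\eta^2\{g_2\}\}$ and the value $\sigma^2\{d_1\}=\eta^2\{g_2\}$ is \emph{nonzero} in $\pi_{46}$, so knowing the inner bracket only up to its indeterminacy in $\pi_{39}$ does not force the $\sigma$-multiple to vanish --- you would additionally have to rule out an $h_3d_1$-component. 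The paper avoids all of this with a shuffle in the other direction:
$$\langle\sigma\{h_0h_2h_5\},\eta,\nu\rangle\ \supseteq\ \sigma\langle\{h_0h_2h_5\},\eta,\nu\rangle\ =\ \{h_0h_2h_5\}\cdot\langle\eta,\nu,\sigma\rangle\ \subseteq\ \{h_0h_2h_5\}\cdot\pi_{12}\ =\ 0,$$
using $\pi_{12}=0$; here the choice of $\{h_0h_2h_5\}$ with trivial $\eta$-multiple is what makes the inner bracket defined. You should supply this (or an equivalent) argument; without it the proof is not complete.

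Two smaller points. For condition $(3)$ you invoke a coker-$J$ / $K(1)$-local argument ``of the same kind'' as in Proposition 6.3; for that to apply in $\pi_{43}$ you would need the cokernel of $J$ there to lie in the indeterminacy $2\cdot\pi_{43}$ of the bracket, which you do not check. The paper's argument is more direct: since $\eta\{h_0h_2h_5\}=0$, one has $\langle\sigma\{h_0h_2h_5\},\eta,2\rangle\supseteq\sigma\langle\{h_0h_2h_5\},\eta,2\rangle\subseteq\sigma\cdot\pi_{36}=0$. Finally, condition $(5)$ ($\nu\cdot\pi_{43}=0$) is also left to ``inspection of tables''; it follows quickly from filtration reasons, but it should be stated as such rather than deferred.
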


\begin{proof}
The complex $X^{20}/S^{19}$ satisfies the conditions in Lemma 6.2, with $\beta' = \eta \in \pi_1$, $\gamma' = 2 \in \pi_0$ and $\delta' = \nu \in \pi_3$.
\begin{displaymath}
    \xymatrix{
    *+[o][F-]{20} \ar@{-}[d]_{\eta}  \\
    *+[o][F-]{18} \ar@{-}@/_1pc/[d]_{2} \ar@{-}`r[dd] `[dd]^{\nu} [dd] \\
    *+[o][F-]{17} \\
    *+[o][F-]{14}
    }
\end{displaymath}
We verify that $\alpha''' = \sigma \{h_0h_2h_5\} \in \pi_{41}$ satisfies the conditions $(1)$ through $(5)$ in Lemma 6.2:
\begin{enumerate}
\item $\sigma \{h_0h_2h_5\} \cdot 2 = 0$ in $\pi_{41}$. This follows from $2 \cdot \pi_{41} = 0$.
\item $\sigma \{h_0h_2h_5\} \cdot \pi_2 \subseteq 2 \cdot \pi_{43}$ in $\pi_{43}$. This follows from the fact that $\pi_2$ is generated by $\eta^2$, and that
    $$\eta^2 \cdot \pi_{41} = \{0, 4\{P^5 h_2\}\} \subseteq 2 \cdot \pi_{43}.$$
\item $0 \in \langle \sigma \{h_0h_2h_5\}, \eta, 2 \rangle$ in $\pi_{43}$. This follows from $\sigma \cdot \pi_{36} = 0$ in $\pi_{43}$. In fact, since we chose the element in $\{h_0h_2h_5\}$ such that its $\eta$-multiple is zero, we have
    $$\langle \sigma\cdot\{h_0h_2h_5\}, \eta, 2 \rangle \supseteq \sigma \langle \{h_0h_2h_5\}, \eta, 2 \rangle \subseteq \sigma\cdot \pi_{36} = 0.$$
\item $\sigma \{h_0h_2h_5\} \cdot \pi_5 = 0$ in $\pi_{46}$. This follows from $\pi_5 = 0$.
\item $\nu \cdot \pi_{43} = 0$ in $\pi_{46}$.
\end{enumerate}
We further verify that $0 \in \langle \sigma \{h_0h_2h_5\}, \eta, \nu \rangle$ in $\pi_{46}$. Since we chose the element in $\{h_0h_2h_5\}$ such that its $\eta$-multiple is zero, we have
$$\langle \sigma\cdot\{h_0h_2h_5\}, \eta, \nu \rangle \supseteq \sigma \langle \{h_0h_2h_5\}, \eta, \nu \rangle = \{h_0h_2h_5\} \cdot \langle \eta, \nu, \sigma \rangle \subseteq \{h_0h_2h_5\} \cdot \pi_{12} = 0.$$
The last equation follows from the fact that $\pi_{12}=0$. Therefore, by Lemma 6.2, the element $\alpha'''[20] = \sigma \{h_0h_2h_5\}[20]$ is a permanent cycle in the Atiyah-Hirzebruch spectral sequence of $X^{20}/S^{19}$.
\end{proof}

In the rest of this section, we prove the following relation in the stable homotopy groups of spheres, which was used in Propositions 6.3 and 6.4.

\begin{lem}
$$\sigma \cdot \pi_{39} \subseteq \eta^2 \pi_{44} = \{0, \eta^2\{g_2\}\}.$$
Moreover, there is at most one nontrivial $\sigma$-extension from $\pi_{39}$ to $\pi_{46}$, namely
$$\sigma^2\{d_1\} = \eta^2\{g_2\}.$$
\end{lem}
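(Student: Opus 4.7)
The plan is to enumerate the generators of $\pi_{39}$ from Isaksen's classical Adams $E_\infty$ chart \cite{Isa, Isa2} and compute $\sigma \cdot x$ for each generator $x$, showing in each case that the product lies in the set $\{0, \eta^2\{g_2\}\}$. The group $\pi_{39}$ (at the prime 2) has a handful of generators detected by Adams $E_\infty$ classes such as those in the image of $J$, together with classes like $\{h_5 c_0\}$, $\{u\}$, $\eta_5$-type classes, $\sigma\{d_1\}$, and $\sigma \cdot \eta \{h_5\}$ and related Toda products. The first task is to list these explicitly and sort them by Adams filtration.

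Second, I would dispatch the majority of the generators by filtration and detection arguments. Multiplying by $\sigma$ shifts stem by $7$ and filtration up by at least $1$, so for most generators $x$, the product $\sigma x$ lies in an Adams filtration in which the only surviving $E_\infty$ class in the $46$-stem is $h_1^2 g_2$ (detecting $\eta^2\{g_2\}$). For the low-filtration classes in the image of $J$, one can use the fact that $\sigma$ acts as zero on the image of $J$ in this range (by comparison with the $K(1)$-local sphere), or use the Toda bracket shuffle $\sigma \cdot \langle -, -, - \rangle = \langle -, -, - \rangle \cdot \sigma$ to land in a stem where the relevant group is zero. For middle-filtration classes such as those detected by $h_5 c_0$ or $u$, I would invoke the $Ext$-level relations $h_3 \cdot h_5 c_0 = 0$ and $h_3 \cdot u = 0$ (easily read from the Lambda algebra or Bruner's tables) and rule out possible hidden $\sigma$-extensions by Adams filtration.

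The only potentially nonzero $\sigma$-extension is the case $x = \sigma\{d_1\}$, for which $\sigma x = \sigma^2\{d_1\}$. The plan here is to realize this hidden $\sigma$-extension via a Toda bracket manipulation starting from the algebraic identity $\sigma^2 \in \langle \sigma, 2\sigma, \eta\rangle$ (up to indeterminacy) and the Massey product relation $h_1^2 g_2 = \langle h_3^2, h_0, h_3 d_1\rangle$ or an equivalent identification in $Ext$, and then apply Moss's convergence theorem to pass to the Adams $E_\infty$ page. Equivalently, one may read off this hidden extension directly from Isaksen's motivic $\tau$-extension data, since the hidden $\sigma$-extensions in this range have been catalogued in \cite{Isa}. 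Either route identifies $\sigma^2\{d_1\}$ with $\eta^2\{g_2\}$ modulo the indeterminacy, which is itself contained in $\eta^2\pi_{44}$.

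The main obstacle is the hidden extension $\sigma^2\{d_1\} = \eta^2\{g_2\}$: verifying this requires a careful Toda/Massey bracket shuffle or a quotation from Isaksen's motivic computation, since the relevant bidegree supports multiple elements and one must show the extension is nonzero (not merely contained in the target). The enumeration of generators and the filtration arguments for the remaining cases are essentially bookkeeping, but must be done carefully to ensure no generator is missed; the group $\pi_{39}$ at the prime $2$ must be checked against Isaksen's chart to guarantee completeness.
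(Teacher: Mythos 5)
Your overall strategy --- run through the Adams $E_\infty$ generators of $\pi_{39}$ (namely $P^2h_0^2i$, $u$, $h_2t$, $h_3d_1$, $h_5c_0$, $h_1h_3h_5$) and annihilate $\sigma$ times a well-chosen representative of each --- is exactly the paper's, and your handling of the image-of-$J$ part and of the lowest-filtration class is in the same spirit. But there are two genuine problems. First, for the crucial case $\sigma\cdot\sigma\{d_1\}$, both identities you propose are dimensionally impossible: $\langle\sigma,2\sigma,\eta\rangle$ lives in $\pi_{16}$, not $\pi_{14}$, so it cannot contain $\sigma^2$; and $\langle h_3^2,h_0,h_3d_1\rangle$ lands in the 54-stem, not the 46-stem, so it cannot equal $h_1^2g_2$. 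Your fallback of quoting the hidden $\sigma$-extension from Isaksen does not work either: hidden $\sigma$-extensions are not part of the catalogued data in \cite{Isa}, and the paper itself remarks that the nontriviality of this extension was only established later and is not what is being proved. What the paper actually does is represent a class detected by $h_3d_1=h_1e_1$ as the bracket $\langle\nu\{n\},\nu,\eta\rangle$ (via Bruner's differential $d_3(e_1)=h_1t=h_2^2n$, the $E_4$-page Massey product $\langle h_2n,h_2,h_1\rangle=h_1e_1$, and Moss's theorem), shuffle to get $\sigma\cdot\langle\nu\{n\},\nu,\eta\rangle=\langle\sigma,\nu\{n\},\nu\rangle\cdot\eta$ with the inner bracket detected by $h_1g_2=h_3e_1$, and then exclude the only competing higher-filtration target $\{w\}\cdot\eta$ by multiplying by $\eta$ and comparing hidden $\eta$-extensions on $h_3d_1$ and $d_0l$.

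Second, ``rule out possible hidden $\sigma$-extensions by Adams filtration'' for $u$ and $h_5c_0$ is not a proof: hidden extensions are precisely what filtration bounds cannot exclude, and the 46-stem contains higher-filtration classes (for instance $N$ and $d_0l$) that remain legitimate potential targets after the $Ext$-product vanishes. The paper handles $u$ by mapping to $tmf$ (both $\{u\}$ and the only filtration-permitted target $\{d_0l\}$ are detected by $tmf$, where $\sigma=0$), handles $h_2t$ by choosing the representative $\nu\{t\}$ and using $\nu\sigma=0$, and handles $h_5c_0$ by choosing the explicit representative $\alpha'=\langle\theta_4,2,\epsilon\rangle$ and shuffling $\langle\theta_4,2,\epsilon\rangle\sigma=\theta_4\langle 2,\epsilon,\sigma\rangle=0$. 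You need an argument of this kind --- an explicit representative annihilated by $\sigma$, or a detecting spectrum --- in each case; $Ext$-level products plus filtration alone leave open exactly the hidden-extension question that is the content of the lemma.
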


\begin{proof}
The group $\pi_{39}$ is generated by classes that are detected by $P^2h_0^2i$, $u$, $h_2t$, $h_3d_1$, $h_5c_0$ and $h_1h_3h_5$ in the Adams $E_\infty$-page. To prove this lemma, we check that for each element in the Adams $E_\infty$-page, $\sigma$ annihilates one class it detects, with the possible exception of $h_3d_1$. For the element $h_3d_1$, we show that there is a possible $\sigma$-extension from $h_3d_1$ to $N$, and it is equivalent to an $\eta$-extension from $h_1g_2$ to $N$. It is now known that
this nontrivial $\sigma$-extension does in fact exist, but it is irrelevant to the proofs in this paper.

\begin{enumerate}
\item For $P^2h_0^2i$, we have $\sigma \cdot \{P^2h_0^2i\} = 0$ for filtration reasons.
\item For $u$, suppose $\sigma \cdot \{u\} \neq 0$. The only possibility is $\sigma \cdot \{u\} = \{d_0 l\}$ for filtration reasons. However, this cannot happen, since both $\{u\}$ and $\{d_0 l\}$ are detected by $tmf$, and $\sigma = 0$ in $\pi_\ast tmf$: mapping this relation to $\pi_\ast tmf$ gives a contradiction. Therefore, $\sigma \cdot \{u\} = 0$.
\item For $h_2t$, one class that it detects is $\nu \{t\}$. It follows from $\nu \cdot \sigma = 0$ that $\sigma \cdot \{h_2t\} = 0$.
\item For $h_3d_1$, note that there is a relation in $Ext$: $h_3d_1 = h_1e_1$. Following Bruner's differential \cite[Theorem 4.1]{Br1}
    $$d_3(e_1) = h_1t = h_2^2n,$$
we have a Massey product in the Adams $E_4$-page
$$\langle h_2n, h_2, h_1\rangle = h_1e_1.$$
By Moss's theorem \cite[Theorem 1.2]{Mos}, we have that the Toda bracket $\langle \nu\{n\}, \nu, \eta\rangle$ is detected by $h_1e_1 = h_3d_1$. Therefore,
$$\sigma \cdot \langle \nu\{n\}, \nu, \eta\rangle = \langle \sigma, \nu\{n\}, \nu \rangle \cdot \eta.$$
By Bruner's differential and Moss's theorem, we have that the Toda bracket $\langle \sigma, \nu\{n\}, \nu \rangle$ is detected by
$$h_1g_2 = h_3e_1 = \langle h_3, h_2n, h_2\rangle.$$
Since the only element with higher filtration than $h_1g_2$ that supports an $\eta$-extension is $w$, to show that
$$\sigma \cdot \{h_3d_1\} = \eta^2 \{g_2\},$$
we only need to show that
$$\sigma \cdot \langle \nu\{n\}, \nu, \eta\rangle \neq \{w\}\cdot \eta.$$
Suppose the opposite is true. Multiplying the equation by $\eta$ gives a contradiction, since $h_3d_1$ does not support hidden $\eta$-extension while $d_0 l$ does. Therefore, we have
$$\sigma \cdot \{h_3 d_1\} = \eta^2 \{g_2\}.$$
\item For $h_5c_0$, by Moss's theorem, $\alpha' = \langle \theta_4, 2, \epsilon\rangle$ is detected by $h_5c_0$. We have
$$\langle \theta_4, 2, \epsilon\rangle \cdot \sigma = \theta_4 \cdot \langle 2, \epsilon, \sigma\rangle = \theta_4 \cdot 0 =0.$$
Therefore, we have the class $\alpha' = \langle \theta_4, 2, \epsilon\rangle$ in $\{h_5 c_0\}$ such that $\sigma \cdot \alpha' = 0$.
\item For $h_1h_3h_5$, it detects $\alpha = \sigma \eta_5$. Since $\nu\cdot \eta_5 =0$, we have
$$\sigma \cdot \sigma \eta_5 = \langle\nu, \sigma, \nu\rangle \eta_5 = \nu \langle \sigma, \nu, \eta_5\rangle \subseteq \nu \cdot \pi_{43} = 0.$$
Therefore, we have the class $\alpha = \sigma \eta_5$ in $\{h_1h_3h_5\}$ such that $\sigma \cdot \alpha = 0$.
\end{enumerate}
In sum, we have $\sigma \cdot \pi_{39} \subseteq \eta^2 \pi_{44} = \{0, \eta^2\{g_2\}\}$.
\end{proof}

\section{The cofiber of $\eta$}

In this section, we establish Step 1 by proving the following theorem.

\begin{thm}
In the Adams spectral sequence of $\Sigma^{14} C \eta$, we have a $d_4$ differential in the 61-stem:
$$d_4(h_4^3[16]) = B_1[14].$$
\end{thm}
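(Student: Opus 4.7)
The plan is to derive the differential from a hidden $\eta$-extension in $\pi_*(S^0)$, by exploiting the defining cofiber sequence
\[
S^{15} \xrightarrow{\eta} S^{14} \xrightarrow{i} \Sigma^{14}C\eta \xrightarrow{p} S^{16}.
\]
This induces a long exact sequence on $Ext$ over the Steenrod algebra in which the class $h_4^3[16] \in Ext(\Sigma^{14}C\eta)$ is a lift of $h_4^3 \in Ext^{3,48}(S^{16})$ through $p_*$, made possible by the vanishing of $h_1 \cdot h_4^3$ in $Ext(S^0)$, while $B_1[14]$ is the image of $B_1 \in Ext(S^0)$ under $i_*$. These two classes sit in the same total degree of $Ext(\Sigma^{14}C\eta)$ with the correct filtration gap to be source and target of an Adams $d_4$.

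The key input is the hidden $\eta$-extension in $\pi_{46}(S^0)$: for any class $x \in \pi_{45}(S^0)$ detected by $h_4^3$, the product $\eta \cdot x$ is detected by $B_1$, with an Adams filtration jump of exactly four. The standard ``geometric boundary'' principle for the cofiber sequence of $\eta$, a special case of Moss's convergence theorem applied to the Toda bracket $\langle \eta, -, - \rangle$, translates a hidden $\eta$-extension of filtration jump $r$ in $\pi_*(S^0)$ into a $d_r$ differential in the Adams spectral sequence of $C\eta$ from the top cell to the bottom cell. Applied to our situation, as Remark~3.8 anticipates, this yields $d_4(h_4^3[16]) = B_1[14]$ directly.

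The main work, therefore, is to establish the hidden $\eta$-extension $\eta \cdot \{h_4^3\} \in \{B_1\}$. One natural approach is via Toda brackets: express $B_1$ as a Massey product in $Ext$ of the form $\langle h_1, \alpha, \beta\rangle$ with $\alpha \beta$ representing $h_4^3$ (up to indeterminacy), then lift the Massey product to a Toda bracket of detected homotopy classes via Moss's theorem, and use standard juggling to conclude that $\eta \cdot \{h_4^3\}$ is detected by $B_1$. Alternatively, since filtration considerations already restrict $\eta \cdot \{h_4^3\}$ to a small set of candidates in $\pi_{46}(S^0)$, one can rule out the other candidates by Hurewicz-type arguments in a detecting spectrum such as $tmf$ or $ko$, or by comparison with the existing rigorous computation of the $46$-stem.

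The principal obstacle is the last step: the filtration jump of four is substantial, so one must carefully rule out every intermediate filtration possibility for $\eta\cdot\{h_4^3\}$. Once the $\eta$-extension is secured, the conversion to the Adams $d_4$ in $\Sigma^{14}C\eta$ is essentially formal. A secondary check, needed to conclude that $d_4$ is the first nonzero differential on $h_4^3[16]$, is that this class is not killed by an earlier $d_2$ or $d_3$; this will follow either from the local structure of $Ext(\Sigma^{14}C\eta)$ in the relevant bidegrees or by naturality from the vanishing of the corresponding differentials on $h_4^3 \in Ext(S^{16})$.
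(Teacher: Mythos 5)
Your overall strategy is the same as the paper's: everything is driven by the cofiber sequence of $\eta$ and the hidden $\eta$-extension from $h_4^3$ to $B_1$ (this is precisely what Remark 3.7 announces). You can, moreover, dispense with what you call the ``principal obstacle'': the extension need not be reproved, since Barratt--Jones--Mahowald show that $B_1$ detects $\eta\theta_{4.5}$, and the paper simply imports this.

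The genuine gap is in the step you declare ``essentially formal.'' A hidden $\eta$-extension with filtration jump $4$ does not by itself yield $d_4(h_4^3[16]) = B_1[14]$. What it yields, once one has computed via the long exact sequence of $Ext$ groups that $B_1[14]\neq 0$ in $E_2(\Sigma^{14}C\eta)$, is that $B_1[14]$ is a permanent cycle detecting $i_\ast(\eta x)=0$ and hence must be \emph{hit by some} differential; identifying the source is the actual content of the theorem. Your proposal checks only that $h_4^3[16]$ survives to the $E_4$-page. It never excludes the possibility that $B_1[14]$ is killed earlier by a $d_2$ or $d_3$ emanating from one of the other classes in the 61-stem of $Ext(\Sigma^{14}C\eta)$ in filtrations 4 and 5, namely $h_0h_4^3[16]$, $h_2g_2[14]$ and $h_1g_2[16]$; if any of those killed $B_1[14]$, your $d_4$ would be zero. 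The paper disposes of these candidates by a counting argument: $\ker(\eta\colon\pi_{45}\to\pi_{46})\cong\mathbb{Z}/8\oplus\mathbb{Z}/2$ is generated by an order 8 class detected by $h_0h_4^3$ together with $\eta\{g_2\}$, so exactness of the homotopy long exact sequence forces $h_0h_4^3[16]$ and $h_1g_2[16]$ to be surviving cycles, while $h_2g_2[14]$ is a permanent cycle by naturality from $S^{14}$. The same kernel computation is also needed to make your other step airtight: to show $h_4^3[16]$ cannot survive you must know that \emph{every} class detected by $h_4^3$ has nonzero $\eta$-multiple (not merely one such class, which is all the hidden extension asserts), and this is exactly the statement that the kernel contains nothing of Adams filtration 3. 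Without these verifications the ``geometric boundary principle'' in the generality you invoke it is not a theorem.
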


\begin{proof}
The cofiber sequence
\begin{displaymath}
    \xymatrix{
  S^{15} \ar[r]^{\eta} & S^{14} \ar[r]^i & \Sigma^{14}C\eta \ar[r]^p & S^{16}
    }
\end{displaymath}
gives us a short exact sequence on cohomology
\begin{displaymath}
    \xymatrix{
  0 \ar[r] & H^\ast(S^{16}) \ar[r]^{p^\ast} & H^\ast(\Sigma^{14}C\eta) \ar[r]^{i^\ast} & H^\ast(S^{14}) \ar[r] & 0
    }
\end{displaymath}
and therefore a long exact sequence of $Ext$ groups
\begin{displaymath}
    \xymatrix{
  Ext^{s-1,t-1}(S^{15}) \ar[r]^-{h_1} & Ext^{s,t}(S^{14}) \ar[r]^{i_{\sharp}} & Ext^{s,t}(\Sigma^{14}C\eta) \ar[r]^{p_\sharp} & Ext^{s,t}(S^{16}).
    }
\end{displaymath}
From this long exact sequence, we have in Table 1 the Adams $E_2$ page of $\Sigma^{14}C\eta$ in the 60 and 61 stems for $s\leq 7$.

\begin{table}[h]
\caption{The Adams $E_2$ page of $\Sigma^{14}C\eta$ in the 60 and 61 stems for $s\leq 7$}
\centering
\begin{tabular}{ l l l }
$s\backslash t-s$ & 60 & 61 \\ [0.5ex] 
\hline 
7 & $B_1[14]$ & $h_0^2h_5d_0[16]$\\ \hline
6 & $h_0^2g_2[16]$ & $h_0h_2g_2[14]$\\
  & & $h_0h_5d_0[16]$ \\ \hline
5 & $h_0g_2[16]$ & $h_2g_2[14]$\\
  & & $h_1g_2[16]$ \\ \hline
4 & & $h_0h_4^3[16]$\\ \hline
3 & & $h_4^3[16]$\\
\end{tabular}
\label{Ceta}
\end{table}

Firstly, since there is an $\eta$-extension from $h_4^3$ to $B_1$ in $S^0$, The class $B_1[14]$ in $Ext(\Sigma^{14}C\eta)$ detects zero in $\pi_{60}(\Sigma^{14}C\eta)$, and therefore must be killed by some element. There are four candidates: $h_4^3[16]$ in filtration 3, $h_0h_4^3[16]$ in filtration 4, and $h_2g_2[14], \ h_1g_2[16]$ in filtration 5.

Secondly, the element $h_4^3[16]$ in $Ext(\Sigma^{14}C\eta)$ cannot survive. Suppose it did. We would then have $q_\sharp(h_4^3[16]) = h_4^3[16]$, where the image survives in $Ext(S^{16})$. However, the homotopy class detected by $h_4^3[16]$ in $Ext(S^{16})$ maps nontrivially to a class in $\pi_{60}(\Sigma S^{14})$ because of the same $\eta$-extension. This contradicts the exactness of the long exact sequence of homotopy groups.

Thirdly, the element $h_2g_2[14]$ is a permanent cycle and therefore cannot kill $B_1[14]$. In fact, the element $h_2g_2[14]$ is a permanent cycle in $Ext(S^{14})$. The image $i_\sharp(h_2g_2[14]) = h_2g_2[14]$ must also be a permanent cycle.

At last, the kernel of the map
$$\eta: \pi_{45}\longrightarrow \pi_{46}$$
is $\mathbb{Z}/{8}\oplus\mathbb{Z}/{2}$, generated by an order 8 element detected by $h_0h_4^3$ and $\eta\{g_2\}$. Since $h_0h_4^3$ and $h_1g_2$ have filtration 4 and 5, we must have two more surviving cycles in $\pi_{61}(\Sigma^{14}C\eta)$ with filtration strictly smaller than 6 besides $h_2g_2[14]$. The only possibility is $h_0h_4^3[16]$ and $h_1g_2[16]$, since we know $h_4^3[16]$ cannot survive.

Therefore, the only possibility to kill $B_1[14]$ is $h_4^3[16]$.
\end{proof}

\begin{cor}
The elements $h_0h_4^3[16]$, $h_2g_2[14]$ and $h_1g_2[16]$ survive in the Adams spectral sequence of $\Sigma^{14} C\eta$.
\end{cor}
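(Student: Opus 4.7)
The plan is to leverage the machinery already set up in the proof of Theorem 7.1, since the corollary is essentially a byproduct of the dimension counting there. First I would handle $h_2g_2[14]$ directly by naturality: the element $h_2g_2$ is a well-known permanent cycle in the Adams spectral sequence of $S^0$ (it detects $\nu\{g_2\}$), so $h_2g_2[14]$ is a permanent cycle in $\mathrm{Ext}(S^{14})$, and its image $i_\sharp(h_2g_2[14]) = h_2g_2[14]$ in $\mathrm{Ext}(\Sigma^{14}C\eta)$ must also be a permanent cycle. This is already noted in the proof of Theorem 7.1 and requires no further argument.

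For the remaining two elements, I would work through the long exact sequence of homotopy groups
\begin{displaymath}
\xymatrix{
\pi_{61}(S^{14}) \ar[r]^{i_\ast} & \pi_{61}(\Sigma^{14}C\eta) \ar[r]^{p_\ast} & \pi_{45}(S^0) \ar[r]^\eta & \pi_{60}(S^{14})
}
\end{displaymath}
associated with the cofiber sequence $S^{15} \xrightarrow{\eta} S^{14} \to \Sigma^{14}C\eta \to S^{16}$. By the computation recalled in the proof of Theorem 7.1, the kernel of $\eta \colon \pi_{45} \to \pi_{46}$ is $\mathbb{Z}/8 \oplus \mathbb{Z}/2$, generated by an order $8$ class detected by $h_0h_4^3$ together with $\eta\{g_2\}$. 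Each element in this kernel lifts via $p_\ast$ to a class in $\pi_{61}(\Sigma^{14}C\eta)$, and the Adams filtration of each lift is at most the filtration of its image under $p_\sharp$ on the $E_\infty$ page.

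Next I would match these lifts against Table 1. A lift of the order-$8$ generator detected by $h_0h_4^3$ sits in Adams filtration $\le 4$, and inspection of the table shows the only candidates at filtration $\le 4$ in stem $61$ are $h_4^3[16]$ and $h_0h_4^3[16]$; since Theorem 7.1 already rules out $h_4^3[16]$ (it supports the $d_4$ differential), the lift must be detected by $h_0h_4^3[16]$, forcing $h_0h_4^3[16]$ to be a permanent cycle. Similarly, a lift of $\eta\{g_2\}$ has filtration $\le 5$, and once $h_4^3[16]$ and $h_0h_4^3[16]$ are used up, the only remaining candidate projecting nontrivially to $h_1g_2$ under $p_\sharp$ is $h_1g_2[16]$; so this element must also be a permanent cycle.

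I do not anticipate a serious obstacle here: the corollary is essentially a bookkeeping consequence of the long exact sequence, the kernel computation for $\eta$, and Table 1. The only subtle point is ensuring that the two lifts coming from the kernel of $\eta$ are genuinely detected by distinct elements and not absorbed into higher-filtration classes; this is handled by noting that any lift of a nonzero element of $\pi_{45}$ must project nontrivially under $p_\sharp$ on $E_\infty$, and the filtration constraints leave exactly the three candidates $h_0h_4^3[16]$, $h_2g_2[14]$ and $h_1g_2[16]$ as permanent cycles in the range considered.
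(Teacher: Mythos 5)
Your proposal is correct and takes essentially the same route as the paper: the paper's proof of Corollary 7.2 is just a pointer back to the proof of Theorem 7.1, where $h_2g_2[14]$ is handled by naturality from $Ext(S^{14})$ and the survival of $h_0h_4^3[16]$ and $h_1g_2[16]$ is forced by lifting the kernel of $\eta\colon\pi_{45}\to\pi_{46}$ through the long exact sequence and matching Adams filtrations against Table 1, exactly as you describe.
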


\begin{proof}
This follows directly from the proof of Theorem 7.1 and filtration reasons.
\end{proof}

\section{The Adams spectral sequence of $\widetilde{X}$}

In this section, based on Theorem 7.1, we prove the following Theorem 8.1 in Step 2.

\begin{thm}
In the Adams spectral sequence of $\widetilde{X}$, we have the differential
$$d_4(h_4^3[16]) = B_1[14].$$
\end{thm}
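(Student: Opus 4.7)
The plan is to deduce the differential from Theorem 7.1 by naturality of the Adams spectral sequence along the inclusion $i: \Sigma^{14}C\eta \hookrightarrow \widetilde{X}$ (recall from Section 5 that $\Sigma^{14}C\eta$ is the 16-skeleton of $\widetilde{X}$). By naturality of the algebraic Atiyah-Hirzebruch spectral sequence, as explained in Remark 3.5, $i_\sharp$ sends $h_4^3[16]$ to $h_4^3[16]$ and $B_1[14]$ to $B_1[14]$ in $Ext(\widetilde{X})$. So once we know that both classes survive to the $E_4$-page of the Adams spectral sequence for $\widetilde{X}$, the $d_4$-differential of Theorem 7.1 will push forward to the claimed differential in $\widetilde{X}$ (with no indeterminacy issue, since by inspection $B_1[14]$ is the only class in its bidegree coming from the 14-cell).

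First I would compute $Ext(\widetilde{X})$ in the bidegrees relevant to the differential, using the algebraic Atiyah-Hirzebruch spectral sequence with $E_1 = \bigoplus_n Ext(S^n)$ indexed by the cells of $\widetilde{X}$ (namely $14, 16, 17, 18, 19, 20, 21, 22$), together with the information from \cite{WX} on differentials. In particular I need to identify, in $Ext^{s,t}(\widetilde{X})$ for $(t-s, s)$ lying in the cone that could hit $B_1[14]\in Ext^{7, 67}(\widetilde{X})$ via $d_2$ or $d_3$, all classes that could kill $B_1[14]$. Parallelly, I would verify that $h_4^3[16]\in Ext^{3, 64}(\widetilde{X})$ is not killed by any $d_2$ or $d_3$, and that it does not already support an earlier differential (the potential targets lie in bidegrees easily controlled by the cell structure of $\widetilde{X}$).

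The main obstacle, as flagged in Remark 3.10, is showing $B_1[14]$ survives to $E_4$ in $\widetilde{X}$: there are on the order of ten Atiyah-Hirzebruch classes in higher cells (dimensions $17$ through $22$) whose corresponding classes in $Ext(\widetilde{X})$ are \emph{a priori} candidates to support a $d_2$ or $d_3$ hitting $B_1[14]$. The strategy for each such candidate $\alpha[n]$ is to exhibit an $H\mathbb{F}_2$-subcomplex $Z \subset \widetilde{X}$ (one of $X^{20}, \widehat{X^{20}}, X^{21}, \widehat{X^{21}}, X^{22}, \widehat{X^{22}}$ from Definitions 5.4--5.7) that contains the cell in dimension $n$ but is small enough that in $Ext(Z)$, the same class $\alpha[n]$ is a permanent cycle (detecting an actual homotopy class, which we exhibit using Propositions 6.3 and 6.4 and the Toda-bracket analysis of Section~6, pushed up via James periodicity and the cell structure lemmas of Section~5). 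Naturality of the Adams spectral sequence along $Z \hookrightarrow \widetilde{X}$ then forces $\alpha[n]$ to be a permanent cycle in $\widetilde{X}$ as well, so in particular it cannot support a nontrivial $d_r$ for any $r$.

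I expect one of the ten candidates to resist this treatment; for that one I would, as Remark~3.10 foretells, show instead that it supports a $d_2$-differential whose target lies elsewhere (not at $B_1[14]$), so it is also unavailable to kill $B_1[14]$. Once all ten candidates have been ruled out, $B_1[14]$ survives to $E_4$, and the functoriality diagram
\begin{displaymath}
\xymatrix{
h_4^3[16] \ar@{|->}[d]_{i_\sharp} \ar@{|->}[r]^-{d_4} & B_1[14] \ar@{|->}[d]^{i_\sharp} \\
h_4^3[16] \ar@{|->}[r]^-{d_4} & B_1[14]
}
\end{displaymath}
together with Theorem~7.1 gives the claimed $d_4(h_4^3[16])=B_1[14]$ in the Adams spectral sequence of $\widetilde{X}$. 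The hardest bookkeeping will be the case-by-case verification of those nine ``permanent cycle'' claims, each of which requires locating the correct $H\mathbb{F}_2$-subcomplex and importing the corresponding Toda-bracket computation from Section~6.
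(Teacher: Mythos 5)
Your proposal is correct and follows essentially the same route as the paper: compute $Ext(\widetilde{X})$ in the relevant range via the cell structure and the data of \cite{WX}, show that nine of the ten candidate classes in filtrations 4 and 5 are permanent cycles by locating them in suitable $H\mathbb{F}_2$-subcomplexes ($\Sigma^{14}C\eta$, $S^{17}$, $X^{20}$, $X^{21}$, $X^{22}$, $\widehat{X^{22}}$) via the Toda bracket computations of Section 6, show the remaining one ($f_1[21]$) supports a $d_2$ with a different target, and then push the $d_4$ of Theorem 7.1 forward by naturality. This matches the paper's proof in both strategy and structure.
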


The proof of Theorem 8.1 is summarized as in the following Table 2.

\begin{table}[h]
\caption{The Adams $E_2$ page of $\widetilde{X}$ in the 60 and 61 stems for $s\leq 7$}
\centering
\begin{tabular}{ l l l | l l l }
$s\backslash t-s$ & 60 & 61 & status & proof & $H\mathbb{F}_2$-subquotients used \\ [0.5ex] 
\hline 
7 & $B_1[14]$ & $\bullet$ & & &\\
  & $\bullet$ & $\bullet$ & & & \\ \hline
6 & $h_0^2f_1[20]$ & $\bullet$ & & &\\
  & $\bullet$ & $\bullet$ & & & \\
  & $\bullet$ & $\bullet$ & & & \\ \hline
5 & $\bullet$ & $h_2g_2[14]$ & permanent cycle & Lemma 8.3 & $\Sigma^{14} C\eta$\\
  & $\bullet$ & $h_1g_2[16]$ & permanent cycle & Lemma 8.3 & $\Sigma^{14} C\eta$\\
  & & $h_1f_1[20]$ & permanent cycle & Lemma 8.10 & $X^{20}$\\
  & & $h_1h_5c_0[21]$ & permanent cycle & Lemma 8.7 & $X^{21}$ \\
  & & $h_3d_1[22]$ & permanent cycle & Lemma 8.8 & $X^{22}$ and $\widehat{X^{22}}$\\ \hline
4 & $\bullet$ & $h_0h_4^3[16]$\\
  & & $g_2[17]$ & permanent cycle & Lemma 8.4 & $S^{17}$\\
  & & $f_1[21]$ & $d_2(f_1[21]) = h_0^2f_1[20]$ & Lemma 8.5 &  $P_{19}^{21}$\\
  & & $h_1^2h_3h_5[21]$ & permanent cycle & Lemma 8.7 & $X^{21}$\\
  & & $h_5c_0[22]$ & permanent cycle & Lemma 8.8 & $X^{22}$ and $\widehat{X^{22}}$\\ \hline
3 & $\bullet$ & $h_4^3[16]$ & $d_4(h_4^3[16]) = B_1[14]$ & & \\
  & & $\underline{h_1h_3h_5[22]}$ & permanent cycle & Lemma 8.8 & $X^{22}$ and $\widehat{X^{22}}$ \\
\end{tabular}
\label{X}
\end{table}
Here the element $\underline{h_1h_3h_5[22]}$ is defined to be the image of $h_1h_3h_5[22]$ in $Ext(X^{22})$. In fact, the group $Ext^{3, 64}(X^{22}) = \mathbb{Z}/2$ is generated by $h_1h_3h_5[22]$ as we will show in Lemma 8.8. Each $\bullet$ represents a nontrivial element in its bidegree. But these elements are irrelevant to our purpose.

\begin{proof}
Firstly, as we will show in Lemma 8.2, the Adams $E_2$-page of $\widetilde{X}$ in the 60 and 61 stems for $s\leq 7$ is as claimed in Table 2. In particular, there are 10 elements in Adams filtration 4 and 5. Secondly, by the Lemmas 8.3, 8.4, 8.5, 8.7, 8.8 and 8.10 in later part of this section, the element $B_1[14]$ in Adams filtration 7 cannot be killed by any $d_2$ or $d_3$ differentials from these 10 elements. In fact, one of these 10 elements in Adams filtration 4 supports a $d_2$ differential, and the rest are permanent cycles. Therefore, the element $B_1[14]$ survives to the $E_4$-page of the Adams spectral sequence of $\widetilde{X}$. Theorem 8.1 follows from naturality of the Adams spectral sequences and Theorem 7.1.
\end{proof}

\begin{lem}
The Adams $E_2$ page of $\widetilde{X}$ in the 60 and 61 stem for $s\leq 7$ is as claimed in Table 2.
\end{lem}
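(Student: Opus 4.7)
The plan is to apply the algebraic Atiyah--Hirzebruch spectral sequence (AAHSS)
$$E_1^{s,t} = \bigoplus_{n \in \{14,16,17,18,19,20,21,22\}} Ext^{s,t-n}(S^0) \Longrightarrow Ext^{s,t}(\widetilde{X}),$$
indexed by the cells of $\widetilde{X}$, together with the cofiber sequence $S^{15} \hookrightarrow P_{14}^{22} \twoheadrightarrow \widetilde{X}$ from Definition 5.1. Since the Lambda algebra computation of $Ext(P_1^\infty)$ through $t < 72$ carried out in \cite{WX} already gives complete control of the AAHSS differentials in the relevant range (see Remark 3.4), the proof is essentially a bookkeeping exercise.

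First I would enumerate, for each cell dimension $n \in \{14,\ldots,22\}$ and for $s \leq 7$, all generators of $Ext^{s,\,s+(60-n)}(S^0)$ and $Ext^{s,\,s+(61-n)}(S^0)$, using Isaksen's tables \cite{Isa, Isa2}. This yields the complete $E_1$-contribution to the bidegrees of interest. Next, I would read off the AAHSS differentials from the Lambda algebra data of \cite{WX}: since $\widetilde{X}$ is obtained from $P_{14}^{22}$ by collapsing the $15$-cell, only those AAHSS differentials in $P_{14}^{22}$ whose source and target both avoid cell $15$ survive to $\widetilde{X}$; contributions involving the $15$-cell are accounted for by the long exact sequence in $Ext$ associated to the cofiber sequence $S^{15} \hookrightarrow P_{14}^{22} \twoheadrightarrow \widetilde{X}$. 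Cancelling the classes hit by differentials should leave precisely the elements displayed in Table 2.

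A subtle point, which is the main thing to be careful about, concerns the generator $\underline{h_1h_3h_5[22]}$ in $Ext^{3,64}(\widetilde{X})$. By Notation 3.3 and Example 3.6, the symbol $h_1h_3h_5[22]$ has a priori indeterminacy modulo lower-cell contributions such as $h_4^3[16]$; one must fix the specific choice arising from the $H\mathbb{F}_2$-subcomplex $X^{22}$ (Definition 5.6), which by construction does not contain the $16$-cell. Independence of $\underline{h_1h_3h_5[22]}$ and $h_4^3[16]$ in $Ext^{3,64}(\widetilde{X})$ then follows because they sit in different AAHSS filtrations (cells $22$ and $16$ respectively). This particular choice of lift is precisely what is needed downstream in Steps 4 and 5 of the main road map. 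Apart from this subtlety, the hardest aspect of the proof is the systematic verification that no generators have been overlooked and no differentials miscounted; this is mechanical given the prior investment in \cite{WX}, but requires patient case checking across all nine cell dimensions and filtrations.
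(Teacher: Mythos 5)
Your plan is workable and rests on the same ultimate input as the paper (the Lambda algebra/Curtis table computation of \cite{WX}), but the decomposition you choose is different. The paper does not run the algebraic Atiyah--Hirzebruch spectral sequence over all eight cells of $\widetilde{X}$ and does not collapse the $15$-cell of $P_{14}^{22}$; instead it splits off the \emph{bottom} cell, using the cofiber sequence $S^{14}\hookrightarrow\widetilde{X}\twoheadrightarrow P_{16}^{22}$, so that $Ext(P_{16}^{22})$ can be read off directly from the Curtis table and the whole lemma reduces to computing the connecting homomorphism $\delta\colon Ext^{s,t}(P_{16}^{22})\to Ext^{s+1,t+1}(\Sigma S^{14})$, which raises Adams filtration by $1$ because the $16$-cell is attached to the $14$-cell by $\eta$. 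The entire computational content of the paper's proof is the single nontrivial value $\delta(h_5d_0[16])=h_1h_5d_0[14]$, pinned down by naturality with respect to $\Sigma^{14}C\eta\hookrightarrow\widetilde{X}$ together with $Ext^{s,s+46}(S^0)=0$ for $s\le 5$ and $Ext^{6,6+46}(S^0)=\mathbb{Z}/2\{h_1h_5d_0\}$. Your route buys a uniform procedure; the paper's buys a one-line identification of the only interesting boundary value.

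One step of your proposal is imprecise enough that, taken literally, it gives the wrong table: the claim that the AAHSS differentials of $\widetilde{X}$ are exactly those differentials of $P_{14}^{22}$ whose source and target avoid the $15$-cell. Collapsing the $15$-cell changes the attaching-map structure --- the $16$-cell, attached to the $15$-cell by degree $2$ in $P_{14}^{22}$, becomes attached to the $14$-cell by $\eta$ in $\widetilde{X}$ --- so \emph{new} AAHSS differentials appear that have no counterpart in $P_{14}^{22}$. The relevant one is precisely $d_2(h_5d_0[16])=h_1h_5d_0[14]$ (the AAHSS avatar of the paper's $\delta$), which removes $h_5d_0[16]$ from the $61$-stem and $h_1h_5d_0[14]$ from the $60$-stem; miss it and Table 2 acquires two spurious generators. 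Your appeal to the long exact sequence for $S^{15}\hookrightarrow P_{14}^{22}\twoheadrightarrow\widetilde{X}$ can in principle recover this, but you should say explicitly that it is this mechanism, not the transport of old differentials, that produces the cancellation, and identify the class on which it acts. The remark about fixing $\underline{h_1h_3h_5[22]}$ via $X^{22}$ is correct and matches the paper, though that choice only becomes load-bearing in Lemma 8.8 and Step 4, not in establishing the $E_2$-page itself.
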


\begin{proof}
Because of the cell structure of $\widetilde{X}$, there exists a cofiber sequence
\begin{displaymath}
    \xymatrix{
  S^{14} \ar[r]^{i} & \widetilde{X} \ar[r]^q &  P_{16}^{22}  \ar[r]^a & \Sigma S^{14}
    }
\end{displaymath}
This cofiber sequence gives us a short exact sequence on cohomology
\begin{displaymath}
    \xymatrix{
  0 \ar[r] & H^\ast(P_{16}^{22}) \ar[r]^{q^\ast} & H^\ast(\widetilde{X}) \ar[r]^{i^\ast} & H^\ast(S^{14}) \ar[r] & 0
    }
\end{displaymath}
and therefore a long exact sequence on $Ext$ groups
\begin{displaymath}
    \xymatrix{
  Ext^{s,t}(S^{14}) \ar[r]^{i_\sharp} & Ext^{s,t}(\widetilde{X}) \ar[r]^{q_{\sharp}} & Ext^{s,t}(P_{16}^{22}) \ar[r]^-\delta & Ext^{s+1,t+1}(\Sigma S^{14}).
    }
\end{displaymath}

Note that the Adams filtration of the attaching map $a: P_{16}^{22} \rightarrow \Sigma S^{14}$ is 1. In fact, in its cofiber $\widetilde{X}$, the 16-cell is attached to the 14-cell by $\eta$, which has the Adams filtration 1. Therefore, the boundary map in the long exact sequence on $Ext$ groups raises the Adams filtration by 1.

In Section 6 of \cite{WX}, we explained how to obtain the Adams $E_2$-page of $P_n^{n+k}$ from our Curtis table of $P_1^\infty$.
In particular, we have the Adams $E_2$ page of $P_{16}^{22}$ in the 60 and 61 stem for $s\leq 7$.

To compute $Ext(\widetilde{X})$ from the long exact sequence on $Ext$ groups, we also need to compute the boundary homomorphism $\delta: Ext^{s,t}(P_{16}^{22}) \rightarrow Ext^{s+1,t+1}(\Sigma S^{14})$. In fact, in the 61 stem for $s\leq 5$, there is only one element $h_5d_0[16]$ (with the right choices of other elements) which maps nontrivially: $\delta(h_5d_0[16]) = h_1h_5d_0[14]$. This follows from the naturality of the boundary homomorphism induced by the inclusion map $\Sigma^{14} C\eta \rightarrow \widetilde{X}$, and the fact that
\begin{equation*}
\begin{split}
Ext^{s,s+46}(S^0) & = 0 \text{~~for~~}s\leq 5 \\
Ext^{6,6+46}(S^0) & = \mathbb{Z}/2, \text{generated by~~} h_1h_5d_0
\end{split}
\end{equation*}

\begin{displaymath}
    \xymatrix{
    Ext^{s,t}(S^{14}) \ar[r]^{i_{\sharp}} \ar[d] & Ext^{s,t}(\Sigma^{14}C\eta) \ar[r]^{q_\sharp} \ar[d] & Ext^{s,t}(S^{16}) \ar[r] \ar[d] & Ext^{s+1,t+1}(\Sigma S^{14}) \ar[d] \\
  Ext^{s,t}(S^{14}) \ar[r]^{i_\sharp} & Ext^{s,t}(\widetilde{X}) \ar[r]^{q_{\sharp}} & Ext^{s,t}(P_{16}^{22}) \ar[r]^-\delta & Ext^{s+1,t+1}(\Sigma S^{14})
    }
\end{displaymath}

Note that the boundary homomorphism $\delta$ corresponds to differentials in the algebraic Atiyah-Hirzebruch spectral sequence of $\widetilde{X}$. One can check, using the naturality of the algebraic Atiyah-Hirzebruch spectral sequence for the quotient map $P_{14}^{22} \twoheadrightarrow \widetilde{X}$, the other elements (with the right choices) maps to zero under the boundary homomorphism $\delta$.

This completes the proof.
\end{proof}

The following lemma is a consequence of Corollary 7.2 and naturality of the Adams spectral sequence.

\begin{lem}
In the Adams spectral sequence of $\widetilde{X}$, the elements $h_2g_2[14]$, $h_1g_2[16]$ and $h_0h_4^3[16]$ are permanent cycles.
\end{lem}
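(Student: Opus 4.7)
The plan is to leverage the $H\mathbb{F}_2$-subcomplex inclusion $i : \Sigma^{14}C\eta \hookrightarrow \widetilde{X}$ (the 16-skeleton of $\widetilde{X}$ is $\Sigma^{14}C\eta$, by the cell-structure discussion preceding Definition 5.1) together with naturality of the Adams spectral sequence. First I would appeal to Remark 3.5: since $i$ is a composite of $H\mathbb{F}_2$-inclusions of subquotients of $P_1^\infty$, and since both $\Sigma^{14}C\eta$ and $\widetilde{X}$ have cells in dimensions $14$ and $16$, the induced map $i_\sharp : Ext(\Sigma^{14}C\eta) \to Ext(\widetilde{X})$ carries each named class $a[n]$ (in the convention of Notation 3.3) to a class of the same name. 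In particular, the elements $h_2g_2[14]$, $h_1g_2[16]$, and $h_0h_4^3[16]$ in $Ext(\widetilde{X})$ are the $i_\sharp$-images of the elements of the same names in $Ext(\Sigma^{14}C\eta)$ identified in Corollary 7.2.

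Corollary 7.2 asserts that these three classes are permanent cycles in the Adams spectral sequence of $\Sigma^{14}C\eta$. By naturality of the Adams spectral sequence along the inclusion $i$, their images $i_\sharp(h_2g_2[14])$, $i_\sharp(h_1g_2[16])$, and $i_\sharp(h_0h_4^3[16])$ in $Ext(\widetilde{X})$ are therefore permanent cycles as well.

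The only point of care is that the names $h_1g_2[16]$ and $h_0h_4^3[16]$ may carry indeterminacy in $Ext(\widetilde{X})$, since Notation 3.3 specifies a class uniquely only when it has the lowest Atiyah--Hirzebruch filtration in its bidegree. Reading Table 2, the only element of strictly lower AH filtration in the same bidegree as $h_1g_2[16]$ is $h_2g_2[14]$, which we have just shown to be a permanent cycle; and the bidegree of $h_0h_4^3[16]$ contains no element of strictly lower AH filtration. Therefore every element of $Ext(\widetilde{X})$ falling under the name $h_1g_2[16]$ differs from the distinguished image $i_\sharp(h_1g_2[16])$ by a permanent cycle and is itself a permanent cycle, and similarly for $h_0h_4^3[16]$ (where there is in fact no indeterminacy). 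The element $h_2g_2[14]$, having the lowest AH filtration in its bidegree, is unambiguous. I do not expect a genuine obstacle here: the argument is a direct application of naturality, with Corollary 7.2 doing the real work.
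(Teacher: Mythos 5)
Your proof is correct and follows essentially the same route as the paper: invoke Corollary 7.2 for $\Sigma^{14}C\eta$ and push the permanent cycles forward along the inclusion of the 16-skeleton $\Sigma^{14}C\eta \hookrightarrow \widetilde{X}$ using naturality of the Adams spectral sequence. Your additional remarks about the indeterminacy of the names $h_1g_2[16]$ and $h_0h_4^3[16]$ are a sensible extra precaution that the paper omits, but they do not change the argument.
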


\begin{proof}
By Corollary 7.2, the elements $h_2g_2[14]$, $h_1g_2[16]$ and $h_0h_4^3[16]$ are surviving cycles in the Adams spectral sequence of $\Sigma^{14} C\eta$. In particular, they are permanent cycles. Since $\Sigma^{14} C\eta$ is the 16-skeleton of $\widetilde{X}$, by naturality for the map
\begin{displaymath}
    \xymatrix{
    \Sigma^{14} C\eta \ar@{^{(}->}[r] & \widetilde{X},
    }
\end{displaymath}
these elements are also permanent cycles in the Adams spectral sequence of $\widetilde{X}$.
\end{proof}

\begin{lem}
In the Adams spectral sequence of $\widetilde{X}$, the element $g_2[17]$ is a permanent cycle.
\end{lem}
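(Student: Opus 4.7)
The plan is to exploit the $H\mathbb{F}_2$-subcomplex structure established in Section 5 together with the naturality of the Adams spectral sequence. By Lemma 5.3, the sphere $S^{17}$ is an $H\mathbb{F}_2$-subcomplex of $\widetilde{X}$, so there is an inclusion $j\colon S^{17} \hookrightarrow \widetilde{X}$ that induces an injection on mod $2$ homology, hence induces a map on Adams $E_2$-pages
\[
j_\sharp\colon Ext^{s,t}(S^{17}) \longrightarrow Ext^{s,t}(\widetilde{X}).
\]

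Next I would invoke the naturality conventions of Notation 3.3 and Remark 3.5: since the $17$-dimensional cell of $\widetilde{X}$ and the unique cell of $S^{17}$ match, the class $g_2[17]$ in $Ext^{4,48+17}(\widetilde{X})$ is, by construction, the image under $j_\sharp$ of the generator $g_2[17]$ in $Ext^{4,48+17}(S^{17}) \cong Ext^{4,48}(S^0)$.

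The final step is to observe that $g_2 \in Ext^{4,48}(S^0)$ is a permanent cycle in the Adams spectral sequence of $S^0$, since it detects a nonzero homotopy class $\{g_2\} \in \pi_{44}$ (this is classical and appears, for instance, in the standard Adams charts used throughout this paper). Consequently $g_2[17]$ is a permanent cycle in the Adams spectral sequence of $S^{17}$. By naturality of the Adams spectral sequence applied to $j$, the image $j_\sharp(g_2[17]) = g_2[17]$ is a permanent cycle in the Adams spectral sequence of $\widetilde{X}$, which is what we wanted.

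There is essentially no obstacle here: the work was already done in constructing $S^{17}$ as an $H\mathbb{F}_2$-subcomplex of $\widetilde{X}$ (Lemma 5.3) and in fixing the Atiyah--Hirzebruch notation so that $g_2[17]$ refers to the same element on both sides of $j_\sharp$. This is the template argument that Remark 3.5 was designed to make rigorous, and it will be reused for the other ``permanent cycle'' entries in Table 2.
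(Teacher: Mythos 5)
Your argument is correct and is essentially identical to the paper's proof of this lemma: both use Lemma 5.3 to realize $S^{17}$ as an $H\mathbb{F}_2$-subcomplex of $\widetilde{X}$, note that $g_2$ is a permanent cycle in the Adams spectral sequence of $S^0$, and conclude by naturality of the Adams spectral sequence for the inclusion map. The paper states this in two sentences; your additional remarks about Notation 3.3 and Remark 3.5 just make explicit the identification of $g_2[17]$ on both sides, which is the intended reading.
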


\begin{proof}
By Lemma 5.3, $S^{17}$ is an $H\mathbb{F}_2$-subcomplex of $\widetilde{X}$. Since $g_2$ is a permanent cycle in the Adams spectral sequence of $S^0$, by the naturality for the inclusion map, it is also a permanent cycle in the Adams spectral sequence of $\widetilde{X}$.
\end{proof}

\begin{lem}
In the Adams spectral sequence of $\widetilde{X}$, we have a $d_2$ differential
$$d_2(f_1[21]) = h_0^2f_1[20].$$
\end{lem}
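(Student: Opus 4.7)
The strategy is to realize $P_{19}^{21}$ as an $H\mathbb{F}_2$-subquotient of $\widetilde{X}$, establish the claimed $d_2$ differential in the Adams spectral sequence of $P_{19}^{21}$, and then transfer the differential to $\widetilde{X}$ by naturality.

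First, I would identify $P_{19}^{21}$ as an $H\mathbb{F}_2$-subquotient of $\widetilde{X}$. Consulting the cell diagrams of $\widetilde{X}$ in Remark 5.13, the three cells in dimensions $19$, $20$, $21$ satisfy: the $20$-cell is attached to the $19$-cell by $2$, and the $21$-cell is attached to the $19$-cell by $\eta$; all other attaching maps involving these three cells land on cells in dimensions $14$, $16$, $17$, $18$, or $22$. Applying Lemma \ref{gll}, one first takes the $H\mathbb{F}_2$-quotient of $\widetilde{X}$ by the cells in dimensions $14$, $16$, $17$, and $18$, then passes to the $21$-skeleton to obtain a three-cell $H\mathbb{F}_2$-subquotient of $\widetilde{X}$ whose attaching pattern matches that of $P_{19}^{21}$; James periodicity identifies this subquotient with $P_{19}^{21}$.

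Second, I would establish $d_2(f_1[21]) = h_0^2 f_1[20]$ in the Adams spectral sequence of $P_{19}^{21}$. By James periodicity $P_{19}^{21}$ is a suspension of a small stunted projective spectrum, reducing the computation to a manageable size. The key input is the top-cell attaching map of $P_{19}^{21}$: it projects into $\pi_{20}$ of the $20$-skeleton $\Sigma^{19}(S^0/2)$, which equals $\pi_1(S^0/2) \cong \mathbb{Z}/2$. The known structure of the Moore spectrum $S^0/2$ at $p = 2$---in particular the $v_1$-periodic behavior reflected in the fact that $v_1 \in \pi_2(S^0/2)$ has order $4$---produces an Adams $d_2$ differential of $h_0^2$-flavor on the fundamental class $\iota[21]$. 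Multiplying by the permanent cycle $f_1 \in Ext^{4,44}(S^0)$ then yields the claimed differential in $Ext(P_{19}^{21})$.

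Finally, by naturality of the Adams spectral sequence applied to the maps that realize $P_{19}^{21}$ as an $H\mathbb{F}_2$-subquotient of $\widetilde{X}$, and using Remark 3.5 to identify $f_1[21]$ and $h_0^2 f_1[20]$ as the corresponding lowest-Atiyah--Hirzebruch-filtration classes in their bidegrees of $Ext(\widetilde{X})$, the differential transfers to $\widetilde{X}$. The main obstacle is the second step: pinning down precisely how the Adams $d_2$ of $h_0^2$-type arises in $P_{19}^{21}$, since the algebraic Atiyah--Hirzebruch spectral sequence does not see it directly---one must invoke the secondary attaching data encoded in the $v_1$-periodic structure of the Moore spectrum at the prime $2$, and verify that the resulting differential is not killed by interference from other elements in the same bidegree of $Ext(P_{19}^{21})$.
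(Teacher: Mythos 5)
Your overall reduction---realize $P_{19}^{21}$ as an $H\mathbb{F}_2$-subquotient of $\widetilde{X}$ by quotienting out the $18$-skeleton to get $P_{19}^{22}$ and restricting to the $21$-skeleton, prove the differential there, and transfer it by naturality---is exactly the paper's route. But the step you yourself flag as the main obstacle is where the proof actually lives, and the mechanism you sketch for it would fail. There is no ``fundamental class $\iota[21]$'' in $Ext^{0,21}(P_{19}^{21})$ to multiply by $f_1$: since the $21$-cell is attached to the $19$-cell by $\eta$, the class $1[21]$ supports the algebraic Atiyah--Hirzebruch differential $d_2(1[21])=h_1[19]\neq 0$ and does not survive. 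The class $f_1[21]$ survives only because of the $Ext$ relation $h_0^2c_2=h_1f_1$, which disposes of $h_1f_1[19]$ via $d_1(h_0c_2[20])=h_0^2c_2[19]$; so the Adams $d_2$ you want is not an $f_1$-multiple of a differential on a fundamental class, and it is not a formal consequence of $v_1$-periodic structure of the Moore spectrum $P_{19}^{20}$---note the $20$- and $21$-cells are not even attached to one another, both hang off the $19$-cell, so the differential from $[21]$ to $[20]$ is necessarily a secondary phenomenon mediated through the $19$-cell.

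The input that actually closes the gap (the paper's Lemma 8.6) is that the same relation $h_0^2c_2=h_1f_1$ forces an algebraic Atiyah--Hirzebruch filtration jump: pushing forward to $P_{20}^{21}=S^{20}\vee S^{21}$, the element of $Ext(P_{19}^{21})$ detected by $f_1[21]$ becomes $\underline{f_1[21]}+h_0c_2[20]$, hence maps to $h_0c_2[20]$ under the quotient $q\colon P_{19}^{21}\twoheadrightarrow S^{20}$. One then quotes the known Adams differential $d_2(h_0c_2)=h_0^2f_1$ in the sphere, pulls it back along $q$ to get $d_2(f_1[21])=h_0^2f_1[20]$ in $P_{19}^{21}$, pushes forward along $P_{19}^{21}\hookrightarrow P_{19}^{22}$, and pulls back along $\widetilde{X}\twoheadrightarrow P_{19}^{22}$, checking as the paper does that the ambiguity from lower Atiyah--Hirzebruch filtration classes (already shown to survive) does not interfere. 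Without identifying $q_\sharp(f_1[21])=h_0c_2[20]$ and invoking the sphere-level $d_2$ on $h_0c_2$, your argument does not produce the differential.
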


To prove Lemma 8.5, we need to prove the following lemma.

\begin{lem}
We have a quotient map $q: P_{19}^{21} \twoheadrightarrow S^{20}$. Moreover, we have $q_\sharp (f_1[21]) = h_0c_2[20]$, where $q_\sharp: Ext(P_{19}^{21}) \rightarrow Ext(S^{20})$ is the induced map on the Adams $E_2$-page.
\end{lem}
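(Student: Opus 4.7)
The plan is to proceed in two stages: first constructing the quotient map $q$ via a standard splitting argument, and then computing the induced map on the $Ext$-class $f_1[21]$ via a long exact sequence analysis.

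For the construction of $q$, I would invoke the solution of the Hopf invariant one problem, which implies that the top cell of $P_1^3 \simeq \mathbb{R}P^3$ splits off stably, so $P_1^3 \simeq P_1^2 \vee S^3$. Applying James periodicity (with $\phi(2)=2$), this lifts to $P_{19}^{21} \simeq P_{19}^{20} \vee S^{21}$. Composing the projection onto the first summand with the standard quotient $P_{19}^{20} = \Sigma^{18}\mathbb{R}P^2 \twoheadrightarrow S^{20}$ (collapsing the bottom 19-cell) produces $q$. Equivalently, since the primary attaching of the 21-cell to the 20-cell in $\mathbb{R}P^\infty$ has degree $1+(-1)^{21}=0$, the cofiber $P_{20}^{21} = P_{19}^{21}/S^{19}$ splits as $S^{20} \vee S^{21}$, and $q$ may also be realized as the composite $P_{19}^{21} \twoheadrightarrow P_{20}^{21} \twoheadrightarrow S^{20}$.

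To compute $q_\sharp(f_1[21])$, I would analyze the long exact sequence of $Ext$-groups associated to the cofiber sequence $S^{19} \to P_{19}^{21} \to P_{20}^{21}$. The connecting map $\delta: Ext^s(P_{20}^{21}) \to Ext^{s+1}(S^{19})$ is induced by the attaching map $(2,\eta): P_{20}^{21} \simeq S^{20} \vee S^{21} \to \Sigma S^{19} = S^{20}$, so it acts on $Ext(P_{20}^{21}) = Ext(S^{20}) \oplus Ext(S^{21})$ by $\delta(\alpha,\beta) = h_0\alpha + h_1\beta$. Evaluating $\delta$ on the element $(h_0 c_2[20], f_1[21]) \in Ext^{4,65}(P_{20}^{21})$ yields $(h_0^2 c_2 + h_1 f_1)[19]$ in $Ext^{5,46}(S^0)$. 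The key algebraic input is the identity $h_0^2 c_2 = h_1 f_1$ in $Ext^{5,46}(S^0)$, a known relation in the $Ext$-algebra of the Steenrod algebra verifiable via the Lambda algebra computations of \cite{WX}. Granting this identity, the element lies in $\ker\delta$ and therefore lifts to some $\tilde\xi \in Ext^{4,65}(P_{19}^{21})$. Since $\tilde\xi$ has leading Atiyah-Hirzebruch term $f_1[21]$, it serves as our representative for the element $f_1[21]$ of $Ext(P_{19}^{21})$. Under $q$, factored through $P_{20}^{21}$, the image of $\tilde\xi$ in $Ext(P_{20}^{21})$ is $(h_0 c_2[20], f_1[21])$, and its projection to $Ext(S^{20})$ is $h_0 c_2[20]$, establishing the claim.

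The main obstacle will be the verification of the algebraic identity $h_0^2 c_2 = h_1 f_1$ in $Ext^{5,46}(S^0)$; this is the only nontrivial algebraic input, and once confirmed (for instance by explicit Lambda algebra computation or consultation of standard $Ext$ tables), the rest of the argument is formal naturality together with the splittings constructed in the first stage.
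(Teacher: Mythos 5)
Your second paragraph is essentially the paper's proof: factor $q$ as $P_{19}^{21}\twoheadrightarrow P_{20}^{21}=S^{20}\vee S^{21}\twoheadrightarrow S^{20}$, observe that the connecting map to $Ext(\Sigma S^{19})$ is multiplication by $h_0$ on the $[20]$-component and by $h_1$ on the $[21]$-component, use the relation $h_0^2c_2=h_1f_1$ to see that $h_0c_2[20]+f_1[21]$ lifts to $Ext(P_{19}^{21})$, and identify the lift with $f_1[21]$ via the Atiyah--Hirzebruch filtration. That part is correct and is the same argument the paper gives.

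However, your first paragraph contains a genuine error. The claim that James periodicity with period $2$ turns $P_1^3\simeq P_1^2\vee S^3$ into $P_{19}^{21}\simeq P_{19}^{20}\vee S^{21}$ is false: for three-cell stunted projective spectra ($k=2$) the James period is $4$, not $2$, so $P_{19}^{21}\simeq\Sigma^{16}P_3^5$, not $\Sigma^{18}P_1^3$. Concretely, $Sq^2\colon H^{19}(P_{19}^{21})\to H^{21}(P_{19}^{21})$ is nonzero because $\binom{19}{2}\equiv 1\pmod 2$ (whereas $\binom{1}{2}\equiv 0$), so the $21$-cell is attached to the $19$-cell by $\eta$ and the top cell does \emph{not} split off. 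This is not a harmless slip: if $P_{19}^{21}$ really were $P_{19}^{20}\vee S^{21}$, then $f_1[21]$ would come from the wedge summand $S^{21}$ and would map to $0$ under $q$, contradicting the very statement you are proving; and the $\eta$-attachment you deny here is exactly the source of the $h_1f_1[19]$ term you use in the connecting-map computation two sentences later. So the two constructions you present as ``equivalent'' are mutually inconsistent, and only the second one (through $P_{20}^{21}$) is valid. Deleting the first construction leaves a correct proof matching the paper's.
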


\begin{proof}
By James periodicity, the quotient map $q: P_{19}^{21} \twoheadrightarrow S^{20}$ maps through $P_{20}^{21}$.
\begin{displaymath}
    \xymatrix{
    P_{19}^{21} \ar@{->>}[r]^{q_1} & P_{20}^{21} \ar@{->>}[r]^{q_2} & S^{20}
}
\end{displaymath}
The cell diagram of $P_{19}^{21}$ is the following:
\begin{displaymath}
    \xymatrix{
    *+[o][F-]{21} \ar@{-}@/^1pc/[dd]^{\eta} \\
    *+[o][F-]{20} \ar@{-}[d]_{2}\\
    *+[o][F-]{19} }
\end{displaymath}
In $Ext(P_{20}^{21})$, we define the element $\underline{f_1[21]}$ to be the image of $f_1[21]$ in $Ext(S^{21})$ under the inclusion map $i: S^{21} \hookrightarrow P_{20}^{21}$, i.e., $\underline{f_1[21]} = i_\sharp(f_1[21])$.
\begin{displaymath}
    \xymatrix{
    & Ext(S^{21}) \ar[d]^{i_\sharp} & \\
 Ext(P_{19}^{21}) \ar[r]^{q_{1\sharp}} & Ext(P_{20}^{21}) \ar[r]^{q_{2\sharp}} & Ext(S^{20}) \\
 f_1[21] \ar@{|->}[r]  & *\txt{ $\underline{f_1[21]}$\\$+h_0c_2[20]$} \ar@{|->}[r] & h_0c_2[20]
    }
\end{displaymath}
By naturality of the algebraic Atiyah-Hirzebruch spectral sequence, we have $q_{2\sharp}(\underline{f_1[21]}) = 0$. Therefore, in $Ext(P_{20}^{21})$, the element $\underline{f_1[21]} + h_0c_2[20]$ maps to $h_0c_2[20]$ in $Ext(S^{20})$, i.e.,
$$q_{2\sharp}(\underline{f_1[21]} + h_0c_2[20]) = h_0c_2[20].$$

Now we consider the cofiber sequence associated to the map $q_1$.
\begin{displaymath}
    \xymatrix{
  S^{19} \ar@{^{(}->}[r] &  P_{19}^{21} \ar@{->>}[r]^-{q_1} & P_{20}^{21} = S^{21}\vee S^{20} \ar[r] & \Sigma S^{19}
}
\end{displaymath}
Both elements $\underline{f_1[21]}$ and $h_0c_2[20]$ map to $h_1f_1[19]$ in $Ext(\Sigma S^{19})$. In fact, it follows from the fact that the 21-cell is attached to the 19-cell by $\eta$, and the 20-cell is attached to the 19-cell by $2$. Note also that there is a relation $h_0^2c_2 = h_1f_1$ in $Ext$. Therefore, the sum $\underline{f_1[21]} + h_0c_2[20]$ maps to 0 in $Ext(\Sigma S^{19})$, and must comes from $Ext(P_{19}^{21})$ by exactness. By naturality of the algebraic Atiyah-Hirzebruch spectral sequence, it must come from $f_1[21]$, i.e.,
$$q_{1\sharp}(f_1[21]) = \underline{f_1[21]} + h_0c_2[20].$$
Combining with
$$q_{2\sharp}(\underline{f_1[21]} + h_0c_2[20]) = h_0c_2[20],$$
we have
$$q_\sharp(f_1[21]) = h_0c_2[20].$$
\end{proof}

Now we present the proof of Lemma 8.5.

\begin{proof}
In the Adams spectral sequence of $S^0$, we have a differential
$$d_2(h_0c_2) = h_0^2f_1.$$
Now consider the following commutative diagram:
\begin{displaymath}
    \xymatrix{
  \widetilde{X} \ar@{->>}[r]^{q_3} &  P_{19}^{22} \ar@{->>}[r]^{q_4} & S^{20} \ar@{=}[d] \\
  & P_{19}^{21} \ar@{^{(}->}[u]^i \ar@{->>}[r]^{q} & S^{20}
}
\end{displaymath}
where $q_3$ is obtained from $\widetilde{X}$ by quotienting out its 18-skeleton, $q_4$ is a quotient map that follows essentially from Theorem 4.7 and James periodicity, and $i$ is an inclusion map. By Lemma 8.6, the $d_2$ differential in $S^{20}$:
$$d_2(h_0c_2[20]) = h_0^2f_1[20]$$
can be pulled back to get a $d_2$ differential in $P_{19}^{21}$:
$$d_2(f_1[21]) = h_0^2f_1[20].$$
This differential can be further pushed forward by $i$, and then pulled back by $q_3$ to get the $d_2$ differential in $\widetilde{X}$:
$$d_2(f_1[21]) = h_0^2f_1[20].$$
Note that in $Ext(\widetilde{X})$, elements of lower Atiyah-Hirzebruch filtrations, i.e., $h_2g_2[14]$ and $h_1g_2[16]$, have already been shown to survive by Lemma 8.3.
\end{proof}

\begin{lem}
The elements $h_1h_5c_0[21]$ and $h_1^2h_3h_5[21]$ are permanent cycles in $Ext(\widetilde{X})$.
\end{lem}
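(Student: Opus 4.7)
The plan is to show both elements are permanent cycles in $Ext(X^{21})$ and then deduce the claim in $Ext(\widetilde{X})$ from naturality of the Adams spectral sequence under the inclusion $X^{21} \hookrightarrow \widetilde{X}$ (an $H\mathbb{F}_2$-subcomplex by Remark 5.12). From Remark 5.8 the $19$-skeleton of $X^{21}$ is the wedge $S^{14}\vee S^{19}$, so we have a cofiber sequence
\[
S^{14}\vee S^{19} \hookrightarrow X^{21} \twoheadrightarrow S^{21}
\]
in which the top-cell attaching data is $\nu^2$ to $S^{14}$ and $\eta$ to $S^{19}$. Under pullback by the quotient map, each of $h_1h_5c_0[21]$ and $h_1^2h_3h_5[21]$ is naturally an element of $Ext(X^{21})$, and to show it is a permanent cycle it suffices to produce a homotopy class in $\pi_\ast(X^{21})$ whose image in $\pi_\ast(S^{21})$ is detected by the named Ext-class.

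By the long exact sequence in homotopy, a stable class $\alpha_0 \in \pi_k$, viewed as an element of $\pi_{k+21}(S^{21})$, lifts to $\pi_{k+21}(X^{21})$ precisely when both $\nu^2\cdot\alpha_0 = 0$ in $\pi_{k+6}$ and $\eta\cdot\alpha_0 = 0$ in $\pi_{k+1}$. For $h_1^2h_3h_5[21]$, the underlying class is $\eta^2\sigma\eta_5 \in \pi_{41}$: both checks are immediate, since $\nu^2\cdot\eta^2 = 0$ because $\nu\eta = 0$, and $\eta^3\sigma\eta_5 = 4\nu\sigma\eta_5 = 0$ because $\nu\sigma = 0$. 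For $h_1h_5c_0[21]$, the underlying class is $\eta\alpha'$ with $\alpha' = \langle\theta_4, 2, \epsilon\rangle$ chosen as in the proof of Lemma 6.5 (so $2\alpha' = \sigma\alpha' = 0$); again $\nu^2\cdot\eta\alpha' = 0$ is immediate from $\nu\eta = 0$, leaving the vanishing of $\eta^2\alpha'$ in $\pi_{41}$ as the only nontrivial check.

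The main obstacle is this last vanishing. I would attack it by Toda-bracket juggling: after confirming $\eta^2\theta_4 = 0$ in $\pi_{32}$ from Isaksen's $E_\infty$ charts in \cite{Isa,Isa2}, we can write
\[
\eta^2\alpha' = \eta^2\langle\theta_4, 2, \epsilon\rangle \subseteq \langle\eta^2, \theta_4, 2\rangle\cdot\epsilon,
\]
reducing the problem to showing that every element of the $3$-fold bracket $\langle\eta^2, \theta_4, 2\rangle \subseteq \pi_{33}$ annihilates $\epsilon$, after accounting for the bracket's indeterminacy $\eta^2\cdot\pi_{32} + \pi_{30}\cdot 2$. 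Once $\eta^2\alpha' = 0$ is established, both lifts exist, both Ext-classes are permanent cycles in $Ext(X^{21})$, and the lemma follows by naturality.
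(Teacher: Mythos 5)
Your overall strategy is exactly the paper's: use the cofiber sequence $S^{14}\vee S^{19}\hookrightarrow X^{21}\twoheadrightarrow S^{21}$ with top-cell attaching data $(\nu^2,\eta)$ from Theorem 5.14 and Remark 5.8, lift a homotopy class from the top cell, and push the conclusion into $\widetilde{X}$ by naturality. The $\nu^2$-check and the filtration bookkeeping are fine (though you should say a word about why the lift cannot be detected by $h_2g_2[14]$, which sits in the same Adams filtration as $h_1h_5c_0[21]$; the paper rules this out using that $h_2g_2[14]$ maps to zero in $Ext(S^{21})$). But both of the $\eta$-checks, which are the real content, have problems.

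For $h_1^2h_3h_5[21]$ there is a degree slip that makes your verification vacuous. The class sits in the $61$-stem on a $21$-cell, so the underlying sphere class is $\eta\sigma\eta_5\in\pi_{40}$ (detected by $h_1^2h_3h_5$ in the $40$-stem), not $\eta^2\sigma\eta_5\in\pi_{41}$. The condition you must verify is therefore $\eta\cdot(\eta\sigma\eta_5)=\eta^2\sigma\eta_5=0$ in $\pi_{41}$; your computation $\eta^3\sigma\eta_5=4\nu\sigma\eta_5=0$ instead establishes $\eta\cdot(\eta^2\sigma\eta_5)=0$, one degree too high, and the genuinely needed vanishing is not immediate from $\eta^3=4\nu$ (note $h_1^3h_3h_5=h_0^2h_2h_3h_5=0$ in $Ext$, so if $\eta^2\sigma\eta_5$ were nonzero it would be detected in higher filtration in $\pi_{41}$, and this must be excluded). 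The paper simply invokes $0\in\eta\cdot\{h_1^2h_3h_5\}$, which also leaves open the freedom to adjust the chosen class by higher-filtration elements; by committing to the specific class $\eta\sigma\eta_5$ you lose that flexibility and must actually prove $\eta^2\sigma\eta_5=0$. For $h_1h_5c_0[21]$ you correctly isolate the needed statement $\eta^2\alpha'=0$ in $\pi_{41}$, but you do not prove it: the reduction to showing that $\langle\eta^2,\theta_4,2\rangle\cdot\epsilon=0$ (whose indeterminacy, incidentally, is $\eta^2\cdot\pi_{31}+2\cdot\pi_{33}$, not what you wrote) is left as a plan, and that bracket computation in $\pi_{33}$ is nontrivial. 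As written, then, neither of the two hypotheses $\eta\cdot\alpha_0=0$ is actually established, so the proof is incomplete even though the architecture matches the paper's.
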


\begin{proof}
We consider the $H\mathbb{F}_2$-subcomplex $X^{21}$. Since there are only three cells in $X^{21}$, the computation of the Adams $E_2$ page of $X^{21}$ in the 61 stem for $s\leq 5$ is straightforward by using the algebraic Atiyah-Hirzebruch spectral sequence.

\begin{table}[h]
\caption{The Adams $E_2$ page of $X^{21}$ and $S^{21}$ in the 61 stem for $s\leq 5$}
\centering
\begin{tabular}{ l l l }
$s\backslash 61-\text{stem of}$ & $X^{21}$ & $S^{21}$ \\ [0.5ex] 
\hline 
5 & $h_1h_5c_0[21]$  & $h_1h_5c_0[21]$\\
  & $h_2g_2[14]$ & $\bullet$ \\ \hline
4 & $h_1^2h_3h_5[21]$ & $h_1^2h_3h_5[21]$\\
  & & $f_1[21]$  \\
\end{tabular}
\label{X21}
\end{table}

By Theorem 5.14, the $H\mathbb{F}_2$-subcomplex $X^{21}$ fits into a cofiber sequence

\begin{displaymath}
    \xymatrix{
    X^{21} \ar@{->>}[rr]^{q_{21}} & & S^{21} \ar[rr]^-{(\eta,\nu^2)} & & S^{20}\vee S^{15} \\
    *+[o][F-]{21} \ar@{-}@/_1pc/[d]_{\eta} \ar@{-}`r[dd] `[dd]^{\nu^2} [dd] \ar@{->>}[rr] & & \xybox{(0,0.5)*+[o][F-]{21}} \ar[rrd]^{\eta} \ar[rrdd]_{\nu^2} & & \\
    *+[o][F-]{19} & & & & \xybox{(0,0.5)*+[o][F-]{20}}\\
    *+[o][F-]{14} & & & & \xybox{(0,0.5)*+[o][F-]{15}}
    }
\end{displaymath}
Here $q_{21}$ is the quotient map. We therefore have a long exact sequence of homotopy groups. Suppose $\alpha \in \pi_{61}(S^{21})$, and $\alpha$ lies in the kernel of the map
$$(\eta,\nu^2) : \pi_{61}(S^{21}) \longrightarrow \pi_{61}(S^{20})\oplus \pi_{61}(S^{15}).$$
Then $\alpha$ must satisfy the following conditions:
$$\eta \cdot \alpha = 0, \ \nu^2 \cdot \alpha =0.$$
We verify that the elements $h_1h_5c_0[21]$ and $h_1^2h_3h_5[21]$ each detect a class that satisfies the above condition. In fact, we have that
$$0 \in \eta \cdot \{h_1^2h_3h_5\}, \ 0 \in \eta \cdot \{h_1h_5c_0\},\text{~~and~~} \nu \cdot \pi_{40}(S^0)=0.$$
Therefore, by exactness of homotopy groups, in $\pi_{61}(X^{21})$, there exist classes that map nontrivially to $\pi_{61}(S^{21})$. Furthermore, these classes are in Adams filtration at most 5. By naturality of the algebraic Atiyah-Hirzebruch spectral sequence, the classes detected by $h_2g_2[14]$ map trivially to $\pi_{61}(S^{21})$. It follows that $h_1h_5c_0[21]$ and $h_1^2h_3h_5[21]$ survive in the Adams spectral sequence of $X^{21}$. In particular, they are permanent cycles. Since $X^{21}$ is an $H\mathbb{F}_2$-subcomplex of $\widetilde{X}$, both elements are permanent cycles in the Adams spectral sequence of $\widetilde{X}$.
\end{proof}

\begin{lem}
The elements $h_3d_1[22]$, $h_5c_0[22]$ and $\underline{h_1h_3h_5[22]}$ are permanent cycles in the Adams spectral sequence of $\widetilde{X}$.
\end{lem}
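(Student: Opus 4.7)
The plan is to realize each of the three classes by a permanent cycle in a suitable $H\mathbb{F}_2$-subcomplex of $\widetilde{X}$, then transport the conclusion to $\widetilde{X}$ by naturality of the Adams spectral sequence and the algebraic Atiyah-Hirzebruch spectral sequence. The relevant subcomplexes are $X^{22}$ for the class $\underline{h_1h_3h_5[22]}$ (whose filtration is low enough that the geometric AH argument of Proposition 6.3 already suffices) and $\widehat{X^{22}}$ for $h_5c_0[22]$ and $h_3d_1[22]$ (where the additional 16-cell provides a $d_2$ in the AH spectral sequence that kills the targets of the $d_8$'s found in Proposition 6.3).

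For $\underline{h_1h_3h_5[22]}$, Proposition 6.3 says $d_8(\alpha[22])=0$ in the AH SS of $X^{22}$, where $\alpha=\sigma\eta_5$ is detected by $h_1h_3h_5$. A short check of the shorter AH differentials $d_r$ with $r\le 8$, using the attaching maps of $X^{22}$ (namely $2$ from cell $22$ to $21$, together with $\eta$ and $\nu^2$ below) and the relations $2\cdot\alpha=0$, $\eta\cdot\alpha=0$, $\nu\cdot\pi_{40}=0$, shows that $\alpha[22]$ is a permanent cycle. Lifting the homotopy class $\alpha$ through the quotient $X^{22}\twoheadrightarrow S^{22}$ yields a class in $\pi_{61}(X^{22})$ detected by $h_1h_3h_5[22]$, so $h_1h_3h_5[22]\in Ext(X^{22})$ is a permanent cycle; by naturality for $X^{22}\hookrightarrow\widetilde{X}$ and the defining identity $\underline{h_1h_3h_5[22]}=i_\sharp(h_1h_3h_5[22])$, the conclusion follows.

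For $h_5c_0[22]$ and $h_3d_1[22]$, I would pass to $\widehat{X^{22}}$, whose cell structure differs from $X^{22}$ only by the extra $16$-cell attached to the $14$-cell by $\eta$. Consequently, $\widehat{X^{22}}$ acquires an AH $d_2$ differential $d_2(x[16])=\eta x[14]$ for any class $x\in\pi_\ast$. Applied to $x=\phi\in\{h_5d_0\}$ this kills $\eta\phi[14]$, which is exactly the target computed in Proposition 6.3 for $d_8(\alpha'[22])$ in $X^{22}$; applied to $x=\eta\{g_2\}$ it kills $\eta^2\{g_2\}[14]$, which by Lemma 6.5 spans the only nonzero possibility for the target computed there for $d_8(\alpha''[22])$. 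Thus in the AH SS of $\widehat{X^{22}}$ the classes $\alpha'[22]$ and $\alpha''[22]$ survive past $d_8$, and an inspection of the remaining differentials (all targets lie in cells $14$ through $21$, with attaching maps accounted for by the relations verified in the proof of Proposition 6.3) shows they are permanent cycles. This produces homotopy classes in $\pi_{61}(\widehat{X^{22}})$ detected by $h_5c_0[22]$ and $h_3d_1[22]$, whence these elements are permanent cycles in $Ext(\widehat{X^{22}})$, and by naturality for $\widehat{X^{22}}\hookrightarrow\widetilde{X}$, in $Ext(\widetilde{X})$.

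The main obstacle is the second paragraph's bookkeeping: one must be sure that the $d_2$ in $\widehat{X^{22}}$ exactly accounts for the $d_8$ target and that no other AH differential from $\alpha'[22]$ or $\alpha''[22]$ in $\widehat{X^{22}}$ survives. The case of $\alpha''[22]$ is most delicate because Proposition 6.3 gives only a containment $d_8(\alpha''[22])\subseteq\eta^2\pi_{44}[14]$, so the argument truly relies on the sharp description of $\sigma\cdot\pi_{39}$ supplied by Lemma 6.5; for $\alpha'[22]$ one has the equality $d_8(\alpha'[22])=\eta\phi[14]$ directly, and the cancellation by $d_2(\phi[16])$ is clean.
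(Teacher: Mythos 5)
Your proposal follows essentially the same route as the paper: $X^{22}$ together with Proposition 6.3 for $\underline{h_1h_3h_5[22]}$, and $\widehat{X^{22}}$ for the other two classes, with the $d_2$ off the added 16-cell cancelling the $d_8$ targets and Lemma 6.5 pinning down the $\alpha''$ case. The only step you compress is the last one — passing from "the lift of $\alpha'[22]$ (resp. $\alpha''[22]$) exists in $\pi_{61}(\widehat{X^{22}})$" to "it is detected by $h_5c_0[22]$ (resp. $h_3d_1[22]$)" requires the Adams-filtration bookkeeping in Table 4 and the fact that lower-filtration candidates mapping nontrivially to $Ext(S^{22})$ are already accounted for — but this is exactly the argument the paper supplies.
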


\begin{proof}
For the element $\underline{h_1h_3h_5[22]}$, we consider the $H\mathbb{F}_2$-subcomplex $X^{22}$, since it is defined by the image of $h_1h_3h_5[22]$ in $Ext(X^{22})$. For the elements $h_3d_1[22]$ and $h_5c_0[22]$, we use both of the $H\mathbb{F}_2$-subcomplexes $X^{22}$ and $\widehat{X^{22}}$. The reason we use different $H\mathbb{F}_2$-subcomplexes here is explained in Remark 8.9.

Using the algebraic Atiyah-Hirzebruch spectral sequences, and their naturality for the maps
\begin{displaymath}
    \xymatrix{
    X^{22} \ar@{^{(}->}[r] & \widehat{X^{22}} \ar@{^{(}->}[r] & \widetilde{X},
    }
\end{displaymath}
we compute the Adams $E_2$-page of $X^{22}$ and $\widehat{X^{22}}$ in the 61 stem for $s\leq 5$.
\begin{table}[h]
\caption{The Adams $E_2$ page of $X^{22}, \widehat{X^{22}}$ and $S^{22}$ in the 61 stem for $s\leq 5$}
\centering
\begin{tabular}{ l l l l }
$s\backslash 61-\text{stem of}$ & $X^{22}$ & $\widehat{X^{22}}$ & $S^{22}$ \\ [0.5ex] 
\hline 
5 & $h_3d_1[22]$ & $h_3d_1[22]$ & $h_3d_1[22]$\\
  & $h_1h_5c_0[21]$ & $h_1h_5c_0[21]$ &  \\
  & $h_2g_2[14]$ & $h_2g_2[14]$ & \\
  & & $h_1g_2[16]$ & \\ \hline
4 & $h_5c_0[22]$ & $h_5c_0[22]$ & $h_5c_0[22]$\\
  & $h_1^2h_3h_5[21]$ & $h_1^2h_3h_5[21]$ &  \\
  & & $h_0h_4^3[16]$ & \\\hline
3 & $h_1h_3h_5[22]$ & $h_1h_3h_5[22]$ & $h_1h_3h_5[22]$ \\
  & & $h_4^3[16]$ & \\
\end{tabular}
\label{X22}
\end{table}

By Definition 5.6, the complex $X^{22}$ fits into a cofiber sequence
\begin{displaymath}
    \xymatrix{
    X^{22} \ar@{->>}[rr]^{q} & & S^{22} \ar[rr]^{a} & & \Sigma X^{21} \\
    *+[o][F-]{22} \ar@{-}[d]^{2} \ar@{->>}[rr] & &  \xybox{(0,0.5)*+[o][F-]{22}} \ar[rr]^2 & & *+[o][F-]{22} \ar@{-}@/_1pc/[d]_{\eta} \ar@{-}`r[dd] `[dd]^{\nu^2} [dd] \\
    *+[o][F-]{21} \ar@{-}@/_1pc/[d]_{\eta} \ar@{-}`r[dd] `[dd]^{\nu^2} [dd] & & & & *+[o][F-]{20} \\
    *+[o][F-]{19} & & & & *+[o][F-]{15} \\
    *+[o][F-]{14}
       }
\end{displaymath}
Here $q$ is the quotient map, and $a$ is the suspension of the attaching map of the 22-cell in $X^{22}$. We have a long exact sequence of homotopy groups associated to this cofiber sequence. Suppose $\alpha[22]$ is an element in $\pi_{61}(S^{22})$. Suppose further that $\alpha[22]$ supports a differential in the Atiyah-Hirzebruch spectral sequence of $X^{22}$. By the construction of the Atiyah-Hirzebruch spectral sequence, the target of the differential that $\alpha[22]$ supports detects $\Delta(\alpha[22])$ in the homotopy groups of lower skeleton, where the map
$$\Delta : \pi_{61}(S^{22}) \longrightarrow \pi_{61}(\Sigma X^{21})$$
is the boundary homomorphism in the long exact sequence of homotopy groups.

For the element $\underline{h_1h_3h_5[22]}$, we consider the homotopy class $\alpha = \sigma \eta_5 \in \pi_{39}$, which is detected by $h_1h_3h_5$ in the $E_\infty$-page of the Adams spectral sequence of $S^0$. By Proposition 6.3, the element $\alpha[22]$ is a permanent cycle in the Atiyah-Hirzebruch spectral sequence of $X^{22}$. Therefore, by exactness of the long exact sequence of homotopy groups, there exists a homotopy class in $\pi_{61}(X^{22})$, which has Adams filtration at most 3. This implies the element $h_1h_3h_5[22]$ survives in $Ext(X^{22})$, since it is the only element with Adams filtration at most 3. In particular, it is a permanent cycle. Therefore, its image in $Ext(\widetilde{X})$, i.e., $\underline{h_1h_3h_5[22]}$, is also a permanent cycle.

By Definition 5.5, the complex $\widehat{X^{22}}$ fits into a cofiber sequence
\begin{displaymath}
    \xymatrix{
    \widehat{X^{22}} \ar@{->>}[rr]^{q'} & & S^{22} \ar[rr]^{a'} & & \Sigma \widehat{X^{21}} \\
    *+[o][F-]{22} \ar@{-}[d]^{2} \ar@{->>}[rr] & & \xybox{(0,0.5)*+[o][F-]{22}} \ar[rr]^2 & & *+[o][F-]{22} \ar@{-}@/_1pc/[d]_{\eta} \ar@{-}`r[ddd] `[ddd]^{\nu^2} [ddd] \\
    *+[o][F-]{21} \ar@{-}@/_1pc/[d]_{\eta} \ar@{-}`r[ddd] `[ddd]^{\nu^2} [ddd] & & & & *+[o][F-]{20} \\
    *+[o][F-]{19} & & & & *+[o][F-]{17} \ar@{-}@/_1pc/[d]_{\eta} \\
    *+[o][F-]{16} \ar@{-}@/_1pc/[d]_{\eta} & & & & *+[o][F-]{15} \\
    *+[o][F-]{14}
    }
\end{displaymath}
Here $q'$ is the quotient map, and $a'$ is the suspension of the attaching map of the 22-cell in $\widehat{X^{22}}$. We have a long exact sequence of homotopy groups associated to this cofiber sequence. Suppose $\alpha'[22]$ is an element in $\pi_{61}(S^{22})$.

Suppose further that $\alpha'[22]$ supports a differential in the Atiyah-Hirzebruch spectral sequence of $\widehat{X^{22}}$. By the construction of the Atiyah-Hirzebruch spectral sequence, the target of the differential that $\alpha'[22]$ supports detects $\Delta'(\alpha'[22])$ in the homotopy groups of lower skeleton, where the map
$$\Delta' : \pi_{61}(S^{22}) \longrightarrow \pi_{61}(\Sigma \widehat{X^{21}}).$$
is the boundary homomorphism in the long exact sequence of homotopy groups.

For the element $h_5c_0[22]$, we consider a homotopy class $\alpha'$ in $\{h_5c_0\} \in \pi_{39}$, such that $2 \cdot \alpha' = 0$. Such a class exists, since there is no 2-extension from $h_5c_0$ in the $E_\infty$-page of the Adams spectral sequence of $S^0$. By Proposition 6.3, we have a differential in the Atiyah-Hirzebruch spectral sequence of $X^{22}$:
$$d_8(\alpha'[22]) = \eta\phi[14],$$
where $\phi\in\pi_{45}$ is detected by $h_5d_0$, such that $\eta \cdot \phi \in \langle\alpha', 2, \nu^2\rangle$.

We map this differential to the Atiyah-Hirzebruch spectral sequence of $\widehat{X^{22}}$. Since the 16-skeleton of $\widehat{X^{22}}$ is $\Sigma^{14} C \eta$, we have a differential in the Atiyah-Hirzebruch spectral sequence of $\widehat{X^{22}}$:
$$d_2(\phi[16]) = \eta \phi[14].$$
This implies the following differential
$$d_8(\alpha'[22]) = 0.$$
That is, $\alpha'[22]$ is a permanent cycle in the Atiyah-Hirzebruch spectral sequence of $\widehat{X^{22}}$. Therefore, by exactness of the long exact sequence of homotopy groups, there exists a homotopy class in $\pi_{61}(\widehat{X^{22}})$, which has Adams filtration at most 4. By naturality of the Adams spectral sequence for the quotient map $\widehat{X^{22}} \twoheadrightarrow S^{22}$, the class that detects $\alpha'[22]$ in $Ext(\widehat{X^{22}})$ must map nontrivially to $Ext(S^{22})$.
\begin{displaymath}
    \xymatrix{
  Ext(\widehat{X^{22}}) \ar[r] \ar@{=>}[d] & Ext(S^{22}) \ar@{=>}[d] \\
  \pi_\ast(\widehat{X^{22}}) \ar[r] & \pi_\ast (S^{22})
    }
\end{displaymath}
Since the element $\underline{h_1h_3h_5[22]}$ is already accounted for, by filtration arguments, the only possibility is that $h_5c_0[22]$ detects $\alpha'[22]$. In particular, the element $h_5c_0[22]$ is a permanent cycle in the Adams spectral sequence of $\widehat{X^{22}}$. Therefore, its image in $Ext(\widetilde{X})$ is also a permanent cycle.

For the element $h_3d_1[22]$, we consider the homotopy class $\alpha'' = \sigma \{d_1\} \in \pi_{39}$, which is detected by $h_3d_1$ in the $E_\infty$-page of the Adams spectral sequence of $S^0$. Note that the notation $\{d_1\}$ has indeterminacy, but for our purpose, any class in the set $\{d_1\}$ works.
By Proposition 6.3, we have a differential in the Atiyah-Hirzebruch spectral sequence of $X^{22}$:
$$d_8(\alpha''[22]) \subseteq \eta^2\pi_{44}[14].$$
We map this differential to the Atiyah-Hirzebruch spectral sequence of $\widehat{X^{22}}$. Since the 16-skeleton of $\widehat{X^{22}}$ is $\Sigma^{14} C \eta$, we have some $d_2$ differentials in the Atiyah-Hirzebruch spectral sequence of $\widehat{X^{22}}$ that kill $\eta^2\pi_{44}[14]$. This implies the following differential
$$d_8(\alpha''[22]) = 0.$$
That is, $\alpha''[22]$ is a permanent cycle in the Atiyah-Hirzebruch spectral sequence of $\widehat{X^{22}}$. Therefore, by exactness of the long exact sequence of homotopy groups, there exists a homotopy class in $\pi_{61}(\widehat{X^{22}})$, which has Adams filtration at most 5. By naturality of the Adams spectral sequence for the quotient map $\widehat{X^{22}} \twoheadrightarrow S^{22}$, the class that detects $\sigma \{d_1\}[22]$ in $Ext(\widehat{X^{22}})$ must map nontrivially to $Ext(S^{22})$. Since the elements $\underline{h_1h_3h_5[22]}$ and $h_5c_0[22]$ are already accounted for, by filtration arguments, the only possibility is $h_3d_1[22]$. In particular, the element $h_3d_1[22]$ is a permanent cycle in the Adams spectral sequence of $\widehat{X^{22}}$. Therefore, its image in $Ext(\widetilde{X})$ is also a permanent cycle.
\end{proof}

\begin{rem}
For the element $h_5c_0[22]$, if we use the $H\mathbb{F}_2$-subcomplex $X^{22}$ instead of $\widehat{X^{22}}$, it would support an Adams $d_2$ differential that kills $h_1h_5d_0[14]$. With the 16-cell, $h_1h_5d_0[14]$ is killed by $h_5d_0[16]$ in the Curtis table, therefore isn't present in the Adams $E_2$ page of $\widehat{X^{22}}$.
\end{rem}

\begin{lem}
The element $h_1f_1[20]$ is a permanent cycle in the Adams spectral sequence of $\widetilde{X}$.
\end{lem}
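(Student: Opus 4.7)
The plan is to exhibit $h_1 f_1[20]$ as a permanent cycle in the Adams spectral sequence of the $H\mathbb{F}_2$-subcomplex $X^{20}$ of $\widetilde{X}$, and then conclude by naturality along the composite inclusion $X^{20} \hookrightarrow \widehat{X^{20}} \hookrightarrow \widetilde{X}$. From the cell diagram of $X^{20}$ in Remark 5.13, the $19$-cell receives no attaching maps from cells of lower dimension, so $S^{19}$ splits off the $19$-skeleton of $X^{20}$ and is therefore an $H\mathbb{F}_2$-subcomplex of $X^{20}$. This produces a cofiber sequence
\begin{displaymath}
    \xymatrix{
      S^{19} \ar@{^{(}->}[r] & X^{20} \ar@{->>}[r] & X^{20}/S^{19},
    }
\end{displaymath}
and the quotient $X^{20}/S^{19}$ is precisely the $4$-cell complex analyzed in Proposition 6.4.

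By Proposition 6.4, the element $\alpha'''[20] = \sigma\{h_0 h_2 h_5\}[20]$ is a permanent cycle in the Atiyah-Hirzebruch spectral sequence of $X^{20}/S^{19}$, and therefore detects a homotopy class $\bar{\alpha} \in \pi_{61}(X^{20}/S^{19})$. The next step is to lift $\bar{\alpha}$ to $\pi_{61}(X^{20})$ using the associated long exact sequence of homotopy groups. The Puppe boundary $X^{20}/S^{19} \to \Sigma S^{19} = S^{20}$ restricted to the top cell is multiplication by $2$, because the $20$-cell attaches to the $19$-cell by $2$ in $X^{20}$; hence the image of $\bar{\alpha}$ under the connecting homomorphism equals $2 \cdot \sigma\{h_0 h_2 h_5\}$. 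This vanishes by condition (1) in the proof of Proposition 6.4 (indeed $2 \cdot \pi_{41} = 0$), so a lift $\tilde{\alpha} \in \pi_{61}(X^{20})$ exists.

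It remains to pin down the Adams filtration of $\tilde{\alpha}$. Since $h_3 \cdot h_0 h_2 h_5 = h_0 h_2 h_3 h_5 = 0$ in $Ext(S^0)$ (using $h_2 h_3 = 0$), the class $\sigma\{h_0 h_2 h_5\}$ is detected in Adams filtration at least $5$; a consultation of the Adams $E_\infty$-page at stem $41$ identifies the detecting element as $h_1 f_1$. Hence $\bar{\alpha}$ is detected in $Ext(X^{20}/S^{19})$ by $h_1 f_1[20]$, and naturality of the Adams spectral sequence along the quotient $X^{20} \twoheadrightarrow X^{20}/S^{19}$ forces the detecting element of $\tilde{\alpha}$ in $Ext(X^{20})$ to lie in filtration $5$ and to map onto $h_1 f_1[20]$. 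Using the long exact sequence of $Ext$ groups associated to $S^{19} \hookrightarrow X^{20} \twoheadrightarrow X^{20}/S^{19}$, one verifies that $h_1 f_1[20]$ is the unique element in $Ext^{5,\,66}(X^{20})$ whose image under the quotient is nontrivial. Therefore $\tilde{\alpha}$ is detected by $h_1 f_1[20]$, which is a permanent cycle in $Ext(X^{20})$; naturality along $X^{20} \hookrightarrow \widetilde{X}$ then delivers the claim.

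The principal obstacle is the identification of the detecting element of $\sigma\{h_0 h_2 h_5\}$: pinning it down as $h_1 f_1$ rather than some other filtration-$5$ class in $Ext^{5,\,46}(S^0)$ requires a Toda bracket / Moss's theorem argument in the spirit of those in the proofs of Propositions 6.3 and 6.4. A secondary technicality is the long exact sequence computation verifying that $h_1 f_1[20]$ is the unique preimage at the relevant bidegree, but this parallels the earlier computations of $Ext(\widetilde{X})$ in Lemma 8.2 and should go through without surprises.
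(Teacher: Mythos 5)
Your proposal follows essentially the same route as the paper's proof: restrict to the $H\mathbb{F}_2$-subcomplex $X^{20}$, invoke Proposition 6.4 for the quotient $X^{20}/S^{19}$, lift the resulting homotopy class to $\pi_{61}(X^{20})$ using the fact that the $20$-cell attaches to the $19$-cell by $2$ and $2\cdot\pi_{41}=0$, identify the detecting element as $h_1f_1[20]$ by a filtration argument, and push forward along $X^{20}\hookrightarrow \widetilde{X}$. Your reorganization of the lifting step (running the long exact sequence of homotopy groups for $S^{19}\hookrightarrow X^{20}\twoheadrightarrow X^{20}/S^{19}$ and observing that the Puppe map factors through the top cell followed by a degree $2$ map) is a legitimate repackaging of the paper's Atiyah--Hirzebruch argument, and the final uniqueness check against $Ext^{5,66}(X^{20})$ matches the paper's use of its Table 5.

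There is, however, one genuine gap, and it is exactly the point you flag as ``the principal obstacle'': the whole filtration argument hinges on knowing that $\sigma\{h_0h_2h_5\}$ is \emph{nonzero} in $\pi_{41}$ and is detected by $h_1f_1$ in Adams filtration exactly $5$. Your in-line justification does not establish this: the relation $h_2h_3=0$ only shows that the product, if nonzero, lies in filtration at least $5$, and ``consulting the $E_\infty$-page'' cannot identify the detecting element of a product --- the $41$-stem contains higher-filtration classes (e.g.\ $z$ and elements in the image of $J$), so a priori $\sigma\{h_0h_2h_5\}$ could vanish or be detected above filtration $5$, in which case nothing forces the detecting element of your lift $\tilde{\alpha}$ to be $h_1f_1[20]$ rather than, say, $g_2[17]$ or $h_2g_2[14]$. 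The paper supplies precisely this input as Lemma 11.4, proved from the Adams differentials $d_3(h_2h_5)=h_1d_1$ and $d_2(h_0c_2)=h_1h_3d_1$, the Massey products $\langle d_1,h_1,h_0\rangle=h_0h_2h_5$ and $\langle h_3d_1,h_1,h_0\rangle=h_0^2c_2=h_1f_1$, and Moss's theorem applied to the Toda brackets $\langle\{d_1\},\eta,2\rangle$ and $\langle\sigma\{d_1\},\eta,2\rangle$. Until that lemma (or an equivalent) is proved, your argument identifies a permanent cycle of filtration at most $5$ in $Ext(X^{20})$ but cannot single out $h_1f_1[20]$.
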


\begin{proof}
We consider the $H\mathbb{F}_2$-subcomplex $X^{20}$. Using the algebraic Atiyah-Hirzebruch spectral sequence, we compute the Adams $E_2$ page of $X^{20}$ in the 61 stem for $s\leq 5$. This computation is straightforward: all differentials in this range follow by the multiplication by 2 attaching maps.

\begin{table}[h]
\caption{The Adams $E_2$ page of $X^{20}$ and $S^{20}$ in the 61 stem for $s\leq 5$}
\centering
\begin{tabular}{ l l l }
$s\backslash 61-\text{stem of}$ & $X^{20}$ & $S^{20}$ \\ [0.5ex] 
\hline 
5 & $h_1f_1[20]$ & $h_1f_1[20]$\\
  & $h_2g_2[14]$ &  \\ \hline
4 & $g_2[17]$ & $\bullet$ \\ \hline
3 & & $\bullet$ \\
\end{tabular}
\label{X20}
\end{table}

The complex $X^{20}$ fits into a cofiber sequence
\begin{displaymath}
    \xymatrix{
    X^{20} \ar@{->>}[rr]^{q'''} & & S^{20} \ar[rr]^{a'''} & & \Sigma (S^{19}\vee X^{18}) \\
    *+[o][F-]{20} \ar@{-}[d]_{2} \ar@{-}@/^1pc/[dd]^{\eta} \ar@{->>}[rr] & & \xybox{(0,0.5)*+[o][F-]{20}} \ar[rr]^{2} \ar[rrd]^{\eta} & & \xybox{(0,0.5)*+[o][F-]{20}} \\
    *+[o][F-]{19} & & & & *+[o][F-]{19} \ar@{-}[d]^{2} \ar@{-} `r[dddd] `[dddd]^{\nu} [dddd] \\
    *+[o][F-]{18} \ar@{-}[d]^{2} \ar@{-} `r[dddd] `[dddd]^{\nu} [dddd] & & & & *+[o][F-]{18} \\
    *+[o][F-]{17} & & & &  \\
   & & & & \\
   & & & & *+[o][F-]{15} \\
    *+[o][F-]{14}
    }
\end{displaymath}
Here $q'''$ is the quotient map, $X^{18}$ is the 18-skeleton of $X^{20}$, $a'''$ is suspension of the attaching map of the 20-cell in $X^{20}$. We have a long exact sequence of homotopy groups associated to this cofiber sequence. Suppose $\alpha'''[20]$ is an element in $\pi_{61}(S^{20})$. Suppose further that $\alpha'''[20]$ supports a differential in the Atiyah-Hirzebruch spectral sequence of $X^{20}$. By the construction of the Atiyah-Hirzebruch spectral sequence, the target of the differential that $\alpha'''[20]$ supports detects $\Delta'''(\alpha'''[20])$ in the homotopy groups of lower skeleton, where the map
$$\Delta''' : \pi_{61}(S^{22}) \longrightarrow \pi_{61}(\Sigma X^{21})$$
is the boundary homomorphism in the long exact sequence of homotopy groups.

By Lemma 4.4 and Remark 5.8, we have $S^{19}$ as an $H\mathbb{F}_2$-subcomplex of $X^{20}$. We consider its cofiber $X^{20}/S^{19}$.
\begin{displaymath}
    \xymatrix{
    *+[o][F-]{20} \ar@{-}[d]_{\eta}  \\
    *+[o][F-]{18} \ar@{-}@/_1pc/[d]_{2} \ar@{-}`r[dd] `[dd]^{\nu} [dd] \\
    *+[o][F-]{17} \\
    *+[o][F-]{14} \\
    X^{20}/S^{19}
    }
\end{displaymath}

For the element $h_1f_1[20]$, we consider the homotopy class $\alpha''' = \sigma \{h_0h_2h_5\} \in \pi_{41}$. Because of Lemma 11.4, $h_1f_1$ detects $\sigma \{h_0h_2h_5\}$ in the Adams $E_\infty$-page of $S^0$. By Proposition 6.4, the element $\alpha'''[20]$ is a permanent cycle in the Atiyah-Hirzebruch spectral sequence of $X^{20}/S^{19}$.

In the Atiyah-Hirzebruch spectral sequence of $X^{20}$, we have the differential
$$d_1(\alpha'''[20]) = 0$$
since the attaching map from the 20-cell to the 19-cell is multiplication by 2 and
$$2 \cdot \alpha''' \in 2 \cdot \pi_{41} =0.$$
Using the fact that the 19-cell of the 19-skeleton of $X^{20}$ splits off, and the naturality of the Atiyah-Hirzebruch spectral sequences for the quotient map $X^{20} \twoheadrightarrow X^{20}/S^{19}$, the element $\alpha'''[20]$ survives in the Atiyah-Hirzebruch spectral sequence of $X^{20}$.

Therefore, by exactness of the long exact sequence of homotopy groups, there exists a homotopy class in $\pi_{61}(X^{20})$, which has Adams filtration at most 5. By naturality of the Adams spectral sequence for the quotient map $X^{20} \twoheadrightarrow S^{20}$, the class that detects $\alpha'''[20]$ in $Ext(X^{20})$ must map nontrivially to $Ext(S^{20})$.
\begin{displaymath}
    \xymatrix{
  Ext(X^{20}) \ar[r] \ar@{=>}[d] & Ext(S^{20}) \ar@{=>}[d] \\
  \pi_\ast(X^{20}) \ar[r] & \pi_\ast (S^{20})
    }
\end{displaymath}
By filtration arguments, the only possibility is $h_1f_1[20]$. In particular, the element $h_1f_1[20]$ is a permanent cycle in the Adams spectral sequence of $X^{20}$. Therefore, its image in $Ext(\widetilde{X})$ is also a permanent cycle.
\end{proof}

\section{The Adams spectral sequence of $X$} \label{34}

In this section, we establish Step 3 and Step 4 by proving Theorems 9.1 and 9.2. Combining them together, we have Corollary 9.3.

\begin{thm}
In the Adams spectral sequence of $X$, we have the differential
$$d_4(h_4^3[16]) = B_1[14].$$
\end{thm}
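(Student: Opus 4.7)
The plan is to deduce this differential from the analogous one in $\widetilde X$ (Theorem 8.1) via naturality of the Adams spectral sequence under the inclusion $i\colon\widetilde X\hookrightarrow X$, whose cofiber is $S^{23}$ (since $\widetilde X$ is by definition the 22-skeleton of $X$). Concretely, $i_*$ commutes with all Adams differentials, so once one verifies that $i_*(h_4^3[16]) = h_4^3[16]$ and $i_*(B_1[14]) = B_1[14]$ in $Ext(X)$, with the target nonzero, the $d_4$ differential in $\widetilde X$ transports to the required $d_4$ differential in $X$.

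The first identification is essentially automatic. Both $\widetilde X$ and $X$ contain a cell in dimension $16$, and $h_4^3[16]$ sits at the lowest algebraic Atiyah--Hirzebruch filtration in its bidegree for each. By naturality of the algebraic AHSS (Remark 3.5), the identification $i_*(h_4^3[16]) = h_4^3[16]$ is canonical and the right-hand side is nonzero in $Ext(X)$.

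The second identification is the crux. Naturality of the algebraic AHSS already forces $i_*(B_1[14])$ to be detected by the AHSS element $B_1[14]$ in $X$, so the only content is nonvanishing in $Ext^{7,67}(X)$. From the cofiber sequence $\widetilde X \hookrightarrow X \twoheadrightarrow S^{23}$ we obtain the long exact sequence
\[
Ext^{6,67}(S^{23}) \xrightarrow{\delta} Ext^{7,67}(\widetilde X) \xrightarrow{i_*} Ext^{7,67}(X),
\]
so $i_*(B_1[14])$ vanishes if and only if $B_1[14]$ lies in the image of $\delta$. The connecting map $\delta$ records algebraic AHSS differentials arising from the attachment of the $23$-cell, so this question reduces to a finite check in the Curtis table of $P_1^\infty$: one must rule out every element of $Ext^{6,44}(S^0)$ whose AHSS differential from the $23$-cell could reach $B_1[14]$. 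I expect this finite check to be the main obstacle of the proof, and it is precisely the kind of computation for which the Lambda-algebra description of $Ext(P_1^\infty)$ from \cite{WX} is tailored. Once it is settled, naturality of $d_4$ applied to $i_*(h_4^3[16]) = h_4^3[16]$ yields $d_4(h_4^3[16]) = i_*(B_1[14]) = B_1[14]$ in $Ext(X)$, as claimed.
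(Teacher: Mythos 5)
There is a genuine gap: you verify that $i_*(B_1[14])$ is nonzero on the $E_2$-page of $X$, but what naturality of the map of Adams spectral sequences actually gives you is an equation on the $E_4$-page, namely $d_4$ of the image of $h_4^3[16]$ equals the \emph{class of} $i_*(B_1[14])$ in $E_4(X)$. For this to be the claimed nontrivial differential you must also show that $B_1[14]$ \emph{survives to} $E_4(X)$, i.e.\ that it is not the target of an Adams $d_2$ or $d_3$ differential in $X$. This is the real content of the step, and it is exactly where the new elements contributed by the $23$-cell enter: $h_0^3h_3h_5[23]$ (a potential $d_2$-source) and $h_0^2h_3h_5[23]$, $e_1[23]$ (potential $d_3$-sources) all sit in the right bidegrees to kill $B_1[14]$. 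The paper's proof of this theorem is devoted almost entirely to ruling these out: it shows $d_2(h_3h_5[23])=d_3(h_3h_5[23])=0$ using the two-cell complex $X^{23}$ and then applies the Leibniz rule with $h_0$-multiplications, and it excludes $d_3(e_1[23])=B_1[14]$ by mapping to $S^{23}$ and invoking Bruner's differential $d_3(e_1)=h_1t$. Your argument never addresses any of this, so as written it only proves the differential modulo the possibility that its target has already died on an earlier page.

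Conversely, the step you identify as the crux --- nonvanishing of $i_*(B_1[14])$ on $E_2$, via the connecting map $\delta$ and a Curtis-table computation --- is essentially free. Once the $15$-cell has been removed, no primary Steenrod operation reaches the $23$-dimensional class of $X$ (the only candidate was $Sq^8$ on the $15$-dimensional class), so $H^*(X)$ splits as $H^*(\widetilde X)\oplus H^*(S^{23})$ over the Steenrod algebra and hence $Ext(X)\cong Ext(\widetilde X)\oplus Ext(S^{23})$; in particular $i_*$ is injective on $E_2$ and $\delta=0$. So the effort is misallocated: the easy part is treated as the main obstacle, and the genuinely hard part (survival of $B_1[14]$ to the $E_4$-page) is omitted.
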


The following Theorem 9.2 is a consequence of Lemma 8.8.

\begin{thm}
In the Adams spectral sequence of $X$, the chosen element $\underline{h_1h_3h_5[22]}$ is a permanent cycle. Here $\underline{h_1h_3h_5[22]}$ is defined to be the image of $h_1h_3h_5[22]$ in $Ext(X^{22})$.
\end{thm}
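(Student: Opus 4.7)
The plan is that Theorem 9.2 follows immediately from Lemma 8.8 combined with the naturality of the Adams spectral sequence along the $H\mathbb{F}_2$-subcomplex inclusions of Definitions 5.1 and 5.6. Specifically, by Example 3.6 the class $\underline{h_1h_3h_5[22]} \in Ext^{3,64}(X)$ is \emph{defined} as the image of the generator $h_1h_3h_5[22] \in Ext^{3,64}(X^{22})$ under the composite inclusion
\[
X^{22} \hookrightarrow \widetilde{X} \hookrightarrow X.
\]
Since permanent cycles map to permanent cycles under any map of spectra by naturality of the Adams spectral sequence, it suffices to verify that $h_1h_3h_5[22]$ is a permanent cycle in the Adams spectral sequence of $X^{22}$.

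This last fact is exactly what was already extracted from the argument of Lemma 8.8. Recall that there one considers the cofiber sequence
\[
X^{22} \twoheadrightarrow S^{22} \xrightarrow{a} \Sigma X^{21},
\]
and uses Proposition 6.3 to conclude that the class $\alpha = \sigma\eta_5 \in \pi_{39}(S^0)$, placed on the top cell, survives the Atiyah-Hirzebruch spectral sequence of $X^{22}$. By exactness of the long exact sequence of homotopy groups, this yields a homotopy class in $\pi_{61}(X^{22})$ whose Adams filtration is at most $3$; according to Table 4, $Ext^{3,64}(X^{22}) = \mathbb{Z}/2$ is generated by $h_1h_3h_5[22]$, which must therefore be the detecting class. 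In particular $h_1h_3h_5[22]$ is a permanent cycle on the nose in $Ext(X^{22})$.

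Applying naturality along $X^{22} \hookrightarrow X$ to this permanent-cycle conclusion gives that $\underline{h_1h_3h_5[22]}$ is a permanent cycle in the Adams spectral sequence of $X$, completing the proof. There is no real obstacle: the only care required is that we chose the particular representative $\underline{h_1h_3h_5[22]}$ specified by Example 3.6, which pins down the ambiguity of the notation $h_1h_3h_5[22]$ in $Ext^{3,64}(X)$ in the way required for the naturality step to be meaningful.
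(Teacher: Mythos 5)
Your proposal is correct and takes essentially the same route as the paper: the paper's proof of Theorem 9.2 likewise factors the inclusion $X^{22}\hookrightarrow X$ through $\widetilde{X}$, cites Lemma 8.8 (whose proof is exactly the Proposition 6.3 argument with $\alpha=\sigma\eta_5$ that you recap) for the permanent-cycle statement, and concludes by naturality of the Adams spectral sequence.
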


\begin{proof}
Since the map $X^{22} \hookrightarrow X$ maps through $\widetilde{X}$, we have $\underline{h_1h_3h_5[22]}$ in $Ext(\widetilde{X})$ maps to $\underline{h_1h_3h_5[22]}$ in $Ext(X)$.
\begin{displaymath}
    \xymatrix{
  Ext^{3,61+3}(X^{22}) \ar[rr] & & Ext^{3,61+3}(\widetilde{X}) \ar[rr] & & Ext^{3,61+3}(X) \\
 h_1h_3h_5[22] \ar@{|->}[rr] & & \underline{h_1h_3h_5[22]} \ar@{|->}[rr] & & \underline{h_1h_3h_5[22]}
    }
\end{displaymath}
By Lemma 8.8, $\underline{h_1h_3h_5[22]}$ is a permanent cycle in $Ext(\widetilde{X})$. Therefore, by naturality of the Adams spectral sequences, $\underline{h_1h_3h_5[22]}$ is also a permanent cycle in $Ext(X)$.
\end{proof}

From Theorem 9.1 and 9.2, we have the following corollary.

\begin{cor}
In the Adams spectral sequence of $X$, we have the differential
$$d_4(\underline{h_1h_3h_5[22]} + h_4^3[16]) = B_1[14].$$
\end{cor}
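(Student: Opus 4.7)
The plan is to deduce this as an immediate linearity consequence of the two preceding results, Theorem 9.1 and Theorem 9.2, using the fact that the Adams $d_4$ differential is additive on the $E_4$-page.

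First I would note that Theorem 9.2 establishes that $\underline{h_1h_3h_5[22]}$ is a permanent cycle in the Adams spectral sequence of $X$. In particular, it survives to the $E_4$-page and all its higher Adams differentials vanish; so $d_4(\underline{h_1h_3h_5[22]}) = 0$. Next I would invoke Theorem 9.1, which gives $d_4(h_4^3[16]) = B_1[14]$ in the same spectral sequence; in particular $h_4^3[16]$ survives to the $E_4$-page. Both classes therefore represent elements of the $E_4$-page in the same bidegree, and their sum is a well-defined class on $E_4$.

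Applying the additivity of the Adams differential $d_4$ on $E_4$, I would compute
\[
d_4\bigl(\underline{h_1h_3h_5[22]} + h_4^3[16]\bigr) \;=\; d_4\bigl(\underline{h_1h_3h_5[22]}\bigr) + d_4\bigl(h_4^3[16]\bigr) \;=\; 0 + B_1[14] \;=\; B_1[14],
\]
which is the asserted differential. There is no genuine obstacle here: the only thing to check is that both summands are defined as elements of the $E_4$-page (so the sum makes sense and $d_4$ applies termwise), and this is exactly what Theorems 9.1 and 9.2 guarantee. Thus the corollary follows by a one-line computation once the two input theorems are in hand.
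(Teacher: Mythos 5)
Your proposal is correct and matches the paper exactly: Corollary 9.3 is stated there as an immediate consequence of Theorems 9.1 and 9.2, with precisely the additivity argument you give. Nothing further is needed.
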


In the rest of this section, we prove Theorem 9.1. The idea is to push the $d_4$ differential in the Adams spectral sequence of $\widetilde{X}$ into that of $X$, and check the element $B_1[14]$ is not killed by an Adams $d_2$ or $d_3$ differential.

\begin{proof}
Recall from Remark 5.11 that the Adams $E_2$ page of $X$ splits as follows:
$$Ext(X) = Ext(\widetilde{X}) \oplus Ext(S^{23}).$$
Therefore, by Lemma 8.2, we have the Adams $E_2$ page of $X$ in the 60 and 61 stems for $s\leq 7$ in the following Table 6.

\begin{table}[h]
\caption{The Adams $E_2$ page of $X$ in the 60 and 61 stems for $s\leq 7$}
\centering
\begin{tabular}{ l l l }
$s\backslash t-s$ & 60 & 61 \\ [0.5ex] 
\hline 
7 & $B_1[14]$ & $\bullet$\\
  & $\bullet$ & $\bullet$ \\
  & $h_1t[23]$ & $\bullet$ \\
  & $h_0^2x[23]$ & \\ \hline
6 & $\bullet$ & $\bullet$\\
  & $\bullet$ & $\bullet$ \\
  & $\bullet$ & $\bullet$ \\
  & $h_0x[23]$ & $\bullet$ \\
  & & $\bullet$ \\ \hline
5 & $\bullet$ & $\bullet$\\
  & $\bullet$ & $\bullet$\\
  & $x[23]$ & $\bullet$\\
  & & $\bullet$\\
  & & $\bullet$\\
  & & $h_0^3h_3h_5[23]$\\ \hline
4 & $\bullet$ & $\bullet$\\
  & & $\bullet$\\
  & & $\bullet$\\
  & & $\bullet$\\
  & & $\bullet$\\
  & & $e_1[23]$\\
  & & $h_0^2h_3h_5[23]$\\ \hline
3 & $\bullet$ & $h_4^3[16]$\\
  & $\bullet$ & $\underline{h_1h_3h_5[22]}$ \\
  & & $h_0h_3h_5[23]$ \\ \hline
2 & & $h_3h_5[23]$ \\
\end{tabular}
\label{XX}
\end{table}

Note that by naturality of the Adams spectral sequences for the inclusion map $\widetilde{X} \hookrightarrow X$, and the proof of the Theorem 8.1, no $\bullet$'s in Adams filtration 4 and 5 can kill $B_1[14]$. Therefore, to prove Theorem 9.1, we only need to show that
$$d_2(h_0^3h_3h_5[23]) \neq B_1[14],$$
$$d_3(h_0^2h_3h_5[23]) \neq B_1[14],$$
$$d_3(e_1[23]) \neq B_1[14].$$

For the elements $h_0^2h_3h_5[23]$ and $h_0^3h_3h_5[23]$, we will show that
$$d_2(h_3h_5[23]) = 0,$$
$$d_3(h_3h_5[23]) = 0,$$
which by Leibniz's rule implies that
$$d_2(h_0^3h_3h_5[23]) = h_0^3 \cdot d_2(h_3h_5[23]) = 0,$$
$$d_3(h_0^2h_3h_5[23]) = h_0^2 \cdot d_3(h_3h_5[23]) = 0.$$
We consider the $H\mathbb{F}_2$-subcomplex $X^{23}$ in Definition 5.10. Recall that $X^{23}$ consists of two cells in dimension 14 and 23. Since there is no primary Steenrod operation connecting them, we have
$$Ext(X^{23}) = Ext(S^{14}) \oplus Ext(S^{23}).$$
Therefore, we have the Adams spectral sequence of $X^{23}$ in the 60 and 61 stems for $s\leq 5$ in the following Table 7.
\begin{table}[h]
\caption{The Adams $E_2$ page of $X^{23}$ in the 60 and 61 stems for $s\leq 5$}
\centering
\begin{tabular}{ l l l }
$s\backslash t-s$ & 60 & 61 \\ [0.5ex] 
\hline 
5 & $x[23]$ & $h_0^3h_3h_5[23]$\\
  & & $\bullet$\\ \hline
4 & & $e_1[23]$\\
  & & $h_0^2h_3h_5[23]$\\ \hline
3 & $\bullet$ & $h_0h_3h_5[23]$ \\ \hline
2 & & $h_3h_5[23]$ \\
\end{tabular}
\label{X23}
\end{table}
In the Adams spectral sequence of $X^{23}$, we have $d_2(h_3h_5[23])=0$, since the target lies in the zero group. If $d_3(h_3h_5[23]) \neq 0$, then we must have that $d_3(h_3h_5[23]) = x[23]$, since that is the only possibility. By mapping through the quotient map $X^{23} \twoheadrightarrow S^{23}$, this differential would imply that $d_3(h_3h_5[23]) = x[23]$ in the Adams spectral sequence of $S^{23}$. However, in $S^0$, we have that $d_3(h_3h_5) = 0$. Contradiction! Therefore, we must have the differential $d_3(h_3h_5[23]) = 0$ in the Adams spectral sequence of $X^{23}$, and therefore also in that of $X$.

For the element $e_1[23]$, suppose we have $d_3(e_1[23]) = B_1[14]$ in the Adams spectral sequence of $X$. By naturality for the quotient map $X \twoheadrightarrow S^{23}$, we have $d_3(e_1[23]) = 0$ in the Adams spectral sequence of $S^{23}$, since the target $B_1[14]$ maps to zero in the $E_2$-page by naturality of the algebraic Atiyah-Hirzebruch spectral sequences. However, this contradicts Bruner's differential \cite[Theorem 4.1]{Br1} in $S^0$:
$$d_3(e_1) = h_1t.$$
Therefore, we must have $d_3(e_1[23]) \neq B_1[14]$, which completes the proof.
\end{proof}

\section{The pull back}

In this section, we prove Step 5: based on Corollary 9.3, we prove the following Theorem 10.1.

\begin{thm}
In the Adams spectral sequence of $P_1^{23}$, we have a $d_3$ differential:
$$d_3(h_1h_3h_5[22]) = G[6].$$
\end{thm}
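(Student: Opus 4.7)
The plan is to pull back the $d_4$ differential of Corollary 9.3 along the quotient map $q \colon P_1^{23} \twoheadrightarrow X$, and then isolate a $d_3$ by ruling out all competing possibilities.

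The preparatory step is to identify the image $q_\sharp(h_1h_3h_5[22]) \in Ext^{3,64}(X)$. By naturality of the algebraic Atiyah--Hirzebruch spectral sequence this image is detected by the AHSS class $h_1h_3h_5[22]$, so it equals either $\underline{h_1h_3h_5[22]}$ or $\underline{h_1h_3h_5[22]} + h_4^3[16]$. The plan is to pin down the correct alternative using the explicit Lambda-algebra representative $(22)\,21\,11\,7$ from \cite{WX} together with the cell-structure computation of $X$ in Section 5. Once the image is shown to be $\underline{h_1h_3h_5[22]} + h_4^3[16]$, Corollary 9.3 and naturality of the Adams spectral sequence force $h_1h_3h_5[22]$ to support a nontrivial differential $d_r$ with $r \in \{2,3,4\}$, whose target $T$ must satisfy $q_\sharp(T) = B_1[14]$ when $r = 4$ and $q_\sharp(T) = 0$ in $E_r(X)$ when $r \in \{2,3\}$.

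The main step is an exhaustion of candidates for $T$. Using the Lambda algebra tables of \cite{WX} we list the generators of $Ext^{s,s+60}(P_1^{23})$ for $s \in \{5,6,7\}$. Candidates arising from cells in dimensions $\leq 13$ (including $G[6]$ itself) have $q_\sharp = 0$ automatically and are therefore compatible only with $r = 3$. Candidates from cells $14$ through $23$ will be ruled out using: naturality for the transfer $P_1^{23} \hookrightarrow P_1^\infty \to S^0$ combined with the status of the image in Isaksen's Adams $E_\infty$ chart \cite{Isa, Isa2}; naturality for $q$ together with the computation of $Ext(X)$ from Section 9; and the Leibniz rule against known permanent cycles. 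In particular the would-be $d_4$ target $B_1[14] \in Ext^{7,67}(P_1^{23})$ will be eliminated by verifying that $B_1[14]$ is a permanent cycle in $Ext(P_1^\infty)$, a consequence of the hidden $\eta$-extension from $h_4^3$ to $B_1$ in $S^0$ that was used in Theorem 7.1. The $d_2$ candidates in filtration $5$ will be handled by analogous comparisons with the (already-known) 59-stem. After this exhaustion the unique remaining possibility is $d_3(h_1h_3h_5[22]) = G[6]$.

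The principal obstacle is the precise identification $q_\sharp(h_1h_3h_5[22]) = \underline{h_1h_3h_5[22]} + h_4^3[16]$: without the extra summand the pull-back carries no content, and verifying it requires the detailed Lambda-algebra bookkeeping of \cite{WX} and close attention to the attaching map from the 22-cell down to the 16-cell in $X$. The subsequent candidate-by-candidate analysis is systematic but tedious, relying on a thorough comparison of the generators in filtrations $5$, $6$, and $7$ of $P_1^{23}$ against Isaksen's Adams $E_\infty$ chart through the 60-stem.
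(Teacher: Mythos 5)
Your route is essentially the paper's: the crux you single out is exactly Lemma 10.3, which proves $f_\sharp(h_1h_3h_5[22]) = \underline{h_1h_3h_5[22]} + h_4^3[16]$ by a Lambda-complex computation of the boundary map ($\Delta_1(h_1h_3h_5[22]) = h_0h_4^3[15]$, cancelling against $\Delta_2(h_4^3[16]) = h_0h_4^3[15]$), and the exhaustion then runs as you describe: targets on cells of dimension at most $13$ are compatible only with $r=3$ since $f_\sharp$ kills them, while targets on cells $20$ through $23$ map to nonzero classes of $Ext(X)$ in Atiyah--Hirzebruch filtration at least $20$, which can be neither $0$ nor $B_1[14]$; the sole survivor is $d_3(h_1h_3h_5[22]) = G[6]$.

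One sub-claim in your plan is backwards, though harmlessly so. You propose to eliminate $B_1[14] \in Ext^{7,67}(P_1^{23})$ as a $d_4$ target by showing it is a permanent cycle ``as a consequence of the hidden $\eta$-extension from $h_4^3$ to $B_1$.'' That extension implies the opposite: it says $B_1$ detects an $\eta$-multiple, so $B_1[14]$ detects zero in $\pi_{60}(\Sigma^{14}C\eta)$ and must be \emph{killed} --- this is precisely the engine of Theorem 7.1 and of the whole argument. Your proof is not sunk only because the candidate is absent: $B_1[14]$ cancels $h_5i[5]$ in the algebraic Atiyah--Hirzebruch spectral sequence and is not a generator of $Ext^{7,67}(P_1^{23})$ (Table 8; see also the discussion in Section 14). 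Likewise $Ext^{5,65}(P_1^{23}) = 0$, so there are no $d_2$ candidates to handle. Once you actually compute the chart from the Curtis table, these spurious worries evaporate and your exhaustion reduces to the paper's cases, closing the argument.
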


\begin{proof}
We have the Adams $E_2$-page of $P_1^{23}$ from the Curtis table.
\begin{table}[h]
\caption{The Adams $E_2$-page of $P_1^{23}$ in the 60 and 61 stems for $s\leq 7$}
\centering
\begin{tabular}{ l l l }
$s\backslash t-s$ & 60 & 61 \\ [0.5ex] 
\hline 
7 & $\bullet[3]$ & $\bullet$\\
  & $\bullet[5]$ & $\bullet$ \\
  & $\bullet[21]$ & $\bullet$ \\
  & $\bullet[23]$ & $\bullet$ \\
  & $\bullet[23]$ & $\bullet$ \\
  & & $\bullet$ \\ \hline
6 & $G[6]$ & $\bullet$\\
  & $\bullet[20]$ & $\bullet$ \\
  & $\bullet[22]$ & $\bullet$ \\
  & $\bullet[23]$ & $\bullet$ \\
  & & $\bullet$ \\ \hline
5 &  & $\bullet$\\
  &  & $\bullet$ \\
  &  & $\bullet$ \\
  & & $\bullet$ \\
  & & $\bullet$ \\
  & & $\bullet$ \\ \hline
4 & $\bullet$ & $\bullet$\\
  & & $\bullet$\\
  & & $\bullet$ \\ \hline
3 & $\bullet$ & $\bullet$ \\
 & & $h_1h_3h_5[22]$\\
\end{tabular}
\label{P23}
\end{table}

We will show in Lemma 10.3 that
$$f_\sharp (h_1h_3h_5[22]) = \underline{h_1h_3h_5[22]} + h_4^3[16],$$
where $f_\sharp: Ext(P_1^{23}) \rightarrow Ext(X)$ is induced by the composition of the two quotient maps $f_1: P_1^{23} \twoheadrightarrow P_{14}^{23}$, $f_2: P_{14}^{23} \twoheadrightarrow X$.
By Corollary 9.3 that in the Adams spectral sequence of $X$, we have the differential
$$d_4(\underline{h_1h_3h_5[22]} + h_4^3[16]) = B_1[14],$$
and the naturality of the Adams spectral sequence, the element $h_1h_3h_5[22]$ in $Ext(P_1^{23})$ must support a nontrivial $d_2$, $d_3$ or $d_4$ differential.

\begin{displaymath}
    \xymatrix{
 Ext(P_1^{23}) \ar[r]^{f_\sharp} & Ext(X) \\
 & B_1[14] \\
 \bullet & \\
 h_1h_3h_5[22] \ar@{-->}[u]^{d_r, \ 2\leq r \leq4}  \ar@{|->}[r] & *\txt{ $\underline{h_1h_3h_5[22]}$\\$+h_4^3[16]$} \ar@{-->}[uu]^{d_4}
    }
\end{displaymath}

From the table of the Adams $E_2$-page of $P_1^{23}$, we have the following three possibilities.

\begin{enumerate}
\item
It supports a nontrivial $d_3$ or $d_4$ differential that kills one of the elements $\bullet[i]$ with $20\leq i \leq 23$.
\item
It supports a nontrivial $d_4$ differential that kills one of the elements $\bullet[i]$ with $i = 3, 5$.
\item
It supports a nontrivial $d_3$ differential that kills $G[6]$.
\end{enumerate}

For $(1)$, since these target elements map nontrivially to $Ext(X)$, this would contradict Theorem 9.1. For $(2)$, from the Curtis table, these two elements exist in $Ext(P_1^n)$ for all $n\geq 5$. In particular, they exist in $Ext(P_1^{13})$, and map trivially to $Ext(P_{14}^{23})$ in the following long exact sequence
\begin{displaymath}
    \xymatrix{
    \cdots \ar[r] & Ext(P_1^{13}) \ar[r] & Ext(P_1^{23}) \ar[r] & Ext(P_{14}^{23}) \ar[r] & \cdots,
    }
\end{displaymath}
and hence trivially to $Ext(X)$. Since they have the same filtration as $B_1[14]$, this would contradict Theorem 9.1.

Therefore, $(3)$ is the only possibility.
\end{proof}

\begin{rem}
The reason we use $P_1^{23}$ instead of $P_1^{22}$ is that, in the bidegree $(s,t-s) = (5, 60)$ of the Curtis table, the element $h_5f_0[11]$ is killed by a $\bullet[23]$. Therefore, in $Ext(P_1^{22})$, the element $h_5f_0[11]$ is present, and leaves a possibility of a nontrivial Adams $d_2$ differential. We add the 23-cell to make this go away.
\end{rem}

We now prove Lemma 10.3.

\begin{lem}
We have
$$f_\sharp (h_1h_3h_5[22]) = \underline{h_1h_3h_5[22]} + h_4^3[16],$$
where $f_\sharp: Ext(P_1^{23}) \rightarrow Ext(X)$ is the homomorphism induced by the composition of the two quotient maps
$$f_1: P_1^{23} \twoheadrightarrow P_{14}^{23}, \ f_2: P_{14}^{23} \twoheadrightarrow X.$$
\end{lem}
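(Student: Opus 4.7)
The plan is to track $h_1h_3h_5[22] \in Ext^{3,64}(P_1^{23})$ through the two quotient maps $f_1$ and $f_2$ using naturality of the algebraic Atiyah-Hirzebruch spectral sequence (AHSS), and then resolve the ambiguity modulo $h_4^3[16]$ in the answer.

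First, I would handle $f_{1\sharp}: Ext(P_1^{23}) \to Ext(P_{14}^{23})$. In $Ext^{3,64}(P_1^{23})$, the class $h_1h_3h_5[22]$ is unambiguous because it has the lowest AHSS filtration in its bidegree (as read off from the Curtis table of \cite{WX}). By naturality of the AHSS for the quotient map $f_1$, and because both $P_1^{23}$ and $P_{14}^{23}$ contain the 22-cell, we get $f_{1\sharp}(h_1h_3h_5[22]) = h_1h_3h_5[22]$ in $Ext^{3,64}(P_{14}^{23})$.

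Next I would analyze $f_{2\sharp}: Ext(P_{14}^{23}) \to Ext(X)$. By AHSS naturality for the quotient $f_2$ that collapses the $15$-cell, the image of $h_1h_3h_5[22]$ is detected by $h_1h_3h_5[22]$ at AHSS filtration $22$ in $Ext^{3,64}(X) = \mathbb{Z}/2\langle h_4^3[16]\rangle \oplus \mathbb{Z}/2\langle h_1h_3h_5[22]\rangle \oplus \mathbb{Z}/2\langle h_0h_3h_5[23]\rangle$. Since $h_4^3[16]$ has strictly lower AHSS filtration than $h_1h_3h_5[22]$ and $h_0h_3h_5[23]$ has strictly higher, the only genuine ambiguity is modulo $h_4^3[16]$. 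So $f_{2\sharp}(h_1h_3h_5[22])$ is either $\underline{h_1h_3h_5[22]}$ or $\underline{h_1h_3h_5[22]} + h_4^3[16]$.

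The main obstacle is resolving this ambiguity. My plan is to use the specific definition of $\underline{h_1h_3h_5[22]}$ as the image of the unique nonzero class in $Ext^{3,64}(X^{22})$ under the inclusion $X^{22} \hookrightarrow X$, together with explicit Lambda algebra representatives from \cite{WX}. Concretely, the element $h_1h_3h_5[22]$ in $Ext(P_1^{23})$ corresponds to the Lambda monomial $(22)\ 21\ 11\ 7$. Under the chain-level cellular quotients, this representative carries directly to a cocycle in the Lambda algebra of $X$, which I would then compare with the Lambda representative of $\underline{h_1h_3h_5[22]}$ obtained by pushing the generator of $Ext^{3,64}(X^{22})$ through the inclusion $X^{22}\hookrightarrow X$. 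The difference between the two Lambda cocycles, computed by direct inspection of the Curtis tables and the cell-attaching maps relating the 16-cell and 22-cell in $P_{14}^{23}$ versus in $X$ (in particular, how the $15$-cell collapse interacts with the AHSS filtration of the $16$-cell), will be shown to be precisely $h_4^3[16]$, giving the claimed equality.
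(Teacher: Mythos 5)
Your reduction of the problem to the dichotomy $f_\sharp(h_1h_3h_5[22]) \in \{\underline{h_1h_3h_5[22]},\ \underline{h_1h_3h_5[22]} + h_4^3[16]\}$ is correct and matches the paper's setup: AHSS naturality handles $f_{1\sharp}$ and pins the image of $f_{2\sharp}$ down to something detected by $h_1h_3h_5[22]$ in Atiyah--Hirzebruch filtration $22$, leaving only an ambiguity by the lower-filtration class $h_4^3[16]$. But this reduction is the easy half (it is already implicit in Example 3.6); the entire content of the lemma is deciding which of the two candidates occurs, and your proposal only asserts that the difference of Lambda cocycles ``will be shown to be precisely $h_4^3[16]$'' without exhibiting the computation or even identifying the mechanism that forces the correction term to be nonzero. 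As written, the decisive step is a restatement of the conclusion.

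The missing ingredient is the attaching-map data between the $22$-cell and the $15$-cell, not (as you suggest) between the $16$-cell and the $22$-cell. The paper's proof forms the $H\mathbb{F}_2$-subcomplex $W$ of $P_{14}^{23}$ obtained by pulling back $X^{22}$ along $f_2$, in which $Sq^7_\ast(e_{22}) = e_{15}$ is nontrivial. The Lambda-complex representative $x = e_{22}\otimes\lambda_1\lambda_7\lambda_{31} + e_{14}\otimes\lambda_{13}\lambda_{19}\lambda_{15}$ of the generator of $Ext^{3,64}(X^{22})$ then acquires the boundary term $d(x) = e_{15}\otimes\lambda_{14}\lambda_{13}\lambda_{11}\lambda_7$ in $H_\ast(W)\otimes\Lambda^{\ast,\ast}$, which represents $h_0h_4^3[15]$ (Lemma 10.4); by naturality this gives $\Delta_2(\underline{h_1h_3h_5[22]}) = h_0h_4^3[15]$ for the boundary map of the cofiber sequence $S^{15}\rightarrow P_{14}^{23}\rightarrow X$. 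Since the $16$-cell of $P_{14}^{23}$ is attached to the $15$-cell by $2$, also $\Delta_2(h_4^3[16]) = h_0h_4^3[15]$, so $\underline{h_1h_3h_5[22]}$ alone does \emph{not} lift to $Ext(P_{14}^{23})$ while the sum $\underline{h_1h_3h_5[22]} + h_4^3[16]$ does, and by AHSS naturality its lift is $h_1h_3h_5[22]$. Conceptually this is the Massey product $\langle h_2, h_1, h_0, h_1h_3h_5\rangle = h_0h_4^3$ realized by the nonzero $Sq^1Sq^2Sq^4$ on $H^{15}(P_{14}^{23})$ (Remark 10.5). If you want to carry out your cocycle comparison directly in $X$, you still need exactly this computation to determine the $e_{16}$-component of the image cocycle, so it must be made explicit rather than asserted.
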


\begin{proof}
By naturality of the algebraic Atiyah-Hirzebruch spectral sequences, we have
$$f_{1\sharp} (h_1h_3h_5[22]) = h_1h_3h_5[22].$$
We only need to show that
$$f_{2\sharp} (h_1h_3h_5[22]) = \underline{h_1h_3h_5[22]} + h_4^3[16].$$
\begin{displaymath}
    \xymatrix{
    & & Ext(X^{22}) \ar[d]^{i_\sharp} \\
 Ext(P_{1}^{\infty}) \ar[r]^{f_{1\sharp}} & Ext(P_{14}^{23}) \ar[r]^{f_{2\sharp}} & Ext(X) \\
 h_1h_3h_5[22] \ar@{|->}[r] & h_1h_3h_5[22] \ar@{|->}[r] & *\txt{ $\underline{h_1h_3h_5[22]}$\\$+h_4^3[16]$}
    }
\end{displaymath}
Consider the cofiber sequence that defines $X$
\begin{displaymath}
    \xymatrix{
S^{15} \ar@{^{(}->}[r] & P_{14}^{23} \ar@{->>}[r]^{f_{2}} & X \ar[r] & \Sigma S^{15}.
    }
\end{displaymath}
This gives a long exact sequence of $Ext$ groups:
\begin{displaymath}
    \xymatrix{
\cdots \ar[r] & Ext(S^{15}) \ar[r] & Ext(P_{14}^{23}) \ar@{->>}[r]^{f_{2\sharp}} & Ext(X) \ar[r]^-{\Delta_2} & Ext(\Sigma S^{15}) \ar[r] & \cdots.
    }
\end{displaymath}
We only need to show that the boundary map $\Delta_2$ satisfies
$$\Delta_2 (\underline{h_1h_3h_5[22]} + h_4^3[16]) = 0.$$
In fact, by exactness, the element $\underline{h_1h_3h_5[22]} + h_4^3[16]$ must come from $Ext(P_{14}^{23})$. By naturality of the algebraic Atiyah-Hirzebruch spectral sequence, it must comes from $h_1h_3h_5[22]$, i.e., we must have
$$f_{2\sharp} (h_1h_3h_5[22]) = \underline{h_1h_3h_5[22]} + h_4^3[16],$$
which completes the proof.

To show $\Delta_2 (\underline{h_1h_3h_5[22]} + h_4^3[16]) = 0$, we consider an $H\mathbb{F}_2$-subcomplex $W$ of $X$. Since $X^{22}$ is an $H\mathbb{F}_2$-subcomplex of $X$, we define $W$ to be the homotopy pull back of $X^{22}$ along the quotient map $f_2: P_{14}^{23} \twoheadrightarrow X$. By Lemma 4.4, we have $(W, j)$ as an $H\mathbb{F}_2$-subcomplex of $P_{14}^{23}$ in the following commutative diagram of cofiber sequences:
\begin{displaymath}
    \xymatrix{
S^{15} \ar@{^{(}->}[r] \ar@{=}[d] & W \ar@{->>}[r] \ar@{^{(}->}[d]^j & X^{22} \ar[r]^{a_1} \ar@{^{(}->}[d]^i & \Sigma S^{15} \ar@{=}[d]\\
S^{15} \ar@{^{(}->}[r] & P_{14}^{23} \ar@{->>}[r]^{f_{2}} & X \ar[r]^{a_2} & \Sigma S^{15}
    }
\end{displaymath}
As an illustration, the cell diagram of $W$ is the following:
\begin{displaymath}
    \xymatrix{
    *+[o][F-]{22} \ar@{-}[d]^{2}  \\
    *+[o][F-]{21} \ar@{-}@/_1pc/[d]_{\eta} \ar@{-}`r[ddd] `[ddd]^{\nu^2} [ddd]\\
    *+[o][F-]{19} \ar@{-}`l[d] `[d]_{\nu} [d]\\
    *+[o][F-]{15} \\
    *+[o][F-]{14}
    }
\end{displaymath}
We will show in the following Lemma 10.4 that
$$\Delta_1 (h_1h_3h_5[22]) = h_0h_4^3[15],$$
where $\Delta_1$ is the boundary map of $Ext$ groups associated to the cofiber sequence defining $W$.
Therefore, following the commutative diagram of cofiber sequences and the definition of the element $\underline{h_1h_3h_5[22]}$, we have
$$\Delta_2 (\underline{h_1h_3h_5[22]}) = h_0h_4^3[15].$$
The fact that the 16-cell in $P_{14}^{23}$ is attached to the 15-cell by $2$ gives us
$$\Delta_2 (h_4^3[16]) = h_0h_4^3[15].$$
Therefore, we have
$$\Delta_2 (\underline{h_1h_3h_5[22]} + h_4^3[16]) = 0,$$
as claimed.
\end{proof}

\begin{lem}
$\Delta_1 (h_1h_3h_5[22]) = h_0h_4^3[15]$.
\end{lem}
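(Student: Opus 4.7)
The plan is to compute the boundary $\Delta_1$ by tracing the attaching chain in $W$ and combining with naturality and the Lambda algebra computations from \cite{WX}.

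First I would observe that by degree counting, $\Delta_1$ raises Adams filtration by $1$ and $t$ by $1$, so $\Delta_1(h_1h_3h_5[22]) \in Ext^{4, 65}(\Sigma S^{15}) \cong Ext^{4, 49}(S^0)$. A check of this small bidegree shows the group is generated by the single class $h_0 h_4^3$, so it suffices to show that $\Delta_1(h_1h_3h_5[22])$ is nonzero, i.e., that $h_1h_3h_5[22]$ does not lift to $Ext(W)$.

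Second, I would exploit the cell diagram of $W$ given in Lemma 10.3 to trace the chain of primary attachings $22 \xrightarrow{\cdot 2} 21 \xrightarrow{\eta} 19 \xrightarrow{\nu} 15$ in $W$, which is the only route in $W$ from the 22-cell down to the 15-cell. The primary $d_1$-differential on $h_1h_3h_5[22]$ in the algebraic Atiyah-Hirzebruch spectral sequence of $W$ vanishes because $h_0 h_1 = 0$, so $h_1h_3h_5[22]$ supports a higher-order differential determined by a secondary Massey-product-like construction along this chain. Using the Ext identity $h_i^3 = h_{i-1}^2 h_{i+1}$ (which yields $h_4^3 = h_3^2 h_5$, and hence $h_0h_4^3 = h_0 h_3^2 h_5$) and juggling identities such as $\langle h_1, h_0, h_1 \rangle = h_0 h_2$, one identifies the endpoint of the iterated construction on the 15-cell as the class $h_0 h_4^3$.

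The main obstacle is to rigorously compute this higher-order algebraic Atiyah-Hirzebruch differential when the naive iterated Massey product is not strictly defined (in particular $h_1 h_2 \neq 0$ in Ext), and to control the indeterminacies that arise. I would navigate this by appealing to naturality for the inclusion $W \hookrightarrow P_{14}^{23}$: the corresponding computation on $Ext(P_1^\infty)$ is controlled by the Lambda algebra, and the Curtis table in \cite{WX} gives explicit Lambda-algebra representatives of $h_1h_3h_5[22]$ and $h_0 h_4^3[15]$ together with the boundary operator relating them. Comparing the two computations pins down $\Delta_1(h_1h_3h_5[22])$ unambiguously and yields the desired identification $\Delta_1(h_1h_3h_5[22]) = h_0 h_4^3[15]$.
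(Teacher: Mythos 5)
Your reduction at the outset is sound: the target group $Ext^{4,65}(\Sigma S^{15})\cong Ext^{4,49}(S^0)$ is indeed $\mathbb{Z}/2$ generated by $h_0h_4^3$, so by exactness it suffices to show $h_1h_3h_5[22]$ does not lift to $Ext(W)$. Your intuition about the attaching chain $22\rightarrow 21\rightarrow 19\rightarrow 15$ and the $4$-fold Massey product $h_0h_4^3=\langle h_2,h_1,h_0,h_1h_3h_5\rangle$ also matches the heuristic the paper records in Remark 10.5 and Appendix II. The problem is the step where you actually have to produce the nonvanishing.

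The mechanism you propose --- naturality for $W\hookrightarrow P_{14}^{23}$ together with reading the boundary off the Curtis table of $P_1^\infty$ --- cannot work, for a reason the paper itself emphasizes: in $P_{14}^{23}$ (and in $P_1^\infty$) the class $h_0h_4^3[15]$ is killed on the $E_2$-page of the algebraic Atiyah--Hirzebruch spectral sequence by the $d_1$ differential $h_4^3[16]\mapsto h_0h_4^3[15]$, since the $16$-cell is attached to the $15$-cell by $2$. Consequently $h_1h_3h_5[22]$ is a \emph{surviving} generator of $Ext^{3,64}(P_1^\infty)$, and the Curtis table records no boundary from $h_1h_3h_5[22]$ to $h_0h_4^3[15]$; the differential you need is invisible in any complex containing the $16$-cell, which is exactly why $W$ (built from $X^{22}$, which omits the $16$-cell) is introduced in the first place. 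Trying instead to route the naturality square through $\Delta_2$ on $Ext(X)$ is circular, since identifying $f_{2\sharp}(h_1h_3h_5[22])$ is what Lemma 10.3 uses the present lemma to prove. The paper closes this gap by an intrinsic chain-level computation in the Lambda complex $H_*(W)\otimes\Lambda^{*,*}$: the element $x=e_{22}\otimes\lambda_1\lambda_7\lambda_{31}+e_{14}\otimes\lambda_{13}\lambda_{19}\lambda_{15}$ is a cycle over $X^{22}$ representing $h_1h_3h_5[22]$, but over $W$ the extra dual operation $Sq^7_*(e_{22})=e_{15}$ contributes $d(x)=e_{15}\otimes\lambda_6\lambda_1\lambda_7\lambda_{31}=e_{15}\otimes\lambda_{14}\lambda_{13}\lambda_{11}\lambda_7$, whose leading term is identified with $h_0h_4^3[15]$ in the Curtis table for the sphere. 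Some such explicit computation internal to $W$ (or a genuinely strictly-defined Massey product argument, which you correctly note you do not have) is unavoidable here, so as written the proposal has a real gap at its central step.
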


\begin{proof}
We use the Lambda complex (see Section 7.1 of \cite{Pri}) to compute the $E_2$-page of the Adams spectral sequence in a functorial way. Recall from \cite{Pri} that, for any spectrum $Y$, we can construct a differential graded module $H_*(Y)\otimes\Lambda^{*,*}$ over the Lambda algebra $\Lambda^{*,*}$. Differentials in this complex are generated by
$$d(x) = \Sigma_{i\geq1} Sq^i_*(x)\otimes\lambda_{i-1}$$
for $x\in H_*(Y)$, where $Sq^i_*$ is the transpose of $Sq^i$.

In our case, we abuse notation to denote the unique generator of $H_i(Y)$ by $e_i$, for any $H\mathbb{F}_2$-subquotient of $X$.

By naturality of the Steenrod operations, we have nontrivial $Sq^4$ and $Sq^8$ in the cohomology of $W$.
\begin{displaymath}
    \xymatrix{
H^{15}(W) \ar[r]^{Sq^4 \neq 0} & H^{19}(W) & & H^{14}(W) \ar[r]^{Sq^8 \neq 0} & H^{22}(W) \\
H^{15}(P_{14}^{23}) \ar[r]^{Sq^4 \neq 0} \ar[u]^{j^\ast}_\cong & H^{19}(P_{14}^{23}) \ar[u]^{j^\ast}_\cong & & H^{14}(P_{14}^{23}) \ar[r]^{Sq^8 \neq 0} \ar[u]^{j^\ast}_\cong & H^{22}(P_{14}^{23}) \ar[u]^{j^\ast}_\cong
    }
\end{displaymath}
Moreover, in the cohomology of $W$, we have $Sq^1 Sq^2 Sq^4 \neq 0$ on $H^{15}$. Dually, we have the following nontrivial operations:
\begin{equation*}
\begin{split}
Sq^1_*(e_{22}) & = e_{21}, \\
Sq^3_*(e_{22}) & = e_{19}, \\
Sq^7_*(e_{22}) & = e_{15}, \\
Sq^8_*(e_{22}) & = e_{14}.
\end{split}
\end{equation*}
By naturality, we have the following nontrivial operations in $H_\ast(X^{22})$:
\begin{equation*}
\begin{split}
Sq^1_*(e_{22}) & = e_{21}, \\
Sq^3_*(e_{22}) & = e_{19}, \\
Sq^8_*(e_{22}) & = e_{14}.
\end{split}
\end{equation*}

We claim that in $H_*(X^{22})\otimes\Lambda^{*,*}$ the cycle
$$x = e_{22}\otimes \lambda_1\lambda_7\lambda_{31}+e_{14}\otimes\lambda_{13}\lambda_{19}\lambda_{15}$$
represents the class $h_1h_3h_5[22]$ in $Ext(X^{22})$.

In fact, we can check directly that $x$ is a cycle:
\begin{equation*}
\begin{split}
d(e_{22}\otimes \lambda_1\lambda_7\lambda_{31}) & = e_{21}\otimes \lambda_0\lambda_1\lambda_7\lambda_{31} + e_{19}\otimes \lambda_2\lambda_1\lambda_7\lambda_{31} + e_{14}\otimes \lambda_7\lambda_1\lambda_7\lambda_{31} \\
& = e_{14}\otimes \lambda_7\lambda_1\lambda_7\lambda_{31} \\
& = e_{14}\otimes(\lambda_{13}\lambda_{15}\lambda_{11}\lambda_7 + \lambda_{11}\lambda_{17}\lambda_{11}\lambda_7 + \lambda_7\lambda_{13}\lambda_{11}\lambda_{15}),\\
d(e_{14}\otimes\lambda_{13}\lambda_{19}\lambda_{15}) & = e_{14}\otimes d(\lambda_{13}\lambda_{19}\lambda_{15}) \\
& = e_{14}\otimes(\lambda_{13}\lambda_{15}\lambda_{11}\lambda_7 + \lambda_{11}\lambda_{17}\lambda_{11}\lambda_7 + \lambda_7\lambda_{13}\lambda_{11}\lambda_{15}).
\end{split}
\end{equation*}
We compute
$$\lambda_1\lambda_7\lambda_{31} = \lambda_{21}\lambda_{11}\lambda_7 + \lambda_{13}\lambda_{11}\lambda_{15},$$
and check the Curtis table in \cite{Tan1} that $Ext^{3,3+39} = \mathbb{Z}/2$, generated by an element with the leading term $\lambda_{21}\lambda_{11}\lambda_7$. Since $Ext^{3,3+39} = \mathbb{Z}/2$ is generated by $h_1h_3h_5$, we conclude that $x$ represents the class $h_1h_3h_5[22]$ in $Ext(X^{22})$.

However, in $H_*(W) \otimes \Lambda^{*,*}$ the element
$$x = e_{22}\otimes \lambda_1\lambda_7\lambda_{31}+e_{14}\otimes\lambda_{13}\lambda_{19}\lambda_{15}$$
is not a cycle anymore: there is one more term in $d(x)$ due to the extra nontrivial operation
$$Sq^7_*(e_{22}) = e_{15}.$$
In fact, we have that
\begin{equation*}
\begin{split}
d(x) & = d(e_{22}\otimes \lambda_1\lambda_7\lambda_{31} + e_{14}\otimes\lambda_{13}\lambda_{19}\lambda_{15}) \\
& = e_{21}\otimes \lambda_0\lambda_1\lambda_7\lambda_{31} + e_{19}\otimes \lambda_2\lambda_1\lambda_7\lambda_{31} + e_{15}\otimes \lambda_6\lambda_1\lambda_7\lambda_{31} \\
& \ \ \ + e_{14}\otimes \lambda_7\lambda_1\lambda_7\lambda_{31} + e_{14}\otimes d(\lambda_{13}\lambda_{19}\lambda_{15}) \\
& = e_{15}\otimes \lambda_6\lambda_1\lambda_7\lambda_{31} \\
& = e_{15}\otimes \lambda_{14}\lambda_{13}\lambda_{11}\lambda_7.
\end{split}
\end{equation*}
Therefore, by the definition of the boundary homomorphism $\Delta_1: Ext(X^{22}) \rightarrow Ext(\Sigma S^{15})$, we have
$$\Delta_1 (x) = e_{15}\otimes \lambda_{14}\lambda_{13}\lambda_{11}\lambda_7.$$

\begin{displaymath}
    \xymatrix{
   H_\ast(S^{15})\otimes\Lambda^{*,*}  \ar[r] & H_\ast(W)\otimes\Lambda^{*,*} \ar[r] & H_\ast(X^{22})\otimes\Lambda^{*,*} \\
   & x \ar@{|->}[r] \ar[d]^d & x \\
   e_{15}\otimes \lambda_{14}\lambda_{13}\lambda_{11}\lambda_7  \ar@{|->}[r] & e_{15}\otimes \lambda_{14}\lambda_{13}\lambda_{11}\lambda_7  &
    }
\end{displaymath}

We check the Curtis table in \cite{Tan1} that $Ext^{4,4+45} = \mathbb{Z}/2$, generated by an element with the leading term $\lambda_{14}\lambda_{13}\lambda_{11}\lambda_7$. Since $Ext^{4,4+45} = \mathbb{Z}/2$ is generated by $h_0h_4^3$, we conclude that $e_{15}\otimes \lambda_{14}\lambda_{13}\lambda_{11}\lambda_7$ represents the class $h_0h_4^3[15]$ in $Ext(\Sigma S^{15})$.
\end{proof}

\begin{rem}
One can think of the boundary homomorphism in Lemma 10.4 as an algebraic attaching map, and therefore its computation corresponds to a 4-fold Massey product. In $Ext(S^0)$, we have the strictly defined 4-fold Massey product
$$h_0h_4^3 = \langle h_2, h_1, h_0, h_1h_3h_5 \rangle$$
with zero indeterminacy. It is straightforward to check this by a Lambda algebra computation:
\begin{displaymath}
    \xymatrix@=0.02in{
  \langle h_2 & , & h_1 & , & h_0 & , & h_1h_3 \rangle \\
  \lambda_3 & & \lambda_1 & & \lambda_0 & & \lambda_5 \lambda_3 \\
  & \lambda_5 & & \lambda_2 & & \ast & \\
  & & \lambda_6 & & \ast & &
    }
\end{displaymath}
Here $\ast$ means the products are zero in the Lambda algebra. Note that the leading term of $h_0h_3^2$ is $\lambda_6 \lambda_5 \lambda_3$ from the Curtis table for $S^0$. Therefore,
$$h_0h_3^2 = \langle h_2, h_1, h_0, h_1h_3 \rangle.$$
Then it follows from a relation in $Ext$: $h_0h_4^3 = h_0h_3^2h_5$.
\end{rem}

\section{A homotopy relation}

In this section, we prove a relation in the homotopy groups of spheres. This relation will lead to an Adams differential that kills the element $gz$ in the 61-stem. We will explain in Remark 11.2 which element supports the differential that kills $gz$. But to prove $\pi_{61} = 0$, all we need is that $gz$ is gone. We will use certain relations in $Ext$ in the proofs, see \cite{Br2} for these relations.

\begin{thm} \label{gz}
We have the homotopy relation $\eta \overline{\kappa}^3 =0$ in $\pi_{61}$. Therefore the element $gz$ must be killed by some Adams differential.
\end{thm}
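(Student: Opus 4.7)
The plan is to establish the relation $\eta\overline{\kappa}^3 = 0$ in $\pi_{61}$ first and then deduce as a formal consequence that $gz$ cannot survive the Adams spectral sequence. The key is a filtration bookkeeping argument combined with the absence of an algebraic $h_1$-multiple on $g^3$. Namely, $\overline{\kappa}^3 \in \pi_{60}$ is detected by $g^3$ in Adams filtration $12$, and since $h_1 g = 0$ in $Ext$ we also have $h_1 g^3 = 0$. So $\eta\overline{\kappa}^3$ is either zero or must be detected strictly above filtration $12$ in the 61-stem. Scanning the $E_\infty$-chart displayed in Section~2, the only class in that stem lying above filtration $12$ which is not already accounted for by the other differentials proved in Sections~3--10 and Section~11 is $gz$ at bidegree $(61,14)$. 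Hence once $\eta\overline{\kappa}^3 = 0$ is known, $gz$ is left with no possible homotopy class to detect, and since $gz$ appears nontrivially on the chart, it must be the target of some Adams differential.

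To show $\eta\overline{\kappa}^3 = 0$, I would use a Toda bracket shuffle. One chooses a presentation of $\overline{\kappa}$ as a Toda bracket whose entries involve classes of low stem that interact well with multiplication by $\eta$ (for instance, a bracket containing a factor of $\eta$ or $2$, which are standard from Isaksen's computations in the relevant range). The juggling identity then permits moving the outer $\eta$ inside the bracket, producing an inner expression of the form $\langle \eta,\alpha,\beta\rangle$ or $\eta\alpha$ that lives in a stem where it is already known to vanish — either for dimensional reasons, or by direct appeal to the established structure of $\pi_*$ below the 61-stem. Multiplying the shuffled expression by $\overline{\kappa}^2$ and tracking the indeterminacies (which are controlled by Isaksen's rigorous computations cited in Section~2) then yields the desired identity $\eta\overline{\kappa}^3 = 0$.

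The main obstacle will be the Toda bracket shuffle itself: identifying a bracket presentation of $\overline{\kappa}$ that absorbs the extra factor of $\eta$ cleanly, and checking that the indeterminacy of the shuffled expression lies in a zero group in bidegree $(61,14)$. The deduction that $gz$ must be killed is then essentially immediate from the filtration count in the first paragraph; no further spectral sequence machinery is needed beyond what is already developed in the preceding sections.
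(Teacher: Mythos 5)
Your overall strategy---decompose $\overline{\kappa}$ as a Toda bracket and shuffle the extra $\eta$ inside---is the same as the paper's, but as written the proposal has two genuine gaps. First, the deduction that $gz$ must die does not follow from your filtration count. Knowing that $\eta \overline{\kappa}^3$, if nonzero, would have to be detected by $gz$ is the wrong direction of implication: you need that $gz$, if it survived to $E_\infty$, would necessarily detect $\eta \overline{\kappa}^3$. The paper gets this from the multiplicative structure of the spectral sequence together with the Barratt--Mahowald--Tangora result that $z$ detects $\eta\overline{\kappa}^2$; since $g$ and $z$ are permanent cycles, $gz$ is a permanent cycle, and if it is nonzero at $E_\infty$ it detects $\overline{\kappa}\cdot\eta\overline{\kappa}^2$, which would force that product to be nonzero. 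Without identifying the class detected by $z$, the survival of $gz$ is a priori compatible with $\eta\overline{\kappa}^3=0$, since $gz$ could detect some unrelated class of filtration $14$. Incidentally, your assertion that $h_1g=0$ in $Ext$ is false: $h_1g$ is nonzero and detects $\eta\overline{\kappa}$ (the paper itself uses that $h_1d_0g$ is nonzero and detects $\eta\kappa\overline{\kappa}$); the bound you actually want, that $\eta\overline{\kappa}^3$ has Adams filtration at least $13$, follows anyway from multiplicativity of filtrations.

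Second, the actual content of the relation $\eta\overline{\kappa}^3=0$ is left unexecuted, and the place where you expect the shuffled expression to vanish ``for dimensional reasons'' or ``by the established structure of $\pi_*$ below the 61-stem'' is exactly where it does not. The paper uses $\overline{\kappa}\in\langle\kappa,2,\eta,\nu\rangle$, and the shuffle reduces the claim to two facts: $\langle\eta\overline{\kappa}^2,\kappa,2\rangle=0$ in $\pi_{56}$ (proved by mapping to the $K(1)$-local sphere), and then $\nu\cdot\pi_{58}=0$. The latter is a statement about a product landing in $\pi_{61}$ itself---precisely the group being computed---and is not available by citation: $\pi_{58}$ contains $\{h_1Q_2\}$, and showing $\nu\cdot\{h_1Q_2\}=0$ occupies Lemmas 11.4--11.7 of the paper, via the bracket $\{h_1Q_2\}=\langle\overline{\kappa},\{t\},\eta\rangle$, the containment $\langle\{t\},\eta,\nu\rangle\subseteq\sigma\{h_0h_2h_5\}$, and the relation $\sigma\overline{\kappa}=0$. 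A proposal that defers this to ``tracking the indeterminacies'' has not yet engaged with the hard part of the theorem.
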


Using several lemmas that will be proved later in this section, we present the proof of Theorem 11.1.

\begin{proof}
We first prove the second claim. By \cite[Corollary 3.4.2]{BMT}, the permanent cycle $z$ in the 41-stem detects the homotopy class $\eta \overline{\kappa}^2$. It follows that the element $gz$ detects $\eta \overline{\kappa}^3$, since $g$ detects $\overline{\kappa}$. Therefore, if $\eta \overline{\kappa}^3 =0$, we must have $gz$ killed by some Adams differential.

Now we prove the relation $\eta \overline{\kappa}^3 =0$.

We have a 4-fold Toda bracket for $\overline{\kappa}$ \cite[page 43-44]{MT2}:
$$\overline{\kappa} \in \langle \kappa, 2, \eta, \nu\rangle \text{~~with indeterminacy even multiples of~~}\overline{\kappa}.$$
The indeterminacy will be killed after multiplying by $\eta$. We will prove in Lemma 11.3 that
$$\langle \eta \overline{\kappa}^2, \kappa, 2\rangle =0\text{~~in~~}\pi_{56}.$$
Therefore
\begin{equation*}
\begin{split}
\eta \overline{\kappa}^3 & = \eta \overline{\kappa}^2 \langle \kappa, 2, \eta, \nu\rangle \\
& \subseteq \langle\langle \eta \overline{\kappa}^2 , \kappa, 2 \rangle, \eta, \nu\rangle \\
& = \langle 0, \eta, \nu\rangle \\
& = \nu \cdot \pi_{58} \\
& = 0
\end{split}
\end{equation*}
The last equation is stated as Lemma 11.7 that we will prove later in this section. Therefore, we have the homotopy relation
$$\eta \overline{\kappa}^3=0\text{~~in~~}\pi_{61}.$$
\end{proof}

\begin{rem}
Alternatively, we can show that $h_1X_1$ must support an Adams differential, and
$$d_4(h_1X_1) = gz$$
is the only possibility. The idea is to consider the Massey product $\langle g^2, d_0^2, h_1\rangle = h_1W_1 + g^2 r$ in the Adams $E_4$-page, and to conclude that $h_1W_1$ must support a nontrivial differential as $g^2 r$ does (See Lemma 3.3.49 of \cite{Isa}), since the sum is a permanent cycle by Moss's Theorem. Suppose that $h_1X_1$ is a permanent cycle. We have that
\begin{equation*}
\begin{split}
h_1W_1 & = Ph_1 X_1 \\
 & = X_1 \langle h_1, h_0^3h_3, h_0 \rangle \\
 & = \langle h_1X_1, h_0^3h_3, h_0 \rangle
\end{split}
\end{equation*}
is also a permanent cycle by Moss's Theorem. We therefore have a contradiction.
\end{rem}

We first prove Lemma 11.3.

\begin{lem}
We have a Toda bracket $\langle \eta \overline{\kappa}^2, \kappa, 2\rangle =0$ in $\pi_{56}$.
\end{lem}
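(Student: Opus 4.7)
The plan is to verify that the three-fold Toda bracket $\langle \eta\overline{\kappa}^2, \kappa, 2\rangle$ is strictly defined, contains $0$, and has vanishing indeterminacy. Definedness requires two relations: $2\kappa = 0$ is immediate since $\kappa\in\pi_{14}$ has order $2$, and $\kappa\cdot\eta\overline{\kappa}^2 = 0$ in $\pi_{55}$. For the latter, I would appeal to Isaksen's tabulation of $\pi_{55}$ combined with the fact that the product is $tmf$-detected: in $\pi_*\,tmf$ the element $\kappa\overline{\kappa}^2\eta$ vanishes by Bauer's/Henriques's computations of the ring structure, and any error sitting in the kernel of $\pi_*S^0\to\pi_*\,tmf$ can be ruled out by a filtration/Adams-$E_\infty$ bookkeeping in that bidegree.

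Next, I would analyze the indeterminacy, which is $\eta\overline{\kappa}^2\cdot\pi_{15} + 2\cdot\pi_{56}$. Since $\pi_{56}=\mathbb{Z}/2$ is generated by an image-of-$J$ element (by Isaksen, cited in Theorem 1.12), the second summand is zero. For the first summand, $\pi_{15}$ is generated by a short list of explicit classes (such as $\eta\kappa$, $\rho_{15}$, and the image-of-$J$ generator); for each generator $\alpha$ I would verify $\eta\overline{\kappa}^2\cdot\alpha = 0$ in $\pi_{56}$, either by $tmf$-detection (for the cokernel-of-$J$ part, which maps into a known-zero bidegree of $\pi_*\,tmf$) or by showing the corresponding Adams $E_\infty$-class sits in a filtration where $\pi_{56}$ has no contribution.

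Finally, I would exhibit a zero representative via Moss's theorem. Consider the Massey product $\langle z, d_0, h_0\rangle$ in the Adams $E_r$-page, where $z$, $d_0$, $h_0$ detect $\eta\overline{\kappa}^2$, $\kappa$, and $2$ respectively. A direct Lambda-algebra computation (of exactly the kind carried out in Section 10 of the paper) together with known relations in the Adams $E_2$-page gives that this Massey product is either zero or lies in a bidegree where $\pi_{56}$ has no cokernel-of-$J$ content. After checking that the crossing differentials vanish (needed to apply Moss's theorem rigorously), this yields $0\in\langle\eta\overline{\kappa}^2,\kappa,2\rangle$, and combined with the vanishing indeterminacy, the bracket equals $\{0\}$.

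The main obstacle is the definedness step, specifically verifying $\eta\overline{\kappa}^2\cdot\kappa = 0$ in $\pi_{55}$ with full rigor, since $\pi_{55}$ contains both image-of-$J$ and cokernel-of-$J$ contributions and the relevant hidden multiplicative extensions in this range must be tracked carefully. The Moss-theorem step is conceptually clean but requires a Lambda-algebra bookkeeping step and a check of crossing differentials, both of which parallel the techniques already used in Sections 7–10 of the paper.
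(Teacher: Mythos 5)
Your outline has the right formal shape (definedness, indeterminacy, exhibiting $0$), but the two steps that actually carry the content are left as unverified ``bookkeeping,'' and you are missing the one fact that makes the lemma easy: by Isaksen's computations, $\pi_{55}\cong\mathbb{Z}/16$ is generated by $\rho_{55}$ and $\pi_{56}\cong\mathbb{Z}/2$ is generated by $\eta\rho_{55}$, i.e.\ \emph{both groups lie entirely in the image of $J$} (the cokernel of $J$ vanishes in these stems). Your stated worry that ``$\pi_{55}$ contains both image-of-$J$ and cokernel-of-$J$ contributions'' is therefore unfounded, and it is precisely this misapprehension that leads you to propose heavy machinery. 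Once one knows these groups are all $\operatorname{Im}J$, everything follows by mapping to the $K(1)$-local sphere, which detects $\operatorname{Im}J$ in this range while killing $\kappa$ and $\overline{\kappa}$: the product $\eta\overline{\kappa}^2\kappa$ is a multiple of $\rho_{55}$ that dies $K(1)$-locally, hence is zero (so the bracket is defined), and every element of $\langle\eta\overline{\kappa}^2,\kappa,2\rangle$ maps into $\langle 0,\kappa,2\rangle\subseteq 2\cdot\pi_{56}=0$ in the $K(1)$-local sphere, hence is zero in $\pi_{56}$ by injectivity there. This is the paper's entire proof.

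As written, your proposal has genuine gaps at exactly these points. The definedness argument via $tmf$ is shaky: $tmf$ is not the right detecting target for $\operatorname{Im}J$ classes such as $\rho_{55}$, and the promised ``filtration/Adams-$E_\infty$ bookkeeping'' to control the kernel of $\pi_*S^0\to\pi_*tmf$ is never specified and is the step you yourself flag as the main obstacle. The Moss-theorem step is both unnecessary and incomplete: the Massey product $\langle z,d_0,h_0\rangle$ is not computed, the conclusion is hedged as ``either zero or lies in a bidegree where $\pi_{56}$ has no cokernel-of-$J$ content'' (which begs the question, since the whole of $\pi_{56}$ is $\operatorname{Im}J$ and a nonzero $\operatorname{Im}J$ value would still falsify the lemma), and the crossing-differential hypothesis is not checked. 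To repair the proof you do not need any of this; you need only quote the values of $\pi_{55}$ and $\pi_{56}$ and the triviality of $\kappa$ and $\overline{\kappa}$ in the $K(1)$-local sphere.
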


\begin{proof}
By \cite{Isa,Isa2},
$$\pi_{55} \cong \mathbb{Z}/16 \text{~~and is generated by an element~~} \rho_{55} \text{~~in~~} Im J.$$
Therefore, we have the relation
$$\eta \overline{\kappa}^2 \kappa =0 \text{~~in~~} \pi_{55}.$$
This follows from the fact that both $\kappa$ and $\overline{\kappa}$ map trivially to the $K(1)$-local sphere. In fact, suppose that $\eta \overline{\kappa}^2 \kappa$ is some multiple of $\rho_{55}$. Then mapping the relation to the $K(1)$-local sphere tells us the multiple must be zero. Therefore, this Toda bracket is defined.

By \cite{Isa,Isa2},
$$\pi_{56} \cong \mathbb{Z}/2 \text{~~and is generated by~~} \eta \rho_{55} \text{~~in~~} Im J.$$
Therefore, we have the relation
$$\langle \eta \overline{\kappa}^2, \kappa, 2\rangle =0.$$
This follows similarly by mapping the Toda bracket to the $K(1)$-local sphere.\\
\end{proof}

To prove Lemma 11.7, we need the following three lemmas.

\begin{lem}
The product $\sigma \cdot \{h_0h_2h_5\}$ is nontrivial in $\pi_{41}$, and is detected by $h_1f_1$.
\end{lem}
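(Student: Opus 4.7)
The plan is to realize a representative of $\{h_0h_2h_5\}$ as a Toda bracket, compute its product with $\sigma$ by juggling, and identify the resulting class as one detected by $h_1f_1$ via Moss's theorem.

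First, I would locate $\{h_0h_2h_5\}$ inside a Toda bracket. The Adams differential $d_2(h_5) = h_0h_4^2$ (already invoked elsewhere in the paper) produces a Massey product relation
\[
h_0h_2h_5 \in \langle h_0, h_4^2, h_2\rangle
\]
on the Adams $E_3$-page, provided $h_2h_4^2 = 0$ in $E_2$. The latter holds because $\nu\theta_4 = 0$ in $\pi_{33}$, which is well known and also follows by mapping to $tmf$. By Moss's theorem, one may select $\alpha \in \{h_0h_2h_5\}$ lying in the Toda bracket $\langle 2, \theta_4, \nu\rangle \subseteq \pi_{34}$ (with indeterminacy controlled by $2\cdot\pi_{34}$ and $\nu\cdot\pi_{31}$).

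Second, I would compute $\sigma\alpha$ by Toda bracket juggling:
\[
\sigma\cdot\langle 2, \theta_4, \nu\rangle \supseteq \langle \sigma, 2, \theta_4\rangle\cdot\nu,
\]
which is defined because $2\sigma = 0$ and $2\theta_4 = 0$. By Moss's theorem applied to the Massey product $\langle h_3, h_0, h_4^2\rangle$ on the Adams $E_3$-page (defined because $h_0h_3$ is a permanent cycle for $2\sigma$ and $d_2(h_5) = h_0h_4^2$), one identifies $\langle \sigma, 2, \theta_4\rangle \subseteq \pi_{38}$ in a specific Adams filtration.

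Third, I would identify $\langle \sigma, 2, \theta_4\rangle\cdot\nu$ with a class detected by $h_1f_1$ by matching it with the Massey product $h_2\cdot\langle h_3, h_0, h_4^2\rangle = \langle h_2h_3, h_0, h_4^2\rangle$ in $Ext$. Carrying out a Lambda-algebra calculation analogous to the one in Remark 10.5, and using the $Ext$-level relation $h_1f_1 = h_0^2c_2$, one shows the Massey product equals $h_1f_1$ modulo classes of strictly higher Adams filtration. The remaining indeterminacies in the juggling step lie in $\sigma\cdot\pi_{34}$ and $\nu\cdot\pi_{38}$; these can be ruled out using the lists of generators of $\pi_{34}$ and $\pi_{38}$ from Isaksen's computations, since each such contribution either has too high Adams filtration or is easily shown not to be detected by $h_1f_1$.

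The main obstacle is the $Ext$-level Massey product identification with $h_1f_1$, which requires a careful Lambda-algebra computation together with the bookkeeping of indeterminacies from each step of the juggling. This is technical but standard, paralleling the computation in Remark 10.5, and once it is carried out the nontriviality of $\sigma\cdot\{h_0h_2h_5\}$ follows since $h_1f_1$ is a nonzero permanent cycle in the Adams $E_\infty$-page.
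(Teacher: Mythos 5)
Your proposal breaks down at two independent points, either of which is fatal. First, the juggling step is not available: $2\sigma \neq 0$ (indeed $\pi_7 \cong \mathbb{Z}/240$ is generated by $\sigma$), so the Toda bracket $\langle \sigma, 2, \theta_4\rangle$ is simply not defined, and the inclusion $\sigma\cdot\langle 2,\theta_4,\nu\rangle \supseteq \langle\sigma,2,\theta_4\rangle\cdot\nu$ has no meaning. The same problem recurs on the $Ext$ side: $h_0h_3 \neq 0$, so $\langle h_3, h_0, h_4^2\rangle$ is not defined either, and in your third step $h_2h_3 = 0$ in $Ext$, so $\langle h_2h_3, h_0, h_4^2\rangle$ is only an indeterminacy set. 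Second, the initial Massey product is misidentified: with respect to $d_2(h_5)=h_0h_4^2$, the bracket $\langle h_0, h_4^2, h_2\rangle$ lies in filtration $2$ and contains $h_2h_5$, not $h_0h_2h_5$ (which has filtration $3$ and the wrong bidegree to lie in this bracket). Since $h_2h_5$ supports $d_3(h_2h_5)=h_1d_1$, the convergence hypothesis of Moss's theorem fails here, and one cannot conclude from this bracket that $\langle 2,\theta_4,\nu\rangle$ contains an element detected by $h_0h_2h_5$.

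The paper's argument takes a different, and workable, pair of inputs: the Mahowald--Tangora differentials $d_3(h_2h_5)=h_1d_1$ and $d_2(h_0c_2)=h_1h_3d_1$, together with the relations $h_3d_1=h_1e_1$ and $h_0^2c_2=h_1f_1$. These give Massey products $\langle d_1,h_1,h_0\rangle = h_0h_2h_5$ on $E_4$ (here the nullhomotopy $h_2h_5$ of $h_1d_1$ is multiplied by $h_0$, which is why the answer correctly lands in filtration $3$) and $\langle h_3d_1,h_1,h_0\rangle = h_1f_1$ on $E_3$. Moss's theorem then shows $\langle \{d_1\},\eta,2\rangle$ is detected by $h_0h_2h_5$ and $\langle \sigma\{d_1\},\eta,2\rangle$ is detected by $h_1f_1$, and the single legitimate shuffle $\sigma\langle\{d_1\},\eta,2\rangle \subseteq \langle\sigma\{d_1\},\eta,2\rangle$ finishes the proof. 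If you want to salvage your idea of routing through $\theta_4$, you would at minimum need to replace $\langle\sigma,2,\theta_4\rangle$ by a bracket that is actually defined and to re-derive the detection of $\{h_0h_2h_5\}$ by a bracket whose associated Massey product genuinely lies in filtration $3$; as written, neither step survives.
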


\begin{proof}
By \cite{MT}, we have the following two Adams differentials
$$d_3(h_2h_5) = h_1d_1, \text{~~and~~} d_2(h_0c_2) = h_1h_3d_1.$$
Note that we have a relation $h_3d_1=h_1e_1$ in $Ext$. Therefore, we have a Massey product in the Adams $E_4$-page
$$\langle d_1, h_1, h_0\rangle = h_0h_2h_5$$
and a Massey product in the Adams $E_3$-page
$$\langle h_3d_1, h_1, h_0\rangle = h_0^2c_2 = h_1f_1.$$
Note that the second equation is a relation in $Ext$. Then by Moss's Theorem \cite[Theorem 1.2]{Mos}, we have the following Toda brackets
$$\langle \{d_1\}, \eta, 2\rangle \text{~~contains an element that is detected by~~} h_0h_2h_5,$$
$$\langle \sigma \{d_1\}, \eta, 2\rangle \text{~~contains an element that is detected by~~} h_1f_1.$$
Since
$$\sigma \langle \{d_1\}, \eta, 2\rangle \subseteq \langle \sigma \{d_1\}, \eta, 2\rangle,$$
the product $\sigma \cdot \{h_0h_2h_5\}$ is nontrivial, and is detected by $h_1f_1$.
\end{proof}

\begin{lem}
We have the relation $\langle \{t\}, \eta, \nu\rangle \subseteq \sigma\{h_0h_2h_5\}$ in $\pi_{41}$.
\end{lem}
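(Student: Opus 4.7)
The plan is the standard Moss-theorem strategy: identify a Massey product in the Adams $E_r$-page that corresponds to the Toda bracket $\langle\{t\},\eta,\nu\rangle$, show this Massey product is detected by $h_1f_1$, and combine with Lemma 11.4 to conclude. Because Bruner's differential $d_3(e_1)=h_1t=h_2^2n$ forces $h_1\cdot t$ to be a boundary, the product $h_1\cdot t$ vanishes in the Adams $E_4$-page; together with the standard $\mathrm{Ext}$ relation $h_1h_2=0$, this makes the Massey product $\langle t,h_1,h_2\rangle$ well defined in $E_4$.

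The key computation will be to verify that $h_1f_1\in\langle t,h_1,h_2\rangle$ in $E_4$, modulo classes of strictly higher Adams filtration. This is a chain-level calculation in the Lambda algebra: I would use $e_1$ as the witnessing cochain for $d_3(e_1)=h_1t$, multiply on the right by $h_2$, and then appeal to the $\mathrm{Ext}$ relations (in particular those among $h_2e_1$, $h_3d_1=h_1e_1$, and $h_0^2c_2=h_1f_1$ already used in the proof of Lemma 11.4) to identify the resulting representative with $h_1f_1$. Moss's theorem \cite[Theorem 1.2]{Mos} then produces an element of $\langle\{t\},\eta,\nu\rangle$ detected by $h_1f_1$ in the Adams filtration on $\pi_{41}$, and the proof of Lemma 11.4 shows that any class detected by $h_1f_1$ is of the form $\sigma\alpha$ with $\alpha\in\{h_0h_2h_5\}$.

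To promote this single-element statement to the set-theoretic inclusion $\langle\{t\},\eta,\nu\rangle\subseteq\sigma\{h_0h_2h_5\}$, I would finally compare indeterminacies. The indeterminacy of the Toda bracket is $\{t\}\cdot\pi_4+\nu\cdot\pi_{37}=\nu\cdot\pi_{37}$ since $\pi_4=0$, and it remains to check that $\nu\cdot\pi_{37}$ is absorbed into $\sigma$ times the indeterminacy of $\{h_0h_2h_5\}$, by inspecting the known generators of $\pi_{37}$ and the relevant $\nu$- and $\sigma$-extensions recorded in \cite{Isa,Isa2}. The main obstacle is the Massey product identification in the first step: since $\langle t,h_1,h_2\rangle$ lives only on the $E_4$-page, it cannot be read off from cobar data alone, and the calculation requires tracking representatives carefully through the $d_2$ and $d_3$ differentials; the indeterminacy comparison is more bookkeeping than substance.
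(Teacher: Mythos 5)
Your overall strategy matches the paper's: use Bruner's differential $d_3(e_1)=h_1t$ to produce the $E_4$ Massey product $\langle t,h_1,h_2\rangle = h_2e_1 = h_1f_1$, apply Moss's theorem to get an element of $\langle\{t\},\eta,\nu\rangle$ detected by $h_1f_1$, and combine with Lemma 11.4. (Your proposed chain-level Lambda algebra verification of the Massey product is unnecessary; the $\mathrm{Ext}$ relation $h_2e_1=h_1f_1$ does this in one line.) However, there is a genuine gap in your final step.

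The gap is your assertion that ``the proof of Lemma 11.4 shows that any class detected by $h_1f_1$ is of the form $\sigma\alpha$ with $\alpha\in\{h_0h_2h_5\}$.'' Lemma 11.4 shows only that $\sigma\{h_0h_2h_5\}$ is detected by $h_1f_1$; it does not show the converse. The set of homotopy classes detected by $h_1f_1$ is a coset of the subgroup of classes of strictly higher Adams filtration, and there are several nonzero $E_\infty$ elements above $h_1f_1$ in the 41-stem. So after Moss's theorem you only know that some element of $\langle\{t\},\eta,\nu\rangle$ and some element of $\sigma\{h_0h_2h_5\}$ agree modulo higher filtration; you still must kill that potential difference. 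Your closing paragraph about comparing indeterminacies does not address this, since it only concerns the indeterminacy of the Toda bracket, not the filtration ambiguity in ``detected by.'' (Also, the indeterminacy of $\langle\{t\},\eta,\nu\rangle$ is $\{t\}\cdot\pi_5+\pi_{38}\cdot\nu$, not $\{t\}\cdot\pi_4+\pi_{37}\cdot\nu$; harmless here since $\pi_5=0$, but the degrees matter for the second term.) The paper closes the gap by multiplying by $\eta$: it checks that $\eta\cdot\sigma\{h_0h_2h_5\}=0$ (via $\eta\{h_0h_2h_5\}=\{\nu\{q\},\eta^2\{P^4h_1\}\}$, both annihilated by $\sigma$) and that $\eta\cdot\langle\{t\},\eta,\nu\rangle=\{t\}\langle\eta,\nu,\eta\rangle=\{t\}\nu^2=0$ (ruling out $\{t\}\nu^2=\kappa^3$ by mapping to $tmf$), and then observes that every element of higher filtration than $h_1f_1$ in the cokernel of $J$ supports a nonzero $\eta$-extension, so the difference must vanish. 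You need an argument of this kind; without it the inclusion does not follow.
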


\begin{proof}
By \cite[Theorem 4.1]{Br1} we have Bruner's differential
$$d_3(e_1)=h_1t.$$
Therefore, we have a Massey product in the Adams $E_4$-page
$$\langle t, h_1, h_2\rangle = h_2e_1 = h_1f_1.$$
The second equation is a relation in $Ext$. Therefore, by Moss's Theorem \cite{Mos}, we have the following Toda bracket:
$$\langle \{t\}, \eta, \nu\rangle \text{~~is detected by~~} h_1f_1.$$
Note that the Toda bracket $\langle \{t\}, \eta, \nu\rangle$ has no indeterminacy. \\

Combining with Lemma 11.4, both $\sigma\{h_0h_2h_5\}$ and $\langle \{t\}, \eta, \nu\rangle$ are detected by $h_1f_1$. But in the same column of the $E_\infty$ page of the Adams spectral sequence, there are several elements with higher filtration than $h_1f_1$.  Therefore, to prove this lemma, we need to show that their difference is actually zero. We prove this by multiplying by $\eta$. First note that
$$\eta \cdot \sigma\{h_0h_2h_5\} = 0.$$
In fact, $\eta\{h_0h_2h_5\}$ contains non-zero classes $\eta\kappa\overline{\kappa} = \nu\{q\}$ and $\eta^2\{P^4h_1\}$. Both classes are annihilated by $\sigma$. Next note that
$$\langle \{t\}, \eta, \nu\rangle \eta = \{t\} \langle \eta, \nu, \eta\rangle = \{t\} \nu^2 =0.$$
For the last equation, by filtration arguments, the only other possibility is that $\{t\} \nu^2 = \kappa^3$. (For reader's convenience, note that $\kappa^3 = \eta^2\overline{\kappa}^2$.) However, mapping this relation to $\pi_\ast(tmf)$ gives a contradiction.

Since all elements of higher filtration than $h_1f_1$ in the cokernel of $J$ support non-zero $\eta$-extensions, this proves the lemma.
\end{proof}

\begin{lem}
We have a Toda bracket $\langle \overline{\kappa}, \{t\}, \eta \rangle = \{h_1Q_2\}$ in $\pi_{58}$.
\end{lem}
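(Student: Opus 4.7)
The plan is to show the bracket is defined, exhibit a representative detected by $h_1Q_2$ via Moss's theorem, and then identify the indeterminacy with the coset $\{h_1Q_2\}$.

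First, I would verify that $\langle \overline{\kappa}, \{t\}, \eta\rangle$ is well-defined. For $\eta\cdot\{t\}=0$ in $\pi_{37}$, Bruner's differential $d_3(e_1)=h_1t$ forces any representative of $\eta\{t\}$ to have Adams filtration strictly larger than the filtration of $h_1t$; inspecting Isaksen's $\pi_{37}$ and using that both $\eta$ and $\{t\}$ die in the $K(1)$-local sphere rules out all the remaining higher-filtration candidates. For $\overline{\kappa}\cdot\{t\}=0$ in $\pi_{56}$, the argument given in Lemma 11.3 shows that at the prime $2$, $\pi_{56}\cong\mathbb{Z}/2$ is generated by $\eta\rho_{55}\in\mathrm{Im}\,J$; since $\overline{\kappa}$ and $\{t\}$ both lie in the cokernel of $J$, their product maps to zero in $K(1)$-local sphere, so $\overline{\kappa}\{t\}$ cannot equal $\eta\rho_{55}$ and hence vanishes.

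Second, I would identify a representative of the Toda bracket. Using Bruner's relation $d_3(e_1)=h_1t$ as the trivializing differential, I would form the Massey product $\langle g, t, h_1\rangle$ in the Adams $E_4$-page, after first checking $g\cdot t=0$ at this page by a direct $\mathrm{Ext}$ computation. A Lambda-algebra calculation, entirely analogous in spirit to the cochain-level argument used in Lemma 10.4, identifies this Massey product with $h_1Q_2$ modulo classes of strictly higher Adams filtration. Moss's theorem (\cite[Theorem 1.2]{Mos}) then converts the Massey product into an element of $\langle \overline{\kappa}, \{t\}, \eta\rangle$ detected by $h_1Q_2$.

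Third, I would analyze the indeterminacy. The indeterminacy of the Toda bracket is $\overline{\kappa}\cdot\pi_{37}+\eta\cdot\pi_{57}$. Using Isaksen's tables for these stems, both summands consist of classes detected in Adams filtration strictly above that of $h_1Q_2$; adding them to any representative therefore yields another class still detected by $h_1Q_2$, which identifies the bracket with the full coset $\{h_1Q_2\}$.

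The hardest step is the middle one: establishing $\langle g, t, h_1\rangle\ni h_1Q_2$ on the Adams $E_4$-page. Unlike the formal manipulations used elsewhere in Section 11, this requires either an explicit cochain-level Lambda-algebra calculation in the spirit of Section 10, or appeal to a known $\mathrm{Ext}$-level expression for $Q_2$ (such as a May-spectral-sequence or matric-Massey-product presentation), in order to verify the identity and to control its indeterminacy at the $E_4$-page.
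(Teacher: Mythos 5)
Your overall architecture (well-definedness, a Massey product in the Adams $E_4$-page converted by Moss's theorem, then an indeterminacy comparison) matches the paper's, but your plan for the central step rests on a false premise and omits the one input that makes the argument work. The product $g\cdot t$ is \emph{not} zero in $Ext$; no ``direct $Ext$ computation'' or cochain-level Lambda-algebra calculation will exhibit $\langle g,t,h_1\rangle$ at the $E_2$-page, because the bracket is not defined there. It only becomes defined at the $E_4$-page because of Isaksen's differential $d_3(Q_2)=gt$, which you never invoke. Once you have that differential together with Bruner's $d_3(e_1)=h_1t$, the $E_4$-page Massey product is computed directly from its defining system: it is represented by $Q_2\cdot h_1+g\cdot e_1$, and since $ge_1=0$ in $Ext$ this equals $h_1Q_2$. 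That is essentially the entire content of the paper's proof; there is no Lambda-algebra cochain computation to do, and attempting one would fail because the nullhomotopy of $gt$ lives in the spectral sequence differential, not in the cobar complex.

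The rest of your outline is serviceable: the hypotheses for Moss's theorem do require $\eta\{t\}=0$ and $\overline{\kappa}\{t\}=0$ (the paper leaves these implicit), and your indeterminacy discussion is consistent with the paper's one-line remark that both sides differ only by classes in the image of $J$. But as written, the proposal cannot establish the key identity $\langle g,t,h_1\rangle=h_1Q_2$, so the proof has a genuine gap.
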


\begin{proof}
By \cite[Table 20]{Isa}, \cite{Isa2}, we have Isaksen's differential
$$d_3(Q_2) = gt.$$
Therefore, combining with Bruner's differential \cite[Theorem 4.1]{Br1} $d_3(e_1) = h_1t$, we have a Massey product in the Adams $E_4$-page
$$\langle g, t, h_1\rangle = h_1Q_2.$$
Note that $g e_1 =0$ in $Ext$. Therefore, the lemma follows from Moss's Theorem \cite[Theorem 1.2]{Mos}. Both sides of $\langle \overline{\kappa}, \{t\}, \eta \rangle = \{h_1Q_2\}$ have the same indeterminacy that lies in the image of J.
\end{proof}

Now we prove Lemma 11.7.

\begin{lem}
$\nu\cdot \pi_{58} =0.$
\end{lem}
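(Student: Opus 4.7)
The plan is to reduce $\nu\cdot\pi_{58}=0$ to showing $\nu\cdot\{h_1Q_2\}=0$: from Isaksen's stemwise computations through stem~59 and the vanishing of the image of $J$ in dimension $58\equiv 2\pmod 8$, the group $\pi_{58}$ is cyclic of order two, generated by $\{h_1Q_2\}$ (the chart in Section~2 shows $h_1Q_2$ as the sole Adams generator in filtration below~16).

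By Lemma~11.6, $\{h_1Q_2\}\in\langle\overline{\kappa},\{t\},\eta\rangle$, whose indeterminacy $\overline{\kappa}\pi_{38}+\pi_{57}\eta$ is annihilated on the right by $\nu$, since $\overline{\kappa}\nu=0$ in $\pi_{23}$ (from $h_2g=0$ in Ext together with a filtration check) and $\eta\nu=0$. Consequently, any representative $x\in\langle\overline{\kappa},\{t\},\eta\rangle$ satisfies $x\nu=\{h_1Q_2\}\nu$, and it suffices to produce one such $x$ with $x\nu=0$.

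Such an $x$ is delivered by the Toda juggling lemma. The required nullities $\overline{\kappa}\{t\}=0$ (from $gt=0$ in $E_\infty$ via Isaksen's $d_3(Q_2)=gt$), $\{t\}\eta=0$ (Bruner's $d_3(e_1)=h_1t$), $\eta\nu=0$, and $\nu\overline{\kappa}=0$ all hold, so the juggling identity
\[
\overline{\kappa}\cdot\langle\{t\},\eta,\nu\rangle \,\cap\, \langle\overline{\kappa},\{t\},\eta\rangle\cdot\nu \,\neq\, \emptyset
\]
furnishes $x\in\langle\overline{\kappa},\{t\},\eta\rangle$ and $y\in\langle\{t\},\eta,\nu\rangle$ with $x\nu=\pm\overline{\kappa}y$. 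Lemma~11.5 puts $y$ inside $\sigma\{h_0h_2h_5\}$, and the classical relation $\sigma\overline{\kappa}=0$ in $\pi_{27}$ (from $h_3g=0$ in Ext with no higher-filtration detector available) then forces $\overline{\kappa}y=0$. Hence $x\nu=0$, so $\{h_1Q_2\}\nu=0$, and therefore $\nu\cdot\pi_{58}=0$.

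The main obstacle is justifying the supporting vanishings---in particular $\nu\overline{\kappa}=0$ in $\pi_{23}$ and $\sigma\overline{\kappa}=0$ in $\pi_{27}$---both of which are classical but each requires a brief Ext/filtration check to confirm, and verifying that the indeterminacy of the inner bracket $\langle\{t\},\eta,\nu\rangle$ (namely $\{t\}\pi_5+\pi_{38}\nu$) is annihilated by $\overline{\kappa}$ using $\pi_5=0$ and $\overline{\kappa}\nu=0$, so that the equation $\overline{\kappa}y=0$ really holds on the nose.
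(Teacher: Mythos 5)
Your main line is exactly the paper's: reduce to $\nu\cdot\{h_1Q_2\}=0$, realize $\{h_1Q_2\}$ as $\langle\overline{\kappa},\{t\},\eta\rangle$ via Lemma 11.6, shuffle to $\overline{\kappa}\langle\{t\},\eta,\nu\rangle$, land in $\sigma\{h_0h_2h_5\}$ via Lemma 11.5, and finish with $\sigma\overline{\kappa}=0$. However, two of your supporting assertions are false, and both are load-bearing as you have written them.

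First, $\pi_{58}$ is not cyclic of order $2$: it is $\mathbb{Z}/2\oplus\mathbb{Z}/2$, generated by $\{h_1Q_2\}$ and $\eta\{P^7h_1\}$. The literal image of $J$ does vanish in dimension $58\equiv 2\pmod 8$, but the $v_1$-periodic class $\eta\mu_{57}=\eta\{P^7h_1\}$ survives in every dimension $\equiv 2\pmod 8$; it sits in Adams filtration $30$ and is invisible in the Section 2 chart only because that chart deliberately truncates at filtration $16$. The omission is harmless — that generator is an $\eta$-multiple, hence annihilated by $\nu$ — but you must account for it. Second, $\nu\overline{\kappa}\neq 0$ in $\pi_{23}$: $h_2g$ is a nonzero permanent cycle in $Ext$ and detects $\nu\overline{\kappa}$, which generates a $\mathbb{Z}/8$ summand of $\pi_{23}$. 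So your mechanism for killing the indeterminacies of both brackets collapses. It can be repaired without new ideas: for the outer bracket, Lemma 11.6 already asserts that the indeterminacy of $\langle\overline{\kappa},\{t\},\eta\rangle$ lies in the image of $J$ in $\pi_{58}$, i.e.\ in $\{0,\eta\{P^7h_1\}\}$, which $\nu$ kills since $\eta\nu=0$; for the inner bracket, Lemma 11.5's containment $\langle\{t\},\eta,\nu\rangle\subseteq\sigma\{h_0h_2h_5\}$ applies to the entire bracket, indeterminacy included, so $\overline{\kappa}\sigma=0$ finishes the argument with no separate indeterminacy check. With those two repairs your proof coincides with the paper's.
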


\begin{proof}
By \cite{Isa,Isa2},
$$\pi_{58} \text{~~is~~}\mathbb{Z}/2\oplus\mathbb{Z}/2, \text{~~and generated by~~} \{h_1Q_2\} \text{~~and~~}\eta\{P^7h_1\}.$$
By Lemma 11.5 that
$$\langle \{t\}, \eta, \nu\rangle \subseteq \sigma\{h_0h_2h_5\}\text{~~in~~}\pi_{41},$$
and Lemma 11.6 that
$$\langle \overline{\kappa}, \{t\}, \eta \rangle = \{h_1Q_2\}\text{~~in~~}\pi_{58},$$
we have that
\begin{equation*}
\begin{split}
\nu \cdot \{h_1Q_2\} & = \langle \overline{\kappa}, \{t\}, \eta\rangle \nu \\
& = \overline{\kappa} \langle \{t\}, \eta, \nu\rangle \\
& \subseteq  \overline{\kappa} \sigma \{h_0h_2h_5\} = 0.
\end{split}
\end{equation*}
The last equation follows from the relation that $\overline{\kappa} \sigma =0$. Therefore, we have that
$$\nu\cdot \pi_{58} =0.$$
\end{proof}

\section{Another homotopy relation and the Adams differential $d_5(A') = h_1B_{21}$}

In this section, we prove another relation in the homotopy groups of spheres. This relation will lead to an Adams differential, which is the only possibility to kill the element $h_1B_{21}$ in the 60-stem.

\begin{thm} \label{A}
We have the relation $\eta \kappa \theta_{4.5} =0$ in $\pi_{60}$. Here $\theta_{4.5}$ is a homotopy class in $\pi_{45}$ defined by Isaksen in Section 1.7 of \cite{Isa}, with an extra condition that it maps to zero in $\pi_{45}(tmf)$. This implies the Adams differential
$$d_5(A') = h_1B_{21}.$$
\end{thm}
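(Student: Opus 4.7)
The plan is to separate the statement into two parts: first establish the homotopy relation $\eta\kappa\theta_{4.5}=0$ in $\pi_{60}$, and then deduce from it the Adams differential $d_5(A')=h_1B_{21}$. For the homotopy relation, I would follow the same style of Toda bracket shuffling that was used to establish $\eta\overline{\kappa}^3=0$ in Theorem \ref{gz}. Specifically, I would look for a Toda bracket decomposition of one of the three factors that, after shuffling, expresses $\eta\kappa\theta_{4.5}$ as a bracket whose inner product vanishes in a lower stem. Natural candidates are a bracket for $\kappa$ of the form $\kappa\in\langle \nu,\eta,\nu,\sigma\rangle$ (or the analogous 3-fold bracket for $\eta\kappa$), combined with the hypothesis that $\theta_{4.5}$ maps to zero in $\pi_{45}(tmf)$. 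The hypothesis on $\theta_{4.5}$ is essential because it forces certain indeterminacies to land in subgroups detected by $tmf$, on which $\kappa$ acts trivially or predictably; it also makes the otherwise ambiguous choice of $\theta_{4.5}$ concrete enough to compute products with.

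Once the relation is in hand, I would translate it into the Adams spectral sequence. The element $B_{21}\in Ext^{10,10+59}(S^0)$ is known from Isaksen's work to detect the product $\kappa\theta_{4.5}$ (up to elements of higher filtration, which will be tracked), and consequently $h_1B_{21}\in Ext^{11,11+60}(S^0)$ detects $\eta\kappa\theta_{4.5}$. Since this product is zero, $h_1B_{21}$ must either fail to be a permanent cycle or be hit by an Adams differential. Using the hypothesis $\theta_{4.5}\mapsto 0$ in $tmf$ and the fact that $h_1B_{21}$ lies above several $tmf$-detected families, the only coherent conclusion is that $h_1B_{21}$ is hit.

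For the second part, I would enumerate all classes in the relevant bidegrees on the chart of Section 2 whose Adams $d_r$ differential could potentially land on $h_1B_{21}$, and eliminate every candidate except $A'$. Most candidates can be ruled out because they are already shown to be permanent cycles (either detected nontrivially in $tmf$ or the $K(1)$-local sphere, or by Moss's theorem applied to a suitable Massey product) or because they support shorter differentials whose targets are incompatible. For $A'$ itself, one checks directly from the $E_2$-page that a $d_2$, $d_3$, or $d_4$ from $A'$ has no possible nonzero target, so the earliest possible differential it can support is $d_5$, which then must equal $h_1B_{21}$ by elimination.

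The principal obstacle is the first step: producing the Toda bracket manipulation that proves $\eta\kappa\theta_{4.5}=0$ cleanly. Unlike the case of $\eta\overline{\kappa}^3$, where the bracket $\overline{\kappa}\in\langle\kappa,2,\eta,\nu\rangle$ is classical, a usable bracket for $\theta_{4.5}$ (or for the product $\kappa\theta_{4.5}$) is more delicate, and one must be very careful that the condition "$\theta_{4.5}$ maps to zero in $\pi_{45}(tmf)$" is strong enough to pin down the shuffling, since without it the indeterminacy of $\theta_{4.5}$ would obstruct the argument.
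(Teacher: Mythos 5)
Your overall architecture matches the paper's: prove $\eta\kappa\theta_{4.5}=0$ by a Toda bracket shuffle exploiting the $tmf$ condition, observe that $h_1B_{21}$ is a permanent cycle detecting $\eta\kappa\theta_{4.5}$, and eliminate every possible source except $A'$. (On the detection step, the paper is slightly different from what you wrote: it does not claim $B_{21}$ detects $\kappa\theta_{4.5}$, but rather uses the Barratt--Jones--Mahowald fact that $B_1$ detects $\eta\theta_{4.5}$ together with the $Ext$ relation $h_1B_{21}=d_0B_1$; and the elimination of other sources is immediate once $d_3(D_3)=B_3$ and $d_3(h_1D_3)=h_1B_3$ are known from the main theorem.) That second half of your plan is fine. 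The problem is that the heart of the theorem --- the actual bracket manipulation --- is absent from your proposal, and the one concrete candidate you float, $\kappa\in\langle\nu,\eta,\nu,\sigma\rangle$, cannot be right: that bracket lives in $\pi_{16}$, not $\pi_{14}$.

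The paper starts instead from the Mimura--Toda bracket $\kappa=\langle\epsilon,\nu,\eta,2\rangle$ (strictly defined, zero indeterminacy), so that $\eta\kappa\theta_{4.5}\in\theta_{4.5}\langle\eta\epsilon,\nu,\eta,2\rangle$. The obstacle you would then hit is that $\theta_{4.5}\cdot\eta\epsilon\neq 0$ (it is detected by $h_1x'$), so $\theta_{4.5}$ cannot be shuffled into the bracket directly. The key idea you are missing is a correction term: one also proves $\rho_{15}\in\langle \{Ph_1\},\nu,\eta,2\rangle$ and $\rho_{15}\theta_{4.5}=0$, and adds this to replace the first entry by $\{Ph_1\}+\eta\epsilon$. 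The point is that $h_1x'$ detects \emph{both} $\eta\epsilon\theta_{4.5}$ and $\{Ph_1\}\theta_{4.5}$, and the only element of higher filtration in that stem, $d_0g^2$, is $tmf$-detected; since $\theta_{4.5}$ is chosen to die in $tmf$, this forces $\theta_{4.5}(\{Ph_1\}+\eta\epsilon)=0$. One then shows $\langle\theta_{4.5},\{Ph_1\}+\eta\epsilon,\nu\rangle=0$ with zero indeterminacy, shuffles to get $\eta\kappa\theta_{4.5}\in\langle 0,\eta,2\rangle=2\pi_{60}=\{0,2\overline{\kappa}^3\}$, and rules out $2\overline{\kappa}^3$ by mapping to $tmf$ once more. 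So the $tmf$ hypothesis is used twice and in a far more specific way than ``forcing indeterminacies into $tmf$-detected subgroups''; without the $\rho_{15}$-correction the shuffle does not go through, and your plan stalls exactly at the step you yourself flag as the principal obstacle.
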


In Isaksen's definition, $\theta_{4.5}$ is a homotopy class detected by $h_4^3$ in the 45-stem, with indeterminacy containing even multiples of itself and the element $\{w\}$. Our definition of $\theta_{4.5}$ is a refinement of Isaksen's. Since $\{w\}$ has a strictly higher Adams filtration than $\theta_{4.5}$, and is detected by tmf, the indeterminacy of our $\theta_{4.5}$ does not contain the element $\{w\}$.

Using several lemmas that will be proved later in this section, we present the proof of Theorem 12.1.

\begin{proof}
We first prove the second claim. By \cite[Theorem 3.1(i)]{BJM}, the permanent cycle $B_{1}$ detects the homotopy class $\eta \theta_{4.5}$. We have the following relation in $Ext$:
$$h_1B_{21} = d_0B_1.$$
Since $d_0$ detects $\kappa$, the permanent cycle $h_1B_{21} = d_0B_1$ detects the homotopy class $\eta \kappa \theta_{4.5}$.
Therefore, if $\eta \kappa \theta_{4.5} =0$, we must have $h_1B_{21}$ killed by some Adams differential. By Theorem 3.1, we have that
$$d_3(D_3) = B_3, \ d_3(h_1D_3) = h_1B_3.$$
This leaves the element $A'$ to be the only possibility to kill $h_1B_{21}$ as the source. Therefore, we have the Adams $d_5$ differential $d_5(A') = h_1B_{21}$.

Now we prove the relation $\eta \kappa \theta_{4.5} =0$.

Recall that there is a strictly defined 4-fold Toda bracket for $\kappa\in\pi_{14}$ with zero indeterminacy:
$$\kappa = \langle \epsilon, \nu, \eta, 2\rangle.$$
It follows that
$$\eta \kappa = \eta \langle \epsilon, \nu, \eta, 2\rangle \in \langle \eta\epsilon, \nu, \eta, 2\rangle,$$
and that
$$\eta \kappa \theta_{4.5} \in \theta_{4.5} \langle \eta\epsilon, \nu, \eta, 2\rangle.$$
We will show in Lemma 12.6 that there is a strictly defined 4-fold Toda bracket in $\pi_{15}$:
$$\rho_{15} \in \langle \{Ph_1\}, \nu, \eta, 2\rangle \text{~~with indeterminacy even multiples of~~}\rho_{15}.$$
We will show in Lemma 12.7 that
$$\rho_{15} \theta_{4.5} = 0\text{~~in~~}\pi_{60}.$$
Thus
$$ 0 = \rho_{15} \theta_{4.5} = \theta_{4.5} \langle \{Ph_1\}, \nu, \eta, 2\rangle.$$
We will show in Lemma 12.5 that
$$\theta_{4.5} (\eta \epsilon + \{Ph_1\} ) = 0,$$
and in Lemma 12.9 that
$$\langle \theta_{4.5}, \{Ph_1\}+\eta\epsilon, \nu\rangle = 0 \text{~~with zero indeterminacy in~~}\pi_{58}.$$
Therefore
\begin{equation*}
\begin{split}
\eta \kappa \theta_{4.5} & = \eta \kappa \theta_{4.5} + \rho_{15} \theta_{4.5} \\
& \in \theta_{4.5} \langle \eta\epsilon, \nu, \eta, 2\rangle + \theta_{4.5} \langle \{Ph_1\}, \nu, \eta, 2\rangle \\
& = \theta_{4.5} \langle \{Ph_1\}+\eta\epsilon, \nu, \eta, 2\rangle \\
& \subseteq \langle\langle \theta_{4.5}, \{Ph_1\}+\eta\epsilon, \nu\rangle, \eta, 2\rangle \\
& = \langle 0, \eta, 2\rangle\\
& = 2 \cdot \pi_{60} = \{0, 2\overline{\kappa}^3\}.
\end{split}
\end{equation*}
Note that the following three Toda brackets
$$\langle \eta\epsilon, \nu, \eta, 2\rangle, \ \langle \{Ph_1\}, \nu, \eta, 2\rangle, \ \langle \{Ph_1\}+\eta\epsilon, \nu, \eta, 2\rangle$$
have the same indeterminacy: $2 \cdot \pi_{15} =$ even multiples of $\rho_{15}$, which is annihilated by $\theta_{4.5}$. \\

To prove that $\eta \kappa \theta_{4.5}=0$, we only need to show that
$$\eta \kappa \theta_{4.5} \neq 2\overline{\kappa}^3.$$
Note that $2\overline{\kappa}^3$ is detected by $tmf$, while $\theta_{4.5}$ is chosen not to be detected by $tmf$. Suppose we have the relation
$$\eta \kappa \theta_{4.5} = 2\overline{\kappa}^3.$$
Then mapping this relation into tmf gives us $2\overline{\kappa}^3=0$, which contradicts the fact that $2\overline{\kappa}^3$ is detected in $\pi_\ast(tmf)$. Therefore, we must have that
$$\eta \kappa \theta_{4.5} = 0.$$
\end{proof}

Now we present the proofs of Lemmas 12.5, 12.6, 12.7, 12.9, and a few other lemmas that will be needed for the proofs.

\begin{lem}
In the Adams $E_2$ page, we have a Massey product
$$h_1 x' = \langle h_0^2g_2, h_0, Ph_1\rangle.$$
\end{lem}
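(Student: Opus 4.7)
The plan is to prove the identity directly at the cochain level, via the Lambda algebra (or equivalently the cobar complex), in the same spirit as the Lambda-algebra computation performed in Lemma 10.4. Before doing anything, one checks that the bracket is actually defined: the composite $h_0^2 g_2 \cdot h_0 = h_0^3 g_2$ vanishes in $Ext$ (a standard relation, readable off the May spectral sequence or Bruner's tables), and $h_0 \cdot Ph_1 = P(h_0 h_1)$ vanishes because $P$ acts by multiplication by an element of bidegree $(4,12)$ and $h_0 h_1 = 0$. One should also verify the bidegrees match: the Massey product lands in bidegree $(s,t-s) = (11, 54)$, which is precisely the bidegree of $h_1 x'$.

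Next, I would select explicit Lambda-algebra cocycle representatives $\alpha$, $\beta = \lambda_0$, $\gamma$ for $h_0^2 g_2$, $h_0$, $Ph_1$ respectively (the standard leading terms from the Curtis table of \cite{Tan1}, cf.\ \cite{WX}). The key step is to produce a defining system: Lambda monomials $\mu_1$, $\mu_2$ with
$$d(\mu_1) \;=\; \alpha\cdot\beta, \qquad d(\mu_2) \;=\; \beta\cdot\gamma.$$
The existence of $\mu_2$ with $d(\mu_2) = \lambda_0 \gamma$ is clear from the standard identity $P h_0 \cdot h_1 = h_0 \cdot P h_1$ in $Ext$, so one takes $\mu_2$ to be (a lift of) a Lambda expression witnessing this; the element $\mu_1$ is found by searching systematically among Lambda monomials of the appropriate internal degree and filtration. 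Once the defining system is in hand, the representative cocycle
$$\mu_1\,\gamma \;+\; \alpha\,\mu_2 \;\in\; \Lambda^{11,65}$$
computes the Massey product, and the final step is to compare its image in $Ext^{11,65}(S^0)$ against the standard Lambda-algebra representative of $h_1 x'$ from the Curtis table.

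A companion bookkeeping task is to verify that the indeterminacy,
$$h_0^2 g_2 \cdot Ext^{5,15}(S^0) \;+\; Ext^{6,51}(S^0)\cdot Ph_1,$$
in bidegree $(11,65)$ does not contain (or absorb) $h_1 x'$. This amounts to inspecting the relevant entries of the Curtis tables and confirming, e.g., that multiplication by $h_0^2 g_2$ on $Ext^{5,15}$ and by $Ph_1$ on $Ext^{6,51}$ misses the generator $h_1 x'$.

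The main obstacle is the combinatorial search for $\mu_1$: there is no mechanical procedure for producing null-homotopies in $\Lambda$, and one must either guess the monomial from the shape of $\alpha$ and $\beta$ or run a small computer search in the relevant bidegree. Once $\mu_1$ is found, the identification $\mu_1\gamma + \alpha\mu_2 \equiv h_1 x'$ modulo indeterminacy reduces to matching leading terms in the Curtis basis, which is routine. An alternative shortcut would be to apply a juggling formula $\langle h_0^2 g_2, h_0, Ph_1\rangle = \langle h_0^2 g_2, h_0\cdot P, h_1\rangle$ (using that $P$ is central and that $h_0 \cdot P$ makes sense as a single cohomology class), reducing the problem to computing the simpler bracket $\langle h_0^2 g_2, P h_0, h_1\rangle$ and checking by inspection of $Ext$ that the outcome is $h_1 x'$; this may be the cleanest route if the requisite juggling identities are available in the existing literature.
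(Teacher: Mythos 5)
Your overall strategy --- computing the bracket directly at the cochain level in the Lambda algebra --- is a legitimate alternative in principle, but as written it is a plan rather than a proof, and it is quite different from what the paper actually does. The paper's argument is two lines and involves no cochain work: by Tangora \cite[Proposition 4.19]{Tan1} there is a May differential $d_6(Y)=h_0^3g_2$, and $h_1x'=Y\cdot Ph_1$ on the May $E_6$-page; May's convergence theorem \cite{May} then converts this pair of facts directly into the $Ext$-level Massey product $h_1x'=\langle h_0^2g_2,h_0,Ph_1\rangle$. The null-homotopy $\mu_1$ that you propose to locate ``by a small computer search'' is exactly the data that the May differential supplies for free; producing it by hand in $\Lambda^{11,65}$ (starting from an explicit cocycle for $h_0^2g_2$, which is already unwieldy) and then matching the resulting class against the Curtis table is the entire content of the lemma, and your proposal defers it rather than performs it. So the central step of your argument is missing, even though the framework (defining system, representative $\mu_1\gamma+\alpha\mu_2$, indeterminacy check) is set up correctly.

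Two smaller points. First, your verification that the bracket is defined is right in its conclusions but wrong in its reasons: $h_0\cdot Ph_1=0$ holds because $Ext^{6,15}(S^0)=0$, not because ``$P$ acts by multiplication by an element of bidegree $(4,12)$'' --- $P$ is the Adams periodicity operator, given by a Massey product such as $P(-)=\langle -,h_0^3h_3,h_0\rangle$, and $Ph_1$ is an indecomposable element of $Ext^{5,14}$, not a product. For the same reason your proposed ``shortcut'' $\langle h_0^2g_2,h_0,Ph_1\rangle=\langle h_0^2g_2,h_0\cdot P,h_1\rangle$ is not a valid juggling move: $h_0\cdot P$ is not a cohomology class, so that bracket is not defined and no shuffling formula applies. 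Second, the indeterminacy bookkeeping you describe would indeed be needed to pin down the answer exactly, and it is harmless here, but it does not substitute for the main computation.
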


\begin{proof}
In Proposition 4.19 of \cite{Tan1}, Tangora showed that we have a May $d_6$ differential
$$d_6(Y) = h_0^3g_2.$$
Here we follow Isaksen's notation \cite{Isa} for names of the elements in the May spectral sequence. Then combining with the fact that $h_1 x' = Y Ph_1$ in the May $E_6$ page, this lemma follows from May's convergence theorem \cite{May}.
\end{proof}

\begin{lem}
We have the relation
$$\{Ph_1\} \cdot \{h_5d_0\} =0 \text{~~in~~} \pi_{54}.$$
\end{lem}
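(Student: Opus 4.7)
The plan is to obtain Toda bracket representations of both factors and juggle the product into a lower stem. By Moss's theorem applied to the algebraic Massey product $Ph_1 = \langle h_1, h_0^3, h_3\rangle$ in $\mathrm{Ext}$, one obtains $\{Ph_1\} \ni \langle \eta, 8, \sigma\rangle$ in $\pi_9$, with indeterminacy $2 \cdot \pi_9$ that is already zero on $\{Ph_1\}$ up to higher Adams filtration. For $\phi \in \{h_5 d_0\}$, recall from the proof of Proposition 6.3 that one may choose a specific representative so that $\eta\phi \in \langle \alpha', 2, \nu^2\rangle$, where $\alpha' = \langle \theta_4, 2, \epsilon\rangle \in \{h_5c_0\}$ satisfies $2\alpha' = 0$ and $\sigma\alpha' = 0$ (the latter by Lemma 6.5). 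This gives $\phi$ a natural presentation as a 4-fold Toda bracket $\phi \in \langle \alpha', 2, \eta, \nu\rangle$ modulo Adams filtration indeterminacy.

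The first step is to verify the side condition $\sigma\phi = 0$ in $\pi_{52}$, which is needed to shuffle the 3-fold bracket for $\{Ph_1\}$. The class $\sigma\phi$ is detected in Adams bidegree corresponding to $h_3 \cdot h_5 d_0 \in \mathrm{Ext}^{4,56}$, which one checks is zero (or has only tame candidates for hidden extensions); any remaining higher-filtration corrections are ruled out either by $tmf$-detection arguments as in Lemma 6.5 or by direct inspection of Isaksen's charts \cite{Isa, Isa2} in the 52-stem. With $\sigma\phi = 0$ established, we shuffle:
\begin{equation*}
\{Ph_1\}\cdot\phi \subseteq \langle \eta, 8, \sigma\rangle\cdot\phi = \eta \cdot \langle 8, \sigma, \phi\rangle.
\end{equation*}
The middle bracket $\langle 8, \sigma, \phi\rangle$ lies in $\pi_{53}$, so it suffices to verify that every class in this bracket is annihilated by $\eta$. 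By Isaksen's computation of $\pi_{53}$, this group is small enough to enumerate directly; each generator is either in the image of $J$ (where $\eta$-multiplication lands in the image of $J$ in $\pi_{54}$, which we check does not intersect the filtration of $\{Ph_1\}\cdot\phi$) or has vanishing $\eta$-extension by \cite{Isa, Isa2}. The indeterminacy $\eta\cdot(\sigma\pi_{46} + 8\pi_{54})$ of the shuffled bracket is similarly controlled.

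The principal obstacle is the indeterminacy analysis, since the shuffle gives a containment rather than an equality, and the initial Toda bracket $\{Ph_1\} \ni \langle \eta, 8, \sigma\rangle$ itself is only defined modulo higher Adams filtration. If this direct route leaves ambiguity, the fallback is to exploit the 4-fold description $\phi \in \langle \alpha', 2, \eta, \nu\rangle$ and juggle against $\{Ph_1\}$ there, reducing the question to the vanishing of products like $\{Ph_1\}\cdot\alpha'$ and $\{Ph_1\}\cdot\eta$ in the 51- and 10-stems respectively, which are immediate from classical computations (for the former, $\alpha' = \langle \theta_4, 2, \epsilon\rangle$ reduces further, and $\{Ph_1\}\theta_4 = 0$ is forced by dimensional and image-of-$J$ considerations in $\pi_{39}$).
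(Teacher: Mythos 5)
Your strategy (decompose $\{Ph_1\}=\mu_9$ as a Toda bracket and shuffle) is genuinely different from the paper's, which instead decomposes the other factor, using that $h_5d_0$ detects $\langle 2,\theta_4,\kappa\rangle$ and juggling $\{Ph_1\}\langle 2,\theta_4,\kappa\rangle$ into $\langle \kappa\{Ph_1\},2,\theta_4\rangle=\langle\eta^3\overline{\kappa},2,\theta_4\rangle$, where the $\eta^3$ kills everything. Unfortunately your route as written has several concrete problems. First, the bracket $\langle\eta,8,\sigma\rangle$ is not defined: $8\sigma\neq 0$ since $\sigma$ generates the $2$-component $\mathbb{Z}/16$ of $\pi_7$. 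The classical statement is $\{Ph_1\}\subseteq\langle\eta,2,8\sigma\rangle$ (this is exactly the form used in the proof of Lemma 12.4), and after that correction the shuffle produces $\eta\cdot\langle 2,8\sigma,\phi\rangle$ with the middle bracket in $\pi_{53}$. Second, and more seriously, your key claim that every relevant class of $\pi_{53}$ is annihilated by $\eta$ is false: $\epsilon\theta_{4.5}\in\pi_{53}$ satisfies $\eta\epsilon\theta_{4.5}\neq 0$, detected by the surviving cycle $h_1x'$ in the $54$-stem (this is precisely the content of Lemmas 12.4 and 12.5, where $h_1x'$ detects $\{Ph_1\}\theta_{4.5}=\eta\epsilon\theta_{4.5}$). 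Since $h_1x'$ sits in filtration high enough to be a potential value of $\{Ph_1\}\cdot\{h_5d_0\}$, you cannot dispose of it by a blanket citation; you would have to show that $\langle 2,8\sigma,\phi\rangle$ misses $\epsilon\theta_{4.5}$ modulo $\eta$-torsion, which is where all the real work lies and is left unaddressed. The side condition $\sigma\phi=0$ is also only asserted (and the cited bidegree is off: $h_3h_5d_0$ lives in $Ext^{6,58}$, stem $52$, filtration $6$, not $Ext^{4,56}$).

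The fallback sketch does not rescue the argument. The product $\{Ph_1\}\cdot\alpha'$ lies in $\pi_{48}$, not $\pi_{51}$; the passage from $\eta\phi\in\langle\alpha',2,\nu^2\rangle$ to a four-fold bracket expression for $\phi$ itself is not justified; and the claim that $\{Ph_1\}\cdot\eta$ vanishes is simply wrong, since $\eta\{Ph_1\}$ generates $\pi_{10}\cong\mathbb{Z}/2$ (as stated in the proof of Lemma 12.4). I would recommend abandoning the decomposition of $\mu_9$ and instead decomposing $\{h_5d_0\}$ as in the paper: the bracket $\langle 2,\theta_4,\kappa\rangle$ is detected by $h_5d_0$ via $d_2(h_5)=h_0h_4^2$ and Moss's theorem, and the relation $\kappa\{Ph_1\}=\eta^3\overline{\kappa}$ then lets $\eta^3$ annihilate everything in sight, after which a filtration argument handles the classes above $h_5d_0$.
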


\begin{proof}
First note that the Toda bracket
$$\langle 2, \theta_4, \kappa\rangle \text{~~is detected by~~} h_5d_0.$$
This follows from the Adams $d_2$ differential $d_2(h_5) = h_0h_4^2$ and Moss's theorem. Note that to apply the Moss's theorem here, we need to use the fact that $\theta_4 \kappa =0$, which is obtained by filtration reasons.\\

We compute the product $\{Ph_1\} \langle 2, \theta_4, \kappa\rangle$ next.

\begin{equation*}
\begin{split}
\{Ph_1\} \langle 2, \theta_4, \kappa\rangle & = \langle \{Ph_1\}, 2, \theta_4\rangle \kappa \\
& \subseteq \langle \kappa \{Ph_1\}, 2, \theta_4\rangle \\
& = \langle \eta^3 \overline{\kappa}, 2, \theta_4\rangle \\
& \supseteq \eta^2 \overline{\kappa} \langle \eta , 2, \theta_4\rangle \\
& = \eta^3 \langle 2, \theta_4, \overline{\kappa}\rangle \subseteq \eta^3 \pi_{51} =0.
\end{split}
\end{equation*}

In other words, both $\{Ph_1\} \langle 2, \theta_4, \kappa\rangle$ and $0$ are contained in the same Toda bracket
$$\langle \eta^3 \overline{\kappa}, 2, \theta_4\rangle.$$
Therefore, their difference must be contained in the indeterminacy of this Toda bracket, which is
$$\eta^3 \overline{\kappa} \cdot \pi_{31} + \pi_{24} \cdot \theta_4.$$
It is clear that $\eta^3 \overline{\kappa} \cdot \pi_{31} \subseteq \eta^3 \pi_{51} =0$. Recall that
$$\pi_{24} \cong \mathbb{Z}/2\oplus \mathbb{Z}/2 \text{~~and is generated by~~} \eta\sigma\eta_4 \text{~~and~~} \eta\rho_{23}\text{~~in the~~} Im J.$$
Multiplying by $\theta_4$, both products are zero. This is due to the fact that $\eta\eta_4\theta_4=0$ (See Lemma 4.1 in \cite{BJM}) and filtration reasons. Therefore, we have achieved that
$$\{Ph_1\} \langle 2, \theta_4, \kappa\rangle=0.$$

Then, from the fact that $2\{Ph_1\}=0$ and filtration reasons, the product of $\{Ph_1\}$ and all elements in the $E_\infty$ page of higher filtration than $h_5d_0$ are zero. Therefore, combining with the fact that the Toda bracket
$$\langle 2, \theta_4, \kappa\rangle \text{~~is detected by~~} h_5d_0,$$
we have the homotopy relation that
$$\{Ph_1\} \cdot \{h_5d_0\} =0 \text{~~in~~} \pi_{54}.$$
\end{proof}

\begin{lem}
The permanent cycle $h_1 x'$ in the 54-stem detects the homotopy class $\theta_{4.5} \{Ph_1\}$.
\end{lem}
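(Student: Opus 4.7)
The plan is to apply Moss's convergence theorem \cite[Theorem 1.2]{Mos} to the Adams $E_2$-page Massey product recorded in Lemma 12.2, to produce a homotopy-level Toda bracket in $\pi_{54}$ detected by $h_1 x'$, and then to identify this bracket with $\theta_{4.5}\{Ph_1\}$ modulo indeterminacy of strictly higher Adams filtration.

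First I would verify the hypotheses of Moss's theorem for the Massey product $h_1 x' = \langle h_0^2 g_2,\, h_0,\, Ph_1\rangle$. Each factor is a permanent cycle ($h_0$ and $Ph_1$ obviously, $h_0^2 g_2$ by the computations of \cite{Isa,Isa2} in the 44-stem); the Massey product is defined already at $E_2$ because $h_0\cdot Ph_1 = P(h_0 h_1) = 0$ and $h_0\cdot h_0^2 g_2 = h_0^3 g_2 = 0$ in $Ext$ (the latter being the underlying content of Lemma 12.2, via Tangora's May differential $d_6(Y) = h_0^3 g_2$); and a direct inspection of the 54-stem of the Adams $E_2$-page rules out crossing $d_2$-differentials that could invalidate the Moss convergence conclusion. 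Moss's theorem then yields that the Toda bracket $\langle \alpha,\, 2,\, \{Ph_1\}\rangle \subseteq \pi_{54}$ is detected by $h_1 x'$, where $\alpha\in\pi_{44}$ is a homotopy class detected by $h_0^2 g_2$.

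Next I would show that $\theta_{4.5}\{Ph_1\}$ lies in this Toda bracket modulo higher Adams filtration, via a juggling argument. The Adams differential $d_2(h_4 h_5) = h_0 h_4^3$ forces $2\theta_{4.5}$ to be detected strictly above filtration $4$ in the 45-stem, and combining this with Isaksen's hidden $2$- and $\eta$-extension data in the 44- and 45-stems from \cite{Isa,Isa2} pins down the multiplicative relation one needs: after the appropriate shuffle the product $\theta_{4.5}\{Ph_1\}$ becomes an element of $\langle \alpha, 2, \{Ph_1\}\rangle$ up to indeterminacy. Finally I would bound that indeterminacy $\alpha\cdot\pi_{10} + \pi_{45}\cdot\{Ph_1\}$: the first summand sits in Adams filtration well above $11$ because $\alpha$ already has filtration $6$, and the second summand is controlled by cycling through the classes of $\pi_{45}$ — the contributions from $\{h_1 g_2\}$ and $\{h_5 d_0\}$ vanish by Lemma 12.3 and filtration reasons respectively, while the $\theta_{4.5}$ contribution is the one we are trying to identify. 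Consequently every element of $\langle \alpha, 2, \{Ph_1\}\rangle$ lies in the same bidegree as $h_1 x'$, and we conclude that $h_1 x'$ detects $\theta_{4.5}\{Ph_1\}$.

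The hardest step is the second one: explicitly performing the Toda shuffle that connects the Moss bracket $\langle \alpha, 2, \{Ph_1\}\rangle$ to the product $\theta_{4.5}\{Ph_1\}$. This rests on fine information about hidden $2$-extensions in the 44- and 45-stems drawn from Isaksen's computations; once the hidden-extension data pins down which class $\alpha\in\pi_{44}$ is actually detected by $h_0^2 g_2$, the remaining bookkeeping reduces to standard Toda-bracket juggling in low stems.
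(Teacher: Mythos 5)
Your first step matches the paper: apply Moss's theorem to the Massey product of Lemma 12.2 to conclude that $h_1x'$ detects an element of the Toda bracket $\langle \alpha, 2, \{Ph_1\}\rangle$, where $\alpha$ is detected by $h_0^2g_2$ (the paper identifies $\alpha = \sigma^2\theta_4$ using \cite{BMT}). But your second step is a genuine gap. You propose to show directly that $\theta_{4.5}\{Ph_1\}$ lies in this bracket ``after the appropriate shuffle,'' driven by the differential $d_2(h_4h_5)=h_0h_4^3$ and hidden $2$-extension data in the 44- and 45-stems. No such shuffle is exhibited, and it is hard to see how one could exist: $\theta_{4.5}$ lives in the 45-stem in low filtration while $\alpha$ lives in the 44-stem in filtration 6, and the standard juggling moves $\gamma\langle\alpha,2,\beta\rangle \subseteq \langle\gamma\alpha,2,\beta\rangle$ do not connect the product $\theta_{4.5}\{Ph_1\}$ to $\langle\sigma^2\theta_4,2,\{Ph_1\}\rangle$ except through the indeterminacy itself. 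The paper's route is different and avoids this entirely: it first shows $0 \in \langle\sigma^2\theta_4, 2, \{Ph_1\}\rangle$ (via $\theta_4\langle\sigma^2,2,\{Ph_1\}\rangle \subseteq \theta_4\cdot\pi_{24} = 0$), then invokes Isaksen's computation that $h_1x'$ is a \emph{surviving} cycle detecting the nonzero class $\nu^3\theta_{4.5} = \eta\epsilon\theta_{4.5}$; since the bracket contains both $0$ and a nonzero element detected by $h_1x'$, that element must lie in the indeterminacy, and the indeterminacy computation forces it to be $\theta_{4.5}\{Ph_1\}$. Without the input that $h_1x'$ survives and detects something nonzero --- which you never establish --- the argument cannot close.

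Your indeterminacy analysis also needs repair. The term $\sigma^2\theta_4\cdot\pi_{10}$ vanishes because $\pi_{10}$ is generated by $\eta\{Ph_1\}$ and $\eta\sigma^2=0$, not merely ``because $\alpha$ has filtration $6$.'' In $\pi_{45}\cdot\{Ph_1\}$ you must handle all four generators $\theta_{4.5}$, $\eta\{g_2\}$, $\{h_5d_0\}$, $\{w\}$: Lemma 12.3 is precisely the statement $\{Ph_1\}\cdot\{h_5d_0\}=0$ (you attribute it to $\{h_1g_2\}$), and this product cannot be dismissed by filtration reasons, since a class of filtration $\ge 11$ in the 54-stem could still be detected by $h_1x'$ or $d_0g^2$ --- that is exactly why the paper proves Lemma 12.3 by a Toda bracket argument. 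The $\eta\{g_2\}$ term requires its own bracket manipulation ($\{Ph_1\} \subseteq \langle\eta,2,8\sigma\rangle$ and $8\sigma\{g_2\}=0$), and $\{w\}\cdot\{Ph_1\}=0$ is the one term that does follow from filtration reasons.
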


\begin{proof}
By Lemma 12.2 and Moss's theorem, we have that
$$h_1 x' \text{~~detects an element in the Toda bracket~~} \langle \sigma^2\theta_4, 2, \{Ph_1\}\rangle.$$
Recall that Barratt, Mahowald and Tangora \cite{BMT} showed that
$$h_0^2g_2\text{~~ detects~~} \sigma^2\theta_4.$$
We have the relation that
$$\theta_4 \langle \sigma^2, 2, \{Ph_1\}\rangle \subseteq \langle \sigma^2\theta_4, 2, \{Ph_1\}\rangle.$$
Since also
$$\theta_4 \langle \sigma^2, 2, \{Ph_1\}\rangle \subseteq \theta_4 \cdot \pi_{24} =0,$$
which we showed in the proof of Lemma 12.3, we have that
$$0 \in \langle \sigma^2\theta_4, 2, \{Ph_1\}\rangle.$$
Note that one can also show directly that $\langle \sigma^2, 2, \{Ph_1\}\rangle=0$.\\

Recall that Isaksen \cite{Isa} showed that $h_1 x'$ is a surviving permanent cycle, and it detects both $\nu^3\theta_{4.5}$ and equally $\eta \epsilon \theta_{4.5}$. Therefore, $h_1 x'$ must detect a nontrivial homotopy class in the indeterminacy of the Toda bracket
$$\langle \sigma^2\theta_4, 2, \{Ph_1\}\rangle.$$
The indeterminacy of this Toda bracket is
$$\sigma^2\theta_4 \cdot \pi_{10} + \pi_{45} \cdot \{Ph_1\}.$$
First note that
$$\pi_{10} \cong \mathbb{Z}/2 \text{~~and is generated by~~} \eta\{Ph_1\}.$$
Since $\eta \sigma^2 =0$, we must have that
$$\sigma^2\theta_4 \cdot \pi_{10} =0.$$
Next note that $2\{Ph_1\}=0$, and the generators of $\pi_{45}$ can be chosen to be the following
$$\theta_{4.5} \in \{h_4^3\}, \ \eta \{g_2\}, \ \{h_5d_0\}, \ \{w\}.$$
We have that
$$\{w\}\cdot \{Ph_1\}=0 \text{~~for filtration reasons}.$$
We also have that
\begin{equation*}
\begin{split}
\{Ph_1\}\cdot \eta \{g_2\} & \subseteq \langle \eta, 2, 8\sigma\rangle \eta \{g_2\} \\
& = \eta \langle 2, 8\sigma, \{g_2\} \rangle \eta \\
& = \eta^2 \langle 2, 8\sigma, \{g_2\} \rangle \\
& \subseteq \eta^2 \pi_{52} = 0.
\end{split}
\end{equation*}
Note here we use the fact that $8\sigma\{g_2\}=0$. Then combining with Lemma 12.3 that
$$\{Ph_1\} \cdot \{h_5d_0\} =0,$$
the only possibility is that
$$h_1 x'\text{~~detects the homotopy class~~} \theta_{4.5} \{Ph_1\}.$$
\end{proof}

\begin{lem}
In $\pi_{54}$, we have a relation $\theta_{4.5} (\eta \epsilon + \{Ph_1\} ) = 0$.
\end{lem}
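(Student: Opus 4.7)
The plan is to combine Lemma 12.4 with Isaksen's detection statement for $\eta\epsilon\theta_{4.5}$ to show that the sum $\theta_{4.5}(\eta\epsilon + \{Ph_1\})$ lies in strictly higher Adams filtration than $h_1x'$, and then to rule out each of the finitely many classes of higher filtration in $\pi_{54}$ that the sum could potentially hit.

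Step one: recall (as cited in the proof of Lemma 12.4 and attributed to Isaksen \cite{Isa, Isa2}) that $h_1x'$ is a permanent cycle which detects $\eta\epsilon\theta_{4.5}$. By Lemma 12.4, $h_1x'$ also detects $\theta_{4.5}\{Ph_1\}$. Since the Adams filtration is additive over sums modulo higher filtration, the sum $\theta_{4.5}(\eta\epsilon + \{Ph_1\})$ is either zero or detected by an element of $Ext(S^0)$ in filtration strictly greater than that of $h_1x'$ in the $54$-stem.

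Step two: enumerate (from Isaksen's $E_\infty$ chart in \cite{Isa, Isa2}) the classes of $\pi_{54}$ in Adams filtration higher than $h_1x'$, and rule each out as a candidate for this sum. The main tools I expect to use are: (i) the map to $tmf$, since $\theta_{4.5}$ was chosen to map to zero in $\pi_{45}(tmf)$, so the sum vanishes in $\pi_{54}(tmf)$ and any $tmf$-detected candidate is killed; (ii) the identity $2 \cdot \theta_{4.5}(\eta\epsilon+\{Ph_1\}) = 0$, which follows from $2\eta = 0$ and $2\{Ph_1\} = 0$, thereby ruling out candidates of order greater than $2$; and (iii) product constraints against specific Hopf classes (e.g.\ $\eta$-multiplication and $\nu$-multiplication on the sum vanish by further Toda bracket manipulations using relations like $\eta\{Ph_1\} = \eta^3\{h_0^2h_2\}$-type identities and the fact that $\epsilon\nu = 0$), which should eliminate candidates supporting hidden extensions.

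The main obstacle will be item (ii)/(iii) of the enumeration: some image-of-$J$ candidates in $\pi_{54}$ of higher filtration than $h_1x'$ are not directly detected by $tmf$, and so must be killed by a finer argument combining the order-$2$ constraint with multiplication by $\eta$ or $\sigma$. I expect the cleanest route is to show that any remaining candidate must be annihilated by either $\eta$ or $2$ and then to check that none of the remaining image-of-$J$ classes has this property while also being a plausible value of $\theta_{4.5}(\eta\epsilon + \{Ph_1\})$. Once every candidate class is eliminated, the only remaining possibility is that $\theta_{4.5}(\eta\epsilon + \{Ph_1\}) = 0$, which is the statement of Lemma 12.5.
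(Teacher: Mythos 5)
Your approach is essentially the paper's: both summands are detected by $h_1 x'$ (by Lemma 12.4 together with Isaksen's result that $h_1x'$ detects $\eta\epsilon\theta_{4.5}$), so the sum lies in strictly higher Adams filtration, and the higher-filtration possibilities are eliminated via the map to $tmf$ using the choice of $\theta_{4.5}$. The enumeration you anticipate as the main obstacle in step two collapses completely: the only class in the $54$-stem of the $E_\infty$-page above $h_1x'$ is $d_0g^2$, which detects $\kappa\overline{\kappa}^2$ and is detected by $tmf$, so your tool (i) alone finishes the argument and the order-$2$ and Hopf-class multiplication constraints are not needed.
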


\begin{proof}
The element $d_0g^2$ is the only element in the 54-stem of the $E_\infty$ page with higher filtration than $h_1 x'$. It detects the homotopy class $\kappa \overline{\kappa}^2$, which is also detected in the Hurewicz image of tmf. Since $\theta_{4.5}$ is chosen not to be detected in the Hurewicz image of tmf, and $h_1 x'$ detects both $\eta \epsilon \theta_{4.5}$ and $\{Ph_1\} \theta_{4.5}$, we must have a relation
$$\theta_{4.5} (\eta \epsilon + \{Ph_1\} ) = 0.$$
\end{proof}

\begin{lem}
We have a strictly defined 4-fold Toda bracket
$$\rho_{15} \in \langle \{Ph_1\}, \nu, \eta, 2\rangle \text{~~in~~} \pi_{15},$$
with indeterminacy $2\pi_{15}$ given by even multiples of $\rho_{15}$, where $\rho_{15}$ is a generator of the $Im J$ in $\pi_{15}$.
\end{lem}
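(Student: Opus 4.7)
First, I verify that the four-fold Toda bracket $\langle \{Ph_1\}, \nu, \eta, 2\rangle$ is strictly defined. The outer pairwise products all vanish: $\{Ph_1\}\cdot\nu$ lies in $\pi_{12}$, which is zero at the prime $2$; $\nu\eta\in\pi_4=0$; and $\eta\cdot 2=0$. The three-fold sub-bracket $\langle\nu,\eta,2\rangle$ lies in $\pi_5=0$ and so trivially contains zero, while $\langle\{Ph_1\},\nu,\eta\rangle\subseteq\pi_{13}$, which is also zero at the prime $2$, so likewise contains zero. Hence the four-fold Toda bracket is strictly defined.

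For the indeterminacy of this strictly defined bracket, the general formula gives $\{Ph_1\}\cdot\pi_6+\pi_{15}\cdot 2$. Since $\pi_6$ is generated by $\nu^2$ and $\{Ph_1\}\cdot\nu^2=(\{Ph_1\}\nu)\cdot\nu$ vanishes because $\pi_{12}=0$ at the prime $2$, the first summand is zero, and the indeterminacy reduces to $2\pi_{15}$. At the prime $2$, $\pi_{15}\cong\mathbb{Z}/32\oplus\mathbb{Z}/2$ with the $\mathbb{Z}/32$ factor generated by $\rho_{15}$, so $2\pi_{15}$ consists precisely of the even multiples of $\rho_{15}$.

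To identify a representative, I combine Moss's theorem with a Massey product computation in $Ext(S^0)$. Adams periodicity gives the strictly defined Massey product identity $Ph_1=\langle h_0^3h_3, h_0, h_1\rangle$, a standard Lambda algebra computation in the style of \cite{Tan1}. By Moss's theorem this identifies $\{Ph_1\}$ as an element of the Toda bracket $\langle 8\sigma, 2, \eta\rangle$. Substituting into the four-fold bracket and juggling --- or, equivalently, applying the four-fold version of Moss's theorem to the strictly defined four-fold Massey product $\langle Ph_1, h_2, h_1, h_0\rangle$ in $Ext(S^0)$ --- produces an element of $\langle \{Ph_1\}, \nu, \eta, 2\rangle$ detected by a specific class in the $15$-stem of the Adams $E_\infty$-page of $S^0$. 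A direct Lambda algebra computation analogous to Lemma~10.4 verifies that the resulting detecting class survives the Adams spectral sequence to an element in the image of $J$ summand of $\pi_{15}$. Modulo the indeterminacy $2\pi_{15}$ of even multiples of $\rho_{15}$, this homotopy class must therefore equal $\rho_{15}$.

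The principal obstacle is the Massey product computation itself: one must verify strict definedness of the four-fold Massey product $\langle Ph_1, h_2, h_1, h_0\rangle$ in $Ext(S^0)$, exhibit an explicit cocycle representative in the Lambda algebra, and then confirm that the resulting class in the $15$-stem of the Adams $E_\infty$-page detects an image of $J$ element rather than a class in the kernel of $J$. Once this identification is made, the strict definedness check and indeterminacy computation in the first two paragraphs immediately yield the lemma.
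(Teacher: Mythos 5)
Your verification that the bracket is strictly defined contains an off-by-one error that hides the real content of this step. The sub-bracket $\langle \{Ph_1\}, \nu, \eta\rangle$ lives in $\pi_{9+3+1+1}=\pi_{14}$, not in $\pi_{13}$, and $\pi_{14}\cong\mathbb{Z}/2\oplus\mathbb{Z}/2$ is nonzero. Showing that this bracket contains zero is genuinely nontrivial: the paper computes the Massey product $\langle Ph_1, h_2, h_1\rangle = Ph_2^2 = h_0^2d_0$ in the Adams $E_2$-page and then uses the differential $d_3(h_0^2h_4)=h_0^2d_0$ to conclude that the corresponding Toda bracket contains $0$ (and separately that its indeterminacy vanishes). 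Your appeal to "$\pi_{13}=0$" skips this entirely. Note also that the same nonvanishing Massey product $\langle Ph_1,h_2,h_1\rangle=h_0^2d_0\neq 0$ undercuts your proposed alternative route: the four-fold Massey product $\langle Ph_1,h_2,h_1,h_0\rangle$ is \emph{not} strictly defined in $Ext(S^0)$, since one of its three-fold sub-brackets fails to contain zero, so the "four-fold Moss theorem" step cannot even get started as stated.

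The second gap is in the identification of the bracket with $\rho_{15}$. Even granting that some juggling produces an element of $\langle \{Ph_1\},\nu,\eta,2\rangle$ detected by $h_0^3h_4$ in the $E_\infty$-page, this only pins the element down modulo higher Adams filtration, i.e.\ up to the class $\eta\kappa$ detected by $h_1d_0$. Since $\rho_{15}+\eta\kappa$ is \emph{not} congruent to $\rho_{15}$ modulo the indeterminacy $2\pi_{15}$, the sentence "this homotopy class must therefore equal $\rho_{15}$" does not follow; a detecting class in $E_\infty$ cannot tell you which summand of $\pi_{15}\cong\mathbb{Z}/32\oplus\mathbb{Z}/2$ the element lies in. The paper resolves exactly this ambiguity by a separate argument: it multiplies the four-fold bracket by $\eta^2$ to see that the answer is $\rho_{15}$ or $\rho_{15}+\eta\kappa$ (using $\langle\nu,\eta,2,\eta^2\rangle=\epsilon$ and the hidden extension $\eta^2\rho_{15}\in\{Ph_1c_0\}$), and then multiplies by $\overline{\kappa}$, showing $\overline{\kappa}\langle\{Ph_1\},\nu,\eta,2\rangle=0$ while $\eta\kappa\overline{\kappa}\neq0$ and $\rho_{15}\overline{\kappa}=0$, to exclude the $\eta\kappa$ correction. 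Some argument of this kind is indispensable and is entirely absent from your proposal.
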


\begin{proof}
We first check that this 4-fold Toda bracket is strictly defined. It is clear that
$$\langle \nu, \eta, 2\rangle \subseteq \pi_5 =0.$$
In the Adams $E_2$ page, we have that
$$\langle Ph_1, h_2, h_1\rangle = Ph_2^2 = h_0^2 d_0.$$
The element $h_0^2d_0$ is killed by the Adams $d_3$ differential
$$d_3(h_0^2h_4) = h_0^2d_0.$$
Therefore,
$$0 \in \langle \{Ph_1\}, \nu, \eta\rangle.$$
It is straightforward to check the indeterminacy of this 3-fold Toda bracket is zero. Therefore, this 4-fold Toda bracket is strictly defined.\\

We next check the indeterminacy of this 4-fold Toda bracket. The indeterminacy is contained in the union of the following
$$\langle \{Ph_1\}, \nu, \pi_2\rangle, \langle \{Ph_1\}, \pi_5, 2\rangle, \langle \pi_{13}, \eta, 2\rangle.$$
Note that $\pi_5=0, \pi_{12}=0, \pi_{13}=0$, $\pi_2$ is generated by $\eta^2$ and $\pi_6$ is generated by $\nu^2$. We have
$$\langle \{Ph_1\}, \nu, \eta^2\rangle \supseteq \langle \{Ph_1\}, \nu, \eta\rangle \eta =0.$$
$$\{Ph_1\} \cdot \nu^2 \in \nu \cdot \pi_{12} =0.$$
Therefore, the indeterminacy is $2\pi_{15}$.\\

Now we multiply this 4-fold Toda bracket by $\eta^2$:
$$\langle \{Ph_1\}, \nu, \eta, 2\rangle \eta^2 = \{Ph_1\} \langle  \nu, \eta, 2, \eta^2\rangle = \{Ph_1\} \epsilon.$$
The 4-fold Toda bracket $\epsilon = \langle  \nu, \eta, 2, \eta^2\rangle$ is strictly defined with zero indeterminacy. The homotopy class $\{Ph_1\} \epsilon$ is detected by the surviving cycle $Ph_1c_0$. We have a nontrivial extension:
$$\eta^2 \rho_{15} \in \{Ph_1c_0\}.$$
Therefore, we must have that the 4-fold Toda bracket
$$\langle \{Ph_1\}, \nu, \eta, 2\rangle \text{~~contains~~} \rho_{15} \text{~~or~~} \rho_{15}+\eta\kappa.$$

To eliminate the second possibility, we multiply this 4-fold Toda bracket by $\overline{\kappa}$. Note that
$$\overline{\kappa} \{Ph_1\} \subseteq \pi_{29}=0,$$
$$\langle \overline{\kappa}, \{Ph_1\}, \nu \rangle =0 \text{~~with indeterminacy~~}\{0, \nu\theta_4\}\text{~~in~~}\pi_{33}.$$
In fact, in the Adams $E_2$ page, we have the Massey product
$$\langle g, Ph_1, h_2\rangle = 0 \text{~~in Adams filtration 9}.$$
The homotopy classes that survive in $\pi_{33}$ with filtration higher than 9 are detected by the $K(1)$-local sphere. Since the class $\overline{\kappa}$ maps trivially to the $K(1)$-local sphere, we must have that
$$\langle \overline{\kappa}, \{Ph_1\}, \nu \rangle \text{~~contains 0}.$$
Then it is straightforward to check the indeterminacy is
$$\overline{\kappa}\cdot \pi_{13} + \pi_{30}\cdot\nu = \{0, \nu\theta_4\}.$$
Now we have that
\begin{equation*}
\begin{split}
\overline{\kappa} \langle \{Ph_1\}, \nu, \eta, 2\rangle & \subseteq \langle \langle \overline{\kappa}, \{Ph_1\}, \nu\rangle, \eta, 2\rangle \\
& = \langle \{0, \nu\theta_4\}, \eta, 2\rangle \\
& = \text{~~the union of~~} \langle 0, \eta, 2\rangle \text{~~and~~} \langle \nu\theta_4,\eta, 2\rangle \\
& = 2 \cdot\pi_{35}.
\end{split}
\end{equation*}
Note that $2\cdot \pi_{35}$ is detected in the $K(1)$-local sphere. Since the class $\overline{\kappa}$ maps trivially to the $K(1)$-local sphere, we have that
$$\overline{\kappa} \langle \{Ph_1\}, \nu, \eta, 2\rangle =0.$$
On the other hand, it is clear that
$$\eta \kappa \overline{\kappa} \neq 0 \text{~~and is detected by~~}h_1d_0g,$$
and that
$$\rho_{15} \overline{\kappa} \in \langle 8, 2\sigma, \sigma\rangle \overline{\kappa} = 8 \langle 2\sigma, \sigma, \overline{\kappa} \rangle \subseteq 8 \pi_{35}=0.$$
Here by Moss's theorem, the relation
$$\rho_{15} \in \langle 8, 2\sigma, \sigma\rangle$$
follows from the Adams differential $d_2(h_4) = h_0h_3^2$ and the Massey product in the $E_3$ page
$$\langle h_0^3, h_0h_3, h_3\rangle = h_0^3h_4 \text{~~with zero indeterminacy}.$$

Therefore, the 4-fold Toda bracket
$$\langle \{Ph_1\}, \nu, \eta, 2\rangle \text{~~contains~~} \rho_{15}.$$
\end{proof}

\begin{lem}
We have the relation $\rho_{15}\theta_{4.5}=0$ in $\pi_{60}$.
\end{lem}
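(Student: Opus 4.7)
The plan is to mirror the proof of $\rho_{15}\overline{\kappa}=0$ given inside Lemma 11.7. Starting from the Toda bracket representation $\rho_{15}\in\langle 8, 2\sigma, \sigma\rangle$ (recorded in the proof of Lemma 12.6), a standard Toda shuffle yields
\[
\rho_{15}\theta_{4.5}\in\langle 8, 2\sigma, \sigma\rangle\,\theta_{4.5}\subseteq 8\langle 2\sigma, \sigma, \theta_{4.5}\rangle,
\]
modulo the indeterminacy $8\pi_{15}\cdot\theta_{4.5}+\pi_8\cdot\sigma\theta_{4.5}$, provided the inner $3$-fold bracket is defined. Definedness requires $2\sigma^2=0$ (automatic, since $\sigma^2$ has order $2$) together with $\sigma\theta_{4.5}=0$ in $\pi_{52}$, the latter being the main auxiliary input.

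To verify $\sigma\theta_{4.5}=0$: the Adem-relation consequence $h_3h_4=0$ in $\mathrm{Ext}$ gives $h_3\cdot h_4^3=0$, so $\sigma\theta_{4.5}$ must have Adams filtration strictly greater than $4$. A case check against Isaksen's $E_\infty$-chart in the $52$-stem rules out every candidate detector in filtration $\geq 5$: each such class either lies in the part of $\pi_{52}$ detected by $tmf$ or $KO$ (whereas $\theta_{4.5}$ was chosen to map to zero in $tmf$, and its positive Adams filtration forces it to map to zero in $KO$), or it corresponds to an already-understood product which is incompatible with $\sigma\theta_{4.5}$. Hence $\sigma\theta_{4.5}=0$.

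With the inner bracket defined, $\rho_{15}\theta_{4.5}$ lies in $8\pi_{53}$ modulo the indeterminacy above. The summand $\pi_8\cdot\sigma\theta_{4.5}$ is zero by the previous step, while $8\pi_{15}\cdot\theta_{4.5}$ consists entirely of integer multiples of $8\rho_{15}\theta_{4.5}$. Rearranging, $(1-8m)\rho_{15}\theta_{4.5}\in 8\pi_{53}$ for some integer $m$. Appealing to Isaksen's computation that $\pi_{53}$ is $2$-torsion at the prime $2$ (so $8\pi_{53}=0$), and noting that $1-8m$ is a unit in $\mathbb{Z}_{(2)}$, we conclude $\rho_{15}\theta_{4.5}=0$, as desired.

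The main obstacle I anticipate is the clean verification that $\sigma\theta_{4.5}=0$: although the filtration bound is immediate from $h_3h_4=0$, ruling out each higher-filtration detector in $\pi_{52}$ is delicate and requires careful appeal to Isaksen's tables. If that verification proves unexpectedly awkward, an alternative is to replace the bracket $\langle 8,2\sigma,\sigma\rangle$ by another $3$-fold representation of $\rho_{15}$ whose shuffle with $\theta_{4.5}$ lands in a group that is manifestly zero.
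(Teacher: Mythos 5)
Your shuffle goes in the opposite direction from the paper's, and as written it has two genuine problems. First, a degree error: the inner bracket $\langle 2\sigma,\sigma,\theta_{4.5}\rangle$ lives in $\pi_{7+7+45+1}=\pi_{60}$, not $\pi_{53}$, so your final step needs $8\pi_{60}=0$ rather than $8\pi_{53}=0$. That is not harmless: at this point in the paper $\pi_{60}$ has not yet been computed --- Corollary 2.2 ($\pi_{60}\cong\mathbb{Z}/4$) is deduced from the three Adams differentials, one of which ($d_5(A')=h_1B_{21}$, Theorem 12.1) is proved \emph{using} this lemma. So appealing to the structure of $\pi_{60}$ here is circular unless you can bound its exponent independently, which the $E_\infty$ chart alone does not obviously do (one would have to exclude chains of hidden $2$-extensions starting at $B_3$). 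Second, the entire argument hinges on $\sigma\theta_{4.5}=0$ in $\pi_{52}$, which you need both for the inner bracket to be defined and to kill the $\pi_8\cdot\sigma\theta_{4.5}$ indeterminacy. The filtration bound from $h_3h_4=0$ is fine, but the "case check against Isaksen's chart" that is supposed to rule out every detector in filtration $\geq 5$ is exactly the hard content, and it is not carried out; it is not a routine $tmf$/$KO$ argument, and nothing elsewhere in the paper supplies it. As it stands this is a sketch with its key input missing.

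For comparison, the paper shuffles the other way: $\rho_{15}\theta_{4.5}\subseteq\langle 8,2\sigma,\sigma\rangle\theta_{4.5}\subseteq\langle 8\theta_{4.5},2\sigma,\sigma\rangle$, then uses the identity $8\theta_{4.5}=\rho_{15}\theta_4$ (both detected by $h_0^2h_5d_0$, with the possible $\{w\}$ discrepancy excluded by multiplying by $\eta^2$) to rewrite the bracket as $\langle\rho_{15}\theta_4,2\sigma,\sigma\rangle$, which vanishes with zero indeterminacy by Lemma~2.4 of \cite{Xu}. That route avoids both of your obstacles: it never needs $\sigma\theta_{4.5}=0$, and it never needs any knowledge of the group $\pi_{60}$. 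If you want to salvage your version, you must supply a genuine proof that $\sigma\theta_{4.5}=0$ and a non-circular proof that $8$ annihilates the relevant element of $\pi_{60}$; both are harder than the lemma you are trying to prove.
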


\begin{proof}
We first claim that
$$\rho_{15} \theta_4 = 8 \theta_{4.5}.$$
In fact, they are both detected by the surviving cycle $h_0^2h_5d_0$ (See Tangora \cite{Tan2}). However, there is one more element $w$ in higher filtration in the $E_\infty$ page, so the two classes might differ by that. Since
$$\eta^2 \theta_4 =0, \text{~~and~~} \eta^2 \{w\} \neq 0,$$
their difference is not $\{w\}$, and hence must be zero. Note that one can also show this by mapping the relation into tmf.\\

Then we have that
\begin{equation*}
\begin{split}
\rho_{15} \theta_{4.5} & \subseteq \langle 8, 2\sigma, \sigma\rangle \theta_{4.5} \\
& \subseteq \langle 8 \theta_{4.5}, 2\sigma, \sigma\rangle \\
& = \langle \rho_{15} \theta_4, 2\sigma, \sigma\rangle \\
& = 0 \text{~~with zero indeterminacy}.
\end{split}
\end{equation*}
The last equation is proved by the second author as Lemma 2.4 in \cite{Xu}.
\end{proof}

\begin{lem}
We have a Toda bracket in $\pi_{20}$:
$$\langle \{Ph_1\}+\eta\epsilon, \nu, \sigma\rangle=0\text{~~with zero indeterminacy}.$$
\end{lem}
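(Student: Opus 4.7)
The plan is to prove this Toda bracket identity by mapping to the topological modular forms spectrum $tmf$, exploiting that $\sigma$ vanishes in $tmf$ and that the surrounding low-dimensional homotopy groups of $tmf$ vanish $2$-locally. First I verify the bracket is defined: $(\{Ph_1\}+\eta\epsilon)\cdot\nu$ lies in $\pi_{12}$, which vanishes at the prime $2$, and $\nu\sigma=0$ is classical.

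Next I apply the unit map $S^0\to tmf$. Since $\pi_7(tmf)_{(2)}=0$, the image of $\sigma$ is zero. By naturality of Toda brackets under ring spectrum maps, the image of $\langle \{Ph_1\}+\eta\epsilon, \nu, \sigma\rangle$ is contained in $\langle \bar a, \bar\nu, 0\rangle$ in $\pi_{20}(tmf)$, where $\bar a$ denotes the image of $\{Ph_1\}+\eta\epsilon$. A Toda bracket with a zero entry collapses to its own indeterminacy subgroup, here $\bar a\cdot \pi_{11}(tmf)+\pi_{13}(tmf)\cdot 0$; since $\pi_{11}(tmf)_{(2)}=\pi_{13}(tmf)_{(2)}=0$ by the standard $2$-primary $tmf$ computations \cite{Bau,Hen}, this indeterminacy vanishes, and so the $tmf$-bracket equals $\{0\}$.

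Finally I exploit injectivity of the unit on $\pi_{20}$: the group $\pi_{20}(S^0)_{(2)}\cong \mathbb{Z}/8$ is generated by $\overline{\kappa}$, whose image in $\pi_{20}(tmf)_{(2)}$ again has order $8$, so the unit map is injective in this degree. Combined with the previous paragraph, every element of the bracket in $\pi_{20}(S^0)$ must vanish; since a defined Toda bracket is a nonempty coset of its indeterminacy subgroup, this forces $\langle \{Ph_1\}+\eta\epsilon, \nu, \sigma\rangle=\{0\}$, which is exactly the statement that the bracket equals zero with zero indeterminacy. The only delicate inputs are the standard $2$-primary $tmf$ computations; as a sphere-internal alternative, one could use $\{Ph_1\}\in\langle\eta,2,8\sigma\rangle$ together with $\epsilon\nu=0$ to juggle each summand of the bracket into $\eta\pi_{19}$ and then read off $\eta\pi_{19}=0$ in $\pi_{20}$ from the Adams $E_\infty$ chart.
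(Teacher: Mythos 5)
Your proof is correct, but it takes a genuinely different route from the paper's. The paper works entirely inside the sphere: it splits the bracket into $\langle \{Ph_1\}, \nu, \sigma\rangle$ and $\langle \eta\epsilon, \nu, \sigma\rangle$, shows the first contains zero via the Massey product $\langle Ph_1, h_2, h_3\rangle = 0$ in the Adams $E_2$-page (plus the absence of surviving classes in higher filtration in the 20-stem), shows the second contains zero by shuffling $\langle \eta\epsilon, \nu, \sigma\rangle \supseteq \epsilon\langle \eta,\nu,\sigma\rangle \subseteq \epsilon\cdot\pi_{12} = 0$, and then kills the indeterminacy using $\pi_{13}=0$ and filtration estimates on $\{Ph_1\}\cdot\pi_{11}$. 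You instead push the whole bracket at once into $tmf$, where $\sigma$ dies, so the target bracket collapses to an indeterminacy that vanishes because $\pi_{11}(tmf)_{(2)}=\pi_{13}(tmf)_{(2)}=0$, and you conclude by the faithfulness of the unit map on $\pi_{20}$ (where $\overline{\kappa}$ generates $\mathbb{Z}/8$ on both sides). Your inputs --- the $2$-primary homotopy of $tmf$ in low degrees, detection of $\overline{\kappa}$, and naturality of Toda brackets along the $E_\infty$-unit $S^0\to tmf$ --- are all standard and already in the paper's toolkit (e.g.\ Proposition 1.12, Lemma 11.5, Theorem 12.1). What your argument buys is brevity and uniformity: no splitting into summands, no Ext-level Massey product, and no filtration bookkeeping in the 20-stem; what it costs is dependence on the full $tmf$ computation and on the module-versus-ring Toda bracket compatibility, which you invoke somewhat tersely but correctly. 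Your closing ``sphere-internal alternative'' is only a sketch and would need the juggling spelled out, but it is inessential to the main argument.
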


\begin{proof}
We consider the two brackets $\langle \{Ph_1\}, \nu, \sigma\rangle$ and $\langle \eta\epsilon, \nu, \sigma\rangle$ one by one.\\

For the first bracket, in the Adams $E_2$ page we have the Massey product
$$\langle Ph_1, h_2, h_3\rangle =0$$
with zero indeterminacy in Adams filtration 6. Since there is no surviving class in Adams filtration 7 or higher, it contains zero. For filtration reasons and the fact that $\pi_{13} = 0$, the indeterminacy of the first bracket is
$$\{Ph_1\} \cdot \pi_{11} + \pi_{13} \cdot \sigma =0.$$
Therefore,
$$\langle \{Ph_1\}, \nu, \sigma\rangle = 0\text{~~with zero indeterminacy}.$$

For the second bracket, we have that
$$\langle \eta\epsilon, \nu, \sigma\rangle \supseteq \epsilon \langle \eta, \nu, \sigma\rangle \subseteq \epsilon \cdot \pi_{12}=0.$$
Therefore, it contains 0. Again, by filtration reasons and the fact that $\pi_{13} = 0$, the indeterminacy of the second bracket is
$$\eta\epsilon \cdot \pi_{11} + \pi_{13} \cdot \sigma =0.$$
Therefore,
$$\langle \eta\epsilon, \nu, \sigma\rangle =0\text{~~with zero indeterminacy}.$$

Summing these two relations, we have that
$$\langle \{Ph_1\}+\eta\epsilon, \nu, \sigma\rangle =0\text{~~with zero indeterminacy}.$$
\end{proof}

\begin{lem}
We have a Toda bracket in $\pi_{58}$:
$$\langle \theta_{4.5}, \{Ph_1\}+\eta\epsilon, \nu\rangle=0\text{~~with zero indeterminacy}.$$
\end{lem}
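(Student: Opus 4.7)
The plan is to verify the bracket is defined, show that its indeterminacy is zero, and then reduce its vanishing to Lemma 12.8 via the Toda-bracket juggling identity.

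First, I would check that $\langle \theta_{4.5}, \{Ph_1\}+\eta\epsilon, \nu\rangle$ is defined. The condition $\theta_{4.5}(\{Ph_1\}+\eta\epsilon)=0$ is exactly Lemma 12.5. For the other composition, $\nu\{Ph_1\}$ lies in $\pi_{12}=0$, and $\eta\epsilon\nu=\eta(\epsilon\nu)$ with $\epsilon\nu\in\pi_{11}=0$, so $(\{Ph_1\}+\eta\epsilon)\nu=0$.

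Second, I would compute the indeterminacy, which is $\theta_{4.5}\cdot\pi_{13}+\pi_{55}\cdot\nu$. The first summand vanishes since $\pi_{13}=0$. For the second, by Isaksen's computations $\pi_{55}$ is generated by a class $\rho_{55}$ in the image of $J$, and the product $\rho_{55}\cdot\nu$ can be shown to vanish in $\pi_{58}=\mathbb{Z}/2\{h_1Q_2\}\oplus\mathbb{Z}/2\{\eta\{P^7h_1\}\}$ by mapping to the $K(1)$-local sphere and using that the two generators of $\pi_{58}$ do not lie in the expected $J$-filtration of $\nu\rho_{55}$. Hence the indeterminacy is zero, and it suffices to show the bracket contains $0$.

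The main step is to apply the juggling formula: since each consecutive product vanishes (using Lemma 12.5, the previous paragraph, and $\nu\sigma=0$), the 4-fold Toda bracket $\langle\theta_{4.5}, \{Ph_1\}+\eta\epsilon, \nu, \sigma\rangle$ is defined, and both $\langle\theta_{4.5}, \{Ph_1\}+\eta\epsilon, \nu\rangle\cdot\sigma$ and $\theta_{4.5}\cdot\langle\{Ph_1\}+\eta\epsilon, \nu, \sigma\rangle$ sit inside this 4-fold bracket. By Lemma 12.8 the latter set is $\{0\}$, so every element of our 3-fold bracket, multiplied by $\sigma$, lies in the indeterminacy of the 4-fold bracket, namely a subset of $\theta_{4.5}\cdot\pi_{27}+\pi_{61}\cdot\sigma$. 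Using Isaksen's tables to inspect the $\sigma$-action on the two generators of $\pi_{58}$, one then checks that $\sigma$ acts injectively modulo these correction terms, forcing $\langle\theta_{4.5}, \{Ph_1\}+\eta\epsilon, \nu\rangle=0$.

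The main obstacle is step three: controlling the 4-fold Toda bracket's indeterminacy and verifying the required $\sigma$-injectivity on $\pi_{58}$. If the correction terms turn out to obstruct a clean conclusion, my backup plan is to compute the representing Massey product $\langle h_4^3,\, Ph_1+h_1c_0,\, h_2\rangle$ directly in the Adams $E_\infty$-page --- checking that $h_4^3\cdot(Ph_1+h_1c_0)=0$ and $(Ph_1+h_1c_0)\cdot h_2=0$ hold in the relevant page, and that the Massey product is a boundary --- and then invoke Moss's theorem \cite[Theorem 1.2]{Mos} to conclude the bracket vanishes together with its indeterminacy.
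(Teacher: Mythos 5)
Your overall strategy --- zero indeterminacy, multiply by $\sigma$, invoke Lemma 12.8 --- is the same as the paper's, but the execution of the key step has a genuine error and a genuine gap.

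First, the juggling step is dimensionally wrong. The products $\langle\theta_{4.5},\{Ph_1\}+\eta\epsilon,\nu\rangle\cdot\sigma$ and $\theta_{4.5}\cdot\langle\{Ph_1\}+\eta\epsilon,\nu,\sigma\rangle$ live in $\pi_{65}$, whereas the $4$-fold bracket $\langle\theta_{4.5},\{Ph_1\}+\eta\epsilon,\nu,\sigma\rangle$ lives in $\pi_{66}$; neither product can ``sit inside'' it. The correct tool is the direct three-fold shuffle $\langle a,b,c\rangle d=\pm a\langle b,c,d\rangle$, which is what the paper uses: it gives $\langle\theta_{4.5},\{Ph_1\}+\eta\epsilon,\nu\rangle\sigma=\theta_{4.5}\langle\{Ph_1\}+\eta\epsilon,\nu,\sigma\rangle=0$ outright, with no $4$-fold bracket and no extra indeterminacy ($\theta_{4.5}\cdot\pi_{27}$, $\pi_{61}\cdot\sigma$ do not enter) to control.

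Second, the conclusion requires knowing that $\sigma\cdot\{h_1Q_2\}\neq 0$ in $\pi_{65}$, and this cannot be read off a table: $\pi_{65}$ is beyond the rigorously established range. The paper proves it by showing $h_1h_3Q_2$ is a surviving cycle (Isaksen rules out $r_1$ as a killer, and the only other candidate $h_1^3h_6$ is a permanent cycle detecting $\eta^2\eta_6$), hence detects $\sigma\{h_1Q_2\}$. You also need to say why the bracket cannot equal $\eta\{P^7h_1\}$ or $\{h_1Q_2\}+\eta\{P^7h_1\}$; the paper handles this by restricting attention to the cokernel of $J$, where $\pi_{58}$ contributes only $\{h_1Q_2\}$. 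Two smaller points: $\pi_{11}\neq 0$ (it is $\mathbb{Z}/504$), though your conclusion survives since $(\eta\epsilon)\nu$ already lies in $\pi_{12}=0$; and your backup plan is unsound, since Moss's theorem shows a Massey product detects \emph{some} element of the Toda bracket --- vanishing of the Massey product does not preclude the bracket being a nonzero class of higher Adams filtration.
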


\begin{proof}
First, by Lemma 12.5, we have the relation
$$\theta_{4.5} \cdot(\{Ph_1\}+\eta\epsilon)=0.$$
Therefore, this Toda bracket is defined.\\

Recall that
$$\text{the cokernel of J in~~} \pi_{58} \text{~~is~~}\mathbb{Z}/2, \text{~~and generated by~~} \{h_1Q_2\}.$$
The indeterminacy equals
$$\theta_{4.5} \cdot \pi_{13} + \pi_{55} \cdot \nu =0.$$
The relation $\pi_{55} \cdot \nu =0$ follows from filtration reasons. As a side remark, one can actually prove that
$$\{h_1Q_2\}\text{~~is indecomposable}.$$
This can be shown by the Adams-Novikov filtration of this element. See Isaksen \cite{Isa} for details.\\

In \cite{Isa}, Isaksen showed that the permanent cycle $h_1h_3Q_2$ cannot be killed by $r_1$. The only other candidate to kill $h_1h_3Q_2$ is $h_1^3h_6$, which is obviously a permanent cycle: it detects $\eta^2\eta_6$. Therefore,
$$h_1h_3Q_2\text{~~is a surviving cycle, and detects~~}\sigma\{h_1Q_2\}.$$
By Lemma 12.8, we have that
$$\langle \theta_{4.5}, \{Ph_1\}+\eta\epsilon, \nu\rangle \sigma =  \theta_{4.5} \langle\{Ph_1\}+\eta\epsilon, \nu, \sigma\rangle =0.$$
Therefore,
$$\langle \theta_{4.5}, \{Ph_1\}+\eta\epsilon, \nu\rangle \text{~~does not contain~~}\{h_1Q_2\},$$
and hence is 0 with zero indeterminacy.
\end{proof}

\section{Appendix I}

The theory of cell diagrams is very helpful when thinking of finite CW spectra. We use them as illustration purpose in Section 5. In this section, we recall the definition of cell diagrams from \cite{BJM}. We also include several examples.

\begin{defn}
Let $Z$ be a finite CW spectrum. Then a cell diagram for $Z$ consists of nodes and edges. The nodes are in 1-1 correspondence with a chosen basis of the mod 2 homology of $Z$, and may be labeled with symbols to indicate the dimension. When two nodes are joined by an edge, then it is possible to form an $H\mathbb{F}_2$-subquotient
$$Z'/Z'' = S^n \smile_f e^m,$$
\begin{displaymath}
    \xymatrix{
 *+[o][F-]{m} \ar@{-}[d]^{f}  \\
 *+[o][F-]{n}  }
\end{displaymath}
which is the cofiber of $f$ with certain suspension. Here $f$, the attaching map, is an element in the stable homotopy groups of spheres. For simplicity, we do not draw an edge if the corresponding $f$ is null.

Suppose we have two nodes labeled $n$ and $m$ with $n<m$, and there is no edge joining them. Then there are two possibilities.

The first one is that there is an integer $k$, and a sequence of nodes labeled $n_i, 0\leq i \leq k$, with $n=n_0<n_1<\cdots<n_k=m$, and edges joining the nodes $n_i$ to the nodes $n_{i+1}$. In this case we do not assert that there is an $H\mathbb{F}_2$-subquotient of the form above; this does not imply that there is no such $H\mathbb{F}_2$-subquotient.

The second one is that there is no such sequence as in the first case. In this case, there exists an $H\mathbb{F}_2$-subquotient which a wedge of spheres $S^n\vee S^m$.
\end{defn}

\begin{rem}
In \cite{BJM}'s original definition, they use subquotients instead of $H\mathbb{F}_2$-subquotients.
\end{rem}

\begin{exmp}
Let $f$ be the composite of the following two maps:

\begin{displaymath}
    \xymatrix{
 S^2 \ar[r]^{\eta^2} & S^0 \ar[r]^i & C\eta,
 }
\end{displaymath}
where the second map $i$ is the inclusion of the bottom cell. Consider the cofiber of $f$: $Cf$, which is a 3 cell complex
with the following cell diagram:

\begin{displaymath}
    \xymatrix{
    *+[o][F-]{3} \\
    *+[o][F-]{2} \ar@{-}@/^1pc/[d]^{\eta} \\
    *+[o][F-]{0} }
\end{displaymath}
It is clear that the top cell of $Cf$ splits off, since $\eta^2$ can be divided by $\eta$. So we do not have to draw any attaching map from the cell in dimension 3 to the one in dimension 0. Note that the cofiber of $\eta^2$ is in fact an $H\mathbb{F}_2$-subcomplex of $Cf$. One could think this as the indeterminacy of cell diagrams associated to a given CW spectrum.
\end{exmp}

\begin{exmp}
Let $X_1=P_1^4$. The cell diagram of $X_1$ is the following:
\begin{displaymath}
    \xymatrix{
 *+[o][F-]{4} \ar@{-}[d]^{2} \ar@{-}@/_1pc/[dd]_{\eta} \\
    *+[o][F-]{3} \\
    *+[o][F-]{2} \ar@{-}[d]^{2}\\
    *+[o][F-]{1} }
\end{displaymath}

As a comparison, let $X_2= C 2\wedge C \eta$, where $C 2$ and $C \eta$ are the cofibers of $2$ and $\eta$. Then the cell diagram of $X_2$ is the following:
\begin{displaymath}
    \xymatrix{
 *+[o][F-]{4} \ar@{-}[d]^{2} \ar@{-}@/_1pc/[dd]_{\eta} \\
    *+[o][F-]{3} \ar@{-}@/^1pc/[dd]^{\eta} \\
    *+[o][F-]{2} \ar@{-}[d]_{2}\\
    *+[o][F-]{1} }
\end{displaymath}
\end{exmp}

We give a more interesting example.

\begin{exmp}
Consider the suspension spectrum of $\mathbb{C}P^3$. It consists of three cells: one each in dimensions 2, 4 and 6. It is shown in \cite{Ada2} by Adams that, the secondary cohomology operation $\Psi$, which is associated to the relation
$$Sq^4 Sq^1 + Sq^2 Sq^1 Sq^2 + Sq^1 Sq^4 = 0,$$
is nonzero on this spectrum. In other words, there exists an attaching map between the cells in dimension 2 and 6, which is detected by $h_0h_2$ in the 3-stem of the Adams $E_\infty$ page. Note that $h_0h_2$ detects two homotopy classes: $2\nu, \ 6\nu$. Their difference is $4\nu = \eta^3$, which is divisible by $\eta$. Therefore, we have its cell diagram as the following:

\begin{displaymath}
    \xymatrix{
    *+[o][F-]{6} \ar@{-}@/_1pc/[dd]_{2\nu} \\
    *+[o][F-]{4} \ar@{-}@/^1pc/[d]^{\eta} \\
    *+[o][F-]{2} }
\end{displaymath}
\end{exmp}

\section{Appendix II}

This section is about intuition.

We summarize and explain the major ideas of how we think of the ``road map" of the proof of the differential $d_3(D_3) = B_3$, especially of Step 4. The ``zigzag" part of the explanation is crucial if one wants to generalize this method to other Adams differentials.

We try to prove an Adams $d_3$ differential in $P_1^\infty$:
$$d_3(h_1h_3h_5[22]) = G[6].$$

The element $G$ supports a differential \cite{Isa, Isa2} in the Adams spectral sequence of $S^0$:
$$d_3(G)=Ph_5d_0.$$
From the computation of the transfer map, we have that
$$Ph_5d_0[6]\text{~~ maps to~~} B_{21}$$
It is shown in \cite{Isa} that $d_3(B_3)\neq B_{21}$. Therefore, the only possibility is that
$$G[6]\text{~~ supports a~~} d_2\text{~~ differential in~~} P_1^6.$$
Checking the bidegree gives us the only element there: $h_5i[5]$. This argument can be summarized in the following diagram:

\begin{displaymath}
    \xymatrix{
  Ext(S^6) & Ext(P_1^6) \ar[l] \ar[r] & Ext(P_1^\infty) \ar[r] & Ext(S^0) \\
 Ph_5d_0[6]  & Ph_5d_0[6] \ar@{|->}[l] \ar@{|->}[r] & Ph_5d_0[6] \ar@{|->}[r] & B_{21} \\
 & h_5i[5] & & \\
  G[6] \ar@{-->}[uu]^{d_3} & G[6] \ar@{-->}[u]^{d_2} \ar@{|->}[l] \ar@{|->}[r] & G[6] \ar@{|->}[r] & B_3
    }
\end{displaymath}

\begin{rem}
The above argument implies that in the Adams spectral sequence of $P_1^2$, we have a differential
$$d_2(G[2])= h_5i[1].$$
This differential in the mod 2 Moore spectrum is not obtained by a zigzag.
\end{rem}

The Curtis table shows that
$$h_5i[5]\text{~~ is killed by~~} B_1[14].$$
Note that the element $B_1$ in $Ext(S^0)$ is a surviving cycle.

This zigzag suggests that, if the element $G[6]$ were going to survive in the Adams spectral sequence of $P_1^{23}$, then it would jump the Adams filtration by 1 to the element $B_1[14]$ in the Adams spectral sequence of $P_{14}^{23}$. This is the first half of the intuition of Step 4: we reduce the Adams $d_3$ differential in $P_1^{23}$ to an Adams $d_4$ differential in $P_{14}^{23}$.

The second half of the intuition is related to the source element $h_1h_3h_5[22]$. The Massey product $h_0h_4^3 = \langle h_2, h_1, h_0, h_1h_3h_5 \rangle$ and the nonzero Steenrod operation $Sq^1 Sq^2 Sq^4$ on the 15 dimensional class in $H^\ast(P_{14}^{23})$ suggest that we should have a differential
$$h_1h_3h_5[22] \text{~~kills~~} h_0h_4^3[15]$$
in the Curtis table of $P_1^\infty$. However, the element $h_0h_4^3[15]$ is killed by $h_4^3[16]$ in the Curtis table because $P_{15}^{16}$ is a suspension of the mod 2 Moore spectrum. Therefore, if we remove the 15-cell in $P_{14}^{23}$, we can ``separate" the two elements $h_1h_3h_5[22]$ and $h_4^3[16]$. To do this, we take the cofiber of the inclusion of the 15-cell to get the spectrum $X$, and reduce the Adams $d_4$ differential in $P_{14}^{23}$ to an Adams $d_4$ differential in $X$.

It is therefore clear that the $\eta$-extension from $h_4^3$ to $B_1$ gives us the $d_4$ differential in $X$, since the 16-cell is attached to the 14-cell by $\eta$.

\end{document}